\patchcmd{\subsection}{\bfseries}{\titlecap}{}{}
\patchcmd{\subsection}{-.5em}{.5em}{}{}
 \def\l@subsection{\@tocline{2}{0pt}{4pc}{6pc}{}}
\def\l@subsubsection{\@tocline{3}{0pt}{8pc}{8pc}{}}
\numberwithin{equation}{section}
\newcommand{\C}{\mathbb{C}}
\newcommand{\N}{\mathbb{N}}
\newcommand{\R}{\mathbb{R}}
\newcommand{\Z}{\mathbb{Z}}
\newcommand{\T}{\mathbb{T}}
\newcommand{\Q}{\mathbb{Q}}
\newcommand{\D}{\mathbb{D}}
\DeclareMathOperator{\pr}{Pr}
\DeclareMathOperator{\id}{Id}
\DeclareMathOperator{\img}{img}
\DeclareMathOperator{\en}{End}
\DeclareMathOperator{\diff}{Diff}
\DeclareMathOperator{\spn}{Span}
\DeclareMathOperator{\ad}{Ad}
\DeclareMathOperator{\Hom}{Hom}
\DeclareMathOperator{\rank}{Rank}
\DeclareMathOperator{\type}{Type}
\DeclareMathOperator{\codim}{Codim}
\DeclareMathOperator{\im}{Im}
\DeclareMathOperator{\re}{Re}
\DeclareMathOperator{\sym}{Sym}
\DeclareMathOperator{\tr}{Trace}
\DeclareMathOperator{\pic}{Pic}
\DeclareMathOperator{\hol}{Hol}
\DeclareMathOperator{\sign}{Sign}
\theoremstyle{plain}
\newtheorem{theorem}{Theorem}[section]
\newtheorem{corollary}[theorem]{Corollary}
\newtheorem{lemma}[theorem]{Lemma}
\newtheorem{prop}[theorem]{Proposition}
\theoremstyle{definition}
\newtheorem{definition}[theorem]{Definition}
\newtheorem{remark}[theorem]{Remark}
\newtheorem{example}[theorem]{Example}
\newtheorem{question}[theorem]{Question}
\newtheorem*{thm}{Theorem}
\def \mc{\mathcal}
\begin{document}

\title[Strong generalized holomorphic principal bundles]{Strong generalized holomorphic principal bundles}
\author[D. Pal]{Debjit Pal}

\address{Department of Mathematics, Indian Institute of Science Education and Research, Pune, India}

\email{debjit.pal@students.iiserpune.ac.in}

\author[M. Poddar]{Mainak Poddar}

\address{Department of Mathematics, Indian Institute of Science Education and Research, Pune, India}

\email{mainak@iiserpune.ac.in}

\subjclass[2020]{Primary: 53D18, 32L05, 32L20. Secondary: 57R22, 57R30, 53D05.}

\keywords{ Generalized complex structure, Atiyah class, generalized holomorphic bundle, principal bundles, Chern-Weil theory, Hodge decomposition, orbifold.}

\begin{abstract}
We introduce the notion of a strong generalized holomorphic (SGH) fiber bundle and develop connection and curvature theory for an SGH principal $G$-bundle over a regular generalized complex (GC) manifold, where $G$ is a complex Lie group. We develop a de Rham cohomology for regular GC manifolds, and a Dolbeault cohomology for SGH vector bundles. Moreover, we establish a Chern-Weil theory for SGH principal $G$-bundles under certain mild assumptions on the leaf space of the GC structure. We also present a Hodge theory along with associated dualities and vanishing theorems for SGH vector bundles. Several examples of SGH fiber bundles are given.
\end{abstract}

\maketitle

\renewcommand\contentsname{\vspace{-1cm}}
{
  \hypersetup{linkcolor=black}
  \tableofcontents
}

\section{Introduction}\label{intro}
Generalized complex (GC) geometry presents a unified framework for a range of geometric structures whose two extreme cases are complex and symplectic structures. The notion was introduced by Hitchin \cite{Hit} and developed to a large extent by his doctoral students Gualtieri \cite{Gua, Gua2} and Cavalcanti \cite{Cavth}. The appropriate generalization of holomorphic bundles in complex geometry to GC geometry has received much attention \cite{Gua, Gua2, Hitchin11,  wang14, lang2023}.

\vspace{0.2em}
The study of principal bundles or vector bundles involves four basic differential geometric aspects, namely: $(1)$ The exploration of connection and curvature, $(2)$ Chern-Weil theory and characteristic classes, $(3)$ Hodge theory and its associated dualities and vanishing theorems, and $(4)$ Deformation theory. Notably, in the case of holomorphic principal bundles or vector bundles, these four aspects reveal rich geometric properties
\cite{atiyah57,cartan58,hubrechts05,kobayashi14,kodaira58,kodaira58-2,voisin02}. Naturally, one can ask the following:
\begin{question}\label{qsn}
~    
\begin{enumerate}
\setlength\itemsep{0.5em}
    \item What kind of vector or principal bundle theory arises within GC geometry?
    \item How are these four classical geometric components represented within the framework of GC geometry?
\end{enumerate}
\end{question}
In \cite{Gua, Gua2}, Gualtieri introduced generalized holomorphic (GH) vector bundles, which are complex vector bundles defined over a GC manifold equipped with a Lie algebroid connection. Wang further extended this in \cite{wang14}, introducing GH principal bundles by extending the structure group action to an exact Courant algebroid. These provide an answer to $(1)$ in Question \ref{qsn}. Additionally, in \cite{yicao14}, Wang explored the deformation of GH vector bundles, covering one of the geometric aspects mentioned above.
However, the absence of a generalized complex structure (GCS) on the total space of the bundle in these notions is a hindrance to the investigation of the other aspects. Recently, in \cite{lang2023}, Lang et al. tried to address this by considering a new notion of GH vector bundles equipped with a GCS on the total space. Their GCS on the total space is locally a product the GCS on the base and the fiber. They also introduced the Atiyah sequence of such bundles and defined its splitting as a generalized holomorphic connection. This new notion of GH vector bundle is more rigid than the notion due to Gualtieri \cite{Gua, Gua2} and Hitchin \cite{Hitchin11}, and yields a strict subclass.  But, it has the potential of being more amenable to methods from complex geometry. 

\vspace{0.2em}
In this article, we generalize the work of \cite{lang2023} on vector bundles to fiber and principal bundles. To distinguish these bundles from the earlier notions due to Gualtieri and Wang \cite{wang14}, we refer to them as {\it strong generalized holomorphic} or {\it SGH} bundles. A regular GCS induces a regular foliation with symplectic leaves and a transverse complex structure. The SGH bundles are intuitively characterized by the fact they are flat along the leaves and transversely holomorphic. However, they form a bigger category than the category of holomorphic bundles on the leaf space (when the leaf space is a manifold or an orbifold), see Examples \ref{counter eg1}, and \ref{counter eg2}.    

\vspace{0.2em}
 Both the base and fiber of an SGH fiber bundle are GC manifolds, and the total space admits a GCS that is locally a product GCS derived from the base and the fiber, see Definition \ref{main def}. In the context of vector bundles, SGH vector bundles correspond precisely to the GH vector bundles of Lang et al. (cf. \cite{lang2023}). Similarly, in the realm of principal bundles, they are a subclass of the GH principal bundles analyzed by Wang (cf. \cite[Example 4.2]{wang14}). 
 
\vspace{0.2em}

The main contribution of this article is in adapting the methods of complex geometry to introduce a suitable Dolbeault cohomology theory for SGH vector bundles and in using it to develop suitable generalizations of Chern-Weil theory and Hodge theory for these bundles. To do so, an assumption that the leaf space of the symplectic foliation is a complex (K\"{a}hler) orbifold is often necessary. A more detailed outline of the paper is given below.       

\vspace{0.2em}
In Section \ref{prelim}, we describe the basic facts on GCS and generalized holomorphic (GH) maps. In Sections \ref{sgh fiber}-\ref{sec4}, we introduce the notions of SGH fiber bundle and SGH principal bundle.    

\vspace{0.2em}
In Section \ref{sec gh pb}, we follow Atiyah's approach to defining holomorphic connection of a holomorphic principal bundle \cite{atiyah57}, to construct the Atiyah sequence of an SGH principal $G$-bundle $P$ over a regular GC manifold $M$, where $G$ is a complex Lie group:
    \begin{equation*}
        \begin{tikzcd}
0 \arrow[r] & Ad(P) \arrow[r] & At(P) \arrow[r] & \mathcal{G}M \arrow[r] & 0 \,.
\end{tikzcd}
    \end{equation*}
Here, $\mathcal{G}M$ is the GH tangent bundle of $M$, $Ad(P)$ is the adjoint bundle of $P$, and $At(P)$ is the Atiyah bundle of $P$. A GH connection on $P$ is a  GH splitting of the above short exact sequence, and the Atiyah class is the obstruction to such a splitting; see Definition \ref{def:gh conn} and Theorem \ref{main3}. 
Furthermore, in Section \ref{sec atiyah}, a la Atiyah, we establish that the Atiyah class of an SGH vector bundle and the Atiyah class of its associated SGH principal bundle agree up to a sign (see Theorem \ref{main4}). 

\vspace{0.2em}
In Section \ref{sec orbi}, we discuss the leaf space associated to the regular symplectic foliation $\mathscr{S}$ of a GCS. In general, the leaf space $M/\mathscr{S}$ might lack the Hausdorff property, as illustrated in \ref{counter eg}. Nonetheless, assuming $M/\mathscr{S}$ is a smooth orbifold, we provide a structured description of $\mathscr{S}$ in Theorem \ref{orbi thm}. Moreover, in Section \ref{sec cohomo}, utilizing Theorem \ref{orbi thm}, we  develop the de Rham cohomology $H^{\bullet}_{D}(M)$ for regular GC manifolds in Proposition \ref{prop2}, and the Dolbeault cohomology $H^{\bullet,\star}_{d_{L}}(M,E)$ of an SGH vector bundle $E$ in Corollary \ref{cor5}. This leads to the notion of the curvature of a smooth generalized connection (see Definition \ref{def:gen conn}) on an SGH principal bundle in Section \ref{curv sec}, and also, provides a crucial relationship between the curvature and its Atiyah class, as follows:
\begin{thm}(Theorem \ref{main7})
 For an SGH principal $G$-bundle $P$ over a regular GC manifold $M$, with $G$ as a complex Lie group and $M/\mathscr{S}$ as a smooth orbifold, the $(1,1)$-component of the curvature of a smooth generalized connection of type $(1,0)$ on $P$, which is constant along the leaves, corresponds to the Atiyah class of $P$ in $H^{1}(M,\mathbf{\mathcal{G}^{*}M}\otimes_{\mathcal{O}_{M}}\mathbf{Ad(P)})$.   
\end{thm}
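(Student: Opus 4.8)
The plan is to transport Atiyah's original argument \cite{atiyah57} into the Dolbeault framework for SGH bundles built in Section~\ref{sec cohomo}. Recall from Definition~\ref{def:gh conn} and Theorem~\ref{main3} that the Atiyah class of $P$ is the image of $\mathrm{id}_{\mathcal{G}M}$ under the connecting homomorphism
\[
\delta\colon H^{0}\bigl(M,\mathcal{G}^{*}M\otimes_{\mathcal{O}_{M}}\mathcal{G}M\bigr)\longrightarrow H^{1}\bigl(M,\mathcal{G}^{*}M\otimes_{\mathcal{O}_{M}}Ad(P)\bigr)
\]
of the long exact sequence obtained by tensoring the Atiyah sequence with $\mathcal{G}^{*}M$. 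First I would fix a cover $\{U_{i}\}$ of $M$ over which $P$ is SGH-trivial, choose GH splittings $s_{i}\colon\mathcal{G}M|_{U_{i}}\to At(P)|_{U_{i}}$ of the Atiyah sequence, and record that the differences $\{s_{i}-s_{j}\}$, which are sections of $\mathcal{G}^{*}M\otimes Ad(P)$, form a \v{C}ech $1$-cocycle representing the Atiyah class.

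Next I would bring in the given smooth generalized connection $D$ on $P$ of type $(1,0)$ which is constant along the leaves, regard it as a global smooth splitting of the Atiyah sequence, and compute its curvature $F$ in the sense of Definition~\ref{def:gen conn} and Section~\ref{curv sec}. Decomposing generalized $Ad(P)$-valued forms according to the transverse holomorphic bigrading together with the leafwise symplectic directions, the hypotheses that $D$ has type $(1,0)$ and is constant along the leaves force the leafwise components of $F$ and its $(2,0)$- and $(0,2)$-components to vanish; hence $F=F^{1,1}$, a smooth $Ad(P)$-valued form of bidegree $(1,1)$, which is $d_{L}$-closed by the Bianchi identity. Reinterpreting it as a $d_{L}$-closed $(0,1)$-form valued in $\mathcal{G}^{*}M\otimes Ad(P)$, it determines a class in $H^{0,1}_{d_{L}}(M,\mathcal{G}^{*}M\otimes Ad(P))$, which by the Dolbeault isomorphism of Corollary~\ref{cor5} is $H^{1}(M,\mathcal{G}^{*}M\otimes_{\mathcal{O}_{M}}Ad(P))$.

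Finally I would compare the two representatives through the \v{C}ech--Dolbeault double complex attached to the fine $d_{L}$-resolution of $\mathcal{G}^{*}M\otimes Ad(P)$ from Section~\ref{sec cohomo}. Writing $D=s_{i}+\theta_{i}$ on $U_{i}$ with $\theta_{i}$ a smooth local section of $\mathcal{G}^{*}M\otimes Ad(P)$, one has $\theta_{i}-\theta_{j}=s_{j}-s_{i}$ on overlaps, so $\{\theta_{i}\}$ is a $0$-cochain whose \v{C}ech coboundary is, up to sign, the Atiyah cocycle, while $d_{L}\theta_{i}$ computes $F^{1,1}$ on $U_{i}$ because each $s_{i}$, being GH, is $d_{L}$-flat in the relevant directions. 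The standard zig-zag in the double complex then identifies $[F^{1,1}]$ with $\pm\,\delta(\mathrm{id}_{\mathcal{G}M})$, the Atiyah class. The \emph{main obstacle} is the homological bookkeeping in the generalized--orbifold setting: one must verify that the $d_{L}$-complex is genuinely a fine resolution of the relevant sheaf over the orbifold leaf space $M/\mathscr{S}$ --- this is exactly where Theorem~\ref{orbi thm} and the ``constant along the leaves'' hypothesis are indispensable, since they let the cocycle data descend to $M/\mathscr{S}$ --- and that the bidegree conventions and the sign are consistent with those fixed earlier (cf.\ Theorem~\ref{main4}); once this double-complex formalism is in place, the remaining computation is the classical one.
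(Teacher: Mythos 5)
Your proposal is essentially the paper's own argument: the paper represents $a(P)$ by the \v{C}ech cocycle of differences of the local splittings coming from SGH trivializations, views the type-$(1,0)$, leafwise-constant connection through its local forms $\Theta_{\alpha}$, which by the coboundary equation \eqref{co boundary equation1-2} form exactly the $0$-cochain whose \v{C}ech coboundary is that cocycle, observes $\Omega^{1,1}_{\alpha}=d_{L}\Theta_{\alpha}$ as in \eqref{curvature class}, and then invokes the \v{C}ech--Dolbeault isomorphism of Corollary~\ref{main6} (built from the $d_{L}$-Poincar\'e lemma and leafwise-constant partitions of unity under the orbifold hypothesis) to identify $[\Omega^{1,1}]$ with $a(P)$; your zig-zag in the double complex is the same mechanism, and you correctly locate where the orbifold assumption enters. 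One side claim in your write-up is wrong, though it does not damage the proof: for a type-$(1,0)$ connection the full curvature is \emph{not} of pure type $(1,1)$ in general, since the $(2,0)$-component $d_{\overline{L}}\Theta_{\alpha}+\tfrac12[\Theta_{\alpha},\Theta_{\alpha}]$ need not vanish --- only the $(0,2)$-part and the leafwise contributions do; the theorem concerns only the $(1,1)$-component, and its $d_{L}$-closedness follows already from $d_{L}^{2}\Theta_{\alpha}=0$ (or from the $(1,2)$-part of the Bianchi identity using $\Omega^{0,2}=0$), so the rest of your argument is unaffected, up to fixing the sign convention consistently with Theorem~\ref{main4}.
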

In Section \ref{sec c-w}, we establish the generalized Chern-Weil homomorphism for SGH principal bundles in Definition \ref{def:c-w} using the generalized connection of Theorem \ref{main7}, and define the generalized characteristic classes. 
In Section \ref{sec sgh vb}, we develop the theory of smooth generalized connection and its curvature for SGH vector bundles. We also introduce the notion of a transverse connection and its curvature in Definition \ref{transvrs connectn} and present a Chern-Weil theory of SGH vector bundles similar to Section \ref{sec c-w}. In particular, we prove that the existence of a GH connection on an SGH bundle is equivalent to the existence of a GH connection on its associated SGH principal bundle; see Theorem \ref{main8}. We also provide an analogue of the holomorphic Picard group in Subsection \ref{sec picard}.

\vspace{0.2em}
In Section \ref{sec duality}, we develop generalized versions of classical results such as Serre duality and Poincar\'{e} duality.  We also introduce a Hodge decomposition for the $D$-cohomology and $d_L$-cohomology (see Section \ref{sec cohomolgy}) of a regular GC manifold. We establish the following result.
\begin{thm}(Theorem \ref{harmonic thm 2})
For any compact regular GC manifold $M$ of type $k$, given that $M/\mathscr{S}$ is a smooth orbifold, the following holds. 
 \begin{enumerate}
 \setlength\itemsep{1em}
     \item $H^{\bullet}_{D}(M)\cong (H^{2k-\bullet}_{D}(M))^{*}\,,$ and $H^{\bullet,\bullet}_{d_{L}}(M)\cong (H^{k-\bullet,k-\bullet}_{d_{L}}(M))^{*}\,.$
     \item If $\mathscr{S}$ is also transversely K\"{a}hler, we have,
      $H^{\bullet}_{D}(M)=\bigoplus_{p+q=\bullet}H^{p,q}_{d_{L}}(M)\,.$
 \end{enumerate}    
\end{thm}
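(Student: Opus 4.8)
The plan is to reduce both parts to Hodge theory on the compact complex orbifold $X := M/\mathscr{S}$, which is smooth by hypothesis, compact since $M$ is, and which by Theorem \ref{orbi thm} inherits the transverse complex (and in part (2), K\"ahler) structure of $\mathscr{S}$, so that $X$ has complex dimension $k$ and real dimension $2k$. First I would promote the comparison maps already set up in Section \ref{sec cohomo} (Proposition \ref{prop2} and Corollary \ref{cor5}) to quasi-isomorphisms of (bi)complexes: the complex computing $H^\bullet_D(M)$ should be quasi-isomorphic to the orbifold de Rham complex of the compact oriented $2k$-orbifold $X$, and the $d_L$-bicomplex computing $H^{\bullet,\bullet}_{d_L}(M)$ quasi-isomorphic to the orbifold Dolbeault bicomplex $(\mathcal{A}^{\bullet,\bullet}(X),\partial,\bar\partial)$ of the complex $k$-orbifold $X$, with the two gradings on the $M$-side matching the transverse Dolbeault bigrading on $X$. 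The mechanism is a leafwise Poincar\'e lemma combined with the structural description of $\mathscr{S}$ in Theorem \ref{orbi thm}: along the symplectic leaves the relevant leafwise cohomology is concentrated in degree zero (sections constant along the leaves), so the Leray-type comparison with $X$ is an isomorphism. I expect this step to be the main obstacle, since it requires careful bookkeeping with orbifold charts and holonomy and with the compatibility of the bigradings; an equivalent route, avoiding the quotient, is to develop El Kacimi-Alaoui-type transverse Hodge theory directly for the Riemannian foliation $\mathscr{S}$ on $M$, the Riemannian/taut structure again being supplied by Theorem \ref{orbi thm}.

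Granting this reduction, I would fix an orbifold Hermitian metric on $X$ (in part (2), the transverse K\"ahler metric itself), form the associated Laplacians on $\mathcal{A}^\bullet(X)$ and $\mathcal{A}^{\bullet,\bullet}(X)$, and invoke Baily's extension of elliptic Hodge theory to compact orbifolds: each orbifold (Dolbeault) cohomology class has a unique harmonic representative and the spaces of harmonic forms are finite-dimensional. This yields $H^\bullet_D(M)\cong\mathcal{H}^\bullet(X)$ and $H^{p,q}_{d_L}(M)\cong\mathcal{H}^{p,q}(X)$.

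For part (1): the Hodge star $*$ of the orbifold metric gives a linear isomorphism $\mathcal{H}^\bullet(X)\cong\mathcal{H}^{2k-\bullet}(X)$, and the pairing $([\alpha],[\beta])\mapsto\int_X\alpha\wedge\beta$ is nondegenerate (orbifold Poincar\'e duality), whence $H^\bullet_D(M)\cong(H^{2k-\bullet}_D(M))^*$. Likewise the conjugate-linear Hodge star $\bar{*}$ gives $\mathcal{H}^{p,q}(X)\cong\mathcal{H}^{k-p,k-q}(X)$, and the pairing $\int_X\alpha\wedge\beta$ on $\mathcal{A}^{p,q}(X)\times\mathcal{A}^{k-p,k-q}(X)$ is nondegenerate (orbifold Serre duality for the trivial line bundle, using $\wedge^{p}TX\otimes K_X\cong\Omega^{k-p}_X$), whence $H^{\bullet,\bullet}_{d_L}(M)\cong(H^{k-\bullet,k-\bullet}_{d_L}(M))^*$. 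Both are just the Satake--Baily orbifold versions of classical Poincar\'e and Serre duality, transported along the quasi-isomorphisms above.

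For part (2): transverse K\"ahlerness of $\mathscr{S}$ says precisely that $X$ is a compact K\"ahler orbifold. The K\"ahler identities are local, hence hold in each orbifold chart and descend to $X$; therefore $\Delta_{d}=2\Delta_{\bar\partial}$ on $\mathcal{A}^\bullet(X)$, the $d$-harmonic $n$-forms split by bidegree, $\mathcal{H}^n(X;\C)=\bigoplus_{p+q=n}\mathcal{H}^{p,q}(X)$, and Hodge-to-de Rham degeneration follows. Transporting this back through the quasi-isomorphisms gives $H^\bullet_D(M)=\bigoplus_{p+q=\bullet}H^{p,q}_{d_L}(M)$. The remaining point of care is to verify that the transverse K\"ahler form descends to a genuine closed positive $(1,1)$-form on $X$ and is compatible with the ``constant along the leaves'' normalizations of Sections \ref{curv sec}--\ref{sec c-w}.
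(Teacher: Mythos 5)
Your proposal is correct, but it takes a genuinely different route from the paper: you push everything down to the leaf space $X=M/\mathscr{S}$ and invoke Satake--Baily orbifold Hodge theory, whereas the paper works intrinsically on $M$ with exactly the "equivalent route" you mention in passing -- transverse Hodge theory for the Riemannian foliation $\mathscr{S}$ in the style of El Kacimi-Alaoui (cited via \cite{asaoka14}). Concretely, the paper fixes a leafwise volume form $\chi$ satisfying \eqref{proprty 1} (tautness/minimality of the leaves), defines a Hodge star $\star:A^{\bullet}\to A^{2k-\bullet}$ on transverse forms and the product $h(\alpha,\beta)=\int_M\alpha\wedge\overline{\star\beta}\wedge\chi$, checks that $\Delta_D$ and $\Delta_{d_L}$ are transversely elliptic to get finite-dimensional harmonic spaces and orthogonal decompositions (Theorem \ref{harmonic thm}), and then deduces (1) from $\star$ together with the nondegenerate pairing $\int_M\alpha\wedge\beta\wedge\chi$, and (2) from $\Delta_D=2\Delta_{d_L}$ in the transversely K\"ahler case. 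Your route buys direct access to classical orbifold results (Baily's Hodge theorem, orbifold Poincar\'e and Serre duality, K\"ahler identities checked in charts), and the step you flag as the main obstacle is in fact easier than you fear: since $A^{\bullet,\bullet}$ consists of leafwise-constant transverse forms and Theorem \ref{orbi thm} gives closed leaves with finite linear holonomy, a basic form is precisely a holonomy-invariant form on local transversals, i.e.\ an orbifold form on $X$, so the comparison is an isomorphism of (bi)complexes and no leafwise Poincar\'e lemma or Leray argument is needed; also note that part (1) is an abstract duality of cohomology spaces, so it transports through this isomorphism without worrying about the leaf-volume weight that would appear if you tried to match the integration pairings on $M$ and $X$. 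What the paper's intrinsic approach buys is that it never leaves $M$ (so no orbifold-chart bookkeeping), at the cost of introducing the taut form $\chi$ to make $D^{*}$, $d^{*}_{L}$ genuine formal adjoints, and it sets up exactly the operators ($\overline{\star}_E$, $\Delta_{d_{L,E}}$, $\Lambda$) reused verbatim for the bundle-valued Hodge decomposition, Serre duality and vanishing theorems (Theorems \ref{harmonic thm 3}, \ref{gc serre}, \ref{vanishing thm}), which your quotient argument would handle less directly for SGH bundles that are not pullbacks from the leaf space (cf.\ Examples \ref{counter eg1}, \ref{counter eg2}).
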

Extending Theorem \ref{harmonic thm 2}, when the coefficient of $H^{\bullet,\star}_{d_{L}}(M)$ is an SGH vector bundle, we establish a generalized Hodge decomposition in Theorem \ref{harmonic thm 3} and provide a generalized Serre duality in Theorem \ref{gc serre}, under the assumption of Theorem \ref{harmonic thm 2}. Additionally, we establish the following vanishing theorem.
\begin{thm}(Theorem \ref{vanishing thm})
 Let $M$ be a  compact regular GC manifold  of type $k$ such that $M/\mathscr{S}$ is a K\"{a}hler orbifold. For a positive SGH line bundle $E$ over $M$,  we have the following.
 
\vspace{0.2em}
\begin{enumerate}
 \setlength\itemsep{1em}
    \item  $H^{q}(M,(\mathbf{\mathcal{G}^{*}M})^{p}\otimes_{\mathcal{O}_{M}}\mathbf{E})=0\quad\text{for $p+q>k$}\,.$
    \item For any SGH vector bundle $E^{'}$ on $M$, there exists a constant $m_0$ such that 
    $H^{q}(M,\mathbf{E^{'}}\otimes_{\mathcal{O}_{M}}\mathbf{E}^{m})=0\quad\text{for $m\geq m_0$ and $q>0$}$.
\end{enumerate}   
\end{thm}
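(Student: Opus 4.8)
The plan is to reduce both statements to the classical Kodaira--Nakano vanishing theorem on the leaf space, exploiting the orbifold structure established in Theorem~\ref{orbi thm} and the Dolbeault-type identifications of Corollary~\ref{cor5}. Recall that when $M/\mathscr{S}$ is a K\"{a}hler orbifold $X$, a positive SGH line bundle $E$ should, by the correspondence between SGH bundles and transversely holomorphic data, descend to a positive orbifold line bundle on $X$ (positivity being defined via a transverse connection of type $(1,0)$ whose $(1,1)$-curvature is, leafwise-constant and hence basic, a positive $(1,1)$-form on $X$ in the orbifold sense). The key translation is the isomorphism $H^{q}(M,(\mathbf{\mathcal{G}^{*}M})^{p}\otimes_{\mathcal{O}_{M}}\mathbf{E}) \cong H^{p,q}_{d_{L}}(M,E) \cong H^{q}(X,\Omega^{p}_{X}\otimes \mathcal{L})$, where $\mathcal{L}$ is the descended orbifold line bundle; here the first isomorphism is the SGH analogue of the Dolbeault theorem (Corollary~\ref{cor5} together with the identification of $\mathcal{G}^{*}M$ with the transverse holomorphic cotangent bundle), and the second uses that $d_{L}$-cohomology of a leafwise-flat, transversely holomorphic bundle computes orbifold Dolbeault cohomology on $X$ (this is essentially Theorem~\ref{orbi thm} plus a Leray-type argument, or a direct averaging over leaves using compactness of $M$).

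Granting that translation, part (1) is immediate: $H^{q}(X,\Omega^{p}_{X}\otimes\mathcal{L}) = 0$ for $p+q > \dim_{\C} X = k$ by the Kodaira--Nakano vanishing theorem for positive line bundles on compact K\"{a}hler orbifolds (Baily's extension of Kodaira's theorem to $V$-manifolds), since $\dim_{\C}X = k$ exactly because the GC manifold has type $k$, i.e. the transverse complex structure has complex dimension $k$. For part (2), I would first descend $E'$ to an orbifold vector bundle $\mathcal{E}'$ on $X$ and $E$ to the positive orbifold line bundle $\mathcal{L}$, so that $H^{q}(M,\mathbf{E'}\otimes_{\mathcal{O}_{M}}\mathbf{E}^{m}) \cong H^{q}(X,\mathcal{E}'\otimes\mathcal{L}^{m})$; then invoke the Serre-type vanishing theorem for ample line bundles on compact complex orbifolds (again Baily, or Ross--Thomas for the orbifold formulation): there is $m_{0}$ with $H^{q}(X,\mathcal{E}'\otimes\mathcal{L}^{m}) = 0$ for all $m \geq m_{0}$ and $q > 0$. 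One subtlety to flag: on orbifolds, ampleness of $\mathcal{L}$ in the naive sense may require passing to a power, or the line bundle may only be ample as an orbifold (fractional) divisor; I would either absorb this into the choice of $m_{0}$ or note that positivity of the curvature already gives the orbifold Kodaira embedding, which is all Serre vanishing needs.

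The main obstacle is making the descent to the leaf orbifold rigorous and functorial enough to carry cohomology: specifically, proving the isomorphism $H^{q}(M,(\mathbf{\mathcal{G}^{*}M})^{p}\otimes_{\mathcal{O}_{M}}\mathbf{E}) \cong H^{q}(X,\Omega^{p}_{X}\otimes\mathcal{L})$ in the orbifold category. The cleanest route is to work with a proper \'{e}tale groupoid (or an orbifold atlas) presenting $X$ as in Theorem~\ref{orbi thm}, pull everything back to local charts where the foliation is a trivial product and the SGH structure is genuinely a product of a flat symplectic factor and a holomorphic factor, observe that on such a chart the leafwise-flat, transversely holomorphic sheaf cohomology collapses (via a leafwise Poincar\'{e}/homotopy argument, using that symplectic leaves are contractible-enough or at least that leafwise de Rham cohomology in positive degree contributes nothing to the relevant $d_{L}$-complex after the averaging in Section~\ref{sec cohomolgy}), and then patch using the orbifold \v{C}ech--Dolbeault double complex. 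A second, more technical obstacle is verifying that the notion of \emph{positive} SGH line bundle in the statement coincides with the curvature-positivity hypothesis of Theorem~\ref{main7}'s connection and that this positivity is preserved under descent, i.e. that one can choose the generalized connection to be leafwise-constant \emph{and} have positive basic $(1,1)$-curvature simultaneously; this should follow from the Chern-Weil construction of Section~\ref{sec c-w} combined with an averaging over the compact leaves, but the compatibility of averaging with positivity deserves an explicit check. Once these two points are settled, both vanishing statements are formal consequences of their orbifold-K\"{a}hler counterparts.
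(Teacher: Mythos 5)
Your reduction hinges on the claim that a positive SGH line bundle $E$ (and, for part (2), an arbitrary SGH vector bundle $E'$) descends to an orbifold holomorphic bundle on the leaf space $X=M/\mathscr{S}$, so that $H^{p,q}_{d_L}(M,E)\cong H^q(X,\Omega^p_X\otimes\mathcal{L})$. This step is not a technical subtlety to be patched; it fails in general, and the paper itself exhibits the failure. Examples \ref{counter eg1} and \ref{counter eg2} give SGH bundles (e.g. $G\times_{\rho}\C^{l}$ on the Iwasawa manifold, or $\otimes_i\mathrm{pr}_i^*V_i$ with $V_2$ a nontrivial flat bundle on the symplectic factor) that are \emph{not} pullbacks of holomorphic bundles from the leaf space: an SGH bundle is only flat along the leaves, and a nontrivial leafwise holonomy obstructs descent. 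Positivity does not rescue this, since the generalized first Chern class only constrains the transverse curvature and is insensitive to twisting by a leafwise flat factor; and in part (2) the bundle $E'$ is completely arbitrary, so $H^q(M,\mathbf{E'}\otimes_{\mathcal{O}_M}\mathbf{E}^m)$ cannot in general be identified with any $H^q(X,\mathcal{E}'\otimes\mathcal{L}^m)$. Consequently Baily's orbifold Kodaira and Serre vanishing theorems cannot be invoked by transport of structure; your "main obstacle" paragraph correctly locates the weak point, but the descent it asks for does not exist, so the argument as proposed does not close. (The identification of Corollary \ref{cor mfld1} is only proved, and only true, for bundles that \emph{are} pullbacks, and only when the leaf space is a manifold.)

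The paper instead proves both statements intrinsically on $M$, replicating the classical Bochner--Kodaira arguments at the level of transverse (leafwise-constant) forms: one chooses a transverse hermitian metric on $E$ whose transverse generalized Chern connection has positive curvature, so that $\tfrac{i}{2\pi}\Omega_{\nabla_E}$ serves as the transverse K\"ahler form; the transverse K\"ahler identities (Proposition \ref{kahler id}) and the inequality $\tfrac{i}{2\pi}h_E([\Lambda,\Omega_{\nabla}]\alpha,\alpha)\ge 0$ for $d_{L,E}$-harmonic $\alpha$ (Lemma \ref{generalized lemma1}) combine with $[\Lambda,\mathcal{L}]=(k-(p+q))\mathrm{id}$ to force $\mathcal{H}^{p,q}_{d_{L,E}}=0$ for $p+q>k$, and then the transverse Hodge decomposition (Theorem \ref{harmonic thm 3}) together with Corollary \ref{cor5} yields (1). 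For (2) the curvature of $E'\otimes E^m$ splits as $\Omega_{E'}\otimes 1+m(1\otimes\Omega_E)$, a compactness bound $C$ on $\|[\Lambda,\Omega_{E'}]\|$ and a Cauchy--Schwarz estimate show the $m$-term dominates for $m\ge m_0$, first giving vanishing in bidegree $(k,q)$ and then the stated vanishing after replacing $E'$ by $(\mathcal{G}M)^{k}\otimes E'$. If you want to salvage your strategy, you would have to restrict to SGH bundles that are pullbacks from the leaf space, which is strictly weaker than the theorem; otherwise the intrinsic transverse harmonic-theoretic route is the one that works.
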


In Section \ref{sec calabi}, we give some criteria on the GCS so that the leaf space of the associated symplectic foliation is a smooth torus, and therefore, satisfies the
hypothesis that the leaf space be an orbifold, used in most of our results in previous sections. This is a generalization of a result of Bailey et al.  \cite[Theorem1.9]{cav17}.
Then, in Section \ref{sec nilpotent}, we give a complete characterization of the leaf space of a left invariant GCS on a simply connected nilpotent Lie group and its associated nilmanifolds. Finally, we construct some examples of nontrivial SGH bundles on the Iwasawa manifolds which show that the category of SGH bundles is in general different from the category of holomorphic bundles on the leaf space.



\subsection{Notation} 
\begin{itemize}
\setlength\itemsep{1em}
    \item If $E$ denotes an SGH fiber bundle over $M$, then $\mathbf{E}$ denotes the corresponding sheaf of GH sections of $E$, and for any open set $U\subseteq M$, we denote the set of GH sections of $E$ over $U$ by $\Gamma(U,\mathbf{E})$ or by $\mathbf{E}(U)$.
    \item If $E$ denotes a smooth fiber bundle over $M$, then $C^{\infty}(E)$ denotes the corresponding sheaf of smooth sections of $E$ and for any open set $U\subseteq M$, we denote the set of smooth sections of $E$ over $U$ by $C^{\infty}(U,E)$ or by $C^{\infty}(E)(U)$. In particular, if $E$ is a complex vector bundle, then $C^{\infty}(E)$ is the sheaf of $\C$-valued sections. Similarly, if $E$ is only a real vector bundle, then $C^{\infty}(E)$ is the sheaf of $\R$-valued sections.
    \item  Given a smooth manifold $M$, for any open set $U\subseteq M$, $C^{\infty}(U)$ denotes the ring of $\C$-valued smooth functions on $U$ and $C^{\infty}_{M}$ denotes the sheaf of $\C$-valued smooth functions on $M$. Also $C^{\infty}_{M,\R}$ denotes the sheaf of $\R$-valued smooth functions on $M$.
\end{itemize}

\section{Preliminaries}\label{prelim}
\subsection{Generalized complex structure (GCS)} We first start by recalling some basic notions of generalized complex (in short GC) geometry. In this subsection, we shall rely upon  \cite{Gua} and \cite{Gua2} for most of the definitions and results.  

\medskip
Given any $2n$-dimensional smooth manifold $M$, the direct sum of tangent and cotangent bundles of $M$, which we denote by $TM\oplus T^{*}M$, is endowed with a natural symmetric bilinear form of signature $(2n,2n)$,
\begin{equation}\label{bilinear}
    \langle X+\xi,Y+\eta\rangle\,:=\,\frac{1}{2}(\xi(Y)+\eta(X))\,.
\end{equation}
It is also equipped with the \textit{Courant Bracket}  defined as follows.
\begin{definition}
The Courant bracket is a skew-symmetric bracket defined on smooth sections of $TM\oplus T^{*}M$, given by
\begin{equation}\label{bracket}
    [X+\xi,Y+\eta] := [X,Y]_{Lie}+\mathcal{L}_{X}\eta-\mathcal{L}_{Y}\xi-\frac{1}{2}d(i_{X}\eta-i_{Y}\xi),
\end{equation}  
where $X,Y\in C^{\infty}(TM)$, $\xi,\eta\in C^{\infty}(T^{*}M)$, $[\,,\,]_{Lie}$ is the usual Lie bracket on $C^{\infty}(TM)$, and $\mathcal{L}_{X},\,\, i_{X}$ denote the Lie derivative and the interior product of forms with respect to the vector field $X$, respectively.
\end{definition}
We are now ready to define the notion of GCS on a $2n$-dimensional smooth manifold $M$ in two equivalent ways.
\begin{definition}(cf. \cite{Gua})\label{gcs}
A \textit{generalized complex structure (GCS)} is determined by any of the following two equivalent sets of data:

\vspace{0.5em}
\begin{enumerate}
\setlength\itemsep{1em}
    \item  A bundle automorphism $\mathcal{J}_{M}$ of $TM\oplus T^{*}M$ which satisfies the following conditions:

    \vspace{0.5em}
    \begin{itemize}
 \setlength\itemsep{1em}
        \item[(a)] $\mathcal{J}_{M}^{2}=-1\,,$
        \item[(b)] $\mathcal{J}_{M}^{*}=-\mathcal{J}_{M}\,,$ that is, $\mathcal{J}_{M}$ is orthogonal with respect to the natural pairing in \eqref{bilinear}\,,
        \item[(c)] $\mathcal{J}_{M}$ has vanishing {\it Nijenhuis tensor}, that is, for all $C, D \in C^{\infty}(TM\oplus T^{*}M)$,
   $$ N(C, D) :=[\mathcal{J}_{M}C, \mathcal{J}_{M}D]-\mathcal{J}_{M} [\mathcal{J}_{M}C, D] - \mathcal{J}_{M} [C, \mathcal{J}_{M} D] 
    - [C, D]=0\,. $$ 
    \end{itemize}
     \item A subbundle, say $L_{M}$, of $(TM\oplus T^{*}M)\otimes\C$ which is maximal isotropic with respect to the natural bilinear form \eqref{bilinear}, involutive with respect to the Courant bracket \eqref{bracket}, and satisfies $L_{M}\cap\overline{L_{M}}=\{0\}$.
\end{enumerate}
\end{definition}
In Definition \ref{gcs}, the two equivalent conditions are related to each other by the fact that the subbundle $L_M$ may be obtained as the $+i$-eigenbundle of the automorphism $\mathcal{J}_M$.

\vspace{0.5em}
Given any GC manifold  $(M,\,\mathcal{J}_{M})$, we can deform $\mathcal{J}_{M}$ by a real closed $2$-form $B$ to get another GCS on $M$,  \begin{equation}\label{B transformation}
        (\mathcal{J}_{M})_{B}=e^{-B}\circ\mathcal{J}_{M}\circ e^{B}\quad\text{where}\quad e^{B}=\begin{pmatrix} 
	       1 & 0 \\
	       B & 1 \\
	    \end{pmatrix}\,.
    \end{equation}
This operation is called a \textit{$B$-field transformation} (or a \textit{$B$-transformation}). Then, the corresponding $+i$-eigenbundle of $(\mathcal{J}_{M})_{B}$ is 
\begin{equation}\label{L-B}
(L_{M})_{B}=\{X+\xi-B(X,\,\cdot)\,|\,X+\xi\in L_{M}\}\,.    
\end{equation}
Let us consider some simple examples of GCS.
\begin{example}\label{complx eg}
Let $(M,\, J_{M})$ is a complex manifold with a complex structure $J_{M}$. Then the natural GCS on $M$ is given by the bundle automorphism  
\[
\mathcal{J}_{M}:=
\begin{pmatrix}
 -J_{M}     &0 \\
    
    0        &J^{*}_{M}
\end{pmatrix}: TM\oplus T^{*}M\longrightarrow TM\oplus T^{*}M\,.
\] The corresponding $+i$-eigen bundle is
$$L_{M}=T^{0,1}M\oplus(T^{1,0}M)^{*}\,.$$
\end{example}
\begin{example}\label{symplectic eg}
Let $(M,\,\omega)$ be a symplectic manifold with a symplectic structure $\omega$. Then, the bundle automorphism   
\[
\mathcal{J}_{M}:=
\begin{pmatrix}
    0    &-\omega^{-1}\\
    \omega    &0
\end{pmatrix}: TM\oplus T^{*}M\longrightarrow TM\oplus T^{*}M\,,
\] gives a natural GCS on $M$. The $+i$-eigen bundle of this GCS is 
$$L_{M}=\{X-i\omega(X)\,|\,X\in TM\otimes\C\}\,.$$
\end{example}  

\subsection{Generalized holomorphic (GH) map}  We recall some basic facts about generalized holomorphic maps, an analogue of holomorphic maps in the case of GC manifolds. For most of the definitions, we shall rely upon \cite{Gua} and \cite{ornea2011}. 

\medskip
Let $(V,\mathcal{J}_{V})$ be a GC linear space with  $+i$-eigenspace $L_{V}$.  Given a subspace $E\leq V\otimes\C$ and an element $\sigma\in\wedge^2 E^{*}$, consider the subspace 
\begin{equation}\label{isotropic set}
    L(E,\sigma):=\{X+\xi\in E\oplus V^{*}\otimes\C\,| \,\xi|_{E}=\sigma(X)\}\,,
\end{equation}
of $(V \oplus V^{\ast}) \otimes \mathbb{C} $.
By \cite[Proposition 2.6]{Gua}, $L(E,\sigma)$ is a maximal isotropic subspace of $(V\oplus V^{*})\otimes\C$ with respect to the bilinear pairing \eqref{bilinear}, and any maximal isotropic subspace of $(V\oplus V^{*})\otimes\C$ is of this form. Consider the projection map 
$$\rho:(V\oplus V^{*})\otimes\C\longrightarrow V\otimes\C\,.$$  Let $\rho(L_{V})=E_{V}$ and let $E_{V}\cap\overline{E_{V}}=\Delta_{V}\otimes\C$ where $\Delta_{V} \le V$ is a real subspace. Then by \cite[Proposition 4.4]{Gua}, we have
\begin{enumerate}
\setlength\itemsep{1em}
    \item $L_{V}=L(E_{V},\sigma)$ for some $\sigma\in\wedge^2 E_{V}^{*}$ ;
    \item $E_{V}+\overline{E_{V}}=V\otimes\C$ with a non-degenerate real $2$-form $\Omega_{\Delta_{V}}:=\im(\sigma|_{\Delta_{V}\otimes\C})$ on $\Delta_{V}\otimes\C$.
\end{enumerate}
Following \cite[Section 3]{ornea2011}, $\widetilde{P}_{V}:=L(\Delta_{V}\otimes\C,\Omega_{\Delta_{V}})$ is called the associated { \it linear Poisson structure} of $\mathcal{J}_{V}$ on $V\otimes\C$.
\begin{definition}\label{GC map} (\cite{ornea2011})
Let $\psi:(V,\mathcal{J}_{V})\longrightarrow (V^{'},\mathcal{J}_{V^{'}})$ be a linear map between two GC linear spaces. Then $\psi$ is called a \textit{generalized complex (GC) map} if

\vspace{0.5em}
\begin{enumerate}
\setlength\itemsep{1em}
    \item $\psi(E_{V})\subseteq E_{V^{'}}$ ,
    \item $\psi_{\star}(\widetilde{P}_{V})=\widetilde{P}_{V^{'}}$ where $\psi_{\star}$ denotes the pushforward of a Dirac structure, as in \cite[Section 1]{ornea2011}, namely,
      $$\psi_{\star}(\widetilde{P}_{V})=\{\psi(Y)+\eta\in (V^{'}\oplus V^{'*})\otimes\C\,|\,Y+\psi^{*}(\eta)\in\widetilde{P}_{V}\}\,.$$
\end{enumerate} 
\end{definition}

\begin{definition}\label{GH map} (\cite{ornea2011})
 A smooth map $\psi:(M,\mathcal{J}_{M})\longrightarrow (M^{'},\mathcal{J}_{M^{'}})$ between two GC manifolds is called a \textit{generalized holomorphic (GH) map} if for each $x\in M$,
    $$(\psi_{*})_{x}:T_{x}M\longrightarrow T_{\psi(x)}M^{'}$$ is a GC map.

\medskip
Let $J_{\R^{2}}$ be the standard complex structure on $\mathbb{R}^2$ so that $(\R^2,J_{\R^{2}})$ is identified with $\C$.Consider $(M^{'},\mathcal{J}_{M^{'}})=(\R^{2},\mathcal{J}_{\R^{2}})$ where $\mathcal{J}_{\R^{2}}$ is as in Example \ref{complx eg}. In this case, a GH map
$\psi$ is called a \textit{GH function}.
\end{definition}

\begin{remark}
 Given a $B$-field transformation of $\mathcal{J}_{V}$, we see that $\rho((L_{V})_{B})=\rho(L_{V})$ where $(L_{V})_{B}$ is as in \eqref{L-B}. Since the imaginary part of $\sigma$ is also preserved, the associated linear Poisson structures are the same for both GCS. This shows that the notions of GC map and GH map 
 are insensitive to $B$-field transformations.   
\end{remark}

Let $(V,\mathcal{J}_{V})$ be a generalized complex (GC) linear space. Then $\mathcal{J}_{V}$ can be written as 
\begin{equation*}
    \mathcal{J}_{V}=
    \begin{pmatrix}
    -J_{V}     &\beta_{V} \\
    
    B_{V}         &J^{*}_{V}
    \end{pmatrix}
\end{equation*}
where $J_{V}\in\en(V)$, $B_{V}\in\Hom_{\R}(V,V^{*})$ and $\beta_{V}\in\Hom_{\R}(V^{*},V)\,.$ Using 
$\mathcal{J}^{*}_{V}=-\mathcal{J}_{V}$ (cf. Definition \ref{gcs}), we get $B_{V}\in\wedge^{2}V^{*}$ and $\beta_{V}\in\wedge^{2}V$.
\begin{lemma}\label{ev-ew lemma}
Let $\psi:V\longrightarrow W$ be a GC map between two GC linear spaces. Then $\psi(E_{V}\cap\overline{E_{V}})=E_{W}\cap\overline{E_{W}}$. 
\end{lemma}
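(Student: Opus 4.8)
Adopt the notation set up just before the lemma: write $E_V\cap\overline{E_V}=\Delta_V\otimes\C$ and $E_W\cap\overline{E_W}=\Delta_W\otimes\C$, and recall $\widetilde{P}_V=L(\Delta_V\otimes\C,\Omega_{\Delta_V})$, $\widetilde{P}_W=L(\Delta_W\otimes\C,\Omega_{\Delta_W})$. The plan is to establish the two inclusions separately, using condition (1) of Definition \ref{GC map} for one and condition (2) for the other.

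For the inclusion $\psi(\Delta_V\otimes\C)\subseteq\Delta_W\otimes\C$, I would use only $\psi(E_V)\subseteq E_W$. Since $\psi$ is the $\C$-linear extension of a real linear map, it commutes with complex conjugation, so $\psi(\overline{E_V})=\overline{\psi(E_V)}\subseteq\overline{E_W}$. Together with $\psi(E_V)\subseteq E_W$ and the elementary fact $\psi(A\cap B)\subseteq\psi(A)\cap\psi(B)$, this gives $\psi(E_V\cap\overline{E_V})\subseteq E_W\cap\overline{E_W}$.

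For the reverse inclusion, I would invoke $\psi_{\star}(\widetilde{P}_V)=\widetilde{P}_W$. The key preliminary observation is that $\rho(L(E,\sigma))=E$ for any maximal isotropic subspace of the form \eqref{isotropic set}, because the restriction map $V^{*}\otimes\C\to E^{*}$ is surjective; hence $\rho(\widetilde{P}_V)=\Delta_V\otimes\C$ and $\rho(\widetilde{P}_W)=\Delta_W\otimes\C$. Now unwind the definition of the pushforward: an arbitrary element of $\psi_{\star}(\widetilde{P}_V)$ has the form $\psi(Y)+\eta$ with $Y+\psi^{*}(\eta)\in\widetilde{P}_V$; applying $\rho$ gives $Y=\rho(Y+\psi^{*}(\eta))\in\rho(\widetilde{P}_V)=\Delta_V\otimes\C$, so $\psi(Y)\in\psi(\Delta_V\otimes\C)$. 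Thus $\rho(\psi_{\star}(\widetilde{P}_V))\subseteq\psi(\Delta_V\otimes\C)$. Since $\psi_{\star}(\widetilde{P}_V)=\widetilde{P}_W$, the left-hand side is $\rho(\widetilde{P}_W)=\Delta_W\otimes\C$, so $\Delta_W\otimes\C\subseteq\psi(\Delta_V\otimes\C)$. Combining the two inclusions yields $\psi(\Delta_V\otimes\C)=\Delta_W\otimes\C$, i.e. $\psi(E_V\cap\overline{E_V})=E_W\cap\overline{E_W}$.

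The only mildly delicate points I anticipate are: (a) recording that $\psi$ commutes with conjugation, which is immediate once one recalls $\psi$ is extended $\C$-linearly from a real map; and (b) the identity $\rho(L(E,\sigma))=E$, where surjectivity of the restriction $V^{*}\otimes\C\to E^{*}$ enters. It is worth emphasizing that condition (2) is genuinely needed for the reverse inclusion — condition (1) alone does not force $\psi$ to map $\Delta_V$ \emph{onto} $\Delta_W$ (the zero map already satisfies (1)) — so the argument must use the compatibility of the associated linear Poisson structures, not merely that of the $E$-distributions.
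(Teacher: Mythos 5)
Your proof is correct, but it distributes the work between the two hypotheses differently than the paper does. The paper treats the inclusion $E_{W}\cap\overline{E_{W}}\subseteq\psi(E_{V}\cap\overline{E_{V}})$ as the straightforward consequence of Definition \ref{GC map} and spends its effort on $\psi(E_{V}\cap\overline{E_{V}})\subseteq E_{W}\cap\overline{E_{W}}$, arguing pointwise: for $v\in E_{V}\cap\overline{E_{V}}$ with $w=\psi(v)$ it extends $\Omega_{\Delta_{W}}$ to a form $\widetilde{\Omega}_{\Delta_{W}}\in\wedge^{2}W^{*}\otimes\C$, feeds $\psi^{*}(\widetilde{\Omega}_{\Delta_{W}}(w))$ into $\widetilde{P}_{V}$, and uses the non-degeneracy of $\Omega_{\Delta_{V}}$ together with the pushforward condition to conclude $w\in E_{W}\cap\overline{E_{W}}$. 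You swap the roles: the inclusion $\psi(E_{V}\cap\overline{E_{V}})\subseteq E_{W}\cap\overline{E_{W}}$ comes for free from condition (1) plus the fact that the complexification of a real linear map commutes with conjugation, while the reverse inclusion is extracted from condition (2) alone by applying the projection $\rho$ to the identity $\psi_{\star}(\widetilde{P}_{V})=\widetilde{P}_{W}$, using the observation $\rho(L(E,\sigma))=E$ (surjectivity of the restriction $V^{*}\otimes\C\to E^{*}$ applied to \eqref{isotropic set}), which gives $\rho(\widetilde{P}_{V})=\Delta_{V}\otimes\C$, $\rho(\widetilde{P}_{W})=\Delta_{W}\otimes\C$, and hence $\Delta_{W}\otimes\C\subseteq\psi(\Delta_{V}\otimes\C)$. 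Both routes use the same two hypotheses, but yours is more economical: it never needs to extend $\Omega_{\Delta_{W}}$, choose preimages, or invoke non-degeneracy, and the single identity $\rho(L(E,\sigma))=E$ does the work of the paper's element-by-element computation. Your closing observation that condition (2) is genuinely needed for the surjectivity onto $E_{W}\cap\overline{E_{W}}$, while condition (1) alone cannot give it, is also correct and worth recording.
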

\begin{proof}
The first part, $E_{W}\cap\overline{E_{W}}\subseteq\psi(E_{V}\cap\overline{E_{V}})$ follows in a straightforward manner from Definition \ref{GC map}. For the converse part, let $v\in E_{V}\cap\overline{E_{V}}$ be a non-zero element. Let $\psi(v)=w\in W\otimes\C\,,$ and let $\widetilde{\Omega}_{\Delta_{W}}:W\otimes\C\longrightarrow\Delta^{*}_{W}\otimes\C$ be an extension of $\Omega_{\Delta_{W}}:\Delta_{W}\otimes\C\longrightarrow\Delta_{W}^{*}\otimes\C$ such that $\widetilde{\Omega}_{\Delta_{W}}\in\wedge^2 W^{*}\otimes\C\,.$ Then, we have $\psi^{*}(\widetilde{\Omega}_{\Delta_{W}}(w))\in V^{*}\otimes\C$ and
\begin{equation}\label{ev-eq}
 \Omega^{-1}_{\Delta_{V}}(\psi^{*}(\widetilde{\Omega}_{\Delta_{W}}(w))|_{\Delta_{V}\otimes\C})+\psi^{*}(\widetilde{\Omega}_{\Delta_{W}}(w))\in\widetilde{P}_{V}\,.   
\end{equation}

Denote $  \Omega^{-1}_{\Delta_{V}}(\psi^{*}(\widetilde{\Omega}_{\Delta_{W}}(w))|_{\Delta_{V}\otimes\C}) \in {\Delta_{V}\otimes\C} $ by $v'$. Then,
\begin{align*}
 &v'=\Omega^{-1}_{\Delta_{V}}(\psi^{*}(\widetilde{\Omega}_{\Delta_{W}}(w))|_{\Delta_{V}\otimes\C})\,\\
 \implies
 &\Omega_{\Delta_{V}}(v')(v)=\widetilde{\Omega}_{\Delta_{W}}(w)(\psi(v))\,\\
 \implies
 &\Omega_{\Delta_{V}}(v',v)=0\,\,\,\,\,(\text{as}\,\,\,\psi(v)=w)\,\\
 \implies &v=kv'\,\,\,\,\,(\text{as}\,\,\,\Omega_{\Delta_{V}}\,\,\,\text{is non-degenerate on $E_{V}\cap\overline{E_{V}}$ and}\,\,\,k\in\C\backslash\{0\})\,\\
 \implies &v=k\,\Omega^{-1}_{\Delta_{V}}(\psi^{*}(\widetilde{\Omega}_{\Delta_{W}}(w))|_{\Delta_{V}\otimes\C})\,.
\end{align*}

Note that $\psi(\Omega^{-1}_{\Delta_{V}}(\psi^{*}(\widetilde{\Omega}_{\Delta_{W}}(w))|_{\Delta_{V}\otimes\C})+(\widetilde{\Omega}_{\Delta_{W}}(w))\in\widetilde{P}_{W}$ by \eqref{ev-eq}, and also $$\psi(\Omega^{-1}_{\Delta_{V}}(\psi^{*}(\widetilde{\Omega}_{\Delta_{W}}(w))|_{\Delta_{V}\otimes\C})=\frac{1}{k}\psi(v)=\frac{1}{k}w\,.$$ Thus $w\in E_{W}\cap\overline{E_{W}}$ and $\psi(E_{V}\cap\overline{E_{V}})\subseteq E_{W}\cap\overline{E_{W}}$. This proves the lemma. 
\end{proof}
\begin{remark}
    The assertion made in the statement of Lemma \ref{ev-ew lemma} is claimed in the proof of \cite[Proposition 3.2]{ornea2011}. However, the argument given there is not very explicit.
\end{remark}

The proof of the next lemma follows similar arguments in \cite[Lemma 2.12]{lang2023} and provides a generalization to the case $W=\C$ established there.

\begin{lemma}\label{gh map equi}
Let $(V,\mathcal{J}_{V})$ and $(W,\mathcal{J}_{W})$ are  GC linear spaces with
\[\mathcal{J}_{V}=
\begin{pmatrix}
  -J_{V}     &\beta_{V} \\
    
    B_{V}         &J^{*}_{V}  
\end{pmatrix}\,\,\,\,\,
\text{and}\,\,\,\,\,
\mathcal{J}_{W}=
\begin{pmatrix}
  -J_{W}     &0 \\
    
    0        &J^{*}_{W}  
\end{pmatrix}
\]
where $J_{W}$ is a complex structure on $W$.  Then $\psi:V\longrightarrow W$ is a GC map if and only if $$\psi\circ J_{V}=J_{W}\circ \psi\,\,\,\,\,,\,\,\,\,\,\psi\circ \beta_{V}=0\,.$$
\end{lemma}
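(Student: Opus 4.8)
The plan is to compute directly with the definition of a GC map (Definition \ref{GC map}) and to compare the two data $E_\bullet$ and $\widetilde{P}_\bullet$ on each side, exploiting the very special form of $\mathcal{J}_W$. First I would unravel what $E_W$, $\Delta_W$, and $\widetilde{P}_W$ are in the complex case: since $\mathcal{J}_W$ is the GCS of Example \ref{complx eg}, its $+i$-eigenbundle is $L_W = T^{0,1}W \oplus (T^{1,0}W)^*$, so $\rho(L_W) = E_W = W^{0,1}$ (the $-i$-eigenspace of $J_W$), which satisfies $E_W \cap \overline{E_W} = 0$, hence $\Delta_W = 0$, $\Omega_{\Delta_W} = 0$, and $\widetilde{P}_W = L(0,0) = 0 \oplus W^* \otimes \C = W^*\otimes\C$. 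On the $V$-side, I would record from the discussion preceding the lemma that $E_V = \rho(L_V)$, that $L_V = L(E_V,\sigma)$, and that $\mathcal{J}_V = \begin{pmatrix} -J_V & \beta_V \\ B_V & J_V^* \end{pmatrix}$ with $\beta_V \in \wedge^2 V$; I also need the standard fact that $E_V = \{ Jv + i v' : \ldots \}$-type description, or more usefully that the projection of $L_V$ to $V\otimes\C$ together with $\beta_V = \rho \circ \mathcal{J}_V|_{V^*}$ controls things — in particular $\im\beta_V \subseteq \Delta_V$ (this is part of \cite[Proposition 4.4]{Gua}-type structure theory and is already used implicitly in the excerpt).

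\textbf{Main steps.} With these identifications, condition (1) of Definition \ref{GC map}, namely $\psi(E_V) \subseteq E_W = W^{0,1}$, should be shown equivalent to $\psi \circ J_V = J_W \circ \psi$ on the relevant subspace, and then promoted to all of $V$ using that $E_V + \overline{E_V} = V\otimes\C$ and that $\psi$ is real. Concretely: $\psi(E_V)\subseteq W^{0,1}$ means $\psi$ intertwines the $J_V$-structure on $E_V \oplus \overline{E_V}$ with $J_W$; since $E_V$ is the $+i$-eigenspace data and $E_V\cap\overline{E_V}=\Delta_V\otimes\C$ carries possibly a non-split $J_V$-action, I must be a little careful, but the key point is that $J_W$ acts as $-i$ on $E_W$, so $\psi(E_V)\subseteq E_W$ forces $\psi$ to send the "$+i$-part" to the "$+i$-part", i.e. $J_W\psi = \psi J_V$ after extending by conjugation — here one genuinely uses $E_V + \overline{E_V} = V\otimes\C$. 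Conversely $\psi J_V = J_W\psi$ immediately gives $\psi(E_V)\subseteq E_W$ by eigenspace matching (noting $E_V \subseteq V^{0,1}$-analogue). For condition (2), $\psi_\star(\widetilde{P}_V) = \widetilde{P}_W = W^*\otimes\C$: using the definition $\psi_\star(\widetilde{P}_V) = \{\psi(Y) + \eta : Y + \psi^*\eta \in \widetilde{P}_V\}$ and $\widetilde{P}_V = L(\Delta_V\otimes\C, \Omega_{\Delta_V})$, I would show this equality is equivalent to the statement that $\psi$ kills $\Delta_V$ after applying $\Omega_{\Delta_V}^{-1}$ appropriately, which translates into $\psi\circ\beta_V = 0$ once one recalls that $\beta_V$ factors through $\Omega_{\Delta_V}^{-1}$ on $\Delta_V$ (this is the content relating the Poisson bivector $\beta_V$ to the leafwise symplectic form $\Omega_{\Delta_V}$). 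Then combine: $\psi$ is a GC map iff both hold iff $\psi J_V = J_W\psi$ and $\psi\beta_V = 0$.

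\textbf{Anticipated obstacle.} The delicate part is the bookkeeping around $\Delta_V$ and the precise dictionary between $\beta_V$, $\sigma$, and $\Omega_{\Delta_V}$: $\beta_V$ is a bivector on $V$ whose image lies in $\Delta_V$ and which, restricted there, is the inverse of $\Omega_{\Delta_V} = \im(\sigma|_{\Delta_V\otimes\C})$, so the condition $\psi_\star\widetilde{P}_V = \widetilde{P}_W$ needs to be matched carefully against $\psi\circ\beta_V = 0$ rather than, say, $\psi|_{\Delta_V} = 0$ (though these turn out equivalent because $\beta_V$ is an isomorphism onto $\Delta_V$). I expect to lean on the computation already carried out in Lemma \ref{ev-ew lemma} — which gives $\psi(\Delta_V\otimes\C) = \psi(E_V\cap\overline{E_V}) = E_W\cap\overline{E_W} = 0$ — to get the "only if" direction of the $\beta_V$ condition almost for free, and then the real work is the "if" direction: showing that $\psi J_V = J_W\psi$ together with $\psi\beta_V = 0$ forces the pushforward of the Dirac structure to be exactly $W^*\otimes\C$, which is a direct but slightly fiddly check using the explicit description of $\psi_\star$. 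I would also remark, as the excerpt does, that this generalizes \cite[Lemma 2.12]{lang2023} where $W = \C$, and that the argument specializes to theirs verbatim when $\dim_\C W = 1$.
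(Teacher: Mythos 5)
Your overall target is right and some ingredients are sound (the identifications $E_W=W^{0,1}$, $\widetilde{P}_W=W^{*}\otimes\C$, and using Lemma \ref{ev-ew lemma} to kill $\Delta_V$ in the ``only if'' direction), but the proposed factorization of the equivalence into two \emph{separate} equivalences --- condition (1) of Definition \ref{GC map} $\Longleftrightarrow$ $\psi\circ J_V=J_W\circ\psi$, and condition (2) $\Longleftrightarrow$ $\psi\circ\beta_V=0$ --- is false in exactly the directions you need, so the proof as structured has a genuine gap. First, when $\beta_V\neq 0$ the subspace $E_V$ is \emph{not} an eigenspace of the block $J_V$ (it contains the conjugation-stable subspace $\Delta_V\otimes\C$, and $J_V$ need not even square to $-1$), so ``eigenspace matching'' in your converse step fails. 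Concretely, take $V=\R^4$ with the standard $\omega_0$ and $B$-transform the symplectic GCS by $B=\omega_0 J$, where $J$ is a complex structure with $\omega_0(Jx,Jy)=-\omega_0(x,y)$ (such $J$ exist on $\R^4$, and then $B$ is a $2$-form); the resulting $\mathcal{J}_V$ has $(1,1)$-block $-J$ and $\beta_V=-\omega_0^{-1}$. Any nonzero $J$-complex-linear map $\psi\colon V\to\C$ satisfies $\psi\circ J_V=J_W\circ\psi$, yet it is not a GC map: the structure has type $0$, so $E_V=V\otimes\C$ and condition (1) forces $\psi=0$ (and indeed $\psi\circ\beta_V\neq 0$). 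Second, condition (2) alone does \emph{not} imply $\psi\circ\beta_V=0$: for $V=\R^2$ symplectic and $\psi$ of rank one (say $\psi(e_1)=f_1$, $\psi(e_2)=0$), the pushforward of $\widetilde{P}_V$, the graph of $\omega_0$, is already all of $W^{*}\otimes\C=\widetilde{P}_W$, although $\psi\circ\omega_0^{-1}\neq 0$. In fact the correct bookkeeping is entangled: condition (1) by itself is equivalent to the \emph{conjunction} of the two displayed equations (and then condition (2) is automatic when the target is complex), so no one-to-one matching of conditions to equations can work.

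The paper sidesteps all of this by first using $B$-field invariance of the notion of GC map to reduce to the split model $\C^{k}\oplus(\R^{2n-2k},\omega_0)$, where $J_V$ and $\beta_V$ live on complementary factors and your eigenspace and pushforward computations become literally true. If you want to keep your direct, non-reduced route, you must prove the two equations jointly rather than condition by condition: in the forward direction first obtain $\psi(\Delta_V\otimes\C)=0$ --- either from Lemma \ref{ev-ew lemma}, or directly from condition (1), since $\Delta_V\otimes\C\subseteq E_V$ is conjugation-stable, $W^{0,1}\cap\overline{W^{0,1}}=0$, and $\psi$ is real --- which gives $\psi\circ\beta_V=0$ because the image of $\beta_V$ is $\Delta_V$; only then does applying $\psi$ to the vector part of the eigenvalue equation, $-J_VX+\beta_V\xi=iX$ for $X+\xi\in L_V$, convert condition (1) into $\psi\circ J_V=J_W\circ\psi$ on $E_V$, hence on $V\otimes\C=E_V+\overline{E_V}$ by conjugation. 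Conversely, the two equations \emph{together} give $\psi(X)\in W^{0,1}$ for $X\in E_V$ from the same equation (condition (1)), and $\psi|_{\Delta_V}=0$ makes the pushforward of $\widetilde{P}_V=L(\Delta_V\otimes\C,\Omega_{\Delta_V})$ equal to $W^{*}\otimes\C$ (condition (2)).
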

\begin{proof}
 Let $\dim V=2n$ and let the type of $\mathcal{J}_{V}$ be $k\in\N\cup\{0\}$. Since the definition of GC map is invariant under a $B$-transformation,  we can assume without loss of generality that
$$(V,\mathcal{J}_{V})=(V_{1},J_{1})\oplus (V_{2},J_{2})\,,$$
  where $(V_{1},J_{1})=(\R^{2k},J_{\R^{2k}})=\C^{k}$  and $(V_{2},J_{2})=(\R^{2n-2k},\omega_0)$. Here, 
  $J_{\R^{2k}}$ and $\omega_0$ denote the standard complex and symplectic structures on the corresponding spaces. It follows that $E_{V}= V_1^{0,1}\oplus (V_2\otimes\C)$. Since $E_{V}\cap\overline{E_{V}}=V_2\otimes\C$, the Poisson bivector on $V$ is 
$$\widetilde{\beta}_{V}=\begin{cases}
     0, & \text{on } V^{*}_1\otimes\C\,,\\
    \omega_{0}^{-1}, & \text{on } V^{*}_2\otimes\C\,.
  \end{cases}$$
 Hence,
$$\widetilde{P}_{V}=L(V_2\otimes\C,\omega_{0})=L(V^{*}\otimes\C,\widetilde{\beta}_{V})\,.$$
  Similarly, as $W$ is a complex vector space, we have $E_{W}=W^{0,1}$ and so, $E_{W}\cap\overline{E_{W}}=\{0\}$. Thus, $\beta_{W}$, the Poisson bivector on $W$ is $0$ and we get $$\widetilde{P}_{W}=W^{*}\otimes\C=L(W^{*}\otimes\C,0)\,.$$ Then, by Lemma \ref{ev-ew lemma},
  $\psi_{\star}(\widetilde{P}_{V})=\widetilde{P}_{W}$ if and only if $\psi\circ\omega_{0}^{-1}=0$. Thus, $\psi$ is a GC map if and only if 
  $$\psi(V_1^{0,1}\oplus (V_2\otimes\C))\subset W^{0,1}\,\,\,{\rm and}\,\,\,\psi\circ\omega_{0}^{-1}=0\,.$$  Hence, for any $v_1\in V_1$ and $v_2\in V_2$,  we have 
  $$\psi((-J_1(v_1)))=(-J_{W})(\psi(v_1))\,\,\,\, {\rm and}\,\,\,\psi(v_2)=0\,.$$
  This implies $$\psi\circ J_{V}=J_{W}\circ \psi\,\,\,\,\,\text{and}\,\,\,\,\,\psi\circ\beta_{V}=0$$ where 
  \[J_{V}=
  \begin{pmatrix}
       J_1    &0\\
        0     &0
\end{pmatrix}\,\,\,\,\,\text{and}\,\,\,\,\,\beta_{V}=-\widetilde{\beta}_{V}=
  \begin{pmatrix}
      0     &0\\
      0     &-\omega_{0}^{-1} 
  \end{pmatrix}\,.
  \]
\end{proof}
Let $\psi$ be any complex valued linear function on $V$. Considered as an element of $V^{\ast}
\otimes \mathbb{C}$, $\psi$ has two components corresponding to the decomposition $(V\oplus V^{*})\otimes\C=L_{V}\oplus\overline{L_{V}}$, namely $\psi_{L_{V}}$ and $\psi_{\overline{L_{V}}}$. The following lemma is obtained by straightforward modification of \cite[Proposition 2.13]{lang2023} together with Lemma \ref{gh map equi}.

\begin{lemma}\label{imp lemma}
 A linear map $\psi:(V,\mathcal{J}_{V})\longrightarrow\C=(\R^2, J_{\R^2})$ between two GC linear spaces, is a GC map if and only if $\psi\in (L_{V}\cap (V^{*}\otimes\C))$ that is, $\psi_{\overline{L_{V}}}=0$.   
\end{lemma}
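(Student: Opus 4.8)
The plan is to leverage Lemma~\ref{gh map equi} with $W = \C = (\R^2, J_{\R^2})$, so that a linear map $\psi : V \to \C$ is a GC map if and only if $\psi \circ J_V = J_{\R^2}\circ\psi$ and $\psi\circ\beta_V = 0$. The content of the present lemma is to recognize that these two conditions together are equivalent to the single vanishing condition $\psi_{\overline{L_V}} = 0$, where $\psi_{\overline{L_V}}$ is the component of $\psi \in V^*\otimes\C$ in $\overline{L_V}$ under the splitting $(V\oplus V^*)\otimes\C = L_V \oplus \overline{L_V}$. So the strategy is to unwind what membership $\psi \in L_V \cap (V^*\otimes\C)$ means in terms of the block form of $\mathcal{J}_V$.

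First I would recall that $L_V$ is the $+i$-eigenspace of $\mathcal{J}_V$, and that for a pure covector $\xi \in V^*\otimes\C$ (viewed as $0 + \xi \in (V\oplus V^*)\otimes\C$) one has
\begin{equation*}
\mathcal{J}_V(\xi) = \beta_V(\xi) + J_V^*(\xi)\,,
\end{equation*}
reading off the columns of the block matrix $\begin{pmatrix} -J_V & \beta_V \\ B_V & J_V^* \end{pmatrix}$. Thus $\xi \in L_V$ as a covector forces $\mathcal{J}_V(\xi) = i\xi$, whose $V$-component gives $\beta_V(\xi) = 0$ and whose $V^*$-component gives $J_V^*(\xi) = i\xi$. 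Dually, $J_V^*\xi = i\xi$ is the statement that $\xi$ annihilates the $-i$-eigenspace of $J_V$ and is $\C$-linear (of type $(1,0)$) with respect to $J_V$ on the complex directions; equivalently $\xi \circ J_V = i\xi = J_{\R^2}\circ\xi$ (identifying $\C$ with $(\R^2,J_{\R^2})$, multiplication by $i$ is exactly applying $J_{\R^2}$). So $\psi \in L_V \cap (V^*\otimes\C)$ is equivalent to the pair $\psi\circ\beta_V = 0$ and $\psi\circ J_V = J_{\R^2}\circ\psi$, which by Lemma~\ref{gh map equi} is exactly the condition that $\psi$ is a GC map. Finally, writing $\psi = \psi_{L_V} + \psi_{\overline{L_V}}$ under $(V\oplus V^*)\otimes\C = L_V\oplus\overline{L_V}$ and noting $\psi \in V^*\otimes\C \subset (V\oplus V^*)\otimes\C$, the condition $\psi \in L_V$ is literally $\psi_{\overline{L_V}} = 0$, giving the stated reformulation.

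The step I expect to require the most care is the passage between the intrinsic eigenbundle description and the block-matrix computation: one must be careful that the decomposition $\psi = \psi_{L_V} + \psi_{\overline{L_V}}$ is taken inside $(V\oplus V^*)\otimes\C$ (not inside $V^*\otimes\C$ alone, where there is no canonical such splitting), and that "$\psi$ is a GC map" genuinely only constrains $\psi$ via $(\psi_*)$ on the $V$-part, so that Lemma~\ref{gh map equi} applies verbatim. One should also double-check the sign conventions: in Lemma~\ref{gh map equi} the relevant bivector is $\beta_V = -\widetilde\beta_V$, so the condition $\psi\circ\beta_V = 0$ and $\psi\circ\widetilde\beta_V = 0$ are the same, and this matches $\beta_V(\psi) = 0$ read from the first column of $\mathcal{J}_V$ acting on covectors. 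Modulo these bookkeeping points, the proof is a direct translation, and indeed the excerpt already signals this by saying it follows from "straightforward modification of \cite[Proposition 2.13]{lang2023} together with Lemma~\ref{gh map equi}."
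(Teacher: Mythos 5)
Your proposal is correct and follows essentially the paper's route: the key input is Lemma \ref{gh map equi}, and the two conditions $\psi\circ J_{V}=J_{\R^2}\circ\psi$, $\psi\circ\beta_{V}=0$ are identified with the statement that $\psi$, viewed as a pure covector in $(V\oplus V^{*})\otimes\C$, satisfies $\mathcal{J}_{V}\psi=i\psi$, i.e.\ $\psi\in L_{V}\cap(V^{*}\otimes\C)$, which is exactly $\psi_{\overline{L_{V}}}=0$. Your iff-chain via the block action of $\mathcal{J}_{V}$ on covectors is in fact a slightly more streamlined version of the componentwise computation the paper alludes to (modifying \cite[Proposition 2.13]{lang2023}), and the bookkeeping points you flag (skew-symmetry of $\beta_{V}$ and taking the splitting inside $(V\oplus V^{*})\otimes\C$) are precisely the right ones.
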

Let $(M,\mathcal{J}_{M})$ be a GC manifold with $+i$-eigenbundle $L_{M}$ so that we have,
$$(TM\oplus T^{*}M)\otimes\C=L_{M}\oplus\overline{L_{M}}\,.$$ Let $d$ be the exterior derivative on $M$. 

\begin{lemma}\label{imp lemma2}
Given an open set $U\subseteq M$, a smooth map $\psi:(U,\mathcal{J}_{U})\longrightarrow\C=(\R^2,J_{\R^{2}})$ is a GH function if and only if for each $x\in U$, $d\psi_{x}\in (L_{M}\cap (T^{*}M\otimes\C))_{x}$\,.
\end{lemma}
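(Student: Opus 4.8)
The plan is to reduce the statement to its pointwise version, Lemma \ref{imp lemma}, using the definition of a GH function as a map whose differential at each point is a GC map (Definition \ref{GH map}). First I would fix $x\in U$ and identify the differential $(\psi_*)_x : T_xM \longrightarrow T_{\psi(x)}\C \cong \R^2$ with the linear functional $d\psi_x \in T_x^*M \otimes \C$, after identifying $T_{\psi(x)}\R^2$ with $\R^2$ equipped with the standard complex structure $J_{\R^2}$ (so that $(\R^2, J_{\R^2})$ carries the GCS of Example \ref{complx eg}). Under this identification, $(\psi_*)_x$ being a GC map from the GC linear space $(T_xM, (\mathcal{J}_M)_x)$ to $\C$ is precisely the hypothesis of Lemma \ref{imp lemma}, whose conclusion is that $(\psi_*)_x$, viewed as an element of $T_x^*M\otimes\C$, lies in $(L_M \cap (T^*M\otimes\C))_x$, i.e. its $\overline{L_M}$-component vanishes.

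The one genuinely new ingredient beyond Lemma \ref{imp lemma} is the compatibility between the complex-analytic structure on the target $\C$ and the GCS of Example \ref{complx eg}: I must check that the notion of $(\psi_*)_x$ being a GC map into $(\R^2, \mathcal{J}_{\R^2})$ coincides with the condition used in Lemma \ref{imp lemma}. This is immediate from how the lemmas are set up — Lemma \ref{imp lemma} is stated exactly for maps into $\C = (\R^2, J_{\R^2})$ — so the main step is just carefully unwinding the identification $d\psi_x \leftrightarrow (\psi_*)_x$ and noting that the linear functional $d\psi_x$ is the same object appearing in Definition \ref{GC map} applied to $(\psi_*)_x$.

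Putting these together: $\psi$ is a GH function $\iff$ $(\psi_*)_x$ is a GC map for every $x\in U$ (Definition \ref{GH map}) $\iff$ $d\psi_x \in (L_M \cap (T^*M\otimes\C))_x$ for every $x\in U$ (Lemma \ref{imp lemma}, applied fiberwise at each $x$, with $V = T_xM$). Equivalently, writing $d\psi_x = (d\psi_x)_{L_M} + (d\psi_x)_{\overline{L_M}}$ in the decomposition $(T_xM\oplus T_x^*M)\otimes\C = (L_M)_x \oplus (\overline{L_M})_x$, the condition is $(d\psi_x)_{\overline{L_M}} = 0$.

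I expect no serious obstacle here; the only point requiring a little care is bookkeeping — making sure the isotropic-subspace/Poisson-structure formalism of Definition \ref{GC map} applied to the constant-rank datum $(\psi_*)_x$ matches the hypothesis of Lemma \ref{imp lemma} verbatim, and that the pointwise conclusions assemble into the stated bundle-level statement without any smoothness subtlety (which they do, since the condition is imposed separately at each $x$). One could add a sentence recalling that $L_M \cap (T^*M\otimes\C)$ is exactly the subbundle whose fiber at $x$ is $L_{T_xM}\cap(T_x^*M\otimes\C)$, so that the fiberwise statement of Lemma \ref{imp lemma} is literally the fiberwise reading of the asserted condition.
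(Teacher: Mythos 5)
Your proposal is correct and follows exactly the paper's route: the paper's proof is the one-line reduction "Follows from Lemma \ref{imp lemma} and Definition \ref{GH map}," i.e. apply Lemma \ref{imp lemma} fiberwise to $(\psi_*)_x$ identified with $d\psi_x$. Your write-up just makes the same bookkeeping explicit.
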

\begin{proof}
Follows from Lemma \ref{imp lemma} and Definition \ref{GH map}.     
\end{proof}

\begin{definition}\label{def:O_M}
    Let $\mathcal{O}_{M}$ denote the sheaf of  $\C$-valued GH functions on $M$.
\end{definition} 

By Lemma \ref{imp lemma2}, $\mathcal{O}_{M}$ is a subsheaf of the sheaf of smooth  $\C$-valued functions on $M$. To begin with, we consider some simple examples of $\mathcal{O}_{M}$. 
\begin{enumerate}
\setlength\itemsep{1em}
    \item Let $(M,J_{M})$ be a complex manifold with $J_{M}$ as its complex structure. Consider the natural GCS induced by $J_{M}$, as given in Example \ref{complx eg}. By Lemma \ref{imp lemma2}, we can see that, given any GH map $\psi$, $d\psi\in\Omega^{1,0}(M)$, that is, $\psi$ is a holomorphic function. So $\mathcal{O}_{M}$ is the sheaf of holomorphic functions on $M$.
\item Let $(M,\omega)$ be a symplectic manifold with a symplectic structure $\omega$. Consider the induced GCS, as given in Example \ref{symplectic eg}, with the $+i$-eigenbundle $L_{M}$. Note that, $L_{M}$ is naturally identified with $TM\otimes\C$. So, $d\psi=0$ for any GH map $\psi$,  which implies $\psi$ is locally constant. Hence $\mathcal{O}_{M}$ is a sheaf of locally constant functions. 
\end{enumerate}

\begin{definition}(cf. \cite{lang2023})
A diffeomorphism $\phi:(M,\mathcal{J}_{M})\longrightarrow (N,\mathcal{J}_{N})$ between two GC manifolds is called a \textit{generalized holomorphic (GH) homeomorphism} if
\begin{equation}\label{gh homeo}
    \begin{pmatrix}
     \phi_{*}    &0 \\
     0           &(\phi^{-1})^{*} 
    \end{pmatrix}
    \circ \mathcal{J}_{M} = \mathcal{J}_{N}\circ 
    \begin{pmatrix}
     \phi_{*}    &0 \\
     0           &(\phi^{-1})^{*} 
    \end{pmatrix}\,.
\end{equation}
When $N=M$, $\phi$ is called GH automorphism.
\end{definition}
\begin{remark}
    Note that a GH homeomorphism $\phi$ and its inverse $\phi^{-1}$ are both GH maps. This can be observed as follows. Let $L_{M},L_{N}$ denote the $+i$-eigen bundles of $\mathcal{J}_{M},\mathcal{J}_{N}$, respectively. For every point $p$ in $M$, consider the subset of $(T_{\phi(p)}N\oplus T_{\phi(p)}^{*}N)\otimes\C$
    $$\phi_{\star}(L_{M}\mid_p):=\{\phi_{*}(X)+\eta\,|\,X+\phi^{*}\eta\in L_{M} \mid_p \}\,.$$ Then, for any $X\in T_pM\otimes\C$ and $\eta\in T_{\phi(p)}^{*}N\otimes\C$
    $$J_{N}(\phi_{*}(X)+\eta)=\mathcal{J}_{N}\left( 
    \begin{pmatrix}
     \phi_{*}    &0 \\
     0           &(\phi^{-1})^{*} 
    \end{pmatrix}(X+\phi^{*}\eta)\right)\,.$$  By \eqref{gh homeo}, we get $Y+\xi\in\phi_{\star}(L_{M})$ if and only $Y+\xi\in L_{N}$, that is $\phi_{\star}(L_{M})=L_{N}\,,$ and using \cite[Corollary 3.3]{ornea2011}, we conclude that both $\phi$ and $\phi^{-1}$ are GH maps.

    \vspace{0.2em}
    But the converse is not true always. A GH map which is a diffeomorphism may not always be a GH homeomorphism. The reason is that a GH map is defined up to a $B$-transformation whereas a GH homeomorphism between two GC manifolds shows that their GC structures are the same.
\end{remark}
\begin{definition}\label{def:type} Let $\rho: (TM \oplus T^{\ast}M) \otimes \C \longrightarrow TM \otimes \C $ denote the natural projection. We denote the image of 
$L_M$ under $\rho$ by $E_M$. Let $\Delta_M \otimes \C := E_M \bigcap \overline{E_M}$. 

\vspace{0.2em}
    For each $x\in M$, type of $\mathcal{J}_{M}$ at $x$ is defined as 
    $$\type(x):=\codim_{\C}((E_{M})_{x})=\frac{1}{2}\codim_{\R}((\Delta_{M})_{x})\,.$$ $x$ is called a regular point of $M$ if $\type(x)$ is constant in a neighborhood of $x$ and $M$ is called a regular GC manifold if each point of $M$ is a regular point.
\end{definition}
 We have the generalized Darboux theorem around any regular point.
\begin{theorem}(\cite[Theorem 4.3]{Gua2})\label{darbu thm}
For a regular point $x\in (M,\mc{J}_{M})$ of $\type(x)=k$, there exists an open neighborhood $U_{x}\subset M$ of $x$ such that, after a $B$-transformation, $U_{x}$ is GH homeomorphic to $U_1\times U_2$, where $U_1\subset(\R^{2n-2k},\omega_0), U_2\subset\C^{k}$ are  open subsets with $\omega_0$ being the standard symplectic structure.
\end{theorem}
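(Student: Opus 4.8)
The strategy is to reproduce Gualtieri's argument \cite{Gua2}: describe $\mc{J}_M$ near $x$ by its Dirac/pure-spinor data, extract the symplectic foliation, straighten the transverse complex structure, and then normalize the leafwise symplectic form together with a residual $B$-field. Fix the regular point $x$ with $\type(x)=k$ and work on a small neighborhood. By \cite[Proposition 4.4]{Gua} applied fibrewise, near $x$ we may write $L_M=L(E_M,\sigma)$ for a smooth section $\sigma\in\wedge^2 E_M^*$, with $E_M+\overline{E_M}=TM\otimes\C$ and $\Delta_M\otimes\C=E_M\cap\overline{E_M}$ of constant real rank $2n-2k$, and with $\omega:=\im(\sigma|_{\Delta_M\otimes\C})$ nondegenerate on $\Delta_M$. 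Since $L_M$ is Courant-involutive and the anchor $\rho$ intertwines the Courant bracket with the Lie bracket on its image, $E_M$ is an involutive complex distribution; in particular $\Delta_M=\overline{\Delta_M}$ is an involutive real distribution of constant rank, and by the Frobenius theorem it integrates to a regular foliation $\mathscr{F}$ of dimension $2n-2k$ on a neighborhood $U_x$ of $x$.

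First I would straighten the transverse complex structure. From $E_M+\overline{E_M}=TM\otimes\C$ and $E_M\cap\overline{E_M}=\Delta_M\otimes\C$ the normal bundle $TM/\Delta_M$ acquires a complex structure with $(0,1)$-part $E_M/(\Delta_M\otimes\C)$, and involutivity of $E_M$ together with $\Delta_M\otimes\C\subset E_M$ makes this a holonomy-invariant transverse complex structure with vanishing transverse Nijenhuis tensor. Applying the transverse Newlander--Nirenberg theorem, I obtain coordinates $(y_1,\dots,y_{2n-2k},z_1,\dots,z_k)$ centered at $x$ in which the leaves of $\mathscr{F}$ are the slices $\{z=\mathrm{const}\}$ and $E_M$ is spanned by $\partial_{y_1},\dots,\partial_{y_{2n-2k}},\partial_{\bar z_1},\dots,\partial_{\bar z_k}$. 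Equivalently, the canonical line bundle of $\mc{J}_M$ is generated near $x$ by a pure spinor of the form $\varphi=e^{B+i\omega}\,dz_1\wedge\cdots\wedge dz_k$ for real $2$-forms $B,\omega$, with $\omega$ as above and the $dz_j$ closed.

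Next I would normalize $\omega$ and then remove $B$. The integrability condition $d\varphi=(X+\xi)\cdot\varphi$ for the pure spinor forces, in particular, $\omega$ to restrict to a closed, hence symplectic, form on every leaf of $\mathscr{F}$. Shrinking $U_x$ so that it retracts leafwise onto the central leaf, the leafwise de Rham class of $\omega$ is locally constant in $z$; hence, by Darboux's theorem on the central leaf followed by a foliated Moser argument (solving the leafwise homotopy equation with smooth dependence on $z$), there is a leaf-preserving diffeomorphism fixing the $z$-coordinates that carries $\omega$ to $\omega_0=\sum_i dy_{2i-1}\wedge dy_{2i}$ on each leaf, turning $\varphi$ into $e^{B'+i\omega_0}\,dz_1\wedge\cdots\wedge dz_k$. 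Finally, re-imposing $d\varphi=(X+\xi)\cdot\varphi$ with $\omega_0$ and the $dz_j$ closed constrains $dB'$ so that, after splitting $B'$ into its leafwise, mixed and transverse components relative to $(dy,dz)$, one sees that $B'$ differs from a closed real $2$-form by a form in the ideal generated by $dz_1,\dots,dz_k,d\bar z_1,\dots,d\bar z_k$ that annihilates $e^{i\omega_0}\,dz_1\wedge\cdots\wedge dz_k$; subtracting the closed part via the $B$-transformation \eqref{B transformation} leaves exactly the pure spinor $e^{i\omega_0}\,dz_1\wedge\cdots\wedge dz_k$ of the product GCS on $(\R^{2n-2k},\omega_0)\times\C^k$. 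Composing the diffeomorphism with this $B$-transform and checking \eqref{gh homeo} gives the desired GH homeomorphism of the $B$-transformed $U_x$ with $U_1\times U_2$.

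I expect the main obstacle to be the last two steps taken together: running the foliated Moser argument so that it produces a genuine diffeomorphism of a full neighborhood (uniformly in the transverse parameter, not merely leaf by leaf), and then verifying that the leftover $B$-field is cohomologically trivial modulo the annihilator ideal. This cohomological vanishing is precisely where constancy of $\type$ — i.e. the regularity of $x$ — is used in an essential way; without it the foliation $\mathscr{F}$ would fail to have constant dimension and the entire normal-form analysis breaks down.
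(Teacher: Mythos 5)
The paper itself offers no proof of this statement -- it is quoted directly from Gualtieri \cite[Theorem 4.3]{Gua2} -- so the only meaningful comparison is with Gualtieri's argument, and your outline does follow the same architecture: describe $\mc{J}_M$ near $x$ by a local pure spinor $e^{B+i\omega}\,dz_1\wedge\cdots\wedge dz_k$, integrate the constant-rank distribution $\Delta_M$ to the symplectic foliation, straighten the transverse complex structure via a local leaf space and Newlander--Nirenberg, normalize the leafwise symplectic form by a parametric Moser/Darboux argument, and absorb the remainder into a $B$-field. Up to and including the Moser step your sketch is sound (in a foliation chart the plaques are balls, so the remark about leafwise de Rham classes is not even needed).

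The genuine gap is the final step, precisely the one you flag yourself. First, a concrete error: the forms that annihilate $e^{i\omega_0}\,dz_1\wedge\cdots\wedge dz_k$ under wedging are those in the ideal generated by $dz_1,\ldots,dz_k$ \emph{alone}; forms involving only $d\overline{z}_j$'s do not annihilate it, so "the ideal generated by $dz_1,\ldots,dz_k,d\overline{z}_1,\ldots,d\overline{z}_k$" is the wrong annihilator, and with the correct ideal the reality constraint on the residual $2$-form becomes nontrivial (a real $2$-form lying in the $dz$-ideal must be of transverse type $(1,1)$). Second, and more substantively, you never actually produce the closed real $2$-form performing the $B$-transformation. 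What integrability gives you, once $\Omega=dz_1\wedge\cdots\wedge dz_k$ is closed, is $d(B+i\omega)\wedge\Omega=0$, i.e.\ only the component of $d(B+i\omega)$ containing no $dz_j$ vanishes; and the fact that both the original structure and the product structure are integrable forces only $d\tilde B\wedge\Omega=0$ for the comparison form $\tilde B$, not $d\tilde B=0$. So one must still \emph{construct} a real closed $\tilde B$ congruent to $B+i(\omega-\omega_0)$ modulo the $dz$-ideal, which amounts to matching the $dy\wedge dy$, $dy\wedge d\overline{z}$ and $d\overline{z}\wedge d\overline{z}$ components, completing by conjugation, and then choosing the free transverse $(1,1)$ part so that the result is closed -- a local Poincar\'e-lemma type verification that your sketch asserts but does not carry out. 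Relatedly, regularity (constancy of type) is used where you invoke constant rank of $\Delta_M$ and Frobenius, not in this last cohomological step, so your closing remark misplaces where the hypothesis enters. The overall route is the right one, but as written the proof is incomplete exactly at its crux.
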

In a simpler terms, Theorem \ref{darbu thm} implies that, for some real closed form $2$-form $B_{\phi}\in\Omega^2(U_1\times U_2)$, there exists a GH homeomorphism $$\phi:(U_{x},\mc{J}_{U_{x}})\longrightarrow (U_1\times U_2,(\mathcal{J}_{U_1\times U_2})_{B_{\phi}})$$
where  $(\mathcal{J}_{U_1\times U_2})_{B_{\phi}}$ is  the $B$-transformation, as in \eqref{B transformation}, of the product GCS, denoted by $\mathcal{J}_{U_1\times U_2}\,.$ Let $p=(p_1,\ldots,p_{2n-2k})$ and $z=(z_1,\ldots,z_{k})$ represent coordinate systems for $\R^{2n-2k}$ and $\C^{k}$, respectively, and consider the corresponding local coordinates around $x$
\begin{equation}\label{loc coordi}
    (U_{x},\phi,p,z):=(U_{x},\phi\,;p_1,\ldots,p_{2n-2k},z_1,\ldots,z_{k})\,.
\end{equation}
We note that the subspaces $(E_M)_x$ and $(\overline{ E_M})_x$ admit the following
description,
\begin{equation}\label{eq:E,Ebar} 
\begin{array}{c}

 (E_M)_x = \spn\left\{ {\partial_{p_i}}\mid_x, 
\partial_{\overline{z_j}}\mid_x : 1 \le i \le 2n-2k,  1\le j \le k \right\} \,, \\
\\
(\overline{ E_M})_x =\spn\left\{ {\partial_{p_i}}\mid_x, 
\partial_{z_j}\mid_x : 1 \le i \le 2n-2k,  1\le j \le k \right\} \,.

\end{array}
\end{equation}

Using Theorem \ref{darbu thm} in the case of a regular GC manifold,  we obtain a nice description of coordinate transformations, as given in the following corollary. 
\begin{corollary}\label{cor:diffcharts}(\cite[Proposition 2.7]{lang2023})
    Let $(M,\mc{J}_{M})$ be a regular GC manifold of type $k$. Let's assume that $(U,\phi,p,z)$ and $(U',\phi',p',z')$ are two local coordinate systems, as in \eqref{loc coordi}, with $U\cap U'\neq\emptyset\,.$ Then,
    $$\frac{\partial z'_{i}}{\partial\overline{z_{j}}}=\frac{\partial z'_{i}}{\partial p_{l}}=0\quad\text{for all}\quad i,j\in\{1,\ldots,k\}\,, l\in\{1,\ldots,2n-2k\}\,.$$
\end{corollary}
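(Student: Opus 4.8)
The plan is to exploit the local model furnished by the generalized Darboux theorem (Theorem \ref{darbu thm}) together with the characterization of GH functions in Lemma \ref{imp lemma2}. On an overlap $U \cap U' \neq \emptyset$, we have two GH homeomorphisms onto products $U_1 \times U_2$ (up to $B$-transformations), hence the transition map $\phi' \circ \phi^{-1}$ is a GH homeomorphism between open subsets of $(\R^{2n-2k} \times \C^k, \mathcal{J}_{\text{prod}})$, modulo a $B$-transformation. Since, as noted in the Remark after Definition \ref{GH map}, the notions of GC and GH map are insensitive to $B$-field transformations, the coordinate functions $z'_1, \dots, z'_k$ — which are GH functions on $U'$ by construction (each $z'_i$ composed with $\phi'^{-1}$ is just a holomorphic coordinate projection on the $\C^k$ factor, hence GH by Example \ref{complx eg} and the product structure) — pull back to GH functions on $U \cap U'$.

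Next I would apply Lemma \ref{imp lemma2}: a smooth $\C$-valued function $\psi$ on an open set is a GH function if and only if $d\psi_x \in (L_M \cap (T^*M \otimes \C))_x$ for every $x$. Taking $\psi = z'_i$ restricted to $U \cap U'$, we conclude $dz'_i|_x \in (L_M)_x$ for all $x \in U \cap U'$. Now I would use the explicit description \eqref{eq:E,Ebar} of $(E_M)_x$ and $(\overline{E_M})_x$ in the coordinates $(p, z)$ on $U$. The condition $dz'_i \in L_M$ means, in particular, that $dz'_i$ annihilates $\overline{L_M}$; since $\rho(\overline{L_M}) = \overline{E_M} = \spn\{\partial_{p_l}, \partial_{z_j}\}$, and $dz'_i$ as a one-form must vanish on the part of $\overline{L_M}$ lying in $\overline{E_M}$ — more precisely, writing out the pairing, the component of $dz'_i$ dual to $\partial_{p_l}$ and to $\partial_{z_j}$ must be zero. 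This forces $\partial z'_i / \partial p_l = 0$ and $\partial z'_i / \partial \overline{z_j} = 0$ for all $l$ and $j$, which is exactly the claim.

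The one point requiring care — and the main obstacle — is the bookkeeping around the $B$-transformations and the precise translation of "$dz'_i \in L_M$" into vanishing of partial derivatives. A $B$-transformation changes $L_M$ to $(L_M)_B = \{X + \xi - B(X, \cdot) : X + \xi \in L_M\}$ as in \eqref{L-B}, so the cotangent part of a lift is shifted; one must check that the projection $\rho$ to $TM \otimes \C$, hence the images $E_M$ and $\overline{E_M}$, are unchanged (this is stated in the Remark after Definition \ref{GH map}), and that the conclusion about $dz'_i$ is really a statement about $\overline{E_M} = \overline{L_M} \cap (\ker \text{ of cotangent part})$... more carefully: one checks that a one-form $\alpha$ lies in $L_M$ (equivalently in $(L_M)_B$, since $B$-transformations fix pure cotangent vectors when... ) — here one should instead argue directly that $\alpha \in (T^*M \otimes \C)$ lies in $L_M$ iff $\alpha$ vanishes on $E_M$-directions appropriately; concretely $\alpha \in L_M \cap (T^*M\otimes\C)$ implies $\langle \alpha, v \rangle = 0$ for every $v \in \overline{L_M}$, and as $v$ ranges over $\overline{L_M}$ its $TM\otimes\C$-component ranges over all of $\overline{E_M}$, forcing $\alpha|_{\overline{E_M}} = 0$. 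Given \eqref{eq:E,Ebar}, $\alpha|_{\overline{E_M}} = 0$ reads precisely as the vanishing of the $\partial_{p_l}$- and $\partial_{z_j}$-coefficients of $\alpha = dz'_i$, i.e. the two families of partial derivatives vanish. This is the computational heart; once set up correctly it is short, and everything else is assembly of cited results.
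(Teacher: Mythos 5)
Your overall strategy is the natural one and matches what the paper implicitly relies on (the paper gives no proof of Corollary \ref{cor:diffcharts}, deferring to \cite[Proposition 2.7]{lang2023}): the coordinates $z'_i$ are GH functions, hence so are their restrictions to $U\cap U'$, so by Lemma \ref{imp lemma2} one has $dz'_i\in L_M\cap(T^{*}M\otimes\C)$ pointwise, and the conclusion should then be read off from \eqref{eq:E,Ebar}; equivalently, the corollary is immediate from Proposition \ref{prop GH} once $z'_i$ is known to be GH. Your treatment of the $B$-field ambiguity is also essentially fine: $e^{B}$ fixes pure covectors, so $L_M\cap(T^{*}M\otimes\C)$ and $E_M=\rho(L_M)$ are unchanged by a $B$-transformation.

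However, the step you yourself call the computational heart is wrong as written. You deduce from $dz'_i\in L_M$ that $dz'_i$ annihilates $\overline{L_M}$ and hence vanishes on $\overline{E_M}=\rho(\overline{L_M})=\spn\{\partial_{p_l},\partial_{z_j}\}$. But $L_M$ and $\overline{L_M}$ are not orthogonal for the pairing \eqref{bilinear}: since $L_M$ is maximal isotropic and $(TM\oplus T^{*}M)\otimes\C=L_M\oplus\overline{L_M}$, the pairing restricted to $L_M\times\overline{L_M}$ is nondegenerate, so a nonzero element of $L_M$ never annihilates all of $\overline{L_M}$. Moreover, if your computation were carried out consistently it would yield $\partial z'_i/\partial p_l=0$ and $\partial z'_i/\partial z_j=0$, i.e.\ that $z'$ is antiholomorphic in $z$, which is not the statement and is false in general; indeed your own write-up slips from ``the component dual to $\partial_{z_j}$ must be zero'' to the non-matching conclusion $\partial z'_i/\partial\overline{z_j}=0$. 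The correct step uses the isotropy of $L_M$ with itself: for any $X+\xi\in L_M$, one has $0=2\langle dz'_i,\,X+\xi\rangle=dz'_i(X)$, so $dz'_i$ vanishes on $\rho(L_M)=E_M=\spn\{\partial_{p_l},\partial_{\overline{z_j}}\}$ by \eqref{eq:E,Ebar}, which is exactly $\partial z'_i/\partial p_l=\partial z'_i/\partial\overline{z_j}=0$. With this one-line repair (and no appeal to $\overline{L_M}$ at all) your argument is complete.
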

Furthermore, we have the following characterization of GH functions on a regular GC manifold in terms of the local coordinates in \eqref{loc coordi}.  
\begin{prop}(\cite[Example 2.8]{lang2023})\label{prop GH}
Let $(M,\mc{J}_{M})$ be a regular GC manifold of type $k$. Then, $f: M\longrightarrow\C$ is a GH function if and only if at every point on $M$, expressed in terms of local coordinates, as shown in \eqref{loc coordi}, $f$ satisfies the following 
$$\frac{\partial f}{\partial\overline{z_{j}}}=\frac{\partial f}{\partial p_{l}}=0\quad\text{for all}\quad i,j\in\{1,\ldots,k\}\,, l\in\{1,\ldots,2n-2k\}\,.$$
\end{prop}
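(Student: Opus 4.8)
The plan is to reduce the statement to Lemma~\ref{imp lemma2} and then translate the eigenbundle condition into coordinates using the generalized Darboux theorem. First I would apply Lemma~\ref{imp lemma2}: a smooth $f: M \to \C$ is a GH function if and only if $df_x \in (L_M \cap (T^*M \otimes \C))_x$ for every $x \in M$. So the whole task is to characterize, in the local coordinates \eqref{loc coordi} guaranteed by Theorem~\ref{darbu thm}, when the $1$-form $df$ lies in $L_M \cap (T^*M\otimes\C)$ at each point.

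Next I would use the explicit model around a regular point. After a $B$-transformation, $U_x$ is GH homeomorphic to $U_1 \times U_2 \subset (\R^{2n-2k},\omega_0) \times \C^k$ with the ($B$-transformed) product GCS. Because the statement to be proved is insensitive to $B$-transformations (by the remark following Definition~\ref{GH map}, the notions of GH map and GH function are unchanged), I may compute with the plain product GCS $\mathcal{J}_{U_1\times U_2}$. For that structure, $L_M$ is the $+i$-eigenbundle of the direct sum of the symplectic GCS on $U_1$ (Example~\ref{symplectic eg}) and the complex GCS on $U_2$ (Example~\ref{complx eg}). Concretely, $L_M \cap (T^*M\otimes\C)$ consists of the cotangent directions annihilated by the projection behavior dictated by these two pieces: from Example~\ref{symplectic eg}, along the symplectic leaf directions $L_M$ is identified with $TU_1\otimes\C$, so $L_M$ contains \emph{no} nonzero covectors purely in the $p$-directions, forcing $\partial f/\partial p_l = 0$; from Example~\ref{complx eg}, on the $\C^k$ factor $L_M \cap (T^*\otimes\C) = (T^{1,0}U_2)^*$, i.e. the $\C$-span of the $dz_j$, which forces $\partial f/\partial\overline{z_j} = 0$. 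I would make this precise by writing $df = \sum_l \frac{\partial f}{\partial p_l}\,dp_l + \sum_j \frac{\partial f}{\partial z_j}\,dz_j + \sum_j \frac{\partial f}{\partial \overline{z_j}}\,d\overline{z_j}$ and checking membership in $L_M$ against the description of $(E_M)_x$ and $(\overline{E_M})_x$ in \eqref{eq:E,Ebar} together with the $2$-form $\sigma$ defining $L_M = L(E_M,\sigma)$ (via \eqref{isotropic set}). The upshot is that $df_x \in (L_M\cap(T^*M\otimes\C))_x$ at every $x$ iff $\partial f/\partial p_l \equiv 0$ and $\partial f/\partial\overline{z_j}\equiv 0$, which is exactly the claimed system of equations.

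Finally I would note well-definedness: the conditions $\partial f/\partial\overline{z_j} = \partial f/\partial p_l = 0$ are stated in a particular chart, so I should remark that by Corollary~\ref{cor:diffcharts} the vanishing of these derivatives is independent of the choice of adapted chart (the transition functions satisfy $\partial z'_i/\partial\overline{z_j} = \partial z'_i/\partial p_l = 0$, so the chain rule preserves both conditions), which makes the pointwise characterization globally meaningful. Then the equivalence holds at every point of $M$, completing the proof.

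The main obstacle I anticipate is the bookkeeping in the second step: carefully identifying $L_M \cap (T^*M\otimes\C)$ for the product structure and matching it against $df$ written in the mixed real/complex coordinate basis $\{dp_l, dz_j, d\overline{z_j}\}$, being careful that the symplectic factor contributes covector constraints (not vector constraints) and handling the $B$-transformation invariance cleanly rather than dragging the explicit $B_\phi$ through the computation. Once the linear-algebra description of $L_M\cap(T^*M\otimes\C)$ in these coordinates is pinned down, the rest is immediate. (In fact much of this is essentially the content of Lemma~\ref{gh map equi} and Lemma~\ref{imp lemma} applied fiberwise, so an alternative, shorter route is to invoke Lemma~\ref{imp lemma} at each point and simply read off the coordinate expressions of the vectors spanning $\beta_V$ and the $(0,1)$-part, which is probably the cleanest write-up.)
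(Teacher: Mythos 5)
Your argument is correct: reducing to Lemma \ref{imp lemma2}, using the $B$-invariance of $\mathcal{G}^{*}M=L_{M}\cap(T^{*}M\otimes\C)$, and observing that in the Darboux product model the only pure covectors in $L_{M}$ are $\spn\{dz_1,\ldots,dz_k\}$ (the symplectic factor contributes none, the complex factor contributes $(T^{1,0})^{*}$) yields exactly the stated equations, with chart-independence automatic since the condition $df\in\mathcal{G}^{*}M$ is intrinsic. The paper gives no proof of this proposition — it is quoted from \cite[Example 2.8]{lang2023} — and your route is the same computation the paper implicitly relies on later when it describes GH $1$-forms locally as $\spn_{\mathcal{O}_{U}}\{dz_1,\ldots,dz_k\}$, so nothing is missing.
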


This implies that for a regular GC manifold of type $k$, the sheaf $\mathcal{O}_M$ is locally given by the ring of convergent power series in the coordinates $z=(z_1,\ldots,z_{k})$ in \eqref{loc coordi}.
 
\section{Strong Generalized Holomorphic Fiber bundle}\label{sgh fiber}
Let $(M,\mathcal{J}_{M})$ be a generalized complex (GC) manifold. Then $\mathcal{J}_{M}$ can be written as 
\begin{equation}\label{matix}
    \mathcal{J}_{M}=
    \begin{pmatrix}
    -J_{M}     &\beta_{M} \\
    
    B_{M}         &J^{*}_{M}
    \end{pmatrix}
\end{equation}
where $J_{M}\in\en(TM)$, $B_{M}\in\Omega^{2}(M)$ and $\beta_{M}\in\mathfrak{X}^{2}(M)$. Let $\diff_{\mathcal{J}_{M}}(M)$ denote the subgroup of $\diff(M)$  defined by
\begin{equation}
 \diff_{\mathcal{J}_{M}}(M):=\{\phi\in\diff(M)\,|\,\phi\,\,\text{is a GH automorphism of}\,\,(M,\mathcal{J}_{M})\} \,. 
\end{equation}

\begin{definition}\label{main def} Let $G$ be a Lie group. A smooth fiber bundle $F\hookrightarrow E\xrightarrow{\pi} M$ over a GC manifold $(M,\mathcal{J}_{M})$ with a GC manifold $(F,\mathcal{J}_{F})$ as  fiber and structure group $G$  is called a \textit{strong generalized holomorphic (SGH) fiber bundle} if

\vspace{0.1em}
    \begin{enumerate}
\setlength\itemsep{1em}
        \item $E$ is a GC manifold,
        \item there is an open cover $\{U_{\alpha}\}$ of $M$ and a family of local trivializations $\{\phi_{\alpha}\}$ of $E$
        $$\{\phi_{\alpha}:\pi^{-1}(U_{\alpha})\longrightarrow U_{\alpha}\times F\}$$
        such that every $\phi_{\alpha}$ is a GH homeomorphism when  $U_{\alpha}\times F$ is endowed with the standard product GC structure.
    \end{enumerate}
    In addition, if $F$ is a vector space and $G$ is a subgroup of $GL(F)$, then we say that 
    $E$ is an SGH vector bundle over $M$.
\end{definition}
The following theorem is a generalization of \cite[Proposition 3.2]{lang2023}.
\begin{theorem}\label{main1}    
Let $E$ be a fiber bundle over $(M, \mathcal{J}_{M})$ with typical fiber $(F,\mathcal{J}_{F})$ and structure group $G$. Let $\{U_{\alpha},\phi_{\alpha}\}$ be a family of local trivializations with  transition functions $\phi_{\alpha  \beta}$ as follows, 
$$\{\phi_{\alpha}:\pi^{-1}(U_{\alpha})\longrightarrow U_{\alpha}\times F\}\,,\,\phi_{\alpha\beta}:U_{\alpha\beta}=U_{\alpha}\cap U_{\beta}\longrightarrow G\,,$$
where $\phi_{\alpha\beta}(x)=\phi_{\alpha}|_{\pi^{-1}(x)}\circ\phi^{-1}_{\beta}(x,\cdot)$ for all $x\in U_{\alpha\beta}\,.$
Then, $E$ is SGH fiber bundle over $M$ with local trivializations $\{U_{\alpha},\phi_{\alpha}\}$ if and only if

\vspace{0.3em}
\begin{enumerate}
\setlength\itemsep{1em}
    \item $\phi_{\alpha\beta}(m)\in\diff_{\mathcal{J}_{F}}(F)$ for all $m\in U_{\alpha\beta}$,
    \item For each $(m,f)\in M\times F$, the following equations hold:
    \begin{equation*} (\rho_{f})_{*}\circ(\phi_{\alpha\beta})_{*m}\circ J_{U_{\alpha\beta}}\,=\,J_{F}\circ(\rho_{f})_{*}\circ(\phi_{\alpha\beta})_{*m}\,,
    \end{equation*}
    \begin{equation*} (\rho_{f})_{*}\circ(\phi_{\alpha\beta})_{*m}\circ\beta_{U_{\alpha\beta}}\,=\,0\,, 
    \end{equation*}
    \begin{equation*}
     B_{F}\circ (\rho_{f})_{*}\circ(\phi_{\alpha\beta})_{*m}\,=\,0\,,   
    \end{equation*}
\end{enumerate}
for all $(m,f)\in M\times F$, where $J_{U_{\alpha\beta}}\,,\,J_{F}\,,\,\beta_{U_{\alpha\beta}}\,,\,B_{F}$ are as in equation \eqref{matix}, and the map $\rho_{f}:G\longrightarrow F$ is defined as 
$\rho_{f}(g) := g\cdot f$.
\end{theorem}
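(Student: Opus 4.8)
The plan is to reduce the statement about the bundle $E$ to a pointwise statement about the transition maps $\phi_{\alpha\beta}$, by exploiting the fact that the product GC structure on $U_\alpha \times F$ is determined fiberwise and that the $\phi_\alpha$ are diffeomorphisms covering $\mathrm{id}_{U_\alpha}$. First I would observe that, given a family $\{U_\alpha,\phi_\alpha\}$ of trivializations for which the $\phi_\alpha$ are GH homeomorphisms onto $U_\alpha\times F$ with the product GCS, the transition maps $\phi_{\alpha\beta}\colon U_{\alpha\beta}\to G$ act on $F$ via $f\mapsto \phi_{\alpha\beta}(m)\cdot f$, and the overlap compatibility $\phi_\alpha\circ\phi_\beta^{-1}$ on $U_{\alpha\beta}\times F$ is the map $(m,f)\mapsto (m, \phi_{\alpha\beta}(m)\cdot f)$. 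Since $\phi_\alpha$ and $\phi_\beta$ are GH homeomorphisms, this overlap map is a GH homeomorphism of $(U_{\alpha\beta}\times F, \mathcal{J}_{U_{\alpha\beta}}\times\mathcal{J}_F)$ to itself; conversely, if every overlap map is a GH homeomorphism and $E$ carries a GCS for which one $\phi_{\alpha_0}$ (hence all, by transport) is a GH homeomorphism, then $E$ is an SGH bundle. So the whole theorem becomes: the map $\Phi_{\alpha\beta}(m,f) := (m,\phi_{\alpha\beta}(m)\cdot f)$ is a GH homeomorphism of the product GCS if and only if conditions (1) and (2) hold.

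Next I would compute the differential of $\Phi_{\alpha\beta}$ at a point $(m,f)$ and test the GH-homeomorphism condition \eqref{gh homeo} block by block with respect to the splitting $T_{(m,f)}(M\times F) = T_mM \oplus T_fF$ and the product GCS
$\mathcal{J}_{U_{\alpha\beta}\times F} = \begin{pmatrix} \mathcal{J}_{U_{\alpha\beta}} & 0 \\ 0 & \mathcal{J}_F\end{pmatrix}$
written in the $(TM\oplus T^*M)\oplus (TF\oplus T^*F)$ decomposition. The differential of $\Phi_{\alpha\beta}$ has a lower-triangular form: the $T_mM\to T_mM$ block is the identity, the $T_fF\to T_fF$ block is $(\phi_{\alpha\beta}(m))_*$ acting on $F$ (which is already a GH automorphism of $F$, this is condition (1) — it comes from restricting $\Phi_{\alpha\beta}$ to $\{m\}\times F$), and the mixed block $T_mM \to T_fF$ is precisely $v\mapsto (\rho_f)_*\circ (\phi_{\alpha\beta})_{*m}(v)$, using the definition $\rho_f(g) = g\cdot f$ and the chain rule applied to $m\mapsto \phi_{\alpha\beta}(m)\cdot f$. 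Plugging this triangular differential into \eqref{gh homeo} and expanding, the diagonal blocks reproduce exactly "$\phi_{\alpha\beta}(m)$ is a GH automorphism of $F$" and the tautology on $M$, while the off-diagonal blocks yield three equations relating $\mathcal{J}_{U_{\alpha\beta}}$, $\mathcal{J}_F$ and the mixed map; using the explicit block form of $\mathcal{J}_F$ as in \eqref{matix} with entries $-J_F, B_F, \beta_F, J_F^*$, these three equations unwind to exactly the three displayed identities in condition (2) (the $J$-intertwining, the vanishing on $\beta_{U_{\alpha\beta}}$, and the vanishing of $B_F$ precomposed with the mixed map). One direction needs a small argument that having these conditions for \emph{one} choice of GCS-compatible $\phi_{\alpha_0}$ suffices to build a GCS on $E$: one defines $\mathcal{J}_E$ on $\pi^{-1}(U_\alpha)$ by pulling back the product GCS along $\phi_\alpha$ and checks the definitions agree on overlaps, which is exactly where conditions (1)–(2) are used, together with the fact (from the remark following Definition of GH homeomorphism, and \cite[Corollary 3.3]{ornea2011}) that the overlap GH homeomorphisms preserve the $+i$-eigenbundles.

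The main obstacle I anticipate is the bookkeeping in the block-matrix computation: one must be careful that $(\phi_{\alpha\beta})_{*m}$ here is the derivative of a map $U_{\alpha\beta}\to G$, so $(\rho_f)_*\circ(\phi_{\alpha\beta})_{*m}$ is genuinely the mixed block of $D\Phi_{\alpha\beta}$ only after correctly identifying tangent spaces to $G$ and using the left-action formula, and that the cotangent (dual) blocks of $D\Phi_{\alpha\beta}$ are transposes of the tangent blocks — so the off-diagonal pieces of the conjugated matrix $(D\Phi_{\alpha\beta})\circ \mathcal{J}\circ (D\Phi_{\alpha\beta})^{-1}$ involve both the mixed tangent map and its dual. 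Getting the three equations out cleanly (rather than a tangle of six) relies on the skew-symmetry $B_F\in\wedge^2 T^*F$, $\beta_F\in\wedge^2 TF$ and on the already-established condition (1) cancelling the "pure fiber" terms. I would organize this by first disposing of the diagonal blocks (giving (1)), then treating the remaining off-diagonal system, and I would invoke Lemma \ref{gh map equi} where convenient to recognize the conditions "intertwines $J$ and kills $\beta$" as precisely the GC-map conditions, which is the conceptual content of condition (2). The converse direction is then a reversal of these equivalences plus the gluing remark above.
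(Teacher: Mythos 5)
Your proposal follows essentially the same route as the paper: reduce the SGH condition to the overlap maps $\psi_{\alpha\beta}(m,f)=(m,\phi_{\alpha\beta}(m)\cdot f)$ being GH automorphisms of the product GCS (with the converse handled by pulling back the product GCS along the $\phi_\alpha$ and checking agreement on overlaps), then write $(\psi_{\alpha\beta})_{*(m,f)}$ in lower-triangular block form with mixed block $(\rho_f)_*\circ(\phi_{\alpha\beta})_{*m}$ and expand the GH-homeomorphism identity block by block, using skew-symmetry of $B_F$ and $\beta_{U_{\alpha\beta}}$ to collapse the dual off-diagonal equations so that the diagonal data give condition (1) and the remaining three identities give condition (2). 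This matches the paper's argument, so the proposal is correct.
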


\begin{proof}
Consider the map 
\begin{equation}\label{psi alpha beta 2} \psi_{\alpha\beta}=\phi_{\alpha}\circ\phi^{-1}_{\beta}:U_{\alpha\beta}\times F\longrightarrow U_{\alpha\beta}\times F\,.
\end{equation}
Note that $\psi_{\alpha\beta}(m,f)=(m,\phi_{\alpha\beta}(m)\cdot f)$ for all $(m,f)\in U_{\alpha\beta}\times F\,.$ 

\medskip
First, we claim that $E$ is an SGH fiber bundle if and only if $\psi_{\alpha\beta}$ is a GH automorphism for any fixed $\alpha, \beta$. Indeed, if $\psi_{\alpha\beta}$ is a GH automorphism, then
\begin{equation}\label{GHdiff}
    \begin{pmatrix}
     (\psi_{\alpha\beta})_{*}    &0 \\
     0           &(\psi_{\beta\alpha})^{*} 
    \end{pmatrix}
    \circ \mathcal{J}_{U_{\alpha\beta}\times F} = \mathcal{J}_{U_{\alpha\beta}\times F}\circ 
    \begin{pmatrix}
     (\psi_{\alpha\beta})_{*}    &0 \\
     0           &(\psi_{\beta\alpha})^{*} 
    \end{pmatrix}\,,
\end{equation}
where $\mathcal{J}_{U_{\alpha\beta}\times F}= (J_{ij})_{2\times 2}$ is the product GC structure on $U_{\alpha\beta}\times F$. Then \begin{equation}
    \begin{pmatrix}
     (\phi_{\alpha}^{-1})_{*}    &0 \\
     0           &(\phi_{\alpha})^{*} 
    \end{pmatrix}
    \circ (J_{ij})_{2\times 2}\circ 
     \begin{pmatrix}
     (\phi_{\alpha})_{*}    &0 \\
     0           &(\phi_{\alpha}^{-1})^{*} 
    \end{pmatrix}\,
\end{equation}
is an endomorphism of $T\pi^{-1}(U_{\alpha})\oplus T^{*}\pi^{-1}(U_{\alpha})$ that produces the GC structure on $\pi^{-1}(U_{\alpha})$. By equation \eqref{GHdiff} this structure is independent of the choice of $\phi_{\alpha}$. Hence, we obtain a GC structure on $E$ such that $\phi_{\alpha}$ becomes GH homeomorphism. The converse is obvious.

\vspace{0.2em}
Now, it is enough to show that $\psi_{\alpha\beta}$ is a GH automorphism if and only if $(1)$ and $(2)$ are satisfied. Observe that, the product GC structure on $U_{\alpha\beta}\times F$ can be expressed as
\begin{equation*}
   J_{11}=
   \begin{pmatrix}
    -J_{U_{\alpha\beta}}     &0 \\
    
    0        &-J_{F}
   \end{pmatrix}
   \,, \,J_{21}=
   \begin{pmatrix}
      B_{U_{\alpha\beta}}   &0\\
      0                  &B_{F}
   \end{pmatrix}
   \,, \, J_{12}=
   \begin{pmatrix}
    \beta_{U_{\alpha\beta}}     &0 \\
    
    0        &\beta_{F}\,,
   \end{pmatrix}
   \,, \,J_{22}=
   \begin{pmatrix}
      J^{*}_{U_{\alpha\beta}}   &0\\
      0                  &J^{*}_{F}
   \end{pmatrix}\,.
\end{equation*}
\\
Upon simplification, the expression for equation \eqref{GHdiff}, at $(m,f)\in U_{\alpha\beta}\times F$, can be represented as: 
\begin{equation}\label{eqA}
  (\psi_{\alpha\beta})_{*(m,f)}\circ J_{11}=J_{11}\circ(\psi_{\alpha\beta})_{*(m,f)}\,,  
\end{equation}
\begin{equation}\label{eqB}
   (\psi_{\alpha\beta})_{*(m,f)}\circ J_{12}=J_{12}\circ (\psi_{\beta\alpha})^{*}_{(m,f)}\,, 
\end{equation}
\begin{equation}\label{eqC}
   (\psi_{\beta\alpha})^{*}_{(m,f)}\circ J_{21}=J_{21}\circ(\psi_{\alpha\beta})_{*(m,f)}\,,  
\end{equation}
\begin{equation}\label{eqD}
  (\psi_{\beta\alpha})^{*}_{(m,f)}\circ J_{22}=J_{22}\circ(\psi_{\beta\alpha})^{*}_{(m,f)}\,.   
\end{equation}
Since $\psi_{\alpha\beta}(m,f)=(m,\phi_{\alpha\beta}(m)\cdot f)$ where $\phi_{\alpha\beta}(m)\in G$, the map 
$$(\psi_{\alpha\beta})_{*(m,f)}: T_{m}U_{\alpha\beta}\oplus T_{f}F\longrightarrow T_{m}U_{\alpha\beta}\oplus T_{\phi_{\alpha\beta}(m)\cdot f}F$$ can be expressed as 
\begin{equation}\label{eq1}
   (\psi_{\alpha\beta})_{*(m,f)}=
   \begin{pmatrix}
       Id_{U_{\alpha\beta}}   &0\\
       (\rho_{f})_{*}\circ(\phi_{\alpha\beta})_{*m}    &(\phi_{\alpha\beta}(m))_{*}
   \end{pmatrix}\,,
\end{equation}
and  the map 
$$(\psi_{\beta\alpha})^{*}_{(m,f)}: T^{*}_{m}U_{\alpha\beta}\oplus T^{*}_{f}F\longrightarrow T^{*}_{m}U_{\alpha\beta}\oplus T^{*}_{\phi_{\alpha\beta}(m)\cdot f}F$$ can be expressed as 
\begin{equation}\label{eq2}
 (\psi_{\beta\alpha})^{*}_{(m,f)}=
   \begin{pmatrix}
       Id_{U_{\alpha\beta}}   &(\phi_{\beta\alpha})^{*}_{m}\circ\rho^{*}_{\phi_{\alpha\beta}(m)\cdot f}\\
       0    &(\phi_{\beta\alpha}(m))^{*} 
   \end{pmatrix} \, .
\end{equation}
From equations \eqref{eqA} and \eqref{eq1}, we have
\begin{equation}\label{eq3}
   (\phi_{\alpha\beta}(m))_{*}\circ (-J_{F})=(-J_{F})\circ(\phi_{\alpha\beta}(m))_{*} 
\end{equation}
and
\begin{equation}\label{eq4}
(\rho_{f})_{*}\circ(\phi_{\alpha\beta})_{*m}\circ (-J_{U_{\alpha\beta}})=(-J_{F})\circ(\rho_{f})_{*}\circ(\phi_{\alpha\beta})_{*m}\,. 
\end{equation}
 Using equations \eqref{eqB}, \eqref{eq1} and \eqref{eq2}, we get 
\begin{equation}\label{eq5}
 (\phi_{\alpha\beta}(m))_{*}\circ\beta_{F}=\beta_{F}\circ(\phi_{\beta\alpha}(m))^{*}\,,   
\end{equation}
\begin{equation}\label{eq6}
(\rho_{f})_{*}\circ(\phi_{\alpha\beta})_{*m}\circ\beta_{U_{\alpha\beta}}=0\,,
\end{equation}
and 
\begin{equation}\label{eq7} \beta_{U_{\alpha\beta}}\circ(\phi_{\beta\alpha})^{*}_{m}\circ\rho^{*}_{\phi_{\alpha\beta}(m)\cdot f}=0\,.  
\end{equation}
From equations \eqref{eqC}, \eqref{eq1} and \eqref{eq2}, we have
\begin{equation}\label{eq8}
(\phi_{\beta\alpha}(m))^{*}\circ B_{F}=B_{F}\circ(\phi_{\alpha\beta}(m))_{*}\,,    
\end{equation}
\begin{equation}\label{eq9} (\phi_{\beta\alpha})^{*}_{m}\circ\rho^{*}_{\phi_{\alpha\beta}(m)\cdot f}\circ B_{F}=0\,,  
\end{equation}
and
\begin{equation}\label{eq10} B_{F}\circ(\rho_{f})_{*}\circ(\phi_{\alpha\beta})_{*m}=0\,. 
\end{equation}
From equations \eqref{eqD} and \eqref{eq2}, we have
\begin{equation}\label{eq11}
  (\phi_{\beta\alpha}(m))^{*}\circ J^{*}_{F}=J^{*}_{F}\circ (\phi_{\beta\alpha}(m))^{*}
\end{equation}
and
\begin{equation}\label{eq12} (\phi_{\beta\alpha})^{*}_{m}\circ\rho^{*}_{\phi_{\alpha\beta}(m)\cdot f}\circ J^{*}_{F}=J^{*}_{U_{\alpha\beta}}\circ(\phi_{\beta\alpha})^{*}_{m}\circ\rho^{*}_{\phi_{\alpha\beta}(m)\cdot f}\,.
\end{equation}
Now, we can see that equations \eqref{eq3}, \eqref{eq5}, \eqref{eq8} and \eqref{eq11} hold if and only if 
\begin{equation}\label{eq13}
\begin{pmatrix}
(\phi_{\alpha\beta}(m))_{*}    &0 \\
    0    &(\phi_{\beta\alpha}(m))^{*}
\end{pmatrix}
\circ\mathcal{J}_{F}=\mathcal{J}_{F}\circ
\begin{pmatrix}
(\phi_{\alpha\beta}(m))_{*}    &0 \\
    0    &(\phi_{\beta\alpha}(m))^{*}
\end{pmatrix}
\end{equation}
where $\mathcal{J}_{F}=
\begin{pmatrix}
    -J_{F}   &\beta_{F}
    \\
    B_{F}    &J^{*}_{F}
\end{pmatrix}$  as in \eqref{matix}.
Therefore, 
 equations \eqref{eq3}, \eqref{eq5}, \eqref{eq8} and \eqref{eq11} hold if and only if $\phi_{\beta\alpha}(m)\in\diff_{\mathcal{J}_{F}}(F)\,.$   
Note that, by skew-symmetry, $\beta^{*}_{U_{\alpha\beta}}=-\beta_{U_{\alpha\beta}}$ and $B^{*}_{F}=-B_{F}$. Since $(m,f)$ is arbitrary, considering duals, we observe that

\vspace{0.3em}
\begin{itemize}
\setlength\itemsep{1em}
    \item equation \eqref{eq6}  holds if and only if equation \eqref{eq7} holds, 
    \item equation \eqref{eq10} holds if and only if equation  \eqref{eq9} holds, 
    \item equations \eqref{eq4} and \eqref{eq12} are equivalent to each other.
\end{itemize}
Therefore, $\psi_{\alpha\beta}$ is a GH automorphism if and only if equations \eqref{eq4}, \eqref{eq6},
\eqref{eq10} and \eqref{eq13} hold. Hence, $\psi_{\alpha\beta}$ is a GH automorphism if and only if $(1)$ and $(2)$ are satisfied as desired.
\end{proof}

\begin{definition}
Let $E$ be an SGH fiber bundle over a GC manifold $(M,\mathcal{J}_{M})$ and $U\subseteq M$ be an open set. A smooth section $s:U\longrightarrow E$ is called a GH section if $s$ is a GH map from $(U,\mathcal{J}_{M}|_{U})$ to $(E,\mathcal{J}_{E})$.    
\end{definition}
\begin{definition}
Given any two SGH fiber bundles $E$ and $E^{'}$ over $M$, a smooth map $\phi:E\longrightarrow E^{'}$ is called an SGH bundle homomorphism if
\begin{enumerate}
\setlength\itemsep{1em}
    \item $\phi$ is a bundle homomorphism between $E$ and $E^{'}$ as smooth fiber bundles.
    \item $\phi$ is a GH map.
\end{enumerate} If, in addition, $\phi$ is a GH homeomorphism, then $\phi:E\longrightarrow E^{'}$ is called SGH bundle isomorphism.
\end{definition}

\begin{example}
{Let $E$ be an SGH fiber bundle over a GC manifold $(M,\mathcal{J}_{M})$ with typical fiber $(F,\omega)$, a symplectic manifold. Consider the GC structure on $F$ induced by $\omega$ as given in Example \ref{symplectic eg}. Note that $J_{F}=J^{*}_{F}=0$ and $B_{F}=-\beta^{-1}_{F}=\omega$. Since $\omega$ is non-degenerate and $f\in F$ is arbitrary, for each $m\in M$, equation \eqref{eq10} holds if and only if  $(\phi_{\alpha\beta})_{*m}=0$,  i.e., $\phi_{\alpha\beta}$ is a locally constant map on $U_{\alpha\beta}$. From equation \eqref{eq8}, for $X,Y\in TM$, we have 
\begin{align*}
  &\omega=(\phi_{\alpha\beta}(m))^{*}\circ\omega \circ(\phi_{\alpha\beta}(m))_{*}\,\\ 
\implies&\omega(X,Y)=(\phi_{\alpha\beta}(m))^{*}\omega(X,Y)\,.
\end{align*}
}
\begin{lemma}\label{sym lmma1}
Any SGH fiber bundle $E$ over a GC manifold $M$ with a symplectic fiber $(F,\omega)$ is a smooth symplectic fiber bundle with a flat connection.
\end{lemma}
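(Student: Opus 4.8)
The plan is to extract the structure of the transition cocycle from Theorem \ref{main1} applied to the symplectic fiber, and then recognize the resulting data as that of a flat symplectic fiber bundle. Recall that for $(F,\omega)$ equipped with the GCS of Example \ref{symplectic eg} one has $J_{F}=J_{F}^{*}=0$, $\beta_{F}=-\omega^{-1}$ and $B_{F}=\omega$. Feeding this into condition $(2)$ of Theorem \ref{main1}, equation \eqref{eq10} becomes $\omega\circ(\rho_{f})_{*}\circ(\phi_{\alpha\beta})_{*m}=0$ for every $m\in U_{\alpha\beta}$ and $f\in F$. Since $\omega$ is non-degenerate, this forces $(\rho_{f})_{*}\circ(\phi_{\alpha\beta})_{*m}=0$ for all $f$. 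Transporting a tangent vector at $\phi_{\alpha\beta}(m)\in G$ to $\mathfrak{g}$ by right translation, this says precisely that the fundamental vector field on $F$ generated by the corresponding element of $\mathfrak{g}$ vanishes at every point of $F$; as the $G$-action on $F$ is effective (so that the infinitesimal action $\mathfrak{g}\to C^{\infty}(TF)$ is injective), we conclude $(\phi_{\alpha\beta})_{*m}=0$, i.e.\ every $\phi_{\alpha\beta}$ is locally constant.

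With $(\phi_{\alpha\beta})_{*m}=0$ in hand, the other two equations of condition $(2)$, namely \eqref{eq4} and \eqref{eq6}, hold trivially, while condition $(1)$ of Theorem \ref{main1} asserts $\phi_{\alpha\beta}(m)\in\diff_{\mathcal{J}_{F}}(F)$. A GH automorphism of the GCS of Example \ref{symplectic eg} is exactly a symplectomorphism of $(F,\omega)$: unwinding the defining relation \eqref{gh homeo} for the automorphism $\mathcal{J}_{F}$ returns $\phi^{*}\omega=\omega$ (equivalently, this is the content of equation \eqref{eq8} with $B_{F}=\omega$, already noted in the example preceding the lemma). Hence the transition cocycle $\{\phi_{\alpha\beta}\}$ is locally constant with values in $\mathrm{Symp}(F,\omega)$; that is, the structure group of $E$ reduces to the locally constant sheaf valued in $\mathrm{Symp}(F,\omega)$.

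From here the conclusion is formal. The horizontal distribution $H\subset TE$ defined in each trivialization $\phi_{\alpha}$ as the preimage of $TU_{\alpha}\oplus 0$ is globally well defined, because by \eqref{eq1} and $(\phi_{\alpha\beta})_{*m}=0$ the overlap map $(\psi_{\alpha\beta})_{*(m,f)}$ is block diagonal and so preserves both $TU\oplus 0$ and the vertical bundle; moreover $H$ is involutive, since in each chart it is the tangent distribution of the product foliation $U_{\alpha}\times\{f\}$, so $H$ is a flat connection on $E$. The fibrewise $2$-form obtained by transporting $\omega$ through any $\phi_{\alpha}$ is independent of the trivialization because the $\phi_{\alpha\beta}(m)$ are symplectomorphisms, and parallel transport along $H$ preserves it because in a local trivialization it is the identity on $F$; thus $E$ is a smooth symplectic fiber bundle carrying a flat symplectic connection.

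The only non-routine point is the passage from $(\rho_{f})_{*}\circ(\phi_{\alpha\beta})_{*m}=0$ for all $f\in F$ to $(\phi_{\alpha\beta})_{*m}=0$, which is exactly where effectiveness of the structure-group action on $F$ enters; everything else is an unwinding of Theorem \ref{main1} together with the standard correspondence between locally constant transition cocycles and flat bundles.
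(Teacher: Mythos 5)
Your argument follows the paper's own route: specialize Theorem \ref{main1} to the symplectic GCS on $F$, so that equation \eqref{eq10} together with the non-degeneracy of $\omega$ forces the transition maps to be locally constant, while equation \eqref{eq8} (equivalently condition $(1)$) identifies each $\phi_{\alpha\beta}(m)$ as a symplectomorphism, and the flat symplectic bundle structure then follows from the locally constant cocycle. Your only addition is making explicit the (standard) effectiveness of the structure-group action on $F$, needed to pass from $(\rho_{f})_{*}\circ(\phi_{\alpha\beta})_{*m}=0$ for all $f$ to $(\phi_{\alpha\beta})_{*m}=0$ — a point the paper leaves implicit — which is a correct and harmless refinement.
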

\end{example}
\begin{example}
   {Let $E$ be an SGH fiber bundle over a symplectic manifold $(M,\omega)$ with a typical fiber $(F,\mathcal{J}_{F})$. Consider the GC structure on the base induced by $\omega$ as given in Example \ref{symplectic eg}. Note that $J_{M}=J^{*}_{M}=0$ and $B_{M}=-\beta^{-1}=\omega$. Since $\omega$ is nondegenarate, by equation \eqref{eq6},  for any $(m,f)\in U_{\alpha\beta}\times F$, $(\rho_{f})_{*}\circ(\phi_{\alpha\beta})_{*m}=0$. Thus, $(\phi_{\alpha\beta})_{*m}=0$, i.e., $\phi_{\alpha\beta}$ is locally constant on $U_{\alpha\beta}$. So, equations \eqref{eq4} and \eqref{eq10} are also satisfied. Hence, we have the following.}
\begin{lemma}\label{sym lmma2}
  $E$ be a smooth fiber bundle over a symplectic manifold $(M,\omega)$ with a typical fiber $(F,\mathcal{J}_{F})$. Then, $E$ is SGH fiber bundle over $M$ with local trivializations $\{U_{\alpha},\phi_{\alpha}\}$ if and only if

  \vspace{0.3em}
\begin{enumerate}
\setlength\itemsep{1em}
    \item $\phi_{\alpha\beta}(m)\in\diff_{\mathcal{J}_{F}}(F)$ for all $m\in U_{\alpha\beta}$,
    \item $\phi_{\alpha\beta}$ is constant on $U_{\alpha\beta}$, that is, $E$ admits a flat connection.
\end{enumerate}
\end{lemma}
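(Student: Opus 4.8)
The plan is to obtain Lemma~\ref{sym lmma2} as a specialization of Theorem~\ref{main1}: I substitute the data of the GCS of Example~\ref{symplectic eg} on the base into the conditions of that theorem and observe that they simplify. Since $(M,\omega)$ carries the symplectic GCS, on every overlap $U_{\alpha\beta}$ we have $J_{U_{\alpha\beta}}=0$, and $\beta_{U_{\alpha\beta}}=-\omega^{-1}|_{U_{\alpha\beta}}\colon T^{*}U_{\alpha\beta}\to TU_{\alpha\beta}$ is a fiberwise isomorphism because $\omega$ is nondegenerate. Feeding this into the three displayed identities that make up condition~$(2)$ of Theorem~\ref{main1} (equivalently, equations \eqref{eq4}, \eqref{eq6}, \eqref{eq10} in its proof): the identity carrying $\beta_{U_{\alpha\beta}}$ becomes $(\rho_{f})_{*}\circ(\phi_{\alpha\beta})_{*m}\circ\beta_{U_{\alpha\beta}}=0$, and invertibility of $\beta_{U_{\alpha\beta}}$ forces $(\rho_{f})_{*}\circ(\phi_{\alpha\beta})_{*m}=0$ for every $(m,f)$; once this holds, the two remaining identities read $J_{F}\circ 0=0$ and $B_{F}\circ 0=0$ and are automatic. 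So under the hypothesis of the lemma, condition~$(2)$ of Theorem~\ref{main1} is equivalent to the single requirement $(\rho_{f})_{*}\circ(\phi_{\alpha\beta})_{*m}=0$ for all $m\in U_{\alpha\beta}$, $f\in F$.

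Next I would convert this infinitesimal condition into a statement about the transition functions. The vanishing $(\rho_{f})_{*}\circ(\phi_{\alpha\beta})_{*m}=0$ says precisely that, for each fixed $f$, the map $m\mapsto\phi_{\alpha\beta}(m)\cdot f$ has zero differential, hence is locally constant on $U_{\alpha\beta}$. Requiring this for all $f\in F$ means that the diffeomorphism $\phi_{\alpha\beta}(m)\in\diff_{\mathcal{J}_{F}}(F)$ is independent of $m$ on each component of $U_{\alpha\beta}$, and since the structure group $G$ acts effectively on $F$ (the standing convention $G\hookrightarrow\diff(F)$ already used in the example preceding the lemma), the $G$-valued map $\phi_{\alpha\beta}$ is locally constant; on connected overlaps this is the assertion that $\phi_{\alpha\beta}$ is constant on $U_{\alpha\beta}$. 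Conversely, if $\phi_{\alpha\beta}$ is constant then $(\phi_{\alpha\beta})_{*m}=0$, so all three identities in Theorem~\ref{main1}$(2)$ hold trivially. Because condition $(1)$ of Lemma~\ref{sym lmma2} is verbatim condition $(1)$ of Theorem~\ref{main1}, combining the two directions gives the claimed equivalence.

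For the parenthetical ``that is, $E$ admits a flat connection'', I would recall the standard dictionary between locally constant transition functions and flat connections: define $H\subset TE$ by $H|_{\pi^{-1}(U_{\alpha})}:=(\phi_{\alpha})_{*}^{-1}\bigl(TU_{\alpha}\oplus 0\bigr)$; constancy of the $\phi_{\alpha\beta}$ makes these local prescriptions agree on overlaps, yielding a global splitting $TE=H\oplus\ker d\pi$, and $H$ is involutive --- its integral leaves are the glued-up slices $\phi_{\alpha}^{-1}(U_{\alpha}\times\{f\})$ --- so the associated connection is flat; conversely a flat connection adapted to $\{U_{\alpha},\phi_{\alpha}\}$ has constant transitions.

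The only non-formal point is the passage from ``the infinitesimal action $(\rho_{f})_{*}\circ(\phi_{\alpha\beta})_{*m}$ vanishes for every $f$'' to ``$\phi_{\alpha\beta}$ is constant'', which uses effectiveness of the structure-group action; as this is the same convention invoked in the preceding example, it is not a genuine obstacle, and everything else is substitution and bookkeeping inside Theorem~\ref{main1}.
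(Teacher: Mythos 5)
Your proposal is correct and follows essentially the same route as the paper: specialize Theorem \ref{main1} to the symplectic GCS of Example \ref{symplectic eg} on the base (so $J_{U_{\alpha\beta}}=0$ and $\beta_{U_{\alpha\beta}}=-\omega^{-1}$ is invertible), use nondegeneracy in equation \eqref{eq6} to force $(\rho_f)_*\circ(\phi_{\alpha\beta})_{*m}=0$ and hence local constancy of $\phi_{\alpha\beta}$, and note that the remaining conditions \eqref{eq4} and \eqref{eq10} then hold automatically. Your explicit remarks on effectiveness of the $G$-action and on the dictionary between constant transition functions and flat connections only make precise steps the paper leaves implicit.
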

\end{example}

\begin{prop}\label{gh map equi2}
  Let $(M,\mathcal{J}_{M})$ be a GC manifold and $(N,J_{N})$ be a complex manifold with a complex structure $J_{N}$. Then, for any smooth map $\psi:M\longrightarrow N$, the following are equivalent.

  \vspace{0.3em}
  \begin{enumerate}
  \setlength\itemsep{1em}
      \item $\psi$ is a GH map.
      \item For any open set $U\subset M$, $\psi: U\longrightarrow N$ is a GH map. 
      \item $\psi_{*}\circ J_{M}=J_{N}\circ \psi_{*}\,, \quad \psi_{*}\circ \beta_{M}=0 \,.$
  \end{enumerate}
  Here, $\mathcal{J}_{M}$ is as in \eqref{matix} and $N$ is considered as a GC manifold with the natural GC structure induced by $J_{N}$.
\end{prop}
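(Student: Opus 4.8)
The plan is to reduce everything to the pointwise linear criterion of Lemma \ref{gh map equi} and then to exploit the fact that generalized holomorphy of a smooth map is, by Definition \ref{GH map}, a pointwise condition on its differential and hence automatically local.

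First I would fix the setup. Since $N$ carries the natural GC structure induced by $J_{N}$ as in Example \ref{complx eg}, at every point $y\in N$ the bundle automorphism $(\mathcal{J}_{N})_{y}$ of $(T_{y}N\oplus T_{y}^{*}N)\otimes\C$ has the block form $\begin{pmatrix} -(J_{N})_{y} & 0 \\ 0 & (J_{N})_{y}^{*}\end{pmatrix}$, so $(T_{y}N,(\mathcal{J}_{N})_{y})$ is precisely a GC linear space of the type appearing in the hypothesis of Lemma \ref{gh map equi}, with vanishing $\beta$- and $B$-blocks. Similarly, at $x\in M$ the blocks of $(\mathcal{J}_{M})_{x}$ are $(J_{M})_{x}$, $(\beta_{M})_{x}$, $(B_{M})_{x}$ read off from \eqref{matix}; no splitting of $M$ itself is assumed, and Lemma \ref{gh map equi} applies to an arbitrary GC linear space as source.

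For $(1)\Leftrightarrow(3)$: by Definition \ref{GH map}, $\psi$ is a GH map if and only if for each $x\in M$ the linear map $(\psi_{*})_{x}\colon T_{x}M\longrightarrow T_{\psi(x)}N$ is a GC map of GC linear spaces. Applying Lemma \ref{gh map equi} with $V=T_{x}M$ and $W=T_{\psi(x)}N$, this is equivalent to $(\psi_{*})_{x}\circ(J_{M})_{x}=(J_{N})_{\psi(x)}\circ(\psi_{*})_{x}$ together with $(\psi_{*})_{x}\circ(\beta_{M})_{x}=0$. Requiring these identities at every $x\in M$ is exactly the pair of bundle identities in $(3)$, so $(1)\Leftrightarrow(3)$. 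For $(1)\Leftrightarrow(2)$: the implication $(2)\Rightarrow(1)$ is immediate on taking $U=M$, and for $(1)\Rightarrow(2)$ one observes that for an open set $U\subseteq M$ the GC structure $\mathcal{J}_{U}$ is just the restriction of $\mathcal{J}_{M}$, so for $x\in U$ one has $((\psi|_{U})_{*})_{x}=(\psi_{*})_{x}$ as a linear map between the same GC linear spaces; hence if $(\psi_{*})_{x}$ is a GC map for all $x\in M$ it is in particular a GC map for all $x\in U$, i.e.\ $\psi|_{U}$ is a GH map.

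Because the whole argument is a direct application of Lemma \ref{gh map equi} together with the pointwise nature of Definition \ref{GH map}, I do not anticipate a genuine obstacle. The only points requiring a little care are to confirm that the hypotheses of Lemma \ref{gh map equi} hold at each point of $N$ — which is exactly the content of Example \ref{complx eg} — and to be mindful that $J_{M}$ and $\beta_{M}$ are to be taken from the block decomposition \eqref{matix} of $\mathcal{J}_{M}$, and that the criterion, being $B$-transformation invariant, is unaffected if one works after a local generalized Darboux normalization of $M$.
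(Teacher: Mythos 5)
Your proposal is correct and follows essentially the same route as the paper, whose proof simply cites Lemma \ref{gh map equi}: generalized holomorphy is a pointwise condition on $(\psi_{*})_{x}$ by Definition \ref{GH map}, so the linear criterion applied at each point gives $(1)\Leftrightarrow(3)$, and locality gives $(1)\Leftrightarrow(2)$. You have merely spelled out the details the paper leaves implicit.
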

\begin{proof}
    Follows from Lemma \ref{gh map equi}.
\end{proof}
\begin{example}
{Let $E$ be an SGH fiber bundle over a GC manifold $(M,\mathcal{J}_{M})$ with typical fiber a complex manifold $(F,J_{F})$. Then, consider the naturally induced GC structure on $F$ as given in Example \ref{complx eg}. Observe that $B_{F}=\beta_{F}=0$. Also, by equation \eqref{eq13}, for any $m\in U_{\alpha\beta}$, $\phi_{\alpha\beta}(m)$ is a biholomorphic automorphism. By Proposition \ref{gh map equi2} and equation \eqref{eq6}, for any $f\in F$,  $\rho_{f}\circ\phi_{\alpha\beta}$ is a GH map. Thus we have the following result.}
\begin{lemma}\label{cmplx fiber}
    Let $E$ be a smooth fiber bundle over a GC manifold $(M,\mathcal{J}_{M})$ with typical fiber a complex manifold $(F,J_{F})$. Let $\{U_{\alpha},\phi_{\alpha}\}$ be a family of local trivializations of $E$. Then, $E$ is an SGH fiber bundle over $M$ if and only if
    \begin{enumerate}
\setlength\itemsep{1em}
        \item for each $m\in U_{\alpha\beta}$, $\phi_{\alpha\beta}(m)$ is a biholomorphic map, 
        \item for any $f\in F$,
       $\rho_{f}\circ\phi_{\alpha\beta}$ is a GH map.
    \end{enumerate}
\end{lemma}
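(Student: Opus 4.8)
The plan is to specialize Theorem \ref{main1} to the present situation and then rephrase the two resulting conditions via Proposition \ref{gh map equi2}.

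First I would record that, when $F$ carries the GCS of Example \ref{complx eg} induced by $J_F$, its block form \eqref{matix} is $\mathcal{J}_F=\begin{pmatrix}-J_F&0\\0&J_F^*\end{pmatrix}$, so that $B_F=0$ and $\beta_F=0$. Substituting $B_F=0$ into the third equation of condition $(2)$ of Theorem \ref{main1}, namely $B_F\circ(\rho_f)_*\circ(\phi_{\alpha\beta})_{*m}=0$, shows that this equation holds automatically. Hence Theorem \ref{main1} reduces to the statement that $E$ is an SGH fiber bundle with the trivializations $\{U_\alpha,\phi_\alpha\}$ if and only if (i) $\phi_{\alpha\beta}(m)\in\diff_{\mathcal{J}_F}(F)$ for all $m\in U_{\alpha\beta}$, and (ii) for all $(m,f)\in U_{\alpha\beta}\times F$ the two equations
\[
(\rho_f)_*\circ(\phi_{\alpha\beta})_{*m}\circ J_{U_{\alpha\beta}}=J_F\circ(\rho_f)_*\circ(\phi_{\alpha\beta})_{*m}\,,\qquad (\rho_f)_*\circ(\phi_{\alpha\beta})_{*m}\circ\beta_{U_{\alpha\beta}}=0
\]
hold.

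Next I would identify (i) with condition $(1)$ of the lemma. Because $\mathcal{J}_F$ is block-diagonal, the defining identity \eqref{gh homeo} for a GH homeomorphism $\phi\colon F\to F$ splits into $\phi_*\circ J_F=J_F\circ\phi_*$ together with its dual; since $\phi$ is a diffeomorphism, both of these amount to $\phi$ being biholomorphic, i.e.\ $\diff_{\mathcal{J}_F}(F)=\aut(F,J_F)$. Therefore $\phi_{\alpha\beta}(m)\in\diff_{\mathcal{J}_F}(F)$ precisely when $\phi_{\alpha\beta}(m)$ is a biholomorphic map of $F$, which is condition $(1)$.

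Finally I would identify (ii) with condition $(2)$. Fix $f\in F$, put $\psi:=\rho_f\circ\phi_{\alpha\beta}\colon U_{\alpha\beta}\to F$, and note $\psi_{*m}=(\rho_f)_*\circ(\phi_{\alpha\beta})_{*m}$ by the chain rule, while $J_{U_{\alpha\beta}}=J_M|_{U_{\alpha\beta}}$ and $\beta_{U_{\alpha\beta}}=\beta_M|_{U_{\alpha\beta}}$ since $U_{\alpha\beta}$ carries the restricted GCS. Then the two equations in (ii), required to hold for every $m\in U_{\alpha\beta}$, are exactly $\psi_*\circ J_M=J_F\circ\psi_*$ and $\psi_*\circ\beta_M=0$, which by Proposition \ref{gh map equi2} (applied to the GC manifold $U_{\alpha\beta}$ and the complex manifold $F$) is equivalent to $\psi=\rho_f\circ\phi_{\alpha\beta}$ being a GH map. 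Letting $f$ range over $F$ yields condition $(2)$, and combining the three steps proves the equivalence. The only point requiring more than bookkeeping with \eqref{matix} is the reduction $\diff_{\mathcal{J}_F}(F)=\aut(F,J_F)$ used in the third paragraph, and I do not anticipate any real difficulty there.
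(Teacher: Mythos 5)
Your proposal is correct and follows essentially the same route as the paper: it specializes Theorem \ref{main1} using $B_{F}=\beta_{F}=0$, identifies condition $(1)$ with membership in $\diff_{\mathcal{J}_{F}}(F)$ via the block-diagonal form of $\mathcal{J}_{F}$ (the paper's equation \eqref{eq13}), and converts the remaining two equations into the statement that $\rho_{f}\circ\phi_{\alpha\beta}$ is a GH map via Proposition \ref{gh map equi2}. No gaps.
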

\end{example}
\begin{remark}\label{VB rmk}
    Note that when $E$ denotes a vector bundle over a GC manifold $M$, by Lemma \ref{cmplx fiber}, $E$ is an SGH vector bundle if and only if it is a GH vector bundle in the sense described by Lang et al. (\cite[Definition 3.1]{lang2023}).
\end{remark}
\begin{example}\label{imp exmple}
    Let $M$ be a GC manifold and $\tilde{M}$ be a covering space. Let $K\leq\pi_1(M)$ be a subgroup corresponding to $\tilde{M}$ such that $\tilde{M}/K\cong M$. Note that $K\hookrightarrow\tilde{M}\xrightarrow{\pi}M$ is a principal $K$-bundle where $\pi$ is the covering map. Since $\pi$ is a local diffeomorphism, $M$ induces a GC structure (of the same type) on $\tilde{M}$ such that $\pi$ becomes a GC map. Let $\rho:K\longrightarrow GL_{l}(\C)$ be a representation. Define 
    $$\tilde{M}\times_{\rho}\C^{l}:=\tilde{M}\times\C^{l}/\sim\,,$$ where $(m,z)\sim (n,w)$ if and only if $n=m\cdot g^{-1}$ and $w=\rho(g)\cdot z$ for some $g\in K$. Since $K$ is discrete, the transition maps of the associated vector bundle $\tilde{M}\times_{\rho}\C^{l}\longrightarrow M$ are locally constant. Hence, by Lemma \ref{cmplx fiber}, $\tilde{M}\times_{\rho}\C^{l}\longrightarrow M$ is a (flat) SGH vector bundle over $M$.
\end{example}
\begin{example}\label{imp exmple2}
    Let $M_1$ be a complex manifold and $V_1$ be a holomorphic vector bundle over $M_1$. Let $M_2$ be a symplectic manifold and $V_2$ be a flat vector bundle over $M_2$. Then $\otimes_{i}\pr^{*}_{i}(V_{i})\longrightarrow M_1\times M_2$ is an SGH vector bundle where $\pr_{i}:M_1\times M_2\longrightarrow M_{i}$ is the natural projection map onto $i$-th component. Here $M_1\times M_2$ is considered with the product GCS.
\end{example}

\section{ SGH principal bundles, SGH vector bundles, and locally free sheaves}\label{sec4}
\subsection{SGH principal bundles and SGH vector bundles}
Let $G$ be a real connected Lie group with a GC structure $\mathcal{J}_{G}$. Let $G\hookrightarrow P\xrightarrow{\pi} M$ be a smooth principal $G$-bundle over a GC manifold $(M,\mathcal{J}_{M})$. Let $\{U_{\alpha},\phi_{\alpha}\}$ be a family of local trivializations 
\begin{equation}\label{phi alpha}
    \phi_{\alpha}:\pi^{-1}(U_{\alpha})\longrightarrow U_{\alpha}\times G\,,
\end{equation}
with transition functions 
\begin{equation}\label{phi alpha beta}
\phi_{\alpha\beta}:U_{\alpha\beta}=U_{\alpha}\cap U_{\beta}\longrightarrow G\,,    
\end{equation}
where $\phi_{\alpha\beta}(x)=\phi_{\alpha}|_{\pi^{-1}(x)}\circ\phi^{-1}_{\beta}(x,\cdot)$ for all $x\in U_{\alpha\beta}\,.$
\begin{definition}
  $P$ is called an SGH principal $G$-bundle over $(M,\mathcal{J}_{M})$ if 
  \begin{enumerate}
  \setlength\itemsep{1em}
      \item $P$ is a GC manifold.
      \item There exist local trivializations $\{U_{\alpha},\phi_{\alpha}\}$ such  that
      every $\phi_{\alpha}$ is a GH homeomorphism when $U_{\alpha} \times G$ is endowed with the standard product GC structure.
  \end{enumerate} 
\end{definition}
As in \eqref{matix}, $\mathcal{J}_{M}$ and $\mathcal{J}_{G}$ can be written in the following form
$$\mathcal{J}_{M}=
    \begin{pmatrix}
    -J_{M}     &\beta_{M} \\
    
    B_{M}         &J^{*}_{M}
    \end{pmatrix}\quad\text{and}\quad
    \mathcal{J}_{G}=
    \begin{pmatrix}
    -J_{G}     &\beta_{G} \\
    
    B_{G}         &J^{*}_{G}
    \end{pmatrix}\,,\,\text{respectively}\,.$$
    \begin{remark}\label{rem:SGH principal bdl}
        Note that in the definition of an SGH principal $G$-bundle, we do not require that the 
        group operations on $G$ be GH maps, or that the left or right translations by elements of $G$ be GH homeomorphisms. However, if we assume that $G$ is a complex Lie group then these conditions hold.
    \end{remark}
    
\begin{prop}\label{GH principal bundle1} The following are equivalent.
 \begin{enumerate}
     \item $P$ is an SGH principal $G$-bundle over $(M, \mathcal{J}_{M} )$ with  local trivializations $\{U_{\alpha},\phi_{\alpha}\}$ and transition functions $\{ \phi_{\alpha \beta} \}$.
     
  \item  For all nonempty $U_{\alpha\beta}\subseteq M$ and $(m,f)\in U_{\alpha\beta}\times G\,,$ the map
  $$\psi_{\alpha\beta}:U_{\alpha\beta}\times G\longrightarrow U_{\alpha\beta}\times G\quad\text{defined as}\quad\psi_{\alpha\beta}(m,f)=(m,\phi_{\alpha\beta}(m)\cdot f)$$
  is a GH automorphism of $U_{\alpha\beta}\times G$.
  
  \item  The transition functions  
  satisfy the following: For all $m\in U_{\alpha\beta}$,

  \vspace{0.3em}
  \begin{enumerate}
  \setlength\itemsep{1em}
      \item $\phi_{\alpha\beta}(m)\in\diff_{\mathcal{J}_{G}}(G)\,,$
      \item $(\phi_{\alpha\beta})_{*m}\circ J_{U_{\alpha\beta}}=J_{F}\circ(\phi_{\alpha\beta})_{*m}\,,$
      \item $(\phi_{\alpha\beta})_{*m}\circ\beta_{U_{\alpha\beta}}=0\,,$
      \item $B_{G}\circ(\phi_{\alpha\beta})_{*m}=0\,.$
  \end{enumerate} 
\end{enumerate}
\end{prop}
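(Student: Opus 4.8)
The plan is to prove the two equivalences $(1)\Leftrightarrow(2)$ and $(2)\Leftrightarrow(3)$ separately, reusing the machinery from the proof of Theorem \ref{main1}. For $(1)\Leftrightarrow(2)$ I would argue exactly as in the first paragraph of that proof: if every $\psi_{\alpha\beta}$ is a GH automorphism of $U_{\alpha\beta}\times G$, then conjugating the product GC structure on $U_{\alpha}\times G$ by $\phi_{\alpha}$ produces an endomorphism of $T\pi^{-1}(U_{\alpha})\oplus T^{*}\pi^{-1}(U_{\alpha})$ that, by the GH-automorphism condition \eqref{gh homeo} for the $\psi_{\alpha\beta}$, does not depend on $\alpha$ on overlaps; it therefore patches to a GC structure on $P$ (integrability being inherited since each $\phi_{\alpha}$ is a local diffeomorphism and the Courant bracket is natural) for which every $\phi_{\alpha}$ is a GH homeomorphism. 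Conversely, if $P$ is an SGH principal $G$-bundle with these trivializations, then $\psi_{\alpha\beta}=\phi_{\alpha}\circ\phi_{\beta}^{-1}$ is a composite of GH homeomorphisms, hence a GH automorphism.

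For $(2)\Leftrightarrow(3)$ I would apply Theorem \ref{main1} to $P$ regarded as a smooth fiber bundle with typical fiber $(G,\mathcal{J}_{G})$ and structure group $G$ acting on the fiber by left translation; the trivialization conditions for an SGH fiber bundle and for an SGH principal bundle coincide, so this is legitimate. By that theorem, $\psi_{\alpha\beta}$ is a GH automorphism if and only if its conditions $(1)$ and $(2)$ hold, and in the present situation the orbit map $\rho_{f}\colon G\to G$, $\rho_{f}(g)=g\cdot f$, is right translation $R_{f}$. Since each $R_{f}$ is a diffeomorphism and $R_{e}=\id_{G}$, evaluating the three identities in Theorem \ref{main1}$(2)$ at $f=e$ gives precisely $(3)(b)$, $(3)(c)$, $(3)(d)$, while Theorem \ref{main1}$(1)$ is $(3)(a)$; this yields $(2)\Rightarrow(3)$. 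For $(3)\Rightarrow(2)$, condition $(3)(c)$ recovers the $\beta_{U_{\alpha\beta}}$-identity of Theorem \ref{main1}$(2)$ for every $f$ upon composing with the isomorphism $(R_{f})_{*}$, and one must promote the $f=e$ instances of the $J$- and $B$-identities to arbitrary $f$, using $(3)(a)$.

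I expect this last promotion to be the main obstacle. Rather than quoting Theorem \ref{main1} as a black box I would rerun its block-matrix computation here: write $(\psi_{\alpha\beta})_{*(m,f)}$ and $(\psi_{\beta\alpha})^{*}_{(m,f)}$ as in \eqref{eq1} and \eqref{eq2}, now with lower-right blocks $(L_{\phi_{\alpha\beta}(m)})_{*}$ and $(L_{\phi_{\alpha\beta}(m)^{-1}})^{*}$, and unwind the four block equations produced by \eqref{gh homeo}. Those involving only the lower-right blocks assemble into condition $(3)(a)$ (equivalently, \eqref{eq13} with $\mathcal{J}_{F}$ read as $\mathcal{J}_{G}$); the remaining ones carry their $f$-dependence solely through the isomorphism $(R_{f})_{*}$, which can be cancelled, and the $J_{G}$- and $B_{G}$-terms, a priori evaluated at the moving point $\phi_{\alpha\beta}(m)\cdot f$, are transported back to $\phi_{\alpha\beta}(m)$ by means of $(3)(a)$. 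Checking that this cancellation-and-transport really does reduce everything to $(3)(b)$--$(3)(d)$ for all $f$ is the single step requiring genuine care; everything else is bookkeeping identical to the proof of Theorem \ref{main1}.
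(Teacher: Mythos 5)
Your reduction of $(1)\Leftrightarrow(2)$ and your derivation of $(2)\Rightarrow(3)$ follow the paper's own route (its proof is literally ``follows from Theorem \ref{main1}''), and your remark that the $\beta$-identity of Theorem \ref{main1} is equivalent to $(3)(c)$ because $(\rho_f)_*=(R_f)_*$ is injective is correct. The gap is exactly the step you flag and then only sketch: promoting the $J$- and $B$-identities from $f=e$ to arbitrary $f$. Your ``cancellation-and-transport'' does not accomplish this. Condition $(3)(a)$ says that left translation $L_{\phi_{\alpha\beta}(m)}$ is a GH automorphism, so it transports $J_G$ and $B_G$ from the moving point $\phi_{\alpha\beta}(m)\cdot f$ back to the point $f$ --- not to the point $\phi_{\alpha\beta}(m)$, which is where $(3)(b)$ and $(3)(d)$ give you information. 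After cancelling $(R_f)_*$ and transporting, you are comparing the image of $(\phi_{\alpha\beta})_{*m}$ (suitably translated) with the values of $J_G$ and $B_G$ at points about which $(3)$ says nothing, and no bookkeeping closes this loop without an extra invariance hypothesis on $\mathcal{J}_G$.

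To see that the step genuinely fails in the stated generality (a real Lie group $G$ with an arbitrary $\mathcal{J}_G$), take $\mathcal{J}_G$ left-invariant, so that $(3)(a)$ is automatic. Pulling all tangent spaces back to $\mathfrak{g}$ by left translations, the identity \eqref{eq4} at $(m,f)$ becomes $\ad(f^{-1})\circ\tilde{A}\circ J_M=J_e\circ\ad(f^{-1})\circ\tilde{A}$, where $\tilde{A}=(L_{\phi_{\alpha\beta}(m)^{-1}})_*\circ(\phi_{\alpha\beta})_{*m}$ and $J_e=J_G|_e$, whereas $(3)(b)$ is only the case $f=e$, namely $\tilde{A}J_M=J_e\tilde{A}$; together they would force $[\ad(f^{-1}),J_e]\circ\tilde{A}=0$ for every $f$. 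For the two-dimensional nonabelian group ($[X,Y]=Y$) with the GCS of a left-invariant complex structure, $M\subseteq\C$ standard, and $\phi_{\alpha\beta}$ a nonconstant map holomorphic for that structure, conditions $(3)(a)$--$(d)$ all hold ($\beta_M=0$, $B_G=\beta_G=0$), yet $\ad(\exp(tX))$ commutes with no complex structure on $\mathfrak{g}$ and $\tilde{A}$ is surjective, so \eqref{eq4} fails and $\psi_{\alpha\beta}$ is not a GH automorphism. The promotion does work when $\mathcal{J}_G$ is also right-invariant --- in particular when $G$ is a complex Lie group with its induced GCS, where $B_G=\beta_G=0$ and $\ad(f)$ is $J$-linear --- which is the only case the paper actually uses from Proposition \ref{GH principal bundle} onward. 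So you must either impose that invariance (equivalently work in the complex-group setting) or find a genuinely different argument; as written, your transport step is a real gap, one that the paper's one-line citation of Theorem \ref{main1} also leaves unaddressed.
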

\begin{proof}
    Follows from Theorem \ref{main1}.
\end{proof}
\begin{prop}\label{GH principal bundle}
Let $G$ be a (connected) complex Lie group. Then, the following are equivalent.
 \begin{enumerate}
 \setlength\itemsep{1em}
     \item $P$ is an SGH principal $G$-bundle over $(M, \mathcal{J}_{M} )$ with  local trivializations $\{U_{\alpha},\phi_{\alpha}\}$ and transition functions $\{ \phi_{\alpha \beta} \}$. 
\item For all nonempty $U_{\alpha\beta}\subseteq M$ and $(m,f)\in U_{\alpha\beta}\times G\,,$ the map
  $$\psi_{\alpha\beta}:U_{\alpha\beta}\times G\longrightarrow U_{\alpha\beta}\times G\quad\text{defined as}\quad\psi_{\alpha\beta}(m,f)=(m,\phi_{\alpha\beta}(m)\cdot f)$$
  is a GH automorphism of $U_{\alpha\beta}\times G$.
 \item The transition maps $\phi_{\alpha\beta}$   satisfy the following:

  \vspace{0.3em}
  \begin{enumerate}
  \setlength\itemsep{1em}
      \item $\phi_{\alpha\beta}(m)$ is a biholomorphic map on $G$ $\forall\,\,m\in U_{\alpha\beta}$,
      \item each $\phi_{\alpha\beta}$  is a GH map.

      \end{enumerate}
  \end{enumerate} 
\end{prop}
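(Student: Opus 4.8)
The plan is to deduce Proposition \ref{GH principal bundle} from the already-established Proposition \ref{GH principal bundle1}, specializing to the case where $G$ is a connected complex Lie group. The equivalence of $(1)$ and $(2)$ is immediate from Proposition \ref{GH principal bundle1}, since that equivalence holds for any Lie group with a GC structure and does not require $G$ to be complex; so the only real content is to show that condition $(3)$ of Proposition \ref{GH principal bundle1} (the four conditions (a)--(d) there) is equivalent to condition $(3)$ here (the two conditions (a),(b)) when $\mathcal{J}_G$ is the natural GCS induced by the complex structure on $G$ as in Example \ref{complx eg}.

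The key observation is that for a complex Lie group $G$ viewed as a GC manifold via Example \ref{complx eg}, we have $B_G = 0$ and $\beta_G = 0$, and the GC structure is $\mathcal{J}_G = \mathrm{diag}(-J_G, J_G^*)$ where $J_G$ is the complex structure. First I would note that under these identifications, condition (a) of Proposition \ref{GH principal bundle1} — namely $\phi_{\alpha\beta}(m) \in \diff_{\mathcal{J}_G}(G)$ — is exactly the statement that $\phi_{\alpha\beta}(m)$ is a GH automorphism of $(G, \mathcal{J}_G)$, which for a complex manifold with its natural GCS means precisely that $\phi_{\alpha\beta}(m)$ is a biholomorphic self-map of $G$; this uses the discussion after Definition \ref{gcs} identifying $\mathcal{O}_M$ with holomorphic functions in the complex case, or more directly Lemma \ref{gh map equi} / Proposition \ref{gh map equi2}. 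Thus condition (a) here is equivalent to condition (a) there.

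Next I would handle conditions (b), (c), (d) of Proposition \ref{GH principal bundle1}. Since $B_G = 0$, condition (d), $B_G \circ (\phi_{\alpha\beta})_{*m} = 0$, is automatically satisfied and carries no information. Conditions (b) and (c) say $(\phi_{\alpha\beta})_{*m} \circ J_{U_{\alpha\beta}} = J_G \circ (\phi_{\alpha\beta})_{*m}$ (note: the paper writes $J_F$ but in the principal bundle setting the fiber is $G$, so this is $J_G$) and $(\phi_{\alpha\beta})_{*m} \circ \beta_{U_{\alpha\beta}} = 0$. By Proposition \ref{gh map equi2} applied to the map $\phi_{\alpha\beta} : U_{\alpha\beta} \to G$ (with $M$ replaced by the GC manifold $U_{\alpha\beta}$ and $N = G$ the complex manifold), these two equations together are exactly equivalent to $\phi_{\alpha\beta}$ being a GH map. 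Hence conditions (b)+(c)+(d) of Proposition \ref{GH principal bundle1} collapse to: $\phi_{\alpha\beta}$ is a GH map, which is condition (b) here. Combining with the previous paragraph gives the equivalence of the two versions of condition $(3)$, completing the proof.

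The argument is essentially bookkeeping; the main subtlety — the only place one must be a little careful — is to make sure the identification of ``$\phi_{\alpha\beta}(m) \in \diff_{\mathcal{J}_G}(G)$'' with ``$\phi_{\alpha\beta}(m)$ biholomorphic'' is legitimate, i.e.\ that a diffeomorphism of a complex manifold which is a GH homeomorphism with respect to the natural GCS (equation \eqref{gh homeo}) is the same thing as a biholomorphism. This follows because the natural GCS $\mathcal{J}_G = \mathrm{diag}(-J_G, J_G^*)$ determines and is determined by $J_G$, so the intertwining condition \eqref{gh homeo} for $\phi_{\alpha\beta}(m)$ reduces to $(\phi_{\alpha\beta}(m))_* \circ J_G = J_G \circ (\phi_{\alpha\beta}(m))_*$, which is the definition of a biholomorphism. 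I would state this as a one-line remark and then assemble the pieces.
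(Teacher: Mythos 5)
Your proposal is correct and follows essentially the same route as the paper, which simply cites Proposition \ref{GH principal bundle1} together with Lemma \ref{cmplx fiber}; your argument just unwinds the content of Lemma \ref{cmplx fiber} directly in the principal-bundle setting via Proposition \ref{gh map equi2} (noting $B_G=\beta_G=0$ and identifying $\diff_{\mathcal{J}_G}(G)$ with biholomorphisms). No gap.
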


\begin{proof}
 Follows from Proposition \ref{GH principal bundle1} and Lemma \ref{cmplx fiber}.   
\end{proof}
Let $M$ be a GC manifold and let $E$ be an SGH vector bundle of real rank $2l$ over $M$ with local trivializations $\{U_{\alpha}\,,\,\phi_{\alpha}\}$. Then, by Theorem \ref{main1} and \cite[Proposition 3.2]{lang2023}, we have 
\begin{enumerate}
\setlength\itemsep{1em}
    \item $\phi_{\alpha\beta}(m)\in GL_{l}(\C)$, i.e., $E$ is a complex vector bundle of of complex rank $l$, 
    \item each entry $B_{\lambda\gamma}: U_{\alpha\beta}\longrightarrow\C$ of $\phi_{\alpha\beta}=(B_{\lambda\gamma})_{l\times l}$ is a GH function, 
\end{enumerate}
where $\phi_{\alpha\beta}:U_{\alpha\beta}\longrightarrow GL_{2l}(\R)$ is the transition map as in  Theorem \ref{main1}. {Following the standard associated principal bundle construction
(cf. \cite[Chapter 3]{steenrod57}), we define the principal bundle, denoted by $P_{E}$, associated to $E$ as 
\begin{equation}\label{p-e} P_{E}:=\bigsqcup_{\alpha}U_{\alpha}\times GL_{l}(\C)\Big/\sim\,,
\end{equation} where $\sim$ is an equivalence relation, given by $$(b,h)\sim (a,g) \iff a=b\,\,\,\text{and}\,\,\,g=\phi_{\alpha\beta}(b)h$$ for $(b,h)\in U_{\beta}\times GL_{l}(\C)$ and  $(a,g)\in U_{\alpha}\times GL_{l}(\C)$. Considering $GL_{l}(\C)\subset GL_{2l}(\R)$, note that the transition map 
  $\phi_{\alpha\beta}:U_{\alpha\beta}\longrightarrow GL_{l}(\C)$ is a GH map if and only if each entry 
  of $\phi_{\alpha\beta}$ is a GH function. Hence, by Proposition \ref{GH principal bundle}, $P_{E}$ is an SGH principal $GL_{l}(\C)$-bundle.}
\medskip

{Given an SGH principal $GL_{l}(\C)$-bundle $P$ over $M$ with local trivializations $\{U_{\alpha},\phi_{\alpha}\}$, the associated vector bundle $E_P$ is defined as the identification space of the right action, that is, 
\begin{equation}\label{e-p}
    E_{P}:=P\times_{GL_{l}(\C)}\C^{l}\,,
\end{equation} where the right action of $GL_l(\mathbb{C})$ on $P\times\C^{l}$ is defined by
$$(p,f)\cdot g=(p\cdot g,g^{-1}( f))\,\,\,\forall\,\,\,p\in P\,,\,f\in\C^{l}\,\,\text{and}\,\,g\in GL_{l}(\C)\,.$$ Note that the transition map $\phi_{\alpha\beta}:U_{\alpha\beta}\longrightarrow GL_{l}(\C)$ of $E_{P}$, as in Theorem \ref{main1}, is a GH map by Proposition \ref{GH principal bundle}. This implies that each entry of $\phi_{\alpha\beta}$ is a GH function. Thus, by \cite [Proposition 3.2]{lang2023}, $E_{P}$ is an SGH vector bundle over $M$. 
The result below now follows using standard arguments.}

\begin{prop}\label{G-V prop}
Let $(M,\mathcal{J}_{M})$ be a GC manifold and $l\in\N$. Consider the following two sets 
\begin{align*}
    \mathscr{E}_{l}:=&\text{Set of all isomorphism classes of SGH vector bundles of real rank $2l$}\\
    &\text{over $M$}\,,
\end{align*}
and 
\begin{align*}
   \mathscr{P}_{GL_{l}(\C)}:=&\text{Set of all  isomorphism classes of SGH principal}\\
   &GL_{l}(\C)\text{-bundles over $M$}\,. 
 \end{align*} 
If $P_{E}$ and $E_{P}$ are as in the equations \eqref{p-e} and \eqref{e-p} respectively, then the map 
\begin{equation}\label{category map}
    \Phi: \mathscr{E}_{l}\longrightarrow\mathscr{P}_{GL_{l}(\C)}
\end{equation} defined by $\Phi([E])=[P_{E}]$ gives a bijective map between the two sets with the inverse map defined as $\Phi^{-1}([P])=[E_{P}]$ where $[E]$ and $[P]$ denote the SGH bundle isomorphism classes of $E$ and $P$, respectively.
\end{prop}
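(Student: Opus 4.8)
The plan is to verify that the two assignments $\Phi([E]) = [P_E]$ and $\Psi([P]) = [E_P]$ are well defined on isomorphism classes and are mutually inverse bijections. First I would check that $\Phi$ and $\Psi$ land in the right target sets: this is already established in the discussion preceding the statement, where $P_E$ is shown to be an SGH principal $GL_l(\mathbb C)$-bundle (via Proposition \ref{GH principal bundle}, using that each entry of $\phi_{\alpha\beta}$ is a GH function) and $E_P$ is shown to be an SGH vector bundle (via \cite[Proposition 3.2]{lang2023}). So the substantive content is (a) well-definedness on isomorphism classes, and (b) the identities $\Psi \circ \Phi = \mathrm{id}$ and $\Phi \circ \Psi = \mathrm{id}$.

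For well-definedness of $\Phi$: if $E \cong E'$ as SGH vector bundles, choose a common trivializing cover $\{U_\alpha\}$ so that the transition cocycles $\{\phi_{\alpha\beta}\}$ and $\{\phi'_{\alpha\beta}\}$ are cohomologous, i.e. $\phi'_{\alpha\beta} = \lambda_\alpha \phi_{\alpha\beta} \lambda_\beta^{-1}$ for GH maps $\lambda_\alpha : U_\alpha \to GL_l(\mathbb C)$ (the components of the bundle isomorphism in local frames are GH because the isomorphism is a GH bundle map). The same $\{\lambda_\alpha\}$ then defines an SGH principal-bundle isomorphism $P_E \to P_{E'}$ in the standard way (sending $[(a,g)]_\alpha$ to $[(a, \lambda_\alpha(a) g)]_\alpha$), and one checks it is a GH homeomorphism by working in the product charts $U_\alpha \times GL_l(\mathbb C)$ and invoking Proposition \ref{GH principal bundle}. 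Well-definedness of $\Psi$ is symmetric: an SGH isomorphism of principal bundles is likewise given by a GH cochain $\{\lambda_\alpha\}$ conjugating the cocycles, and the associated-bundle construction \eqref{e-p} turns it into an SGH vector-bundle isomorphism $E_P \to E_{P'}$.

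For the inverse identities: compute the transition cocycle of $P_{E_P}$ and of $E_{P_E}$ directly from the definitions \eqref{p-e} and \eqref{e-p}. Both constructions preserve the same distinguished local trivializations $\{U_\alpha\}$ and, unwinding the equivalence relations, the transition functions of $P_{E_P}$ (respectively $E_{P_E}$) are exactly $\phi_{\alpha\beta}$ again — the associated-vector-bundle and associated-principal-bundle operations are mutually inverse at the level of $GL_l(\mathbb C)$-valued cocycles. Hence there is a canonical identity-on-$\{U_\alpha\}$ isomorphism $P_{E_P} \cong P$ and $E_{P_E} \cong E$, which is visibly a GH homeomorphism (it is the identity in the product charts). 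This shows $\Psi = \Phi^{-1}$.

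The only mildly delicate point — the "main obstacle" such as it is — is making sure that the structural maps produced by the associated-bundle constructions (the equivalence-class projections and the local trivializations of $P_E$ and $E_P$) are genuinely GH, not merely smooth; but this is exactly what the preceding paragraphs of the paper already verify using Proposition \ref{GH principal bundle} and \cite[Proposition 3.2]{lang2023}, since GH-ness of a transition cocycle is equivalent to GH-ness of all its matrix entries, and this condition is manifestly symmetric under passing between $E$ and $P_E$. Everything else is the standard Steenrod-style bookkeeping of reconstructing a bundle from its cocycle (cf. \cite[Chapter 3]{steenrod57}), carried out in the GH category.
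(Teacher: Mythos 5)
Your proposal is correct and follows essentially the same route the paper intends: the paper itself only records that $P_E$ is an SGH principal $GL_l(\C)$-bundle and $E_P$ an SGH vector bundle, and then declares the bijection to follow "using standard arguments," which are exactly the cocycle-level checks (well-definedness on isomorphism classes via GH cochains $\{\lambda_\alpha\}$, and mutual inverseness by unwinding the transition functions of $P_{E_P}$ and $E_{P_E}$) that you spell out. Your additional care that the conjugating cochains and trivializations are GH, via Proposition \ref{GH principal bundle} and the entrywise-GH characterization, is precisely the point the paper's earlier discussion supplies, so no gap remains.
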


\subsection{SGH vector bundles and locally free sheaves of finite rank}
{Let $M$ be a GC manifold. Let $E$ be an SGH vector bundle of real rank $2l$ over $M$. Consider the sheaf $\mathbf{E}$ of GH sections of $E$ over $M$, that is, for any open set $U\subseteq M$,
$$\Gamma(U,\mathbf{E}):=\{s\in C^{\infty}(U,E)\,|\,s\,\text{is a GH section of $E$ over $U$}\}\,.$$ Note that $\mathbf{E}$ is a sheaf of $\mathcal{O}_{M}$-modules. On a trivializing neighborhood $U$, $$E|_{U}\cong U\times\C^{l}\,,$$ so that $\Gamma(U,\mathbf{E})\cong\bigoplus_{l}\mathcal{O}_{M}(U)$\,. This implies that $\mathbf{E}$ is a locally free sheaf of complex rank $l$ over $M$. (We will henceforth follow the convention of denoting the sheaf of GH sections of an SGH vector bundle by the corresponding bold letter.)}
\vspace{0.5em}

{Conversely, given any locally free sheaf $\mathcal{F}$ of $\mathcal{O}_{M}$-modules of rank $l$, one associates an SGH vector bundle $E_{\mathcal{F}}$ over $M$ of real rank $2l$ to it using standard arguments, such that $$\mathbf{E_{\mathcal{F}}}\cong \mathcal{F}\,\,\,\,\text{as $\mathcal{O}_{M}$-modules}\,.$$
Hence, we get the following.}
\begin{prop}\label{S-V prop}
Let $M$ be a GC manifold and $l\in\N$. Consider the following set
\begin{align*}
   \mathscr{S}_{l}:=\text{Set of all isomorphism class of locally free $\mathcal{O}_{M}$-modules of complex rank $l$}\,. 
 \end{align*} Then the association $E\longrightarrow\mathbf{E}$ induces an one to one correspondence between $ \mathscr{E}_{l}$ and $\mathscr{S}_{l}$ where $\mathscr{E}_{l}$ as given in Proposition \ref{G-V prop}. The inverse map is given by the association $\mathcal{F}\longrightarrow E_{\mathcal{F}}$\,.
\end{prop}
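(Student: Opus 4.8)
The plan is to exhibit the two assignments $E\mapsto \mathbf{E}$ and $\mathcal{F}\mapsto E_{\mathcal{F}}$ explicitly on the level of local data, check that each lands in the correct category and is compatible with isomorphisms, and then verify that the two composites are the identity up to canonical isomorphism. Most of the content has already been recorded in the paragraphs immediately preceding the statement; the proof is just the bookkeeping that turns those remarks into a genuine bijection.

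First, for the forward direction, I would recall that $\mathbf{E}$ is a sheaf of $\mathcal{O}_M$-modules and that on a trivializing open set $U$ the GH homeomorphism $\phi_\alpha$ identifies $E|_U$ with $U\times\C^l$ carrying the product GCS, so a smooth section of $E$ over $U$ is GH if and only if each of its $l$ component functions is a GH function; hence $\mathbf{E}(U)\cong \mathcal{O}_M(U)^{\oplus l}$ and $\mathbf{E}$ is locally free of rank $l$, i.e. $[\mathbf{E}]\in\mathscr{S}_l$. I would also observe that an SGH bundle isomorphism $E\to E'$ is a $C^\infty$ bundle isomorphism which, together with its inverse, is a GH map, hence carries GH sections to GH sections both ways; this gives an $\mathcal{O}_M$-module isomorphism $\mathbf{E}\xrightarrow{\sim}\mathbf{E'}$, so the assignment is well defined on isomorphism classes.

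Second, for the inverse, given a locally free $\mathcal{O}_M$-module $\mathcal{F}$ of rank $l$, I would choose an open cover $\{U_\alpha\}$ and $\mathcal{O}_M$-module trivializations $\tau_\alpha\colon \mathcal{F}|_{U_\alpha}\xrightarrow{\sim}\mathcal{O}_M|_{U_\alpha}^{\oplus l}$, so that $\tau_\alpha\circ\tau_\beta^{-1}$ is given by a matrix $g_{\alpha\beta}\in GL_l\big(\mathcal{O}_M(U_{\alpha\beta})\big)$, with entries GH functions and satisfying the cocycle condition. I then set $E_{\mathcal{F}}:=\bigsqcup_\alpha U_\alpha\times\C^l/\!\sim$, identifying $(x,v)\in U_\beta\times\C^l$ with $(x,g_{\alpha\beta}(x)v)\in U_\alpha\times\C^l$. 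Since each $g_{\alpha\beta}$ has GH entries it is a GH map $U_{\alpha\beta}\to GL_l(\C)$ (by Proposition \ref{prop GH}), and it is fiberwise $\C$-linear hence biholomorphic; therefore Lemma \ref{cmplx fiber} (equivalently Proposition \ref{GH principal bundle} applied to the associated principal $GL_l(\C)$-bundle, as in Proposition \ref{G-V prop}) shows that $E_{\mathcal{F}}$ is an SGH vector bundle of real rank $2l$. Changing the cover or the $\tau_\alpha$ replaces $(g_{\alpha\beta})$ by a cohomologous cocycle through a GH coboundary, producing an SGH-isomorphic bundle, and an $\mathcal{O}_M$-module isomorphism $\mathcal{F}\to\mathcal{F}'$ transports to GH gluing isomorphisms and hence to an SGH isomorphism $E_{\mathcal{F}}\to E_{\mathcal{F}'}$; so $\mathcal{F}\mapsto[E_{\mathcal{F}}]$ is well defined.

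Finally I would identify the composites. Starting from $\mathcal{F}$, the bundle $E_{\mathcal{F}}$ carries over each $U_\alpha$ the tautological GH frame given by the standard basis of $\C^l$, glued precisely by $g_{\alpha\beta}$; matching this frame with $\tau_\alpha$ yields a morphism of sheaves $\mathcal{F}\to\mathbf{E_{\mathcal{F}}}$ which is an isomorphism on the cover, hence an isomorphism of $\mathcal{O}_M$-modules. Starting from $E$ with trivializations $\phi_\alpha$ and GH transition matrices $\phi_{\alpha\beta}$, the sheaf $\mathbf{E}$ has exactly $(\phi_{\alpha\beta})$ as its cocycle, so reconstructing $E_{\mathbf{E}}$ recovers $E$ up to SGH isomorphism through $\{\phi_\alpha\}$. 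The main obstacle is not conceptual but is the one point that must be checked with care: that ``$\mathcal{F}$ locally free of rank $l$ over $\mathcal{O}_M$'' really forces the transition cocycle to take values in matrices whose entries are GH functions, so that Lemma \ref{cmplx fiber} applies — this is immediate from the fact that $\mathcal{O}_M$ is by Definition \ref{def:O_M} the sheaf of GH functions and from the coordinate description in Proposition \ref{prop GH} — together with the routine verification that all the above is independent of the chosen trivializing data. Granting this, the two assignments are mutually inverse bijections between $\mathscr{E}_l$ and $\mathscr{S}_l$.
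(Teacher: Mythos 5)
Your proposal is correct and follows essentially the same route as the paper, which only sketches these ``standard arguments'': GH sections of a locally trivialized SGH bundle correspond to $l$-tuples of GH functions, so $\mathbf{E}$ is locally free of rank $l$, and conversely a locally free $\mathcal{O}_{M}$-module is glued from trivial pieces by a cocycle of matrices with GH entries, to which Lemma \ref{cmplx fiber} (equivalently \cite[Proposition 3.2]{lang2023}, as in the construction of $P_E$ and $E_P$) applies, with the two composites identified exactly as you describe. One cosmetic point: your appeal to Proposition \ref{prop GH} is unnecessary (and that proposition assumes a regular GC manifold, whereas the statement here is for a general GC manifold); the fact you actually need---that a map into $GL_{l}(\C)$ is GH if and only if its entries are GH functions---is the one already recorded in the paper just before Proposition \ref{G-V prop}.
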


\section{Generalized Holomorphic Connection on SGH Principal bundle}\label{sec gh pb}
\subsection{GH tangent and GH cotangent bundle}
Let $(M,\mathcal{J}_{M})$ be a regular GC manifold of type $k\in\N\cup \{0\}$. Let $L_{M}$ and $\overline{L_{M}}$ are its corresponding $+i$ and $-i$-eigenspace sub-bundles of $(TM\oplus T^{*}M)\otimes\C$ respectively.
 Define
\begin{equation}\label{GH cotangent bundle}
    \mathcal{G}^{*}M:= L_{M}\cap (T^{*}M\otimes\C)\,.
\end{equation}
By \cite[Proposition 3.13]{lang2023}, $\mathcal{G}^{*}M$ is an SGH vector bundle over $M$. It is called the \textit{GH cotangent bundle}. The GH sections of $\mathcal{G}^{*}M$ are called GH $1$-forms. Since $\mathcal{G}^{*}M$ is $B$-field transformation invariant, locally (cf. \eqref{loc coordi}, \eqref{eq:E,Ebar}), the space of GH $1$-forms is of the form
\begin{equation*}
 \spn_{\mathcal{O}_{U}}\{ dz_1\cdots dz_{k}\} \,.
\end{equation*} 
This shows that $\mathbf{\mathcal{G}^{*}M}$, the sheaf of GH sections of
$\mathcal{G}^{*}M$, is a locally free sheaf of $\mathcal{O}_{M}$-modules of finite rank $k$. Define 
 \begin{equation}\label{GH tangent sheaf} 
\mathbf{\mathcal{G}M}:=\Hom_{\mathcal{O}_{M}}(\mathbf{\mathcal{G}^{*}M},\mathcal{O}_{M})\,.
\end{equation}
Since $\mathbf{\mathcal{G}^{*}M}$ is a locally free sheaf of $\mathcal{O}_{M}$-modules of rank $k$, $\mathbf{\mathcal{G}M}$ will also be a locally free sheaf with the same rank. Then, by Proposition \ref{S-V prop}, the corresponding SGH vector bundle is defined as
\begin{equation}\label{GH tangent bundle} 
\mathcal{G}M:=E_{\mathbf{\mathcal{G}M}}\,.
 \end{equation}
 Here $\mathcal{G}M$ as given in Proposition \ref{S-V prop}. It is called the \textit{GH tangent bundle} of $M$. The GH sections of $\mathcal{G}M$ are called GH vector fields. Since $\mathcal{G}^{*}M$ is $B$-transformation invariant, $\mathcal{G}M$ is also invariant under $B$-transformation.  Thus, locally (cf. \eqref{loc coordi}), the space of GH vector fields is of the form
 \begin{equation*}
     \spn_{\mathcal{O}_{U}}\left\{\frac{\partial}{\partial z_1},\cdots,\frac{\partial}{\partial z_{k}}\right\}
 \end{equation*}
 Note that $\mathcal{G}M$ and $\mathcal{G}^{*}M$ are dual to each other as $\mathcal{O}_{M}$-modules of their GH sections. But, we can say more. Observe that $C^{\infty}(\mathcal{G}^{*}M)=\Hom_{\mathcal{O}_M}(\mathbf{\mathcal{G}M} , \mathcal{O}_M)\otimes_{\mathcal{O}_{M}} C^{\infty}_{M}$\,. {Then, by \cite[Proposition 7, Section 5, Chapter II]{bourbaki89}, we have
 \begin{equation}\label{iso GM-G*M}
\begin{aligned}
C^{\infty}(\mathcal{G}^{*}M)
\cong\Hom_{( \mathcal{O}_{M}\otimes_{\mathcal{O}_{M}} C^{\infty}_{M})}(\mathbf{\mathcal{G}M}\otimes_{\mathcal{O}_{M}} C^{\infty}_{M}\,,\,\mathcal{O}_{M}\otimes_{\mathcal{O}_{M}} C^{\infty}_{M})\,
\cong C^{\infty}((\mathcal{G}M)^{*})\,,
 \end{aligned}     
 \end{equation} that is, $\mathcal{G}M$ and $\mathcal{G}^{*}M$ are also dual to each other as $C^{\infty}_{M}$-modules of their smooth sections.}
\begin{remark} We note that the definition of 
$\mathcal{G}M$, given in  \cite[p.16]{lang2023}, as 
$\mathcal{G}M:= \overline{L} \cap TM \otimes \mathbb{C}$  is unsatisfactory as it varies with $B$-transformations. In other words, it is not always the case that $\mathcal{G}M$ and $e^{B}(\overline{L}_{M})\cap TM \otimes \mathbb{C}$ are same, while $\mathcal{G}^{*}M=e^{B}(L_{M})\cap T^{*}M \otimes \mathbb{C}$ for any $B$-transformation. Therefore, this does not guarantee duality with respect to $\mathcal{G}^{\ast}M$.
\end{remark}

\subsection{SGH principal bundles with complex fibers and GH connections}
 
There are some special properties of SGH principal bundles with a complex Lie group as a structure group which we similar to holomorphic principal bundles over complex manifolds. These properties do not hold in general. We list a few of them here that are important for our purposes.  

\begin{prop}\label{prop:SGH special}
Let $G\hookrightarrow P\xrightarrow{\pi} M$ be an SGH principal $G$-bundle over a regular GC manifold $(M,\mathcal{J}_M)$ where $G$ is a complex Lie group. Then

\vspace{0.2em}
\begin{enumerate}
\setlength\itemsep{1em}
\item $P$ admits GH sections over any  trivializing open set $U \subseteq M$. 
\item If $s:V \to P$ is a GH section of $P$ over an open subset $V \subseteq M$, then so is $  s \cdot \phi $ for any GH map $\phi: V \to G$. 
\item If $s_1$ and $s_2$ are any two GH sections of $P$ over $V$, then there exists a unique $GH$ map $\phi: V \to G $ such that $s_2 =   s_1  \cdot  \phi \,.$
\end{enumerate}
    
\end{prop}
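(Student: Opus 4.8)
The plan is to prove the three assertions in order, exploiting the local description of SGH principal $G$-bundles with complex structure group provided by Proposition \ref{GH principal bundle} together with the coordinate characterization of GH functions on a regular GC manifold in Proposition \ref{prop GH} and Corollary \ref{cor:diffcharts}.

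For (1), I would work over a trivializing open set $U \subseteq M$, so that $\pi^{-1}(U) \cong U \times G$ via a trivialization $\phi_\alpha$ which is a GH homeomorphism. Pulling back the constant section $m \mapsto (m, e)$ (where $e \in G$ is the identity) gives a smooth section $s_\alpha$ of $P$ over $U$; I need to check it is a GH map. Since $\phi_\alpha$ is a GH homeomorphism, it suffices to show that $m \mapsto (m,e)$ is a GH map $U \to U \times G$, which reduces via Lemma \ref{imp lemma2} (or Proposition \ref{gh map equi2} applied to the projections, using that $G$ is complex) to the observation that the component into $G$ is constant, hence trivially GH, and the component into $U$ is the identity. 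Composing with $\phi_\alpha^{-1}$, which is a GH map since $\phi_\alpha$ is a GH homeomorphism, yields the desired GH section.

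For (2), given a GH section $s : V \to P$ and a GH map $\phi : V \to G$, I would show $s \cdot \phi : V \to P$, defined by $(s\cdot\phi)(m) = s(m)\cdot\phi(m)$, is GH. Since being a GH map is a local condition (Proposition \ref{gh map equi2}(1)$\Leftrightarrow$(2)), I may pass to a trivializing neighborhood and use a local GH section $s_\alpha$ from part (1) to write $s = s_\alpha \cdot h$ for a smooth $h : V \to G$; one checks $h$ is GH because $s$ and $s_\alpha$ both are and the trivialization identifies $s$ with the graph of $h$, whose GH-ness is equivalent to that of $h$ (again using that $G$ is complex and Remark \ref{rem:SGH principal bdl}, so right translations on $G$ are GH). Then $s\cdot\phi = s_\alpha \cdot (h\phi)$, and $h\phi : V \to G$ is GH as a product of two GH maps into a complex Lie group — here I use that multiplication $G \times G \to G$ is holomorphic, hence GH, and that a composite of GH maps is GH (via Proposition \ref{gh map equi2}(3), the relevant conditions $\psi_* \circ J = J \circ \psi_*$ and $\psi_* \circ \beta = 0$ are preserved under composition). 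Translating back through $\phi_\alpha^{-1}$ gives that $s\cdot\phi$ is GH.

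For (3), given GH sections $s_1, s_2$ over $V$, existence and uniqueness of a \emph{smooth} $\phi : V \to G$ with $s_2 = s_1 \cdot \phi$ is standard for smooth principal bundles; the content is that $\phi$ is GH. This is again local: in a trivialization over $U \subseteq V$ where $s_i$ corresponds to a GH map $h_i : U \to G$ (GH by the same graph argument as in (2)), we have $\phi|_U = h_1^{-1} h_2$, and this is GH because inversion $G \to G$ and multiplication $G \times G \to G$ are holomorphic, hence GH, on the complex Lie group $G$, and composites/products of GH maps are GH. Uniqueness follows from freeness of the $G$-action.

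\emph{Main obstacle.} The routine parts are the bundle-theoretic bookkeeping; the real point to get right is the repeated use of the fact that for a \emph{complex} Lie group $G$, the algebraic operations (multiplication, inversion) are genuinely GH maps and that GH maps are closed under composition and under such products — i.e., carefully invoking Remark \ref{rem:SGH principal bdl}, Proposition \ref{gh map equi2}, and the stability of the conditions $\psi_* \circ J_M = J_N \circ \psi_*$, $\psi_* \circ \beta_M = 0$. The one genuinely non-formal step is establishing the "graph is GH iff the map into $G$ is GH" equivalence, which requires unwinding the product GC structure on $U \times G$ and applying Proposition \ref{gh map equi2} (or Theorem \ref{main1}'s local equations) to the two factors separately; once that is in hand, (1)–(3) follow quickly.
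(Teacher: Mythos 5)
Your argument is correct and takes essentially the same route as the paper, whose entire proof of this proposition is the single line ``Follows from Proposition \ref{gh map equi2}''; your write-up just makes explicit the trivialization and graph bookkeeping and the use of holomorphicity of multiplication and inversion on the complex group $G$ that the paper leaves implicit. The only point to flag is that for the direction ``$k:U\to G$ GH $\Rightarrow$ the graph $m\mapsto(m,k(m))$ is GH into $U\times G$'' you cannot invoke Proposition \ref{gh map equi2} directly (its target must be a complex manifold); as you yourself note, this step needs the product-GCS computation, or equivalently Proposition \ref{GH principal bundle} applied to the GH automorphism $(m,g)\mapsto(m,k(m)\cdot g)$ composed with the constant section $m\mapsto(m,e)$.
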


 \begin{proof} 
 {Follows from Proposition \ref{gh map equi2}.}
 \end{proof}

\medskip
By Remark \ref{rem:SGH principal bdl}, $G$ acts on $P$ as a group of fiber preserving GH automorphisms, $P\times G \longrightarrow P\,$. The GCS induced by the complex structure on $G$ is regular, which implies that $P$ is a regular GC manifold. Let $\mathcal{G}^{*}P$ and 
 $\mathcal{G}P$ denote the GH cotangent and GH tangent bundles of $G$ as specified in \eqref{GH cotangent bundle} and \eqref{GH tangent bundle}, respectively. Since $G$ acts on $P$, it has an induced action on $(TP\oplus T^{*}P)\otimes\C$. For any $g\in G$, we have
\[(X+\xi)\cdot g =
\begin{pmatrix}
    g^{-1}_{*}    &0\\
    0        &g^{*}
\end{pmatrix} (X+\xi)\quad\text{for all}\quad X+\xi\in(TP\oplus T^{*}P)\otimes\C\,.
\]
 As $g:P\longrightarrow P\,,\,p\mapsto p\cdot g$, is a GH automorphism for every $g\in G$, it follows that  $G$ acts on $i$-eigen bundle $L_{P}$ of $\mathcal{J}_P$. This implies that $G$ acts on $\mathcal{G}^{*}P$ and hence on $\mathcal{G}P$. Define the {\em SGH Atiyah bundle} of $P$ by 
\begin{equation}\label{Q}
    At(P):=\mathcal{G}P/G\,.
\end{equation} Then, a point of $At(P)$ is a $G$-invariant field of GH tangent vectors, defined along one of the fibers of $P$. We shall show that $At(P)$ has a natural SGH vector bundle structure over $M$.

\medskip
Let $m\in M$ and let $U\subset M$ be a sufficiently small open neighborhood of $m$ such that there exist a GH section of $P$ over $U$, 
\begin{equation}\label{section}
    s:U\longrightarrow P\,.
\end{equation}
Let $(\mathcal{G}P)_{s}$ be the restriction of $\mathcal{G}P$ to $s(U)$. Now since $s: U\longrightarrow s(U)$ is a diffeomorphism, $s(U)$ can be endowed with the structure of a regular GC manifold such that $s$ becomes a GH homeomorphism between $U$ and $s(U)$. Since $s$ is a GH section, by \cite[Example 3.3]{lang2023}, $s^{*}(\mathcal{G}P)$ is an SGH vector bundle over $U$ and so, $(s^{-1})^{*}(s^{*}(\mathcal{G}P))$ is also an SGH vector bundle over $s(U)$ which coincides with $(\mathcal{G}P)_{s}$ as a smooth bundle. This defines a canonical SGH bundle structure on $(\mathcal{G}P)_{s}$.

\medskip
There is a natural one-to-one correspondence between $At(P)_{U}$ and $(\mathcal{G}P)_{s}$, 
\begin{equation}\label{gamma} \gamma_{s}:At(P)_{U}\longrightarrow(\mathcal{G}P)_{s}\,,
\end{equation}
where $\gamma_{s}$ assigns to each invariant GH vector field along $\pi^{-1}(x):=P_{x}$ its value at $s(x)$. This is easily seen to be an isomorphism of smooth vector bundles. Then, the SGH vector bundle structure of $(\mathcal{G}P)_{s}$ defines an SGH vector bundle structure of $At(P)_{U}$.

\medskip
It remains to show that this construction is independent of the choice of the GH section $s$. Let $s_1$ and $s_2$ be any two GH sections of $P$ over $U$. Then, by  Proposition \ref{prop:SGH special},  there exist a unique GH map $\phi:U\longrightarrow G$ such that 
$$s_{1}(x)\cdot\phi(x)=s_2(x)\,,\,\,\,\,\,\forall\,\,\,x\in U\,.$$
Note that the map $\psi:U\times G\longrightarrow U\times G$ defined as 
  $$\psi(x,g)=(x,\phi(x)\cdot g)\quad\text{for all $(x,g)\in U\times G$}$$
  is a GH automorphism of $U\times G$ by Proposition \ref{GH principal bundle}. Therefore, $\psi$ induces an isomorphism of SGH vector bundles, again denoted by $\psi\,,$
$$\psi:(\mathcal{G}P)_{s_1}\longrightarrow(\mathcal{G}P)_{s_2}\,,$$ satisfying
$$ \gamma_{s_2} = \psi \circ \gamma_{s_1}   \, .$$
Hence, the SGH vector bundle structure on $At(P)$ is well-defined.

\medskip
Let $\mathcal{T}$ denote the sub-bundle of $TP$ formed by vectors tangential to the fibers of $P$. Define $\mathcal{GT}=\mathcal{T}\cap \mathcal{G}P$. Since $G$ acts on $\mathcal{T}$, it also acts on $\mathcal{GT}$. Define
\begin{equation}\label{R}
    \mathcal{R}=\mathcal{GT}/G\,.
\end{equation}
 If $\gamma_{s}$ is defined as in \eqref{gamma}, then restricting to $\mathcal{R}_{U}$, we get 
 \begin{equation}\label{gamma'}
\gamma_{s}|_{\mathcal{R}_{U}}:=\gamma^{'}_{s}:\mathcal{R}_{U}\longrightarrow(\mathcal{GT})_{s}\,.
 \end{equation}
Note that $(\mathcal{GT})_{s}$ is also an SGH vector sub-bundle of $(\mathcal{G}P)_{s}$ as $(\mathcal{GT})$ is an SGH vector sub-bundle of $(\mathcal{G}P)$. Hence by above, $\mathcal{R}$ is also an SGH sub-bundle of $At(P)$.

\medskip
We now examine $\mathcal{R}$ more closely. Let $\mathfrak{g}$ denote the complex Lie algebra of $G$. As a vector space, $\mathfrak{g}$ is the holomorphic tangent space of $G$ at identity. In the SGH principal bundle $P$, for $x\in M$, each fiber $P_{x}$ can be identified with $G$ up to a left multiplication. Note that each smooth tangent vector at the point $p\in P$, tangential to the fiber, defines a unique left-invariant smooth vector field on $G$. Since the left (respectively, right) multiplication is biholomorphic, the vector space of left (respectively, right) invariant holomorphic vector fields on $G$ is then isomorphic with $\mathfrak{g}$ via left (respectively, right) multiplication. Note that by the locally product nature of the GCS on $P$, and the absence of a $B$ transformation in a GH homeomorphism, any holomorphic tangent vector to a fiber of $P$ is an element of 
$\mathcal{GT}$. 
Therefore,  any holomorphic tangent vector to the fiber at the point $p \in P$ defines a unique left invariant GH tangent vector field on $G$. Thus, we have an SGH vector bundle isomorphism 
$$\mathcal{GT}\cong P\times\mathfrak{g} \,.$$
Then, the action of $G$ on $\mathcal{GT}$ induces an action on $P\times\mathfrak{g}$ as follows,
\begin{equation}\label{action}
    (p,l)\cdot g=(p\cdot g\,,\ad(g^{-1})\cdot l)\,\,\,\,\,\forall\,\,\,(p,l)\in P\times\mathfrak{g}\,.
\end{equation}
Let $P \times_{G} \mathfrak{g}$ be the identification space defined by the action in \eqref{action}. The adjoint map is a biholomorphism due to the complex Lie group structure of $G$. Consequently,  the transition maps of the complex vector bundle $P\times_{G} \mathfrak{g}$ are GH maps. Therefore, by Theorem \ref{main1} and \cite[Proposition 3.2]{lang2023}, $P\times_{G} \mathfrak{g}$ is an SGH vector bundle over $M$ with fiber $\mathfrak{g}$ associated to $P$ by the adjoint representation. 
Hence, $\mathcal{R}=\mathcal{GT}/G\cong P\times_G \mathfrak{g}\,.$ We shall denote it by $Ad(P)$, that is,
\begin{equation}\label{atiyah GH bundle}
    Ad(P):= P\times_{G} \mathfrak{g}\,.
\end{equation}
The projection $\pi:P\longrightarrow M$ induces a bundle map $\mathcal{G}\pi:At(P)\longrightarrow\mathcal{G}M$. 
Using Definition \ref{main def}, we deduce that $\mathcal{G}\pi$ is an SGH vector bundle homomorphism. 

\medskip
Moreover, let $T_{s}$ denote the tangent bundle and $\mathcal{G}T_{s}$ denotes the GH tangent bundle of $s(U)$ respectively where $s(U)$ is the image of the GH section $s$ as in \eqref{section}. Then we have $(\mathcal{G}P)_{s}=(\mathcal{GT})_{s}\oplus(\mathcal{G}T_{s})$ where $(\mathcal{G}P)_{s}$ and $(\mathcal{GT})_{s}$ are as defined in \eqref{gamma} and \eqref{gamma'}, respectively. This implies the following commutative diagram:
\[\begin{tikzcd}
0 \arrow[r] & \mathcal{R}_{U} \arrow[r, "\xi",] \arrow[d, "\gamma^{'}_{s}"] & At(P)_{U} \arrow[r, "\mathcal{G}\pi"] \arrow[d, "\gamma_{s}"] & \mathcal{G}U \arrow[r] \arrow[d, "s^{\#}"] & 0 \\
0 \arrow[r] & (\mathcal{GT})_{s} \arrow[r]                                       & (\mathcal{G}P)_{s} \arrow[r]                                        & \mathcal{G}T_{s} \arrow[r]                 & 0
\end{tikzcd}\] where $\gamma_{s}$, $\gamma^{'}_{s}$ and $\xi$ are as in \eqref{gamma}, \eqref{gamma'} and the natural inclusion map, respectively. Also, the map $s^{\#}:\mathcal{G}U\longrightarrow\mathcal{G}T_{s}$, induced by $s$, is an isomorphism of  SGH vector bundles.  We conclude that 
\[\begin{tikzcd}
0 \arrow[r] & \mathcal{R} \arrow[r, "\xi"] & At(P) \arrow[r, "\mathcal{G}\pi"] & \mathcal{G}M \arrow[r] & 0
\end{tikzcd}\] is a short exact sequence of SGH vector bundles over $M$. We summarize our results in the following theorem.

\begin{theorem}\label{main2}
   Let $P$ be an SGH principal $G$-bundle over a regular GC manifold $(M,\mathcal{J}_{M})$ where $G$ is a complex Lie group. Then, there exists a canonical short exact sequence
    $\mathcal{A}(P)$ of SGH vector bundles over $M$:
    \begin{equation}\label{A(P)}
        \begin{tikzcd}
0 \arrow[r] & Ad(P) \arrow[r] & At(P) \arrow[r] & \mathcal{G}M \arrow[r] & 0
\end{tikzcd}
    \end{equation}
    where $\mathcal{G}M$ is the GH tangent bundle of $M$ as in \eqref{GH tangent bundle}, $Ad(P)$ is the SGH vector bundle associated to $P$ by the adjoint representation of $G$ as in \eqref{atiyah GH bundle}, and $At(P)$ is the SGH vector bundle of invariant GH tangent vector fields on $P$ as in \eqref{Q}.
\end{theorem}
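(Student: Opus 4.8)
The statement collects the construction carried out in the preceding discussion, so the plan is to assemble the pieces and verify that the resulting sequence consists entirely of SGH vector bundles and SGH bundle homomorphisms. First I would pin down the SGH structure on $At(P) = \mathcal{G}P/G$: on a trivializing open set $U \subseteq M$, Proposition \ref{prop:SGH special}(1) furnishes a GH section $s\colon U \to P$; since $s$ is a GH section, \cite[Example 3.3]{lang2023} shows $s^{*}(\mathcal{G}P)$ is SGH over $U$, hence so is $(\mathcal{G}P)_{s} = (s^{-1})^{*}s^{*}(\mathcal{G}P)$ over $s(U)$, and transporting along the smooth isomorphism $\gamma_{s}\colon At(P)_{U} \to (\mathcal{G}P)_{s}$ of \eqref{gamma} equips $At(P)_{U}$ with an SGH structure.

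The first thing requiring proof is that this is independent of $s$. Given two GH sections $s_{1}, s_{2}$ over $U$, Proposition \ref{prop:SGH special}(3) yields a unique GH map $\phi\colon U \to G$ with $s_{2} = s_{1}\cdot\phi$; by Proposition \ref{GH principal bundle} the map $(x,g) \mapsto (x,\phi(x)g)$ is a GH automorphism of $U\times G$, so it induces an SGH isomorphism $(\mathcal{G}P)_{s_{1}} \to (\mathcal{G}P)_{s_{2}}$ intertwining $\gamma_{s_{1}}$ and $\gamma_{s_{2}}$; the two SGH structures on $At(P)_{U}$ therefore coincide, and, as such $U$ cover $M$, they glue to a global SGH structure on $At(P)$. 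Running the same argument for the vertical GH sub-bundle $\mathcal{GT} = \mathcal{T}\cap\mathcal{G}P$ makes $\mathcal{R} = \mathcal{GT}/G$ an SGH sub-bundle of $At(P)$.

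Next I would identify the outer terms. Because the GCS on $P$ is locally a product and a GH homeomorphism carries no $B$-field, every holomorphic tangent vector to a fiber of $P$ lies in $\mathcal{GT}$ and extends to a left-invariant GH vector field on that fiber $\cong G$; this produces an $\ad$-equivariant SGH isomorphism $\mathcal{GT}\cong P\times\mathfrak{g}$ for the action \eqref{action}, whence $\mathcal{R}\cong P\times_{G}\mathfrak{g} =: Ad(P)$, which is SGH by Theorem \ref{main1} since $\ad$ is biholomorphic and the cocycle $\ad(\phi_{\alpha\beta}^{-1})$ is GH. Dually, $\pi$ induces $\mathcal{G}\pi\colon At(P) \to \mathcal{G}M$, which is an SGH bundle homomorphism by Definition \ref{main def}.

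Finally, exactness. Over $U$ with GH section $s$ the decomposition $(\mathcal{G}P)_{s} = (\mathcal{GT})_{s}\oplus(\mathcal{G}T_{s})$, together with the SGH isomorphism $s^{\#}\colon \mathcal{G}U \to \mathcal{G}T_{s}$, gives a commutative diagram with vertical SGH isomorphisms $\gamma'_{s}, \gamma_{s}, s^{\#}$ from $0 \to \mathcal{R}_{U} \to At(P)_{U} \to \mathcal{G}U \to 0$ to the visibly short exact sequence $0 \to (\mathcal{GT})_{s} \to (\mathcal{G}P)_{s} \to \mathcal{G}T_{s} \to 0$; since exactness is local this makes \eqref{A(P)} a short exact sequence of SGH vector bundles, and it is canonical because no choices survive into it. The one genuinely delicate point is the gluing/well-definedness step: that is where one must use that a GH section pushed by a GH map stays GH (Proposition \ref{prop:SGH special}(2)) and that the associated fiberwise translation is a GH automorphism (Proposition \ref{GH principal bundle}); everything else is bookkeeping.
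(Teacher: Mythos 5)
Your proposal is correct and follows essentially the same route as the paper: SGH structure on $At(P)$ via local GH sections and $\gamma_{s}$, independence of the section through Proposition \ref{prop:SGH special} and Proposition \ref{GH principal bundle}, the identification $\mathcal{R}\cong P\times_{G}\mathfrak{g}$ using the biholomorphy of the adjoint action, and exactness from the local splitting $(\mathcal{G}P)_{s}=(\mathcal{GT})_{s}\oplus\mathcal{G}T_{s}$ together with the commutative diagram. No gaps to flag.
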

\begin{definition}\label{def:gh conn}
Let $P$ be an SGH principal $G$-bundle over a regular GC manifold $M$ where $G$ is a complex Lie group. A generalized holomorphic (GH) connection on $P$ is a splitting of the short exact sequence $\mathcal{A}(P)$  in \eqref{A(P)} such that the splitting map is a GH map.    
\end{definition}

By \cite[Proposition 2]{atiyah57}, the extension $\mathcal{A}(P)$ defines an element 
$$a(P)\in H^{1}(M,\Hom_{\mathcal{O}_{M}}(\mathbf{\mathcal{G}M}\,,\,\mathbf{Ad(P)})\,,$$ and $\mathcal{A}(P)$ is a trivial extension if and only if $a(P)=0$.

\medskip
Note that $\mathbf{\mathcal{G}^{*}M}=\Hom_{\mathcal{O}_{M}}(\mathbf{\mathcal{G}M}\,,\,\mathcal{O}_{M})$. Hence,
$$\Hom_{\mathcal{O}_{M}}(\mathbf{\mathcal{G}M}\,,\,\mathbf{Ad(P)})=\mathbf{Ad(P)}\otimes_{\mathcal{O}_{M}}\mathbf{\mathcal{G}^{*}M}\,.$$
Thus we have the following result. 

\begin{theorem}\label{main3}
    An SGH principal $G$-bundle $P$ over a regular GC manifold $M$ defines an element 
    $$a(P)\in H^{1}(M,\mathbf{Ad(P)}\otimes_{\mathcal{O}_{M}}\mathbf{\mathcal{G}^{*}M})\,.$$ $P$ admits a GH connection if and only if $a(P)=0$.
\end{theorem}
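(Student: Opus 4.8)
The plan is to reduce Theorem \ref{main3} to Theorem \ref{main2} together with the standard homological interpretation of an extension of locally free sheaves, as formalized in Atiyah's \cite[Proposition 2]{atiyah57}. The short exact sequence $\mathcal{A}(P)$ of SGH vector bundles produced in Theorem \ref{main2} gives, by passing to sheaves of GH sections, a short exact sequence of locally free $\mathcal{O}_M$-modules
\begin{equation*}
0 \longrightarrow \mathbf{Ad(P)} \longrightarrow \mathbf{At(P)} \longrightarrow \mathbf{\mathcal{G}M} \longrightarrow 0 \,.
\end{equation*}
First I would invoke \cite[Proposition 2]{atiyah57}: such an extension is classified by an element $a(P)$ of $\operatorname{Ext}^1_{\mathcal{O}_M}(\mathbf{\mathcal{G}M}, \mathbf{Ad(P)})$, and because $\mathbf{\mathcal{G}M}$ is locally free of finite rank, this $\operatorname{Ext}$ group is canonically $H^1\bigl(M, \mathcal{H}om_{\mathcal{O}_M}(\mathbf{\mathcal{G}M}, \mathbf{Ad(P)})\bigr)$. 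Then I would apply the identification $\mathcal{H}om_{\mathcal{O}_M}(\mathbf{\mathcal{G}M}, \mathbf{Ad(P)}) \cong \mathbf{Ad(P)} \otimes_{\mathcal{O}_M} \mathbf{\mathcal{G}^*M}$, which holds since $\mathbf{\mathcal{G}^*M} = \mathcal{H}om_{\mathcal{O}_M}(\mathbf{\mathcal{G}M}, \mathcal{O}_M)$ is locally free of finite rank $k$ (established in the text preceding the statement). This already appears spelled out in the excerpt just before the theorem, so it is essentially a matter of assembling the pieces.

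For the second assertion — that $P$ admits a GH connection if and only if $a(P) = 0$ — the argument is the tautological correspondence between splittings of the extension and its triviality class. By Definition \ref{def:gh conn}, a GH connection is precisely a GH (equivalently, $\mathcal{O}_M$-linear on sheaves of GH sections) splitting of $\mathcal{A}(P)$. A splitting $\sigma: \mathbf{\mathcal{G}M} \to \mathbf{At(P)}$ of $\mathcal{G}\pi$ exists globally if and only if the extension is isomorphic to the trivial one, which by \cite[Proposition 2]{atiyah57} happens exactly when its class $a(P) \in H^1(M, \mathbf{Ad(P)} \otimes_{\mathcal{O}_M} \mathbf{\mathcal{G}^*M})$ vanishes. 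One subtlety worth a sentence: a global splitting of the surjection $\mathbf{At(P)} \twoheadrightarrow \mathbf{\mathcal{G}M}$ as $\mathcal{O}_M$-modules is the same data as a global splitting of the injection $\mathbf{Ad(P)} \hookrightarrow \mathbf{At(P)}$ (take $\operatorname{id} - \sigma \circ \mathcal{G}\pi$), and both are equivalent to the SGH-bundle-level splitting by Proposition \ref{S-V prop}, so the categorical statement about sheaves transports back to the bundle picture in Definition \ref{def:gh conn}.

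The main obstacle — such as it is — is a bookkeeping point rather than a conceptual one: one must be careful that the $\operatorname{Ext}^1$-to-$H^1$ comparison is legitimate over the ringed space $(M, \mathcal{O}_M)$, where $\mathcal{O}_M$ is the sheaf of GH functions (locally a ring of convergent power series in $k$ transverse variables, by Proposition \ref{prop GH}), rather than over a genuine complex manifold. This works because the only inputs needed are that $\mathbf{\mathcal{G}M}$ and $\mathbf{\mathcal{G}^*M}$ are locally free of finite rank over $\mathcal{O}_M$ and that $\mathcal{H}om$ and $\otimes$ behave as usual for locally free modules — both of which are available here — so Atiyah's argument applies verbatim. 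I would therefore present the proof as: (i) recall the extension from Theorem \ref{main2}, (ii) apply \cite[Proposition 2]{atiyah57} to get $a(P) \in \operatorname{Ext}^1$, (iii) rewrite $\operatorname{Ext}^1$ as $H^1$ of the $\mathcal{H}om$ sheaf using local freeness, (iv) identify the $\mathcal{H}om$ sheaf with $\mathbf{Ad(P)} \otimes_{\mathcal{O}_M} \mathbf{\mathcal{G}^*M}$, and (v) observe that GH connections are splittings and invoke the standard equivalence "splitting exists $\iff$ class vanishes."
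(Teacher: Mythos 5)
Your proposal is correct and follows essentially the same route as the paper: the paper likewise takes the extension $\mathcal{A}(P)$ from Theorem \ref{main2}, invokes \cite[Proposition 2]{atiyah57} to obtain $a(P)\in H^{1}(M,\Hom_{\mathcal{O}_{M}}(\mathbf{\mathcal{G}M},\mathbf{Ad(P)}))$ with triviality of the extension equivalent to $a(P)=0$, and then rewrites the coefficient sheaf as $\mathbf{Ad(P)}\otimes_{\mathcal{O}_{M}}\mathbf{\mathcal{G}^{*}M}$ using $\mathbf{\mathcal{G}^{*}M}=\Hom_{\mathcal{O}_{M}}(\mathbf{\mathcal{G}M},\mathcal{O}_{M})$. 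Your extra remarks on the $\operatorname{Ext}^1$-to-$H^1$ comparison over the ringed space $(M,\mathcal{O}_M)$ and on passing between bundle-level and sheaf-level splittings via Proposition \ref{S-V prop} are compatible elaborations of what the paper leaves implicit.
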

\begin{definition}\label{def:atiyah}
    The element $a(P)$ in Theorem \ref{main3} is called the \textit{Atiyah class} of the SGH principal $G$-bundle $P$. The SGH vector bundle $At(P)$ in \eqref{A(P)} is called the \textit{SGH Atiyah bundle} of the SGH principal $G$-bundle $P$.
\end{definition}

\begin{definition}\label{def:gen conn}
  A smooth generalized connection in the principal bundle $P$ is a smooth splitting of the short exact sequence $\mathcal{A}(P)$  in \eqref{A(P)}.
  \end{definition}
  
\begin{remark}
In this case, when $\mathcal{A}(P)$ is considered as a short exact sequence of smooth vector bundles, again by \cite[Proposition 2]{atiyah57}, the smooth extension $\mathcal{A}(P)$ defines an element 
$$a'(P)\in H^{1}(M,\Hom_{C^{\infty}_{M}}(C^{\infty}({\mathcal{G}M})\,,\,C^{\infty}({Ad(P)}))\,,$$ and $\mathcal{A}(P)$ is a trivial smooth extension if and only if $a'(P)=0$. But due to the smooth partition of unity, $\Hom_{C^{\infty}_{M}}(C^{\infty}({\mathcal{G}M})\,,\,C^{\infty}({Ad(P)})$ is a fine sheaf. This implies $H^{1}(M,\Hom_{C^{\infty}_{M}}(C^{\infty}({\mathcal{G}M})\,,\,C^{\infty}({Ad(P)}))=0$ and $a'(P)$ is always zero. Thus a smooth generalized connection always exists.
\end{remark}


\subsection{Local coordinate description of the Atiyah class}\label{connection} 
In this section, we compute the Atiyah class $a(P)$ in local coordinates following Atiyah \cite{atiyah57}. Let $G$ be a (connected) complex Lie group with complex Lie algebra $\mathfrak{g}\,.$ Let $P$ be an SGH principal $G$-bundle over a regular GC manifold $M$ with  local trivializations $\{U_{\alpha}\,,\,\phi_{\alpha}\}$ and transition maps $\phi_{\alpha \beta}$ (see \eqref{phi alpha beta}).

\medskip
Let $M_{\mathfrak{g}}:=M\times\mathfrak{g}$ denote the trivial SGH vector bundle over $M$ where $\mathfrak{g}$ is the complex Lie algebra of $G$. Since $\phi_{\alpha}$ is a GH homeomorphism and it commutes with the action of $G$, it induces an SGH vector bundle isomorphism
\begin{equation}\label{Mg}
\widehat{\phi_{\alpha}}:At(P)|_{U_{\alpha}}\longrightarrow\mathcal{G}M|_{U_{\alpha}}\oplus M_{\mathfrak{g}}|_{U_{\alpha}}\,.
\end{equation}
Define the SGH vector bundle homomorphism 
\begin{equation}\label{a alpha}
   a_{\alpha}: \mathcal{G}M|_{U_{\alpha}}\longrightarrow At(P)|_{U_{\alpha}} 
\end{equation}
by $a_{\alpha}(X)=(\widehat{\phi_{\alpha}})^{-1}(X\oplus 0)$ for all $X\in \mathcal{G}M|_{U_{\alpha}}$. Then the map $a_{\alpha\beta}:\mathcal{G}M|_{U_{\alpha\beta}}\longrightarrow At(P)|_{U_{\alpha\beta}}$, defined as $$a_{\alpha\beta}=a_{\beta}-a_{\alpha}\,,$$ gives a representative $1$-cocycle for $a(P)$ in $H^{1}(M,\Hom_{\mathcal{O}_{M}}(\mathbf{\mathcal{G}M}\,,\,\mathbf{Ad(P)})$.

\medskip
Denote $G_{\mathfrak{g}}:=G\times\mathfrak{g}$. Note that  $\mathcal{G}G:=T^{1,0}G$.  Both right and left multiplication maps on $G$ are biholomorphic. Using them we have  SGH bundle isomorphisms
$$\xi:\mathcal{G}G\longrightarrow G_{\mathfrak{g}}\quad\text{and}\quad\eta:\mathcal{G}G\longrightarrow G_{\mathfrak{g}}\,,$$
 respectively. Thus, 
$$\xi\,,\,\eta\in H^0(G,\Hom_{\mathcal{O}_{G}}(\mathbf{T^{1,0}G},\mathbf{G_{\mathfrak{g}}}))\,.$$

\vspace{0.2em}
Now, $\phi_{\alpha\beta}$ is a GH map due to Proposition \ref{GH principal bundle}, thereby it induces elements $$\xi_{\alpha\beta}\,,\,\eta_{\alpha\beta}\in\Gamma(U_{\alpha\beta},\Hom_{\mathcal{O}_{M}}(\mathbf{\mathcal{G}M},\mathbf{M_{\mathfrak{g}}}))\,.$$  
Then, for each $X\in\mathcal{G}M|_{U_{\alpha\beta}}$,
\begin{align*}
 \widehat{\phi_{\alpha}}(a_{\alpha\beta}(X))&=\widehat{\phi_{\alpha}}((\widehat{\phi_{\beta}})^{-1}(X\oplus 0)-(\widehat{\phi_{\alpha}})^{-1}(X\oplus 0))\,\\
 &=\widehat{\phi_{\alpha}}((\widehat{\phi_{\beta}})^{-1}(X\oplus 0))-(X\oplus 0)\,\\
 &=(X\oplus\xi_{\alpha\beta}(X))-(X\oplus 0)\,\\
 &=(0\oplus\xi_{\alpha\beta}(X))\,.
\end{align*}
By the short exact sequence in \eqref{A(P)}, we can identify $Ad(P)|_{U_{\alpha}}$ as an SGH subbundle of $At(P)|_{U_{\alpha}}$. Then, the SGH vector bundle isomorphism between $Ad(P)|_{U_{\alpha}}$ and $M_{\mathfrak{g}}|_{U_{\alpha}}$ is identified with the restriction map
$$\widehat{\phi_{\alpha}}|_{Ad(P)|_{U_{\alpha}}}:Ad(P)|_{U_{\alpha}}\longrightarrow M_{\mathfrak{g}}|_{U_{\alpha}}\,.$$ 
Therefore, we get 
\begin{equation}\label{a alpha beta 2}
    a_{\alpha\beta}=(\widehat{\phi_{\alpha}})^{-1}\circ\xi_{\alpha\beta}\,,
\end{equation} and since $\xi_{\alpha\beta}=\ad(\phi_{\alpha\beta})\cdot\eta_{\alpha\beta}$,  we
can replace \eqref{a alpha beta 2} by
\begin{equation}\label{a alpha beta 3}
  a_{\alpha\beta}=(\widehat{\phi_{\beta}})^{-1}\circ\eta_{\alpha\beta}\,.  
\end{equation}
Now if $a(P)=0$, then the coboundary equation is 
$$a_{\alpha\beta}=\gamma_{\beta}-\gamma_{\alpha}$$ where $\gamma_{i}\in\Gamma(U_{i},\Hom_{\mathcal{O}_{M}}(\mathbf{\mathcal{G}M}\,,\,\mathbf{Ad(P)}))$ for $i\in\{\alpha,\beta\}$. For each $i\in\{\alpha,\beta\}$, if we denote 
$$\Theta_{i}:=\widehat{\phi_{i}}\circ\gamma_{i}\,,$$ then $\Theta_{i}\in\Gamma(U_{i},\Hom_{\mathcal{O}_{M}}(\mathbf{\mathcal{G}M}\,,\,\mathbf{M_{\mathfrak{g}}}))\,.$ Thus the 
coboundary equation becomes
\begin{equation}\label{co boundary equation}  \xi_{\alpha\beta}=\ad(\phi_{\alpha\beta})\cdot\Theta_{\beta}-\Theta_{\alpha}\,,
\end{equation}
or
\begin{equation}\label{co boundary equation 2}
\eta_{\alpha\beta}=\Theta_{\beta}-\ad(\phi_{\beta\alpha})\cdot\Theta_{\alpha}\,. 
\end{equation}
\begin{remark}
Note that, in case of smooth generalized connection, since we have $$H^{1}(M,\Hom_{C^{\infty}_{M}}(C^{\infty}({\mathcal{G}M})\,,\,C^{\infty}({Ad(P)})))=0\,,$$ the co boundary equation is 
$$a_{\alpha\beta}=\gamma'_{\beta}-\gamma'_{\alpha}$$ where $\gamma'_{i}\in C^{\infty}(U_{i},\Hom_{C^{\infty}_{M}}(C^{\infty}({\mathcal{G}M})\,,\,C^{\infty}({Ad(P)})))$ for $i\in\{\alpha,\beta\}$. Then for each $i$ in $\{\alpha,\beta\}$, if we again denote 
$$\Theta_{i}:=\widehat{\phi_{i}}\circ\gamma'_{i}\,,$$ we get that $\Theta_{i}\in C^{\infty}(U_{i},
\Hom_{C^{\infty}_{M}}(C^{\infty}({\mathcal{G}M})\,,\,C^{\infty}(M_{\mathfrak{g}})))\,.$
 Thus the co-boundary equation becomes
\begin{equation}\label{co boundary equation1-2}  \xi_{\alpha\beta}=\ad(\phi_{\alpha\beta})\cdot\Theta_{\beta}-\Theta_{\alpha}\,,
\end{equation}
or
\begin{equation}\label{co boundary equation2-2}
\eta_{\alpha\beta}=\Theta_{\beta}-\ad(\phi_{\beta\alpha})\cdot\Theta_{\alpha}\,. 
\end{equation}   
\end{remark}
\section{ Atiyah class of an SGH vector bundle}\label{sec atiyah}
Let $E$ be an SGH vector bundle over a regular GC manifold $M$ with local trivializations $\{U_{\alpha}\,,\,\phi_{\alpha}\}$. Let $J^{1}(E)$ be the first jet bundle of $E$ over $M$ as defined in \cite[Section 3.2]{lang2023}. Then by \cite[Theorem 3.17]{lang2023}, $J_{1}(E)$ is an  SGH vector bundle over $M$ and it fits into the following exact sequence, denoted by $\mathcal{B}(E)$, 
\begin{equation}\label{B(E)}
  \begin{tikzcd}
0 \arrow[r] & \mathcal{G}^{*}M\otimes E \arrow[r, "J"] & J_{1}(E) \arrow[r, "\pi_{1}"] & E \arrow[r] & 0
\end{tikzcd}  
\end{equation}
of SGH vector bundles over $M$. 
\medskip

As a sheaf of $\C$-modules,
$$\mathbf{J_{1}(E)}=\mathbf{E}\oplus_{\C}(\mathbf{\mathcal{G}^{*}M}\otimes_{\mathcal{O}_{M}}\mathbf{E})\,.$$
Recall that, for each $m\in M$, $f\in\mathcal{O}_{M,m}$ if and only if $(df)_{m}\in(\mathcal{G}^{*}M)_{m}$. So, we can define the map
$$\phi_{m}:\mathcal{O}_{M,m}\times\mathbf{J_{1}(E)}_{m}\longrightarrow \mathbf{J_{1}(E)}_{m}$$ by 
$$\phi_{m}(f, s+ \delta ) = fs\oplus(f\delta+df\otimes s)$$
where $s\in E_{m}$,  $ \delta \in ((\mathcal{G}^{*}M)_{m}\otimes_{\mathcal{O}_{M,m}} E_{m})$, and $f\in\mathcal{O}_{M,m}$. 
This defines an action of $\mathcal{O}_{M}$ on $\mathbf{J_{1}(E)}$ making it a sheaf of $\mathcal{O}_{M}$-modules.  We obtain the following short exact sequence of $\mathcal{O}_{M}$-modules
\begin{equation}\label{B(E)2}
  \begin{tikzcd}
0 \arrow[r] & \mathbf{\mathcal{G}^{*}M}\otimes_{\mathcal{O}_{M}}\mathbf{E}\arrow[r, "\widehat{J}"] & \mathbf{J_{1}(E)} \arrow[r, "\widehat{\pi_1}"] & \mathbf{E} \arrow[r] & 0
\end{tikzcd}  
\end{equation}
where $\widehat{J}(\delta)=0+\delta$ and $\widehat{\pi_{1}}(s+\delta)=s$ are the morphisms of $\mathcal{O}_{M}$-modules induced by the maps $J$ and $\pi_1$  in \eqref{B(E)}, respectively.

\medskip
Since $\Hom_{\mathcal{O}_{M}}(\mathbf{E},\mathbf{\mathcal{G}^{*}M}\otimes_{\mathcal{O}_{M}}\mathbf{E})\cong\mathbf{\mathcal{G}^{*}M}\otimes_{\mathcal{O}_{M}}\mathbf{\en(E)}$,  by \cite[Proposition 2]{atiyah57} and using \eqref{B(E)2}, the extension $\mathcal{B}(E)$ defines an element
$$b(E)\in H^{1}(M,\mathbf{\mathcal{G}^{*}M}\otimes_{\mathcal{O}_{M}}\mathbf{\en(E)})\,.$$

\begin{definition} (\cite[Definition 4.4]{lang2023})\label{atiyah VB}
$b(E)$ is called the \textit{Atiyah class} of the SGH vector bundle $E$ over $M$. 
\end{definition}

The following result is standard in the holomorphic case (see \cite[Proposition 9]{atiyah57}) and follows similarly in the SGH setting.  

\begin{prop}\label{imp prop}
 Let $E$ be an SGH vector bundle of real rank $2l$ over $M$. Let $P_{E}$ be the corresponding SGH principal $GL_{l}(\C)$-bundle as in \eqref{p-e}. Then we have $$\en(E)\cong Ad(P_{E})$$ as SGH vector bundles where $Ad(P_{E})$ as in \eqref{atiyah GH bundle}.  
\end{prop}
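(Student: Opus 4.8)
The plan is to mimic the classical proof that for a holomorphic vector bundle $E$ the adjoint bundle of its associated frame bundle is $\en(E)$, transported to the SGH setting. First I would recall the construction of $P_E$ from \eqref{p-e}: it is the SGH principal $GL_l(\C)$-bundle with the same local trivializations $\{U_\alpha,\phi_\alpha\}$ and transition maps $\phi_{\alpha\beta}\colon U_{\alpha\beta}\to GL_l(\C)$ as $E$, where each entry of $\phi_{\alpha\beta}$ is a GH function. By \eqref{atiyah GH bundle}, $Ad(P_E)=P_E\times_{GL_l(\C)}\mathfrak{gl}_l(\C)$, where $GL_l(\C)$ acts on $\mathfrak{gl}_l(\C)=\mathfrak{g}$ by the adjoint action $\ad(g)\cdot A = gAg^{-1}$. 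On the other hand, $\en(E)$ has the natural frame-bundle description: on $U_\alpha$ it is trivialized as $U_\alpha\times\mathfrak{gl}_l(\C)$, and the transition functions are $A\mapsto \phi_{\alpha\beta}(m)\,A\,\phi_{\alpha\beta}(m)^{-1}$, i.e. exactly $\ad(\phi_{\alpha\beta})$. So at the level of smooth complex vector bundles the isomorphism $\en(E)\cong Ad(P_E)$ is the standard one, matching the two sets of transition cocycles.

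The content specific to the SGH category is to upgrade this smooth isomorphism to an isomorphism of SGH vector bundles. Here I would invoke the characterization of SGH vector bundles in terms of transition data: by Theorem~\ref{main1} together with \cite[Proposition 3.2]{lang2023}, a complex vector bundle over the regular GC manifold $M$ given by local trivializations and transition maps $\psi_{\alpha\beta}\colon U_{\alpha\beta}\to GL_r(\C)$ is SGH precisely when each matrix entry of $\psi_{\alpha\beta}$ is a GH function. For $\en(E)$ the transition maps are $\ad(\phi_{\alpha\beta})$, and since the entries of $\ad(\phi_{\alpha\beta})(m) = \phi_{\alpha\beta}(m)(-)\phi_{\alpha\beta}(m)^{-1}$ are polynomial expressions in the entries of $\phi_{\alpha\beta}(m)$ and of $\phi_{\alpha\beta}(m)^{-1}=\phi_{\beta\alpha}(m)$ (with denominators given by $\det$, which is a nowhere-vanishing GH function), and since $\mathcal{O}_M$ is a sheaf of rings closed under such operations, these entries are GH functions. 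Thus $\en(E)$ is an SGH vector bundle whose defining cocycle coincides with that of $Ad(P_E)=P\times_{GL_l(\C)}\mathfrak{g}$ (the latter having been shown to be SGH in the paragraph preceding \eqref{atiyah GH bundle}, by the same entrywise-GH criterion applied to the biholomorphic adjoint map). An SGH bundle isomorphism is then obtained by declaring it to be the identity $U_\alpha\times\mathfrak{gl}_l(\C)\to U_\alpha\times\mathfrak{g}$ on each chart; compatibility on overlaps is exactly the equality of the two adjoint cocycles, and the map is a GH homeomorphism because it is a GH map in each local trivialization (being the identity) and the trivializations are GH homeomorphisms by definition of SGH bundle.

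The main obstacle — really the only nontrivial point — is verifying that passing from a matrix-valued GH transition map to its adjoint (equivalently, conjugation) action stays within the GH category: one must know that the entries of $\phi_{\alpha\beta}^{-1}$ are GH and that products and $\mathcal{O}_M$-linear combinations of GH functions are GH. The former follows because $\det\phi_{\alpha\beta}$ is a nowhere-zero GH function (GH functions form a sheaf of rings by Lemma~\ref{imp lemma2}/Proposition~\ref{prop GH}, and locally they are convergent power series in $z=(z_1,\dots,z_k)$, a ring in which units have power-series inverses), so Cramer's rule expresses each entry of $\phi_{\alpha\beta}^{-1}$ as a GH function; the latter is immediate from $\mathcal{O}_M$ being a subsheaf of rings of $C^\infty_M$. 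Everything else is the standard associated-bundle bookkeeping, and I would state it briefly rather than grinding through it, since the analogous holomorphic statement is \cite[Proposition 9]{atiyah57}.
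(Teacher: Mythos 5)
Your proposal is correct and follows essentially the same route the paper intends: the paper simply asserts the result "follows similarly" to the holomorphic case of \cite[Proposition 9]{atiyah57}, i.e.\ the standard identification of the two $\ad(\phi_{\alpha\beta})$-cocycles, which is exactly what you carry out, together with the (correct) SGH-specific check that conjugation by a GH-valued cocycle again has GH entries. The only remark is that this last check can be done even more directly, since $\phi_{\alpha\beta}^{-1}=\phi_{\beta\alpha}$ is itself a transition map with GH entries and GH functions are closed under products by Lemma \ref{imp lemma2}, so Cramer's rule and the power-series description are not strictly needed.
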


\begin{corollary}
  $H^{1}(M,\mathbf{\mathcal{G}^{*}M}\otimes_{\mathcal{O}_{M}}\mathbf{\en(E)})\cong H^{1}(M,\mathbf{\mathcal{G}^{*}M}\otimes_{\mathcal{O}_{M}}\mathbf{Ad(P_{E})})$.  
\end{corollary}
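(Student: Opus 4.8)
The plan is to deduce this immediately from Proposition \ref{imp prop} together with the sheaf-theoretic dictionary of Proposition \ref{S-V prop}. First I would invoke Proposition \ref{imp prop} to obtain an isomorphism $\en(E)\cong Ad(P_{E})$ of SGH vector bundles over $M$. Passing to sheaves of GH sections is functorial — this is precisely the content of the correspondence $E\mapsto\mathbf{E}$ of Proposition \ref{S-V prop} — so this bundle isomorphism descends to an isomorphism $\mathbf{\en(E)}\cong\mathbf{Ad(P_{E})}$ of locally free $\mathcal{O}_{M}$-modules.

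Next I would tensor this isomorphism with the locally free $\mathcal{O}_{M}$-module $\mathbf{\mathcal{G}^{*}M}$ over $\mathcal{O}_{M}$. Since $-\otimes_{\mathcal{O}_{M}}\mathbf{\mathcal{G}^{*}M}$ is a functor on the category of $\mathcal{O}_{M}$-modules, this yields an isomorphism of sheaves
\[
\mathbf{\mathcal{G}^{*}M}\otimes_{\mathcal{O}_{M}}\mathbf{\en(E)}\;\cong\;\mathbf{\mathcal{G}^{*}M}\otimes_{\mathcal{O}_{M}}\mathbf{Ad(P_{E})}\,.
\]
Finally, isomorphic sheaves of abelian groups on $M$ have canonically isomorphic sheaf cohomology in every degree, so applying $H^{1}(M,-)$ to both sides gives the asserted isomorphism.

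There is essentially no obstacle here; the corollary is a formal consequence of Proposition \ref{imp prop}. The only point worth a word of care is that the isomorphism furnished by Proposition \ref{imp prop} is genuinely an isomorphism of \emph{SGH} vector bundles (equivalently, of locally free $\mathcal{O}_{M}$-modules), and not merely of underlying smooth complex vector bundles, so that it truly lives at the level of GH-section sheaves and survives the passage to cohomology — but this is exactly what Proposition \ref{imp prop} asserts, so nothing further is needed.
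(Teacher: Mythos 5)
Your argument is exactly the intended one: the paper presents this corollary as an immediate consequence of Proposition \ref{imp prop}, obtained by passing from the SGH bundle isomorphism $\en(E)\cong Ad(P_{E})$ to the corresponding locally free $\mathcal{O}_{M}$-modules, tensoring with $\mathbf{\mathcal{G}^{*}M}$, and applying $H^{1}(M,-)$. Your added remark that the isomorphism holds at the SGH (not merely smooth) level is precisely the point that makes the deduction legitimate, so the proposal is correct and matches the paper's approach.
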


\begin{theorem}\label{main4}
    Let $E$ be an SGH vector bundle over a regular GC manifold $M$. Let $P$ be the associated SGH principal $GL_{l}(\C)$-bundle over $M$, as in \eqref{e-p}, where $l$ is the complex rank of $E$. Let $b(E)$ and $a(P)$ be the obstruction elements defined by $\mathcal{B}(E)$ and $\mathcal{A}(P)$, as in the equations \eqref{B(E)} and \eqref{A(P)}, respectively. Then 
    $$a(P)=-b(E) \,.$$
\end{theorem}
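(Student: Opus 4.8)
The plan is to compute both Atiyah classes in terms of the same \v{C}ech data --- the transition functions $\phi_{\alpha\beta}\colon U_{\alpha\beta}\to GL_l(\mathbb{C})$ of $E$ --- and show the resulting $1$-cocycles differ by a sign. This mirrors Atiyah's original argument in \cite[Proposition 9]{atiyah57}, using the identification $\en(E)\cong Ad(P)$ from Proposition \ref{imp prop} to regard both classes as living in $H^1(M,\mathbf{\mathcal{G}^{*}M}\otimes_{\mathcal{O}_M}\mathbf{\en(E)})$.

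First I would recall from Section \ref{connection} that a representative cocycle for $a(P)$ is $a_{\alpha\beta}=(\widehat{\phi_\alpha})^{-1}\circ\xi_{\alpha\beta}$ (equation \eqref{a alpha beta 2}), where $\xi_{\alpha\beta}\in\Gamma(U_{\alpha\beta},\Hom_{\mathcal{O}_M}(\mathbf{\mathcal{G}M},\mathbf{M_{\mathfrak{g}}}))$ is the $\mathfrak{g}$-valued GH $1$-form obtained by pulling back, via $\phi_{\alpha\beta}$, the left Maurer--Cartan form on $G=GL_l(\mathbb{C})$; concretely, on a GH coordinate chart with coordinates $z=(z_1,\dots,z_k)$, one has $\xi_{\alpha\beta}=\sum_j (\partial_{z_j}\phi_{\alpha\beta})\phi_{\alpha\beta}^{-1}\,dz_j$ (up to the left/right convention, which is where the adjoint twist in \eqref{a alpha beta 3} enters). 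Here $\mathfrak{g}=\mathfrak{gl}_l(\mathbb{C})=\en(\mathbb{C}^l)$, so $\xi_{\alpha\beta}$ is naturally a GH section of $\mathbf{\mathcal{G}^{*}M}\otimes_{\mathcal{O}_M}\mathbf{\en(E)}$ over $U_{\alpha\beta}$ once we use $\widehat{\phi_\alpha}$ (equivalently the local trivialization) to identify $M_{\mathfrak{g}}|_{U_\alpha}$ with $\en(E)|_{U_\alpha}$ via Proposition \ref{imp prop}.

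Next I would compute a representative cocycle for $b(E)$ from the jet sequence \eqref{B(E)2}. Over $U_\alpha$, the trivialization $\phi_\alpha$ splits $\mathbf{J_1(E)}|_{U_\alpha}\cong \mathbf{E}|_{U_\alpha}\oplus(\mathbf{\mathcal{G}^{*}M}\otimes\mathbf{E})|_{U_\alpha}$ by sending a local GH section $s$ of $E$ to its $1$-jet $(s, ds)$ in the chart; the local splittings $\sigma_\alpha\colon\mathbf{E}|_{U_\alpha}\to\mathbf{J_1(E)}|_{U_\alpha}$ of $\widehat{\pi_1}$ so obtained differ on overlaps by $\sigma_\beta-\sigma_\alpha=\widehat{J}\circ b_{\alpha\beta}$ for some $b_{\alpha\beta}\in\Gamma(U_{\alpha\beta},\mathbf{\mathcal{G}^{*}M}\otimes\mathbf{\en(E)})$, and this $\{b_{\alpha\beta}\}$ is the Atiyah cocycle. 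Tracking how a $1$-jet transforms under $s\mapsto \phi_{\alpha\beta}s$ --- namely $d(\phi_{\alpha\beta}s)=\phi_{\alpha\beta}\,ds + (d\phi_{\alpha\beta})s$ --- shows that $b_{\alpha\beta}$ equals, in the chart, $\phi_{\alpha\beta}^{-1}(d\phi_{\alpha\beta})=\sum_j \phi_{\alpha\beta}^{-1}(\partial_{z_j}\phi_{\alpha\beta})\,dz_j$, i.e. the \emph{right} logarithmic derivative, conjugated appropriately. Comparing this with the expression for $a_{\alpha\beta}$ above --- and being careful that $a(P)$ is built from the Atiyah \emph{bundle} $At(P)=\mathcal{G}P/G$ (a quotient) whereas $b(E)$ is built from the jet bundle (an extension of $E$, not by $E$), which is precisely what produces the opposite sign in Atiyah's computation --- yields $a_{\alpha\beta}=-b_{\alpha\beta}$ as cocycles in $\check{C}^1(\{U_\alpha\},\mathbf{\mathcal{G}^{*}M}\otimes\mathbf{\en(E)})$, hence $a(P)=-b(E)$ in cohomology.

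The main obstacle is bookkeeping the three places where left-versus-right conventions and the direction of the short exact sequences interact: (i) the left Maurer--Cartan form $\xi$ versus the right one $\eta$ in \eqref{a alpha beta 2}--\eqref{a alpha beta 3}; (ii) the convention $(p,f)\cdot g=(p\cdot g, g^{-1}f)$ in the associated bundle $E_P$ (equation \eqref{e-p}), which inserts an inverse; and (iii) the fact that $\mathcal{A}(P)$ has $\mathcal{G}M$ as \emph{quotient} while in $\mathcal{B}(E)$ the analogous role is played by $E$ with $\mathcal{G}^*M\otimes E$ as \emph{sub}-bundle. Pinning all three down correctly is what forces the sign; once the conventions are fixed, the identification $\en(E)\cong Ad(P_E)$ of Proposition \ref{imp prop} is $\mathcal{O}_M$-linear and compatible with these local descriptions, so the two cocycles are literally negatives of one another, giving $a(P)=-b(E)$.
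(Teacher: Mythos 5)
Your proposal follows essentially the same route as the paper: both compute \v{C}ech representatives from the local trivializations, identifying the cocycle for $a(P)$ with $\xi_{\alpha\beta}=d\widetilde{\phi}_{\alpha\beta}\,\widetilde{\phi}_{\alpha\beta}^{-1}$ via \eqref{a alpha beta 2}, obtaining the cocycle for $b(E)$ from the local splittings $s\mapsto s+(\widetilde{\phi}_{\alpha})^{-1}d\widetilde{\phi}_{\alpha}(s)$ of the jet sequence, and comparing them through the identification $\en(E)\cong Ad(P)$ of Proposition \ref{imp prop}. The only caveat is the sign bookkeeping you already flag: with the paper's ordering $b_{\alpha\beta}=b_{\beta}-b_{\alpha}$ one gets, in the $\alpha$-chart, $\widetilde{\phi}_{\alpha\beta}\,d(\widetilde{\phi}_{\alpha\beta}^{-1})=-\xi_{\alpha\beta}$ (so $b_{\alpha\beta}=-a_{\alpha\beta}$ directly), which is consistent with your conclusion once the convention is fixed.
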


\begin{proof}
    Let $E$ be an SGH vector bundle with local trivializations $\{U_{\alpha},\phi_{\alpha}\}$ where  
    \begin{equation}\label{phi alpha beta 3}
    \phi_{\alpha}: E|_{U_{\alpha}}\longrightarrow U_{\alpha}\times \C^l\,,
    \end{equation}
    are local GH homeomorphisms (cf. \eqref{phi alpha}). 
     Then $P$ is defined by the transition functions (cf. \eqref{phi alpha beta}), 
    \begin{equation}
        \phi_{\alpha\beta}: U_{\alpha\beta}\longrightarrow GL_{l}(\C)\,,
    \end{equation} 
    where
    \begin{equation}\label{psi alpha beta3}
\psi_{\alpha\beta}=\phi_{\alpha}\circ\phi^{-1}_{\beta}: U_{\alpha\beta}\times GL_{l}(C)\longrightarrow U_{\alpha\beta}\times GL_{l}(\C)
\end{equation}
is given by $\psi_{\alpha\beta}(m\,,g)=(m\,,\phi_{\alpha\beta}(m)g)$.

\medskip
Let $W =  \C^l$ so that $E \cong P\times_{GL_{l}(\C)} W$. Let $M_{W}=M\times W$, a trivial SGH vector bundle over $M$. The GH homeomorphism $\phi_{\alpha}$ induces a sheaf isomorphism of $\mathcal{O}_{M}|_{U_{\alpha}}$-modules
\begin{equation}\label{tilde phi alpha}
\widetilde{\phi}_{\alpha}:\mathbf{E}|_{U_{\alpha}}\longrightarrow\mathbf{M_{W}}|_{U_{\alpha}}\,.    
\end{equation} 
This also induces another canonical $\mathcal{O}_{M}|_{U_{\alpha}}$-module isomorphism, again denoted by $\widetilde{\phi}_{\alpha}$,
\begin{equation}\label{tilde phi alpha 2}
\widetilde{\phi}_{\alpha}=\widetilde{\phi}_{\alpha}\otimes_{\mathcal{O}_{M}}\id:\mathbf{E}|_{U_{\alpha}}\otimes_{\mathcal{O}_{M}|_{U_{\alpha}}}\mathbf{\mathcal{G}^{*}M}|_{U_{\alpha}}\longrightarrow\mathbf{M_{W}}|_{U_{\alpha}}\otimes_{\mathcal{O}_{M}|_{U_{\alpha}}}\mathbf{\mathcal{G}^{*}M}|_{U_{\alpha}}\,.
\end{equation}
{Let $d$ be the exterior derivative. Note that $f\in\mathcal{O}_{M}$ if and only if $df\in\mathbf{\mathcal{G}^{*}M}$. 
Thus we can extend $d$ to a $\C$-linear sheaf homomorphism,} 
\begin{equation}\label{d}
d:\mathbf{M_{W}}|_{U_{\alpha}}\longrightarrow\mathbf{M_{W}}|_{U_{\alpha}}\otimes_{\mathcal{O}_{M}|_{U_{\alpha}}}\mathbf{\mathcal{G}^{*}M}|_{U_{\alpha}}\,.
\end{equation}
Define a $\C$-homomorphism of sheaves over $U_{\alpha}$,
\begin{equation}\label{D alpha}
D_{\alpha}:\mathbf{E}|_{U_{\alpha}}\longrightarrow\mathbf{E}|_{U_{\alpha}}\otimes_{\mathcal{O}_{M}|_{U_{\alpha}}}\mathbf{\mathcal{G}^{*}M}|_{U_{\alpha}}\quad\text{by}\quad D_{\alpha}(s)=(\widetilde{\phi}_{\alpha})^{-1}d\widetilde{\phi}_{\alpha}(s)\,,  
\end{equation} 
where the first $\widetilde{\phi}_{\alpha}$, $d$ and the second $\widetilde{\phi}_{\alpha}$ are as in the equations \eqref{tilde phi alpha 2}, \eqref{d} and \eqref{tilde phi alpha}, respectively. 
Now consider the sheaf homomorphism 
\begin{equation} b_{\alpha}:\mathbf{E}|_{U_{\alpha}}\longrightarrow\mathbf{J_{1}(E)}|_{U_{\alpha}}\,\,\,\text{defined by}\,\,\,b_{\alpha}(s)=s+D_{\alpha}(s) \,\, {\rm for \, all\,\,} s\in\mathbf{E}|_{U_{\alpha}}\,.
\end{equation} 
Then for any $f\in\mathcal{O}_{M}|_{U_{\alpha}}$ and $s\in\mathbf{E}|_{U_{\alpha}}$, we have 
\begin{align*}
 b_{\alpha}(fs)&=fs+(\widetilde{\phi}_{\alpha})^{-1}d\widetilde{\phi}_{\alpha}(fs)\,\\
 &=fs\oplus(s\otimes df+f(\widetilde{\phi}_{\alpha})^{-1}d\widetilde{\phi}_{\alpha}(s))\,\\
 &=f\cdot(s+D_{\alpha}(s))\,.
\end{align*}
Hence, $b_{\alpha}$ is an $\mathcal{O}_{M}$-module homomorphism. Consider the sheaf homomorphism 
\begin{equation}
b_{\alpha\beta}:\mathbf{E}|_{U_{\alpha\beta}}\longrightarrow\mathbf{J_{1}(E)}|_{U_{\alpha\beta}}\,\,\,\text{defined by}\,\,\,b_{\alpha\beta}:=b_{\beta}-b_{\alpha}\,.
\end{equation} Note that $b_{\alpha\beta}(s)=D_{\beta}(s)-D_{\alpha}(s)$. So
$$b_{\alpha\beta}\in\Gamma(U_{\alpha\beta},\Hom_{\mathcal{O}_{M}}(\mathbf{E},\mathbf{E}\otimes_{\mathcal{O}_{M}}\mathbf{\mathcal{G}^{*}M}))\,.$$
This shows that $\{b_{\alpha\beta}\}$ is a representative $1$-cocycle for $b(E)$ in $H^1(M,\mathbf{\en(E)}\otimes_{\mathcal{O}_{M}}\mathbf{\mathcal{G}^{*}M})$.

\medskip
Consider the following two sheaf homomorphisms over $U_{\alpha}$,
\begin{equation}
\widetilde{\phi}_{\alpha\beta}=\widetilde{\phi}_{\alpha}\circ(\widetilde{\phi}_{\beta})^{-1}:\mathbf{M_{W}}|_{U_{\alpha\beta}}\longrightarrow\mathbf{M_{W}}|_{U_{\alpha\beta}}\,,   
\end{equation}
and the second one, also denoted by $\widetilde{\phi}_{\alpha\beta}$,
\begin{equation}
   \widetilde{\phi}_{\alpha\beta}:  \mathbf{M_{W}}|_{U_{\alpha\beta}}\otimes_{\mathcal{O}_{M}|_{U_{\alpha\beta}}}\mathbf{\mathcal{G}^{*}M}|_{U_{\alpha\beta}}\longrightarrow\mathbf{M_{W}}|_{U_{\alpha\beta}}\otimes_{\mathcal{O}_{M}|_{U_{\alpha\beta}}}\mathbf{\mathcal{G}^{*}M}|_{U_{\alpha\beta}}\,.
\end{equation}
We can see that $\widetilde{\phi}_{\alpha\beta}$ can be thought of as a $\mathcal{O}_{M}|_{U_{\alpha\beta}}$-valued matrix, again denoted by $$\widetilde{\phi}_{\alpha\beta}:\bigoplus_{r}\mathcal{O}_{M}|_{U_{\alpha\beta}}\longrightarrow\bigoplus_{r}\mathcal{O}_{M}|_{U_{\alpha\beta}}.$$ So $d(\widetilde{\phi}_{\alpha\beta})$ is well understood. Then for any $s\in\mathbf{M_{W}}|_{U_{\alpha\beta}}$, we get
\begin{equation}\label{imp b alpha beta}
\begin{aligned}
\widetilde{\phi}_{\alpha}b_{\alpha\beta}(\widetilde{\phi}_{\alpha})^{-1}(s)&=\widetilde{\phi}_{\alpha}(D_{\beta}((\widetilde{\phi}_{\alpha})^{-1}(s))-D_{\alpha}((\widetilde{\phi}_{\alpha})^{-1}(s)))\,\\
&=\widetilde{\phi}_{\alpha}((\widetilde{\phi}_{\beta})^{-1}d(\widetilde{\phi}_{\beta}(\widetilde{\phi}_{\alpha})^{-1}(s))-(\widetilde{\phi}_{\alpha})^{-1}(ds)) \\
&=\widetilde{\phi}_{\alpha\beta}(d(\widetilde{\phi}_{\alpha\beta}^{-1}(s)))-ds \\
&=\widetilde{\phi}_{\alpha\beta}\,d(\widetilde{\phi}_{\alpha\beta}^{-1})\cdot s \\
&=-d(\widetilde{\phi}_{\alpha\beta})\,\widetilde{\phi}_{\alpha\beta}^{-1}\cdot s\,.
\end{aligned}
\end{equation}
But, in the notation of Subsection \ref{connection}, $d(\widetilde{\phi}_{\alpha\beta})\,\widetilde{\phi}_{\alpha\beta}^{-1}=\xi_{\alpha\beta}$.
Here, using  $\mathfrak{g}=\mathfrak{gl}_{l}(\C)$, we  identify the three sheaves $\Hom_{\mathcal{O}_{M}}(\mathbf{M_{W}},\mathbf{M_{W}}\otimes_{\mathcal{O}_{M}}\mathbf{\mathcal{G}^{*}M})$, $\Hom_{\mathcal{O}_{M}}(\mathbf{\mathcal{G}M},\mathbf{M_{\mathfrak{g}}})$, and $\mathbf{M_{\mathfrak{g}}}\otimes_{\mathcal{O}_{M}}\mathbf{\mathcal{G}^{*}M}$ via their respective canonical $\mathcal{O}_{M}$-module isomorphisms where $\mathbf{\mathcal{G}M}$, $\mathbf{M_{\mathfrak{g}}}$ are as in the equations \eqref{GH tangent bundle} and \eqref{Mg} respectively.

\vspace{0.3em}
Now, $\Hom_{\mathcal{O}_{M}}(\mathbf{E},\mathbf{E}\otimes_{\mathcal{O}_{M}}\mathbf{\mathcal{G}^{*}M})$ is isomorphic to $\mathbf{Ad(P)}\otimes_{\mathcal{O}_{M}}\mathbf{\mathcal{G}^{*}M}$ by Proposition \ref{imp prop}. Therefore, upon identifying $\Hom_{\mathcal{O}_{M}}(\mathbf{\mathcal{G}M},\mathbf{Ad(P)})$, $\mathbf{Ad(P)}\otimes_{\mathcal{O}_{M}}\mathbf{\mathcal{G}^{*}M}\,,$ and also $\Hom_{\mathcal{O}_{M}}(\mathbf{E},\mathbf{E}\otimes_{\mathcal{O}_{M}}\mathbf{\mathcal{G}^{*}M})$ via  canonical $\mathcal{O}_{M}$-module isomorphisms, we have, via  equations \eqref{a alpha beta 2} and \eqref{imp b alpha beta}, that 
$$b_{\alpha\beta}=-a_{\alpha\beta}\,.$$ It follows that $a(P)=-b(E)$.
\end{proof}

\section{Generalized complex structure and orbifold}\label{sec orbi}
Let $M$ be a regular GC manifold of dimension $2m$ and type $k$. Let $\mathscr{S}$ denote the associated symplectic foliation of complex codimension $k$ which is transversely holomorphic. Let $T\mathscr{S}$ be the corresponding involutive subbundle of $TM$ of rank $2m-2k$, called the tangent bundle of the foliation. The normal bundle of the foliation, denoted by $\mathcal{N}$, is defined by 
$$\mathcal{N}:=TM/T\mathscr{S}\,.$$ By \cite[Proposition 4.2]{Gua2}, $\mathcal{N}$ is an integrable subbundle with a complex structure. Then $\mathcal{N}$ has a decomposition given by the complex structure
\begin{equation}\label{nrml bundle}
\mathcal{N}\otimes\C=\mathcal{N}^{1,0}\oplus\mathcal{N}^{0,1}\,.
\end{equation}
As the exact sequence 
\[\begin{tikzcd}[ampersand replacement=\&]
	0 \& {T\mathscr{S}} \& TM \& {\mathcal{N}} \& 0
	\arrow[from=1-2, to=1-3]
	\arrow[from=1-3, to=1-4]
	\arrow[from=1-1, to=1-2]
	\arrow[from=1-4, to=1-5]
\end{tikzcd}\,\]
splits smoothly,
$\mathcal{N}$ may be regarded as a subbundle of $TM$ complementary to $T\mathscr{S}$, and we may identify $\mathcal{N}^{1,0}$ with $\mathcal{G}M$.
Define
\begin{equation}\label{leaf sp}
    \mathscr{M}:=M/\mathscr{S}
\end{equation} to be the leaf space of the foliation $\mathscr{S}$. This is a topological space that has the  quotient topology induced by the quotient map
\begin{equation}\label{leaf sp map}
\tilde{\pi}:M\longrightarrow\mathscr{M} \,.    
\end{equation}
 The map $\tilde{\pi} $  is open (cf. \cite[Section 2.4]{moerdijk03}). 
\medskip

{In general, $\mathscr{M}$ could be rather wild. To have a reasonable theory, we assume that $\mathscr{M}$ admits a smooth orbifold structure.  Since $\mathscr{S}$ is transversely holomorphic, $\mathscr{M}$ then becomes a complex orbifold. Moreover, observe that $\tilde{\pi}$ is a smooth complete orbifold map (cf. \cite[Definition 3.1]{borz12}), 
and each point $y\in\mathscr{M}$ is a regular value of $\tilde{\pi}$. Thus, by the preimage theorem for orbifolds (cf.  \cite[Theorem 4.2]{borz12}), $\tilde{\pi}^{-1}(y)$ is an embedded submanifold of real dimension $2m-2k$ for all $y\in\mathscr{M}$. Hence, each leaf is not only an immersed but also a closed embedded submanifold of $M$.}

\begin{definition}\label{basic open set}
    An open set in $M$ is called a transverse open set if it is a union of leaves. An open cover $\mathcal{U}=\{U_{\alpha}\}$ is called a transverse open cover of $M$ if each $U_{\alpha}$ is a transverse open subset of $M$.
\end{definition}

Let $S$ be a leaf of $\mathscr{S}$. By the Tubular Neighborhood Theorem, there exists a transverse neighborhood (tubular neighborhood) of $S$ which is diffeomorphic to the normal bundle $\mathcal{N}_{S}$ of $S$. One can see that $\mathcal{N}_{S}$ is just the pullback of $\mathcal{N}$ via the inclusion map $S\hookrightarrow M$. Due to the transverse complex structure, $\mathcal{N}$, as well as $\mathcal{N}_{S}$, can be thought of as a complex vector bundle of complex rank $k$.
 Consider the partial connection, known as the Bott connection (cf. \cite[Section 6]{bott71}), on $\mathcal{N}$ which is flat along the leaves. Then its pullback on $\mathcal{N}_{S}$ gives a flat connection. Thus, considering $\mathcal{N}_{S}$ as a complex vector bundle, by \cite[Proposition 1.2.5]{kobayashi14}
$$\mathcal{N}_{S}\cong\tilde{S}\times_{\rho}\C^{k}$$ where $\rho:\pi_1(S)\longrightarrow GL_{k}(\C)$ is the linear holonomy representation of $\pi_1(S)$ and $\tilde{S}$ is the universal cover of $S$.
\begin{definition}
    A $2k$-dimensional embedded submanifold of $M$ is called a transversal section if it is transversal to the leaves of $\mathscr{S}$. 
\end{definition}
Note that by \cite[Proposition 2.20]{moerdijk03}, $\mathscr{M}$ admits a Riemannian metric which makes $\mathscr{S}$ into a Riemannian foliation. 
Since $S$ is an embedded submanifold, $T\cap S$ is discrete for any transversal section $T$.  Then, following the proof of \cite[Theorem 2.6]{moerdijk03}, one can show that the holonomy group of $S$, $\hol(S)$ is finite. By the differentiable slice theorem, we can indeed assume that the action of $\hol(S)$ on $T$ is linear, that is, $$\hol(S)=\img(\rho)\,.$$
We summarise our observations as follows.
\begin{theorem}\label{orbi thm}
    Let $M$ be a regular GC manifold and let $\mathscr{S}$ be the induced symplectic foliation. Assume that $M/\mathscr{S}$ has a smooth orbifold structure. Then, we have the following.
    \begin{enumerate}
    \setlength\itemsep{1em}
        \item Each leaf of $\mathscr{S}$ is an embedded closed submanifold of $M$.
        \item The holonomy group of each leaf is finite.
        \item $(M,\mathscr{S})$ is a regular Riemannian foliation.
        \item Around each leaf $S$, there exists a tubular neighborhood $U$ such that $U$   is diffeomorphic to $\tilde{S}\times_{\hol(S)}\C^{k}$ where $\tilde{S}$ is the universal cover of $S$ and $\hol(S)$ is the holonomy group of $S$. Here, $\hol(S)$ acts on $\C^{k}$ via a linear holonomy representation.      
    \end{enumerate}
\end{theorem}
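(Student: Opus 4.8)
The statement collects the observations established in the discussion preceding it, so the plan is simply to organize them into the four assertions, flagging the one point that needs genuine work.

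I would start with (1), on which the rest depends. Having assumed that $\mathscr{M}=M/\mathscr{S}$ carries a smooth orbifold structure, one observes that the quotient map $\tilde\pi\colon M\to\mathscr{M}$ is a smooth complete orbifold map in the sense of \cite[Definition 3.1]{borz12}, and that every $y\in\mathscr{M}$ is a regular value. The preimage theorem for orbifolds \cite[Theorem 4.2]{borz12} then gives that $\tilde\pi^{-1}(y)$ is an embedded submanifold of $M$ of real dimension $2m-2k$. Since each leaf of $\mathscr{S}$ is precisely a fibre $\tilde\pi^{-1}(y)$, and singletons are closed in the underlying (Hausdorff) space of the orbifold $\mathscr{M}$, every leaf is a closed embedded submanifold; this is (1).

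For (3) I would transport geometry from the leaf space: by \cite[Proposition 2.20]{moerdijk03} the orbifold $\mathscr{M}$ admits a Riemannian metric, whose pullback along $\tilde\pi$ is a bundle-like transverse metric on $M$, so $\mathscr{S}$ is Riemannian; it is a regular foliation because $M$ is a regular GC manifold, so the leaves have constant dimension. For (4) I would use the Tubular Neighborhood Theorem to produce a saturated (transverse) neighbourhood $U$ of a leaf $S$ diffeomorphic to the total space of the normal bundle $\mathcal{N}_S=\mathcal{N}|_S$. The transverse complex structure makes $\mathcal{N}_S$ a complex vector bundle of rank $k$, and restricting the Bott connection of $\mathcal{N}$ (flat along the leaves) yields a flat connection on $\mathcal{N}_S$; hence by \cite[Proposition 1.2.5]{kobayashi14} one gets $\mathcal{N}_S\cong\tilde S\times_\rho\C^k$, where $\tilde S$ is the universal cover of $S$ and $\rho\colon\pi_1(S)\to GL_k(\C)$ is the linear holonomy representation.

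The crux is (2), which I expect to be the only step requiring genuine argument. Fix $x\in S$ and a transversal section $T$ through $x$; by (1) the leaf $S$ is embedded, so $T\cap S$ is discrete. Following the argument of \cite[Theorem 2.6]{moerdijk03}, and using that $\mathscr{S}$ is Riemannian by (3) so that holonomy transport along $T$ is realized by local isometries, one concludes that the holonomy group $\hol(S)$ is finite. The differentiable slice theorem then permits choosing $T$ so that $\hol(S)$ acts linearly and isometrically on $T\cong\C^k$, and this linear action is exactly the image of $\rho$, i.e.\ $\hol(S)=\img(\rho)$. Feeding $\img(\rho)=\hol(S)$ back into the identification from (4) rewrites the tubular neighbourhood as $U\cong\tilde S\times_{\hol(S)}\C^k$ with $\hol(S)$ acting linearly on $\C^k$. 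Thus the main obstacle is the finiteness of $\hol(S)$: it is where the embeddedness of leaves and the Riemannian structure must be combined carefully, following \cite[Theorem 2.6]{moerdijk03}, while the remaining parts are straightforward recollections of the orbifold preimage theorem, the Tubular Neighborhood Theorem, and the flatness of the Bott connection transverse to a Riemannian foliation.
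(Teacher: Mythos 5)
Your proposal is correct and follows essentially the same route as the paper: part (1) via the orbifold preimage theorem of Borzellino--Brunsden, part (3) by transporting a Riemannian metric from the orbifold leaf space, part (4) via the Tubular Neighborhood Theorem together with the flat Bott connection on $\mathcal{N}_S$ and the identification $\mathcal{N}_S\cong\tilde S\times_\rho\C^k$, and part (2) by discreteness of $T\cap S$ plus the argument of Moerdijk--Mr\v{c}un and the slice theorem to linearize the finite holonomy action and identify $\hol(S)=\img(\rho)$. The only differences are cosmetic (e.g.\ your explicit remark that fibres of $\tilde\pi$ are closed), so no further changes are needed.
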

\begin{example}\label{orbi eg}
    Let $F$ be a symplectic manifold and $\tilde{F}$ be its universal cover. Then as in Example \ref{imp exmple}, $\tilde{F}\times_{\rho}\C^{l}$ is a regular GC manifold of type $l$. The induced symplectic foliation $\mathscr{S}$ is the foliation of $F$-parameter submanifolds, that is, sets of the form $$S_{x}=\left\{[\tilde{m},y]\,|\,\tilde{m}\in\tilde{F},y\in [x]\right\}\quad\text{where}\quad [x]:=\{\rho(g)\cdot x\,|\,g\in\pi_1(F)\}\subset\C^{l}\,.$$ This implies that the leaf space $\tilde{F}\times_{\rho}\C^{l}/\mathscr{S}$ is exactly $$\C^{l}/\rho:=\{[x]\,|\,x\in\C^{l}\}\,.$$ The isotropy group at $0$ is $\img(\rho)$ which is the linear holonomy group. Therefore, we get that  $\tilde{F}\times_{\rho}\C^{l}/\mathscr{S}$ is a smooth orbifold if and only if the linear holonomy group is finite.
\end{example}
\begin{remark}
    It is tempting to think that the leaf space of a regular GCS is either manifold or an orbifold. But, it may not be even Hausdorff. The following example demonstrates this.
\end{remark}
\begin{example}\label{counter eg}
    Consider the product GCS on $M\times F$ where $M$ is a complex manifold and $F$ is a symplectic manifold. Let $N\subset F$ be a closed submanifold such that $F\backslash N$ is disconnected.  Fix $m\in M$, Consider the open submanifold $$X_{m}=M\times F\backslash\{m\times N\}\,.$$
    Consider the natural regular GCS on $X_{m}$ induced from $M\times F$. Let $(x,f)\in X_{m}$. Then, the leaf of the induced foliation $\mathscr{S}_{m}$, through $(x,f)$, is of the following form
    \begin{align*}
        S_{(x,f)}&=
        \begin{cases}
            F & \text{if }  x\neq m\,,\\
            (F\backslash N)_{\alpha}  & \text{if }  x=m\,,
        \end{cases}
    \end{align*}
where $(F\backslash N)_{\alpha}$ denotes the connected component of $F\backslash N$ that contains $f$ for $x=m$. One can see that the leaf space $X_{m}/\mathscr{S}_{m}$ is not Hausdroff. Thus, we obtain an infinite family of regular GC manifolds with non-Hausdorff leaf space.
\end{example}

\section{Dolbeault cohomology of SGH vector bundles}\label{sec cohomo}

\subsection{Cohomology Theory}\label{sec cohomolgy}
Let $(M,\mathcal{J}_{M})$ be a regular GC manifold with $i$-eigen bundle $L$. Then, $(TM\oplus T^{*}M)\otimes\C=L\oplus\overline{L}$.  We have a differential operator, 
\begin{equation}\label{d-L}
    d_{L}:C^{\infty}(\wedge^{\bullet}L^{*})\longrightarrow C^{\infty}(\wedge^{\bullet+1}L^{*})\,,
\end{equation}
defined as follows.
For any $\omega\in C^{\infty}(\wedge^{n} {L^*})$ and $X_{i}\in C^{\infty}(L)$ for all $i\in\{1,\cdots,n+1\}$, 
\begin{align*}
d_{L}\omega(X_{1},\cdots,X_{n+1})& :=\sum_{i=1}^{n+1}(-1)^{i+1}\rho(X_{i})(\omega(X_{1},\cdots,\hat{X_{i}},\cdots,X_{n+1}))\\
&+\sum_{i<j}(-1)^{i+j}\omega([X_{i},X_{j}],X_{1},\cdots,\hat{X_{i}},\cdots,\cdots,\hat{X_{j}},\cdots,X_{n+1})\,,
\end{align*}
where $\rho:(TM\oplus T^{*}M)\otimes\C\longrightarrow TM\otimes\C$ is the projection map and $[\,,\,]$ is the Courant bracket.
Similarly, we have another operator
\begin{equation}\label{d-L bar}
    d_{\overline{L}}:C^{\infty}(\wedge^{\bullet}\overline{L}^{*})\longrightarrow C^{\infty}(\wedge^{\bullet+1}\overline{L}^{*})\,.
\end{equation} 
Note that
\begin{equation}
\overline{\mathcal{G}^{*}M}=\overline{L}\cap(T^{*}M\otimes\C)\quad\text{and}\quad\overline{\mathcal{G}M}=(\overline{\mathcal{G}^{*}M})^{*}\,   
\end{equation}
are also smooth vector bundles over $M$ (cf. \eqref{GH cotangent bundle}, \eqref{GH tangent bundle}). Let $k$ be the type of $\mathcal{J}_{M}$.
So, on a coordinate neighborhood $U$ (cf. \eqref{loc coordi}, Corollary \ref{cor:diffcharts}), 
$$C^{\infty}(\overline{\mathcal{G}^{*}M}|_{U})=\spn_{C^{\infty}(U)}\{d\overline{z_1},\ldots,d\overline{z_{k}}\}\quad\text{and}\quad C^{\infty}(\overline{\mathcal{G}M}|_{U})=\spn_{C^{\infty}(U)}\{\frac{\partial}{\partial\overline{z_1}},\ldots,\frac{\partial}{\partial\overline{ z_{k}}}\}\,.$$
Let $\mathscr{S}$ denote the induced regular transversely holomorphic, symplectic foliation of complex codimension $k$ corresponding to $\mathcal{J}_{M}$. Let $d_{\mathscr{S}}$ denote the exterior derivative along the leaves. Define 
$$F_{M}:=\ker(d_{\mathscr{S}}:C^{\infty}_{M}\longrightarrow C^{\infty}(T^{*}\mathscr{S}\otimes\C))$$ as the sheaf of smooth $\C$-valued functions over $M$ which are constant along the leaves. Note that $\mathcal{O}_{M}\leq F_{M}\leq C^{\infty}_{M}$. For any vector bundle $E$ over $M$ whose transition maps are leaf-wise constant, we denote the sheaf of smooth leaf-wise constant sections of $E$ by $F_M(E)$. 

\medskip
The transition functions of $\mathcal{G}^{*}M$ and $\overline{\mathcal{G}^{*}M}$ are constant along the leaves of $\mathscr{S}$. On a coordinate neighborhood $U$ (cf. \eqref{loc coordi}), 
$$F_{M}(\overline{\mathcal{G}^{*}M}|_{U})=\spn_{F_{M}(U)}\{d\overline{z_1},\ldots,d\overline{z_{k}}\}\,,$$ and 
$$F_{M}(\overline{\mathcal{G}M}|_{U})=\spn_{F_{M}(U)}\{\frac{\partial}{\partial\overline{z_1}},\ldots,\frac{\partial}{\partial\overline{ z_{k}}}\}\,.$$
For any $p,q\geq 0$\,, define
\begin{equation}\label{A-pq}
\begin{aligned}
\tilde{A}^{p,q}:=C^{\infty}(\wedge^{p}\mathcal{G}^{*}M\otimes\wedge^{q}\overline{\mathcal{G}^{*}M})\,,\\
A^{p,q}:=F_{M}(\wedge^{p}\mathcal{G}^{*}M\otimes\wedge^{q}\overline{\mathcal{G}^{*}M})\,.   
\end{aligned}
\end{equation} More specifically, given any open set $U\subseteq M$,
\begin{equation}\label{Apq}
\begin{aligned}
\tilde{A}^{p,q}(U)=C^{\infty}(U,\wedge^{p}\mathcal{G}^{*}M)\otimes_{C^{\infty}(U)}C^{\infty}(U,\wedge^{q}\overline{\mathcal{G}^{*}M})\, ,\\
A^{p,q}(U)=F_{M}(\wedge^{p}\mathcal{G}^{*}M)(U)\otimes_{F_{M}(U)}F_{M}(\wedge^{q}\overline{\mathcal{G}^{*}M})(U)\,. 
\end{aligned}
\end{equation}
Note that $A^{p,q}\leq\tilde{A}^{p,q}$ and $\tilde{A}^{p,q}=A^{p,q}\otimes_{F_{M}} C_{M}^{\infty}$.
For any $l\in\{0,\ldots,2k\}$, denote $A^{l}=\bigoplus_{p+q=l}A^{p,q}$ and $\tilde{A}^{l}=\bigoplus_{p+q=l}\tilde{A}^{p,q}$.
Thus, we get two bigraded sheaves, namely,  
\begin{equation}\label{A}
 A:=\bigoplus_{p,q} A^{p,q}\,\,\,\,,\,\,\,\,\,\tilde{A}:=\bigoplus_{p,q} \tilde{A}^{p,q}\,.
\end{equation}
To summarize, $\tilde{A}$ and $A$ are the bigraded sheaves of germs of sections of $\bigoplus_{p,q}(\wedge^{p}\mathcal{G}^{*}M\otimes\wedge^{q}\overline{\mathcal{G}^{*}M})$, which are smooth and constant along the leaves, respectively.
\vspace{0.5em}

Let $d:C^{\infty}(\wedge^{\bullet}T^{*}M\otimes\C)\longrightarrow C^{\infty}(\wedge^{\bullet+1}T^{*}M\otimes\C)$ be the exterior derivative.  
By \cite[Proposition 4.2]{Gua2}, $\mathcal{G}M$ and $\overline{\mathcal{G}M}$ both are integrable smooth sub-bundle of $TM\otimes\C$\,. Thus we can restrict $d$ to $\tilde{A}^{\bullet}$, $A^{\bullet}$. We denote these restrictions by $\tilde{D}$ and $D$, respectively, that is, 
\begin{equation}\label{D-def}
\tilde{D}:=d|_{\tilde{A}^{\bullet}}\,\,\,\,\,, \,\,\,\,\,D:=d|_{A^{\bullet}}\,.  
\end{equation}
In particular,  any $\omega\in A^{p,q}\,(\text{respectively},\,\,\tilde{A}^{p,q})$, is locally (cf. \eqref{loc coordi}) of the form
$$\omega=\sum_{I,J}f_{IJ}\,dz_{I}\wedge d\overline{z_{J}}\,,$$
where $f_{IJ}\in F_{M}(U)\,(\text{respectively},\,\,C^{\infty}(U))$\,, 
$I, J$ are ordered subsets of $\{1, \ldots, k \}$, and
$dz_{I}= \bigwedge_{i \in I} dz_{i}$, $d\overline{z_J} = \bigwedge_{j \in J} d\overline{z_{j}} $ . 
Then,  
\begin{equation}\label{form of D}
D\omega\,(\text{respectively},\,\,\tilde{D}\omega)=\sum_{I,J}\partial f_{IJ}\,\,dz_{I}\wedge d\overline{z_{J}}+\sum_{I,J}\overline{\partial}f_{IJ}\,\,dz_{I}\wedge d\overline{z_{J}}\,,    
\end{equation}
where $\partial f_{IJ}$ and $\overline{\partial} f_{IJ}$ are defined by
\begin{equation}\label{del-delbar}
    \begin{aligned}
      \partial f_{IJ} :=\sum^{k}_{i=1}\frac{\partial f_{IJ}}{\partial z_{i}}\,dz_{i}\,\,,\,\,\,\,\,\,\,\, 
      \overline{\partial} f_{IJ} :=\sum^{k}_{i=1}\frac{\overline{\partial} f_{IJ}}{\partial \overline{z_{i}}}\,d\overline{z_{i}}\,.
    \end{aligned}
\end{equation}

We identify $L^{*}$ with $\overline{L}$ via the symmetric bilinear form defined in \eqref{bilinear}, and consider the restrictions of $d_{L}$  to $C^{\infty}(\wedge^{\bullet}\overline{\mathcal{G}^{*}M})$ and $d_{\overline{L}}$ to $C^{\infty}(\wedge^{\bullet}\mathcal{G}^{*}M)$. We denote these by $\tilde{d}_{L}$ and $\tilde{d}_{\overline{L}}$, respectively. In particular, for any $\omega\in C^{\infty}(\wedge^{p}\mathcal{G}^{*}M)$, locally we can write
 $$\omega=\sum_{I}f_{I}\,dz_{I}\,.$$ Then, 
 \begin{equation*}
\tilde{d}_{\overline{L}}\omega=\sum_{I}d_{\overline{L}}f_{I}|_{C^{\infty}(\mathcal{G}^{*}M)}\,\,dz_{I}\,.   
 \end{equation*}
We know that, for any $f\in C^{\infty}(U)$, $d_{\overline{L}}f\in L$ and $d_{L}f\in\overline{L}$. Therefore, if we restrict them to $C^{\infty}(\mathcal{G}^{*}M)$ and $C^{\infty}(\overline{\mathcal{G}^{*}M})$, respectively, we get that
\begin{equation*}
 d_{\overline{L}}|_{C^{\infty}(\mathcal{G}^{*}M)}f=\partial f\,\,,\,\,\,\,\,\,d_{L}|_{C^{\infty}(\overline{\mathcal{G}^{*}M})}f=\overline{\partial} f\,,   
\end{equation*} where $\partial f$ and $\overline{\partial} f$ are defined as in \eqref{del-delbar}\,.
 We can further restrict $d_{L}$ and $d_{\overline{L}}$ to $F_{M}(\wedge^{\bullet}\overline{\mathcal{G}^{*}M})$ and $F_{M}(\wedge^{\bullet}\mathcal{G}^{*}M)$ which we again denote by $d_{L}$ and $d_{\overline{L}}$, respectively. 
 Thus we can consider the following morphisms of sheaves
\begin{equation}\label{d-l,d-l bar}
    \begin{aligned}
     d_{L}:F_{M}(\wedge^{\bullet}\overline{\mathcal{G}^{*}M})\longrightarrow F_{M}(\wedge^{\bullet+1}\overline{\mathcal{G}^{*}M})\, ,\\
     d_{\overline{L}}:F_{M}(\wedge^{\bullet}\mathcal{G}^{*}M)\longrightarrow F_{M}(\wedge^{\bullet+1}\mathcal{G}^{*}M)\,.
    \end{aligned}
\end{equation}
Note that $d_{L}=\tilde{d}_{L}|_{F_{M}(\wedge^{\bullet}\overline{\mathcal{G}^{*}M})}$ and $d_{\overline{L}}=\tilde{d}_{\overline{L}}|_{F_{M}(\wedge^{\bullet}\mathcal{G}^{*}M)}$.
  They induce two differential complexes, namely $(F_{M}(\wedge^{\bullet}\overline{\mathcal{G}^{*}M}),d_{L})$ and $(F_{M}(\wedge^{\bullet}\mathcal{G}^{*}M),d_{\overline{L}})\,.$ Subsequently, we can naturally extend $d_{L}$ and $d_{\overline{L}}$ to $A^{\bullet,\bullet}$, again denoted by $d_{L}$ and $d_{\overline{L}}$ respectively, and get the following morphisms of sheaves
  \begin{equation}\label{dL-dL bar}
    \begin{aligned}
d_{L}:A^{\bullet,\bullet}\longrightarrow A^{\bullet,\bullet+1}\,;\\
d_{\overline{L}}:A^{\bullet,\bullet}\longrightarrow A^{\bullet+1,\bullet}\,.
    \end{aligned}
\end{equation}
 In particular, for any $\omega\in A^{p,q}$, locally
 $$\omega=\sum_{I,J}f_{IJ}\,dz_{I}\wedge d\overline{z_{J}}\,.$$ Then, 
 \begin{equation}\label{form of d-l bar}
 d_{{L}}\omega=\sum_{J}d_{{L}}f_{IJ}|_{F_{M}(\overline{\mathcal{G}^{*}M)}} \, \wedge \,dz_{I}\wedge d\overline{z_{J}}\,, \quad 
d_{\overline{L}}\omega=\sum_{I}d_{\overline{L}}f_{IJ}|_{F_{M}(\mathcal{G}^{*}M)}\, \wedge \,dz_{I}\wedge d\overline{z_{J}}\,.   
 \end{equation}
By the equations \eqref{form of D} and \eqref{form of d-l bar}, on $A^{\bullet,\bullet}$, we have
$$D=d_{\overline{L}}+d_{L}\quad\text{and}\quad D(A^{\bullet,\bullet})\subseteq A^{\bullet+1,\bullet}\oplus A^{\bullet,\bullet+1}\,.$$
 Similarly,  one can see that $\tilde{D}=\tilde{d}_{\overline{L}}+\tilde{d}_{L}$ where $\tilde{d}_{\overline{L}}$ and $\tilde{d}_{L}$ are considered as a morphism of sheaves between $\tilde{A}^{\bullet,\bullet}$ to $\tilde{A}^{\bullet+1,\bullet}$ and $\tilde{A}^{\bullet,\bullet+1}$, respectively.
\begin{definition}
    Any element $\omega\in\tilde{A}^{l}$ is called a generalized form of degree $l$ and any element in $\tilde{A}^{p,q}$ is called a generalized form of type $(p,q)$. Here $\tilde{A}^{p,q},\tilde{A}^{l}$ are as in \eqref{A}.
\end{definition}
\begin{definition}
    Any element $\omega\in A^{l}$ is called a transverse generalized form of degree $l$ and any element in $A^{p,q}$ is called a transverse generalized form of type $(p,q)$. Here $A^{p,q},A^{l}$ are as in \eqref{A}.
\end{definition}
Let $Z^{\bullet}=\ker(D:A^{\bullet}\longrightarrow A^{\bullet+1})$, i.e., the set of $D$-closed transverse generalized forms of degree $l$.  Let 
$B^{\bullet}(M)=\img(D:A^{\bullet-1}(M)\longrightarrow A^{\bullet}(M))$, i.e.,  the set of $D$-exact  transverse generalized forms of degree $l$. Then, the homology of the cochain complex $\{A^{\bullet}(M),D\}$  is called the $D$-cohomology of $M$, and it is denoted by
\begin{equation}\label{d-cohomo}
    H_{D}^{\bullet}(M) := \frac{Z^{\bullet}(M)}{B^{\bullet}(M)}=\frac{\ker(D:A^{\bullet}(M)\longrightarrow A^{\bullet+1}(M))}{\img(D:A^{\bullet-1}(M)\longrightarrow A^{\bullet}(M))}\,.
\end{equation}
\medskip

\begin{definition} Let $(\mathbf{\mathcal{G}^{*}M})^{p}:=\bigwedge^{p}_{\mathcal{O}_{M}}\mathbf{\mathcal{G}^{*}M}$ for $p \in \N$ and $(\mathbf{\mathcal{G}^{*}M})^{0}:=\mathcal{O}_{M}$. Note that $(\mathbf{\mathcal{G}^{*}M})^{\bullet}<F_{M}(\wedge^{\bullet}\mathcal{G}^{*}M)$. We say  that a transverse generalized form $\omega$ of type $(p,0)$ is a GH $p$-form if $d_{L}\omega=0$, that is, $\omega\in (\mathbf{\mathcal{G}^{*}M})^{p}$.   
\end{definition}

Let $N$ be another regular GC manifold and let $f: M\longrightarrow N$ be a GH map. Then it follows that 
\begin{enumerate}
\setlength\itemsep{0.5em}
    \item $f^{*}(A^{\bullet,\bullet}_{N})\subset A^{\bullet,\bullet}_{M}$,
    \item $f^{*}\circ d_{L_{N}}=d_{L_{M}}\circ f^{*}$.
\end{enumerate}

\begin{corollary}\label{imp corr}
    Let $M$ be a GC manifold. Given an open set $U\subseteq M$, a smooth map $\psi:(U,\mathcal{J}_{U})\longrightarrow\C$ is a GH function, that is, $f\in\mathcal{O}_{M}(U)$, if and only if $d_{L}f=0$ where $d_{L}$ as defined in \eqref{d-L}.
\end{corollary}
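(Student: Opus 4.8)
The plan is to deduce this directly from Lemma \ref{imp lemma2}, which already identifies GH functions on an open set $U$ as exactly those smooth $f\colon U\to\C$ with $(df)_x\in(L_M\cap(T^*M\otimes\C))_x$ for every $x\in U$. So it is enough to prove that the pointwise condition ``$(df)_x\in(L_M)_x$ for all $x$'' is equivalent to ``$d_Lf=0$''. Note that $df$ is automatically a section of $T^*M\otimes\C$, so $(df)_x\in(L_M)_x$ is the same as $(df)_x\in(L_M\cap(T^*M\otimes\C))_x=(\mathcal{G}^*M)_x$, and the corollary follows once the equivalence with $d_Lf=0$ is established.

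First I would compute $d_L$ on a $\C$-valued function. In degree zero the defining formula \eqref{d-L} collapses to $d_Lf(X)=\rho(X)(f)=(df)\bigl(\rho(X)\bigr)$ for every $X\in C^\infty(L)$, where $\rho$ is the projection to $TM\otimes\C$. Writing $X=\rho(X)+\xi$ with $\xi\in T^*M\otimes\C$ and using that $df$ has no tangent component, the definition \eqref{bilinear} of the natural pairing gives $(df)\bigl(\rho(X)\bigr)=2\langle df,X\rangle$. Hence $d_Lf(X)=2\langle df,X\rangle$ for all local sections $X$ of $L_M$.

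Next I would use that $L_M$ is maximal isotropic with respect to \eqref{bilinear}, so that $L_M^{\perp}=L_M$ fibrewise. Therefore $d_Lf$ vanishes identically if and only if $\langle df,X\rangle=0$ for every local section $X$ of $L_M$; since the sections of $L_M$ span each fibre $(L_M)_x$, this is in turn equivalent to $(df)_x\perp(L_M)_x$, hence to $(df)_x\in(L_M)_x$, for every $x\in U$. Combining this with the reduction above and with Lemma \ref{imp lemma2} yields the corollary. I do not expect a genuine obstacle here: the only points needing care are the factor $2$ produced by \eqref{bilinear} and the passage between the section-level statement $d_Lf=0$ and the fibrewise orthogonality, both of which are routine.
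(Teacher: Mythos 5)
Your proposal is correct and follows the same route as the paper, whose proof of this corollary is simply the citation of Lemma \ref{imp lemma2}; you merely make explicit the routine bridging step (that $d_{L}f=0$ is equivalent to $(df)_x\in (L_M)_x$ for all $x$, via the degree-zero formula for $d_L$ and the maximal isotropy $L_M^{\perp}=L_M$), and that computation, including the factor $2$ from \eqref{bilinear}, is accurate.
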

\begin{proof}
Follows from Lemma \ref{imp lemma2}.
\end{proof}
Let $Z^{\bullet,\bullet}=\ker(d_{L}:A^{\bullet,\bullet}\longrightarrow A^{\bullet,\bullet+1})$  and let 
$B^{\bullet,\bullet}(M)=\img(d_{L}:A^{\bullet,\bullet-1}(M)\longrightarrow A^{\bullet,\bullet}(M))$. Then the homology of the cochain complex $\{A^{\bullet,\bullet}(M),d_{L}\}$ is called \textit{$d_{L}$-cohomology of $M$} and it is denoted by
\begin{equation}\label{d-L-cohomo}
    H_{d_{L}}^{\bullet,\bullet}(M):=\frac{Z^{\bullet,\bullet}(M)}{B^{\bullet,\bullet}(M)} =\frac{\ker(d_{L}:A^{\bullet,\bullet}(M)\longrightarrow A^{\bullet,\bullet+1}(M))}{\img(d_{L}:A^{\bullet,\bullet-1}(M)\longrightarrow A^{\bullet,\bullet}(M))}\,.
\end{equation}
One can also consider the homology of the cochain complex $\{\tilde{A}^{\bullet}(M),\tilde{D}\}$ which is called the $\tilde{D}$-cohomology of $M$, and is denoted by
\begin{equation*}
 H_{\tilde{D}}^{\bullet}(M):=\frac{\ker(\tilde{D}:\tilde{A}^{\bullet}(M)\longrightarrow \tilde{A}^{\bullet+1}(M))}{\img(\tilde{D}:\tilde{A}^{\bullet-1}(M)\longrightarrow \tilde{A}^{\bullet}(M))}\,.    
\end{equation*}
Similarly, the homology of the cochain complex $\{\tilde{A}^{\bullet,\bullet}(M),\tilde{d}_{L}\}$ which will be called  \textit{$\tilde{d}_{L}$-cohomology of $M$}, and denoted by 
\begin{equation*}
  H_{\tilde{d}_{L}}^{\bullet,\bullet}(M):=\frac{\ker(\tilde{d}_{L}:\tilde{A}^{\bullet,\bullet}(M)\longrightarrow \tilde{A}^{\bullet,\bullet+1}(M))}{\img(\tilde{d}_{L}:\tilde{A}^{\bullet,\bullet-1}(M)\longrightarrow \tilde{A}^{\bullet,\bullet}(M))}\,.  
\end{equation*}
We know that locally (cf. \eqref{loc coordi}), 
$$C^{\infty}(\overline{\mathcal{G}^{*}M}|_{U})=\spn_{C^{\infty}(U)}\{d\overline{z_1},\ldots,d\overline{z_{k}}\}\,,$$ and 
$$C^{\infty}(\mathcal{G}^{*}M|_{U})=\spn_{C^{\infty}(U)}\{dz_1,\ldots,dz_{k}\}\,,$$ where $k$ is the type of $M$. Then, by following \cite[P-25\,,\,P-42]{griffiths}, one immediately obtains the result below.

\begin{prop}
Let $M$ be a regular GC manifold of type $k$. Then for any $q>0$,

\vspace{0.5em}
  \begin{enumerate}
  \setlength\itemsep{1em}
      \item $\tilde{d}_{L}$-Poincar\'{e} Lemma: For sufficiently small open set $U\subset M$, $ H_{\tilde{d}_{L}}^{\bullet,q}(U)=0\,.$
      \item $H^{q}(M,\tilde{A}^{\bullet,\bullet})=0\,.$
      \item $\tilde{D}$-Poincar\'{e} Lemma: For a sufficiently small open set $U\subset M$, $ H_{\tilde{D}}^{q}(U)=0\,.$
      \item $H^{q}(M,\tilde{A}^{\bullet})=0\,.$
  \end{enumerate}  
\end{prop}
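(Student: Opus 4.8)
The plan is to establish the four statements by reducing everything to the classical Dolbeault and de Rham Poincaré lemmas on polydiscs, using the generalized Darboux Theorem \ref{darbu thm} together with the fine-sheaf resolution argument. First I would fix a regular point and a generalized Darboux chart $(U,\phi,p,z)$ as in \eqref{loc coordi}, so that after a $B$-transformation $U \cong U_1 \times U_2$ with $U_1 \subset (\R^{2n-2k},\omega_0)$ and $U_2 \subset \C^k$. By \eqref{eq:E,Ebar} and the discussion following \eqref{dL-dL bar}, on such a chart a generalized form $\omega \in \tilde{A}^{\bullet,q}(U)$ is written $\omega = \sum_{I,J} f_{IJ}\, dz_I \wedge d\overline{z_J}$ with $f_{IJ} \in C^\infty(U)$, and the operators act only on the transverse $z$-variables via \eqref{form of D}--\eqref{del-delbar}: $\tilde{d}_L$ is the transverse Dolbeault $\overline{\partial}$ acting on the $\overline{z}$-factor and $\tilde{D}$ is the transverse exterior derivative. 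Crucially, the $p$-variables enter only as smooth parameters. The idea is therefore: shrink $U$ so that $U_2$ is a polydisc in $\C^k$; for fixed $\overline{z}$-degree $q>0$ and a $\tilde{d}_L$-closed $\omega$, apply the $k$-variable Dolbeault–Grothendieck lemma (the $\overline{\partial}$-Poincaré lemma, e.g. \cite[p.~25]{griffiths}) pointwise in the parameters $(p, z)$, obtaining a primitive that depends smoothly on all variables — smoothness in parameters is the content of the parametrized version of the Dolbeault lemma. This proves (1); statement (3) is identical with the holomorphic Poincaré lemma $H^q_{dR}(\text{polydisc})=0$ replaced in for $q>0$, again applied with the $p$-variables as smooth parameters, using that $\tilde D$ restricted to a Darboux chart is the ordinary de Rham differential in the transverse complex coordinates.

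For (2) and (4), I would run the standard abstract de Rham / Dolbeault argument. The sheaves $\tilde{A}^{p,q}$ and $\tilde{A}^{l}$ are sheaves of $C^\infty_M$-modules, hence fine (admitting partitions of unity subordinate to any open cover), so they are acyclic: $H^q(M, \tilde{A}^{p,q}) = 0$ and $H^q(M, \tilde{A}^{l}) = 0$ for all $q>0$. This is exactly statements (2) and (4) once we read them as the cohomology of the individual sheaves in the complex (not the hypercohomology of the complex). The fineness follows because $\tilde{A}^{p,q} = A^{p,q} \otimes_{F_M} C^\infty_M$ is locally a finite free $C^\infty_M$-module, and $C^\infty_M$ is a fine sheaf by the existence of smooth partitions of unity on the manifold $M$.

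The main obstacle is the parametrized smoothness in the Poincaré lemmas of parts (1) and (3): one must check that solving $\overline{\partial} u = \omega$ (resp. $d u = \omega$) in the transverse variables can be done with $u$ depending smoothly, not merely continuously, on the symplectic parameters $p$. This is handled by writing the classical solution operator as an explicit integral (the Cauchy–Pompeiu / homotopy operator) and differentiating under the integral sign, using that the integrand is smooth jointly in all variables on the shrunk chart; alternatively one invokes the known fact that the Dolbeault and de Rham complexes of smooth families are exact in positive degree. A secondary point worth a sentence is that the operators $\tilde{d}_L$ and $\tilde{D}$ genuinely restrict to the transverse coordinate expressions claimed — but this has already been recorded in the text just before the proposition, via \eqref{form of D}, \eqref{del-delbar}, and the identification of $d_{\overline L}|_{C^\infty(\mathcal{G}^*M)}$ with $\partial$ — so it may be cited rather than reproven. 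I would close by remarking that the transverse-constant versions $D$, $d_L$ (parts about $A^{\bullet}$, $A^{\bullet,\bullet}$) are not claimed here and require the orbifold hypothesis; the present proposition is purely the smooth statement.
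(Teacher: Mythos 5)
Your proposal is correct and follows essentially the same route as the paper, which simply notes that in a generalized Darboux chart the relevant sheaves are spanned by $dz_I\wedge d\overline{z_J}$ with smooth coefficients and then cites the standard arguments of Griffiths--Harris (pp.\ 25, 42): the parametrized $\overline{\partial}$- and de Rham Poincar\'e lemmas in the transverse variables for (1) and (3), and fineness of the $C^{\infty}_{M}$-module sheaves $\tilde{A}^{\bullet,\bullet}$, $\tilde{A}^{\bullet}$ for (2) and (4). Your explicit treatment of smooth dependence on the leafwise parameters $p$ (via the integral solution operator) is exactly the point the paper leaves implicit; only the phrase \emph{holomorphic Poincar\'e lemma} in part (3) should read the smooth de Rham Poincar\'e lemma, as your own formula $H^{q}_{dR}(\text{polydisc})=0$ indicates.
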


\begin{definition}
    An open cover $\mathcal{U}=\{U_{\alpha}\}$ of $M$ is called a transverse good cover if $\mathcal{U}$ is a locally finite transverse open cover (cf. Definition \ref{basic open set}) and any finite intersection $\bigcap^{l}_{i=0}U_{\alpha_{i}}$ is diffeomorphic to a tubular neighborhood as in Theorem \ref{orbi thm}.
\end{definition}
\begin{prop}\label{prop}
 Let $M$ be a regular GC manifold of type $k$. Assume $M/\mathscr{S}$ has a smooth orbifold structure. Let $\mathcal{U}=\{U_{\alpha}\}$ be a sufficiently fine transverse good cover of $M$. Then for any $q>0$,
 \vspace{0.5em}
  \begin{enumerate}
  \setlength\itemsep{1em}
      \item $d_{L}$-Poincar\'{e} Lemma: For a sufficiently small transverse open set $U\subset M$, $$ H_{d_{L}}^{\bullet,q}(U)=0\,.$$
      \item $D$-Poincar\'{e} Lemma: For a sufficiently small transverse open set $U\subset M$, $$ H_{D}^{q}(U)=0\,.$$
      \item $H^{q}(\mathcal{U},A^{\bullet,\bullet})=0\,.$
      \item $H^{q}(\mathcal{U},A^{\bullet})=0\,.$
      \end{enumerate} 
\end{prop}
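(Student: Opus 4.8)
The plan is to reduce all four assertions, via Theorem~\ref{orbi thm}, to classical Poincar\'e-type lemmas on a ball in $\C^{k}$ made equivariant under a finite group, together with the existence of partitions of unity that are constant along the leaves. First I would fix the model neighborhoods. By Theorem~\ref{orbi thm}, part (4), every leaf $S$ has a cofinal family of transverse open neighborhoods of the form $U\cong\tilde S\times_{\hol(S)}B$ with $B\subset\C^{k}$ an open ball about the origin and $\hol(S)$ finite; by the differentiable slice theorem used in the proof of that theorem we may take the $\hol(S)$-action on $B$ to be linear, hence biholomorphic, and $B$ to be $\hol(S)$-invariant. Any transverse good cover refines to such neighborhoods, and finite intersections of its members are again of this form by definition. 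On such a $U$, a smooth function constant along the leaves of $\mathscr{S}|_{U}$ is exactly a $\hol(S)$-invariant smooth function on $B$ pulled back to $U$; since $\mathscr{S}$ is transversely holomorphic and $\hol(S)$ acts biholomorphically, the coordinates $z=(z_{1},\ldots,z_{k})$ of \eqref{loc coordi} may be taken as the pullbacks of the linear coordinates on $B$, so that \eqref{Apq} yields canonical identifications $A^{p,q}(U)\cong\Omega^{p,q}(B)^{\hol(S)}$ and $A^{l}(U)\cong\Omega^{l}(B)^{\hol(S)}$, under which, by \eqref{form of D} and \eqref{form of d-l bar}, $d_{L}$ becomes $\overline\partial$ and $D$ becomes the de Rham differential $d$ on $B$.

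For (1) and (2): on the ball $B$ the $\overline\partial$-Poincar\'e (Dolbeault--Grothendieck) lemma and the ordinary Poincar\'e lemma hold (cf.\ \cite{griffiths}), so every $\overline\partial$-closed $(p,q)$-form with $q>0$, respectively every $d$-closed $l$-form with $l>0$, has a primitive on $B$. If the given form is $\hol(S)$-invariant, averaging its primitive over the finite group $\hol(S)$ produces an invariant primitive: each $g\in\hol(S)$ acts biholomorphically, hence $g^{*}$ commutes with $\overline\partial$ and with $d$ and fixes the invariant closed form, so $|\hol(S)|^{-1}\sum_{g}g^{*}(\text{primitive})$ is again a primitive and is invariant. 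This gives $H^{\bullet,q}_{d_{L}}(U)=0$ and $H^{q}_{D}(U)=0$ for $q>0$ on the model neighborhoods, and hence for all sufficiently small transverse open sets, since these neighborhoods form a basis among transverse open sets.

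For (3) and (4): the sheaves $A^{p,q}$ and $A^{l}$ are modules over $F_{M}$. A transverse open cover of $M$ is the pullback under the open quotient map $\tilde\pi\colon M\to\mathscr{M}=M/\mathscr{S}$ of an open cover of the orbifold $\mathscr{M}$; since $\mathscr{M}$ is a smooth orbifold it carries smooth partitions of unity subordinate to the locally finite cover corresponding to $\mathcal{U}$, and their $\tilde\pi$-pullbacks are smooth, constant along the leaves (hence lie in $F_{M}$), and subordinate to $\mathcal{U}$. Thus each $A^{p,q}$ and each $A^{l}$ is fine with respect to transverse covers, so the standard partition-of-unity contracting homotopy on the \v{C}ech complex of $\mathcal{U}$ gives $H^{q}(\mathcal{U},A^{p,q})=H^{q}(\mathcal{U},A^{l})=0$ for $q>0$; summing over the finitely many bidegrees $p,q\le k$ yields $H^{q}(\mathcal{U},A^{\bullet,\bullet})=H^{q}(\mathcal{U},A^{\bullet})=0$.

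The one step that needs genuine care is the reduction in the first paragraph: verifying that ``constant along the leaves'' on the tubular model $\tilde S\times_{\hol(S)}B$ corresponds precisely to $\hol(S)$-invariance of forms on $B$, and that $d_{L}$ and $D$ translate to $\overline\partial$ and $d$ respectively. Once that dictionary is in place, the possible non-freeness of the holonomy action costs nothing more than an averaging over a finite group, and the remainder is the standard Poincar\'e-lemma and partition-of-unity machinery.
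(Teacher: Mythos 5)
Your proposal is correct and follows essentially the same route as the paper: reduce to the tubular-neighborhood model of Theorem \ref{orbi thm}, run the classical $\overline\partial$- and de Rham Poincar\'e lemmas there, and obtain (3)--(4) from leaf-constant partitions of unity pulled back from the orbifold leaf space. The only difference is that you spell out the holonomy-invariance of the local model and the averaging over the finite group $\hol(S)$ to get an invariant primitive, a detail the paper subsumes under ``following the proof in Griffiths--Harris.''
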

\begin{proof}
    By Theorem \ref{orbi thm}, there exists a transverse open set (tubular neighborhood) $U$ around a leaf $S$ which is diffeomorphic to $\tilde{S}\times_{\hol(S)}\C^{k}$ where $\tilde{S}$ is the universal cover of $S$ and $\hol(S)$ is the holonomy group of $S$. Since $\hol(S)$ is finite, it acts linearly. Recall that 
$\mathcal{N}^{*}\otimes\C=\mathcal{G}^{*}M\oplus\overline{\mathcal{G}^{*}M}$ where $\mathcal{N}$ is the normal bundle of $\mathscr{S}$. 
Taking $U$ to be sufficiently small, we have
    $$F_{M}(\overline{\mathcal{G}^{*}M}|_{U})=\spn_{F_{M}(U)}\{d\overline{z_1},\ldots,d\overline{z_{k}}\}\,,$$ and 
    $$F_{M}(\mathcal{G}^{*}M|_{U})=\spn_{F_{M}(U)}\{dz_1,\ldots,dz_{k}\}\,.$$
    Then following the proof in \cite[P-25\,,\,P-42]{griffiths}, we can prove $(1)$ and $(2)$.
    \vspace{0.3em}
    \\ 
    To prove $(3)$ and $(4)$, it is enough to show that there is a partition of unity subordinate to\, $\mathcal{U}$ \, such that they are constant along the leaves. This is obtained easily by pulling back a partition of unity for   
    $M/\mathscr{S}$ subordinate to $\hat{\mathcal{U}}=(\tilde{\pi}(U_{\alpha}))$ with respect to the quotient map $\tilde{\pi}:M\longrightarrow M/\mathscr{S}$.
    \end{proof}  

\begin{prop}(de Rham cohomology for regular GC manifold)\label{prop2}
Let $M$ be a regular GC manifold with induced symplectic foliation $\mathscr{S}$. Assume the leaf space $M/\mathscr{S}$ admits a smooth orbifold structure. Then for $q\geq 0$,
    $$H^{q}(\mathcal{U},\{\C\})\cong H_{D}^{q}(M)\,,$$ where $\{\C\}$ is the sheaf of locally constant $\C$-valued functions and $\mathcal{U}$ is a sufficiently fine transverse good cover of $M$.
\end{prop}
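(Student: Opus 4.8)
\emph{Proof proposal.} The plan is to run the generalized Mayer--Vietoris (\v{C}ech--de Rham) argument, adapted from the classical case to the complex $(A^{\bullet},D)$ of transverse generalized forms. Existence of a sufficiently fine transverse good cover $\mathcal{U}$ follows from Theorem \ref{orbi thm} by taking a locally finite refinement of a cover of $M$ by tubular neighborhoods of the leaves; I take $\mathcal{U}$ as in the statement. First I would form the double complex $K^{p,q}:=C^{p}(\mathcal{U},\mathcal{A}^{q})$, where $\mathcal{A}^{q}$ is the sheaf $A^{q}=\bigoplus_{i+j=q}A^{i,j}$ of \eqref{A}, with the \v{C}ech differential $\delta\colon K^{p,q}\to K^{p+1,q}$ and, up to the usual sign, the differential $D\colon K^{p,q}\to K^{p,q+1}$ of \eqref{D-def}. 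The proposition will drop out of comparing the two filtrations of the total complex $\mathrm{Tot}(K)$.

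For the filtration by \v{C}ech degree, fix $q$ and consider the augmented complex $0\to\mathcal{A}^{q}(M)\to C^{0}(\mathcal{U},\mathcal{A}^{q})\xrightarrow{\delta}C^{1}(\mathcal{U},\mathcal{A}^{q})\xrightarrow{\delta}\cdots$. Exactness at $C^{0}$ is the sheaf gluing axiom for $\mathcal{A}^{q}$, and exactness at $C^{p}$ for $p>0$ is exactly Proposition \ref{prop}(4): the partition of unity on $M/\mathscr{S}$ subordinate to $\{\tilde{\pi}(U_{\alpha})\}$ pulls back via $\tilde{\pi}$ (see \eqref{leaf sp map}) to a partition of unity on $M$ constant along the leaves, hence lying in $F_{M}$ and subordinate to $\mathcal{U}$, so each $\mathcal{A}^{q}$ is fine relative to $\mathcal{U}$. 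Consequently the first spectral sequence has $E_{1}$-page concentrated in column $p=0$, equal to $(A^{\bullet}(M),D)$, so it degenerates at $E_{2}$ and gives $H^{n}(\mathrm{Tot}(K))\cong H^{n}_{D}(M)$.

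For the filtration by form degree, fix $p$ and consider $0\to C^{p}(\mathcal{U},\{\mathbb{C}\})\to C^{p}(\mathcal{U},\mathcal{A}^{0})\xrightarrow{D}C^{p}(\mathcal{U},\mathcal{A}^{1})\xrightarrow{D}\cdots$, which is the product over the $p$-fold intersections $U_{\alpha_{0}\cdots\alpha_{p}}$ of the augmented complexes $0\to\{\mathbb{C}\}(U_{\alpha_{0}\cdots\alpha_{p}})\to A^{0}(U_{\alpha_{0}\cdots\alpha_{p}})\xrightarrow{D}A^{1}(U_{\alpha_{0}\cdots\alpha_{p}})\to\cdots$. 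Since $\mathcal{U}$ is a transverse good cover, each intersection $U_{\alpha_{0}\cdots\alpha_{p}}$ is diffeomorphic to a tubular neighborhood as in Theorem \ref{orbi thm}, so the $D$-Poincar\'{e} Lemma, Proposition \ref{prop}(2), gives $H^{q}_{D}(U_{\alpha_{0}\cdots\alpha_{p}})=0$ for $q>0$. For $q=0$ I would identify $\ker(D\colon A^{0}(U)\to A^{1}(U))$ with the locally constant $\mathbb{C}$-valued functions: for $f\in A^{0}=F_{M}$ one has $Df=\partial f+\overline{\partial}f$ by \eqref{form of D}--\eqref{del-delbar}, and if $f$ is constant along the leaves with $\partial f=\overline{\partial}f=0$ then $df=0$ in the local coordinates \eqref{loc coordi}, so $f$ is locally constant; the converse is clear. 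Hence the second spectral sequence has $E_{1}$-page concentrated in row $q=0$, equal to the \v{C}ech complex $C^{\bullet}(\mathcal{U},\{\mathbb{C}\})$, so it too degenerates at $E_{2}$ and gives $H^{n}(\mathrm{Tot}(K))\cong H^{n}(\mathcal{U},\{\mathbb{C}\})$.

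Comparing the two identifications of $H^{\bullet}(\mathrm{Tot}(K))$ yields $H^{q}(\mathcal{U},\{\mathbb{C}\})\cong H^{q}_{D}(M)$ for all $q\geq 0$, as claimed. The only input beyond diagram bookkeeping is the column-exactness step, which is why the \emph{transverse good cover} hypothesis is needed here (unlike in the classical de Rham theorem): the $D$-Poincar\'{e} Lemma is established only on tubular neighborhoods, so all finite intersections of members of $\mathcal{U}$ must be of that form; the identification of the zeroth cohomology sheaf of $(\mathcal{A}^{\bullet},D)$ with $\{\mathbb{C}\}$ is then the small verification above. All of the analytic substance is already packaged in Proposition \ref{prop}, so I expect this proof itself to be essentially formal.
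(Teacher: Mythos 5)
Your proof is correct, and it reaches the statement by a different mechanism than the paper. The paper writes the resolution $0\to\{\C\}\to A^{0}\xrightarrow{D}A^{1}\to\cdots$ (justified by the $D$-Poincar\'{e} Lemma), splits it into the short exact sequences of sheaves \eqref{exact ses2}--\eqref{exact ses1}, and then dimension-shifts through the associated long exact sequences in \v{C}ech cohomology, using $H^{q}(\mathcal{U},A^{\bullet})=0$ from Proposition \ref{prop}(4) to collapse each step, ending with $H^{q}(\mathcal{U},\{\C\})\cong Z^{q}(M)/B^{q}(M)$. You instead assemble the same two inputs into the \v{C}ech--de Rham double complex $C^{p}(\mathcal{U},\mathcal{A}^{q})$ and compare the two filtrations: row-exactness is Proposition \ref{prop}(4) (via the leafwise-constant partition of unity pulled back from $M/\mathscr{S}$) together with the sheaf axiom, and column-exactness is the $D$-Poincar\'{e} Lemma on the intersections $U_{\alpha_{0}\cdots\alpha_{p}}$, which the transverse-good-cover hypothesis guarantees are tubular neighborhoods. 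The substance is identical, but your packaging has two small advantages: it only ever invokes the Poincar\'{e} Lemma on the finite intersections of $\mathcal{U}$, exactly where Proposition \ref{prop}(2) is stated, rather than implicitly at the level of germs when asserting exactness of the sheaf sequence \eqref{exact ses }; and it makes explicit the verification that $\ker(D\colon A^{0}\to A^{1})=\{\C\}$ (a leafwise-constant function killed by $\partial$ and $\overline{\partial}$ has $df=0$ in the Darboux coordinates \eqref{loc coordi}), which the paper leaves implicit. The paper's dimension-shifting route is shorter on the page and is the same template it reuses verbatim for Theorem \ref{main5}; your route generalizes equally well to the twisted case of Corollary \ref{cor5} after tensoring with $\mathbf{E}$.
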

\begin{proof}
     By $D$-Poincar\'{e} Lemma, we have the following exact sequence of sheaves
\begin{equation}\label{exact ses }
  \begin{tikzcd}
0 \arrow[r] & \{\C\} \arrow[r, hookrightarrow] & A^{0} \arrow[r, "D"] &
A^{1} \arrow[r, "D"] &
\cdots
\end{tikzcd} 
\end{equation}
on $M$.  This gives the following exact sequence,
\begin{equation}\label{exact ses2}
\begin{tikzcd}
0 \arrow[r] & Z^{\bullet} \arrow[r, hookrightarrow] & A^{\bullet} \arrow[r, "D"] & Z^{\bullet+1} \arrow[r] & 0 \,.
\end{tikzcd} 
\end{equation}
 In particular, the sequence
\begin{equation}\label{exact ses1}
  \begin{tikzcd}
0 \arrow[r] & \{\C\} \arrow[r, hookrightarrow] & A^{0} \arrow[r, "D"]  & Z^{1} \arrow[r] & 0\,
\end{tikzcd} 
\end{equation} is exact.
By $(4)$ in Proposition \ref{prop}, $H^{q}(\mathcal{U},A^{\bullet})=0$ for all $q>0$. Thus, considering the associated long exact sequences in cohomology for these exact sequences of sheaves, as in \cite[pp. 40-41, 44]{griffiths}, we  obtain that for all $q\geq 0$ 
\begin{align*}
    H^{q}(\mathcal{U},\{\C\})&\cong H^{q-1}(\mathcal{U},Z^{1})\,\,\,\,\,(\text{by}\,\,\,\eqref{exact ses1})\\
    &\cong H^{q-2}(\mathcal{U},Z^{2})\,\,\,\,\,(\text{by}\,\,\,\eqref{exact ses2})\\
    &\vdots\\
    &\cong H^{1}(\mathcal{U},Z^{q-1})\,\,\,\,\,(\text{by}\,\,\,\eqref{exact ses2})\\
    &\cong \frac{H^{0}(\mathcal{U},Z^{q})}{D(H^{0}(\mathcal{U},Z^{q-1}))}\\
    &= \frac{Z^{q}(M)}{B^{q}(M)}=H_{D}^{q}(M)\,.
\end{align*}
\end{proof}
\begin{theorem}(Dolbeault cohomology for regular GC manifold)\label{main5}
    Let $M$ be a regular GC manifold with induced symplectic foliation $\mathscr{S}$. Assume that the leaf space $M/\mathscr{S}$ admits an orbifold structure. Then for any $p,q\geq 0$,
    $$H^{q}(M,(\mathbf{\mathcal{G}^{*}M})^{p})\cong H_{d_{L}}^{p,q}(M)\,.$$
\end{theorem}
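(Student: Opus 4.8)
The plan is to run the abstract de Rham / Dolbeault argument in parallel with the proof of Proposition \ref{prop2}, now with the complex $(A^{p,\bullet},d_L)$ in place of $(A^{\bullet},D)$ and the sheaf $(\mathbf{\mathcal{G}^{*}M})^{p}$ in place of $\{\C\}$. First I would check that
\[
\begin{tikzcd}
0 \arrow[r] & (\mathbf{\mathcal{G}^{*}M})^{p} \arrow[r, hookrightarrow] & A^{p,0} \arrow[r,"d_L"] & A^{p,1} \arrow[r,"d_L"] & A^{p,2} \arrow[r,"d_L"] & \cdots
\end{tikzcd}
\]
is an exact sequence of sheaves on $M$. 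Exactness at $A^{p,0}$ is the identity $\ker(d_L\colon A^{p,0}\to A^{p,1})=(\mathbf{\mathcal{G}^{*}M})^{p}$: writing a transverse generalized form of type $(p,0)$ locally as $\sum_{I}f_{I}\,dz_{I}$, one has $d_L(\sum_{I}f_{I}\,dz_{I})=\sum_{I}\overline{\partial}f_{I}\wedge dz_{I}$ by \eqref{form of d-l bar}, and this vanishes precisely when each $f_{I}$ is a GH function (Corollary \ref{imp corr}), i.e. when the form is a GH $p$-form. Exactness at $A^{p,q}$ for $q\ge 1$ is the $d_L$-Poincar\'e Lemma, Proposition \ref{prop}(1); this is the step where the hypothesis that $M/\mathscr{S}$ carries a smooth orbifold structure enters, as that lemma is proved on the tubular-neighbourhood local model $\tilde{S}\times_{\hol(S)}\C^{k}$ furnished by Theorem \ref{orbi thm}.

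Next, break the resolution into short exact sequences of sheaves
\[
\begin{tikzcd}
0 \arrow[r] & \mathcal{Z}^{p,q} \arrow[r, hookrightarrow] & A^{p,q} \arrow[r,"d_L"] & \mathcal{Z}^{p,q+1} \arrow[r] & 0 \,,
\end{tikzcd}
\]
with $\mathcal{Z}^{p,0}=(\mathbf{\mathcal{G}^{*}M})^{p}$ and $\mathcal{Z}^{p,q}=\ker(d_L\colon A^{p,q}\to A^{p,q+1})$, and fix a sufficiently fine transverse good cover $\mathcal{U}$ of $M$. By Proposition \ref{prop}(3), $H^{j}(\mathcal{U},A^{p,q})=0$ for all $j>0$ and all $p,q$ — here the essential input is a partition of unity subordinate to $\mathcal{U}$ that is constant along the leaves, obtained by pulling back, via $\tilde{\pi}$, a partition of unity on the orbifold $M/\mathscr{S}$. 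Passing to the long exact sequences in \v{C}ech cohomology and dimension-shifting verbatim as in the proof of Proposition \ref{prop2} then yields
\[
H^{q}\big(\mathcal{U},(\mathbf{\mathcal{G}^{*}M})^{p}\big)\;\cong\;H^{q-1}(\mathcal{U},\mathcal{Z}^{p,1})\;\cong\;\cdots\;\cong\;H^{1}(\mathcal{U},\mathcal{Z}^{p,q-1})\;\cong\;\frac{H^{0}(\mathcal{U},\mathcal{Z}^{p,q})}{d_L\,H^{0}(\mathcal{U},\mathcal{Z}^{p,q-1})}\;=\;H^{p,q}_{d_L}(M)\,.
\]
As in Proposition \ref{prop2}, the left-hand side (\v{C}ech cohomology over a sufficiently fine transverse good cover, independent of the choice by the same double-complex reasoning) is what is denoted $H^{q}(M,(\mathbf{\mathcal{G}^{*}M})^{p})$; the case $p=0$ recovers $H^{q}(M,\mathcal{O}_{M})\cong H^{0,q}_{d_L}(M)$.

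The dimension-shift is routine once Proposition \ref{prop2} is in hand, so the substantive content — the main obstacle — lies entirely in the two ingredients packaged in Proposition \ref{prop}: that the $d_L$-Poincar\'e Lemma holds (so $(A^{p,\bullet},d_L)$ is genuinely a resolution of $(\mathbf{\mathcal{G}^{*}M})^{p}$), and that the sheaves $A^{p,q}$ are acyclic with respect to a transverse good cover. Both rely on the leaf space being an orbifold: the first via the normal form $\tilde{S}\times_{\hol(S)}\C^{k}$ of Theorem \ref{orbi thm} together with the ordinary $\overline{\partial}$-Poincar\'e Lemma in the transverse directions, the second via the pull-back of an orbifold partition of unity. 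It is also worth recording in the set-up that, by Theorem \ref{orbi thm}, a transverse good cover can be chosen fine enough that the $d_L$-Poincar\'e Lemma applies on each of its finite intersections, which is what legitimises the dimension shift.
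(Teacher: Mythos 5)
Your proposal follows essentially the same route as the paper: the $d_L$-Poincar\'e Lemma makes $(A^{p,\bullet},d_L)$ a resolution of $(\mathbf{\mathcal{G}^{*}M})^{p}$, the short exact sequences \eqref{exact 1}--\eqref{exact 2} together with the acyclicity $H^{j}(\mathcal{U},A^{p,q})=0$ from Proposition \ref{prop}(3) give the dimension shift, and the orbifold hypothesis enters exactly where you say it does. The one step you treat too lightly is the passage from $H^{q}(\mathcal{U},(\mathbf{\mathcal{G}^{*}M})^{p})$ to $H^{q}(M,(\mathbf{\mathcal{G}^{*}M})^{p})$: the theorem is stated for the cohomology of $M$, and declaring the \v{C}ech groups of one cover to be ``what is denoted'' $H^{q}(M,\cdot)$, or appealing to cover-independence in passing, skips the part of the paper's proof that actually establishes this. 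The paper invokes Leray's theorem: it takes $\mathcal{U}$ so that every finite intersection $V$ is a tubular neighbourhood as in Theorem \ref{orbi thm}, reruns the same resolution argument on $V$ to get $H^{q}(V,(\mathbf{\mathcal{G}^{*}M})^{p}|_{V})\cong H^{p,q}_{d_L}(V)$, and then uses the $d_L$-Poincar\'e Lemma to conclude these vanish for $q>0$, so $\mathcal{U}$ is acyclic for $(\mathbf{\mathcal{G}^{*}M})^{p}$ and $H^{q}(\mathcal{U},(\mathbf{\mathcal{G}^{*}M})^{p})\cong H^{q}(M,(\mathbf{\mathcal{G}^{*}M})^{p})$. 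Note also that your closing remark — that the cover can be refined so the $d_L$-Poincar\'e Lemma holds on all finite intersections — is precisely the input to this Leray step, not to the dimension shift, which only needs Proposition \ref{prop}(3); with that step made explicit your argument coincides with the paper's.
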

\begin{proof}
By $d_{L}$-Poincar\'{e} Lemma, the following sequences of sheaves, 
    \begin{equation}\label{exact 1}
  \begin{tikzcd}
0 \arrow[r] & (\mathbf{\mathcal{G}^{*}M})^{\bullet} \arrow[r, hookrightarrow] & A^{\bullet,0} \arrow[r, "d_{L}"] & Z^{\bullet,1} \arrow[r] & 0\,,
\end{tikzcd} 
\end{equation}
\begin{equation}\label{exact 2}
\begin{tikzcd}
0 \arrow[r] & Z^{\bullet,\bullet} \arrow[r, hookrightarrow] & A^{\bullet,\bullet} \arrow[r, "d_{L}"] & Z^{\bullet,\bullet+1} \arrow[r] & 0
\end{tikzcd} 
\end{equation}
are exact. {Let $\mathcal{U}$ be a sufficiently fine  transverse good cover of $M$. By $(3)$ in Proposition \ref{prop}, $H^{r}(\mathcal{U},A^{\bullet,\bullet})=0$ for all $r>0$. Hence, using the long exact sequences in cohomology associated with \eqref{exact 1} and \eqref{exact 2} (cf. \cite[pp. 40-41]{griffiths}), we have,
\begin{align*}
    H^{q}(\mathcal{U},(\mathbf{\mathcal{G}^{*}M})^{p})&\cong H^{q-1}(\mathcal{U},Z^{p,1})\,\,\,\,\,(\text{by}\,\,\,\eqref{exact 1})\\
    &\cong H^{q-2}(\mathcal{U},Z^{p,2})\,\,\,\,\,(\text{by}\,\,\,\eqref{exact 2})\\
    &\vdots\\
    &\cong H^{1}(\mathcal{U},Z^{p,q-1})\,\,\,\,\,(\text{by}\,\,\,\eqref{exact 2})\\
    &\cong \frac{H^{0}(\mathcal{U},Z^{p,q})}{d_{L}(H^{0}(\mathcal{U},Z^{p,q-1}))}\\
    &= \frac{Z^{p,q}(M)}{B^{p,q}(M)}=H_{d_{L}}^{p,q}(M)\,.
\end{align*}
We can choose $\mathcal{U}=\{U_{\alpha}\}$ such that any finite intersection $V=\bigcap^{l}_{i=0}U_{\alpha_{i}}$ is diffeomorphic a tubular neighborhood as in Theorem \ref{orbi thm}. Fix such a $V$. Then $\mathcal{V}:=\{V\cap U_{\alpha}\}$ is a  transverse good cover of $V$. Note that $H^{q}(V,A^{\bullet,\bullet})=0$. Then, as above, $$H^q(\mathcal{V},(\mathbf{\mathcal{G}^{*}M})^{p}|_{V})=H_{d_{L}}^{p,q}(V)\,.$$ Since $H_{d_{L}}^{p,q}(W)=0$  for any finite intersection $W$ of elements in $\mathcal{V}$ by the $d_{L}$-Poincar\'{e} Lemma, using Leray's theorem we have, 
$$ H^q({V},(\mathbf{\mathcal{G}^{*}M})^{p}|_{V})= H^q(\mathcal{V},(\mathbf{\mathcal{G}^{*}M})^{p}|_{V})=H_{d_{L}}^{p,q}(V)\,.$$  
Again, by the $d_{L}$-Poincar\'{e} Lemma, $H_{d_{L}}^{p,q}(V)=0$. Thus, by Leray's Theorem, for all $p,q\geq 0$, $$H^q(\mathcal{U},(\mathbf{\mathcal{G}^{*}M})^{p})\cong H^q(M,(\mathbf{\mathcal{G}^{*}M})^{p})\,.$$}
\end{proof}

Let $E$ be an SGH vector bundle over $M$. We put 
\begin{equation}\label{A-E}
A_{E}:=A\otimes_{\mathcal{O}_{M}}\mathbf{E}\,,
\end{equation}
 where $A$ is as in \eqref{A}. Since $A$ is an $F_{M}$-module, it is also $\mathcal{O}_{M}$-module, and thus, $A_{E}$ is well-defined. We can naturally extend $d_{L}$ from $A$ to $A_{E}$,  denoted by $d_{L}^{'}$, as follows: For $f\in F_{M}$, $\alpha\in A$ and $\beta\in\mathbf{E}$, we can define $d_{L}^{'}(f\alpha\wedge\beta)=fd_{L}(\alpha)\wedge\beta\,.$ This definition is well-defined because if $f\in\mathcal{O}_{M}$, we have $f\alpha\wedge\beta=\alpha\wedge f\beta$. Then 
 \begin{align*}
     d_{L}^{'}(f\alpha\wedge\beta)&=fd_{L}\alpha\wedge\beta\quad(\text{as $d_{L}f=0$})\\
     &=d_{L}\alpha\wedge(f\beta)\\
     &=d_{L}^{'}(\alpha\wedge f\beta)\,.
 \end{align*}
 {For notational convenience, we again denote by $d_{L}\,$ or by $d_{L,E}\,$, the natural extension $d_L^{\prime}$ of the operator $d_L$.} We denote the component of type $(p,q)$ of the cohomology of the complex $ (H^0(M,A_E), d_L) $ by $H_{d_{L}}^{p,q}(M, E)\,.$ Note that tensoring the short exact sequences \eqref{exact 1} and \eqref{exact 2} with the locally free sheaf $\mathbf{E}$ again yields short exact sequences.  Then, following the proof of Theorem \ref{main5}, we get the following.
\begin{corollary}\label{cor5}
   $H_{d_{L}}^{p,q}(M,E)\cong H^{q}(M,(\mathbf{\mathcal{G}^{*}M})^{p}\otimes_{\mathcal{O}_{M}}\mathbf{E})$. 
\end{corollary}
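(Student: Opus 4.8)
The plan is to mirror the proof of Theorem~\ref{main5} verbatim, replacing the structure sheaf $\mathcal{O}_M$ (and the complex $A^{\bullet,\bullet}$) by its twist with the locally free sheaf $\mathbf{E}$. Concretely, I would first record that since $\mathbf{E}$ is locally free of finite rank over $\mathcal{O}_M$, tensoring the two short exact sequences
\eqref{exact 1} and \eqref{exact 2} over $\mathcal{O}_M$ with $\mathbf{E}$ preserves exactness; this gives
\[
0 \longrightarrow (\mathbf{\mathcal{G}^{*}M})^{\bullet}\otimes_{\mathcal{O}_M}\mathbf{E} \longrightarrow A^{\bullet,0}\otimes_{\mathcal{O}_M}\mathbf{E} \xrightarrow{\,d_{L,E}\,} Z^{\bullet,1}\otimes_{\mathcal{O}_M}\mathbf{E} \longrightarrow 0
\]
and
\[
0 \longrightarrow Z^{\bullet,\bullet}\otimes_{\mathcal{O}_M}\mathbf{E} \longrightarrow A^{\bullet,\bullet}\otimes_{\mathcal{O}_M}\mathbf{E} \xrightarrow{\,d_{L,E}\,} Z^{\bullet,\bullet+1}\otimes_{\mathcal{O}_M}\mathbf{E} \longrightarrow 0\,,
\]
where $d_{L,E}=d_L'$ is the extension introduced just before the corollary, and where one must check that the kernel/image sheaves of $d_{L,E}$ on $A_E$ are exactly $Z^{\bullet,\bullet}\otimes_{\mathcal{O}_M}\mathbf{E}$ and $(\mathbf{\mathcal{G}^{*}M})^{\bullet}\otimes_{\mathcal{O}_M}\mathbf{E}$ respectively — this is immediate locally, since on a trivializing transverse good chart $\mathbf{E}$ is a free $\mathcal{O}_M$-module and $d_{L,E}$ acts componentwise as $d_L$ does on $A^{\bullet,\bullet}$.

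Next I would invoke the twisted $d_L$-Poincar\'e Lemma: on a sufficiently small transverse open set $U$ that trivializes $E$, $H^{\bullet,q}_{d_{L,E}}(U)=0$ for $q>0$. This follows from part~(1) of Proposition~\ref{prop} applied on each summand after choosing a GH frame of $E$ over $U$ (the trivialization is by GH sections, so $d_{L,E}$ becomes the direct sum of $\mathrm{rank}(E)$ copies of $d_L$). Similarly, part~(3) of Proposition~\ref{prop} together with local freeness gives $H^{r}(\mathcal{U}, A^{\bullet,\bullet}\otimes_{\mathcal{O}_M}\mathbf{E})=0$ for $r>0$ and a sufficiently fine transverse good cover $\mathcal{U}$ — here one uses the leaf-wise-constant partition of unity pulled back from $M/\mathscr{S}$, exactly as in Proposition~\ref{prop}, and the fact that $A_E$ is a module over the fine sheaf of leaf-wise-constant functions tensored with $C^\infty_M$.

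Then the homological bookkeeping is identical to that in Theorem~\ref{main5}: chasing the long exact cohomology sequences attached to the two twisted short exact sequences dimension-shifts
\[
H^{q}(\mathcal{U}, (\mathbf{\mathcal{G}^{*}M})^{p}\otimes_{\mathcal{O}_M}\mathbf{E}) \;\cong\; H^{q-1}(\mathcal{U}, Z^{p,1}\otimes_{\mathcal{O}_M}\mathbf{E}) \;\cong\; \cdots \;\cong\; \frac{H^{0}(\mathcal{U}, Z^{p,q}\otimes_{\mathcal{O}_M}\mathbf{E})}{d_{L,E}\bigl(H^{0}(\mathcal{U}, Z^{p,q-1}\otimes_{\mathcal{O}_M}\mathbf{E})\bigr)} = H^{p,q}_{d_{L}}(M,E)\,,
\]
and the same Leray-cover argument (refining $\mathcal{U}$ so that all finite intersections are tubular neighborhoods on which the twisted $d_L$-Poincar\'e Lemma kills higher cohomology, hence $\mathcal{U}$ is acyclic for $(\mathbf{\mathcal{G}^{*}M})^{p}\otimes_{\mathcal{O}_M}\mathbf{E}$) upgrades $\check{\mathrm{C}}$ech cohomology over $\mathcal{U}$ to sheaf cohomology over $M$. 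Combining, $H^{p,q}_{d_L}(M,E)\cong H^q(M,(\mathbf{\mathcal{G}^{*}M})^{p}\otimes_{\mathcal{O}_M}\mathbf{E})$.

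I expect the only genuinely non-routine point to be the verification that exactness of \eqref{exact 1}--\eqref{exact 2} is preserved after $\otimes_{\mathcal{O}_M}\mathbf{E}$ and that the twisted kernel and image sheaves are the asserted ones — i.e. that the locally free twist commutes with taking kernels and images of $d_{L,E}$. Since exactness of a sheaf sequence and these identifications are local statements, and $\mathbf{E}$ is locally $\mathcal{O}_M^{\,\oplus l}$ with $d_{L,E}$ acting diagonally in a GH frame, everything reduces to the untwisted statements already established; so this obstacle is mild. The remaining steps are formally identical to Theorem~\ref{main5}, which is why the corollary follows ``following the proof of Theorem~\ref{main5}.''
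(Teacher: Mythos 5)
Your proposal is correct and is essentially the paper's own argument: the paper proves the corollary by observing that tensoring \eqref{exact 1} and \eqref{exact 2} with the locally free sheaf $\mathbf{E}$ preserves exactness and then repeating the dimension-shifting and Leray-cover argument of Theorem \ref{main5} verbatim, which is exactly what you spell out (including the local reduction to $\mathcal{O}_M^{\oplus l}$ with $d_{L,E}$ acting diagonally and the leaf-wise constant partition of unity from Proposition \ref{prop}). The only nitpick is your phrasing that $A_E$ is a module over a sheaf involving $C^\infty_M$; it is an $F_M$-module, which is what the transverse partition-of-unity argument actually uses.
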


\subsection{Cohomology class of the curvature}\label{curv sec}
Let $G$ be a complex Lie group with complex Lie algebra $\mathfrak{g}$. Let $G\hookrightarrow P\longrightarrow M$ be an SGH principal bundle. Then, by applying Corollary \ref{cor5} to $\mathbf{Ad(P)}$ where $Ad(P)$ is as in \eqref{atiyah GH bundle}, we have the following.
\begin{corollary}\label{main6}
   $H^{q}(M,(\mathbf{\mathcal{G}^{*}M})^{p}\otimes_{\mathcal{O}_{M}}\mathbf{Ad(P)})\cong H_{d_{L}}^{p,q}(M,Ad(P))$.
\end{corollary}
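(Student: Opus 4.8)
The plan is to obtain this as an immediate special case of Corollary \ref{cor5}, applied to the SGH vector bundle $E = Ad(P)$. The only point requiring verification is that $Ad(P) = P \times_G \mathfrak{g}$ is genuinely an SGH vector bundle over $M$, but this was already established in the construction leading up to \eqref{atiyah GH bundle}: since $G$ is a complex Lie group, the adjoint representation $\ad \colon G \longrightarrow GL(\mathfrak{g})$ is a holomorphic homomorphism, so the transition maps $\ad(\phi_{\alpha\beta})$ of $P \times_G \mathfrak{g}$ are GH maps (being compositions of the GH transition maps $\phi_{\alpha\beta}$ of $P$, cf. Proposition \ref{GH principal bundle}, with a biholomorphism), and hence by Theorem \ref{main1} together with \cite[Proposition 3.2]{lang2023} the associated bundle $Ad(P)$ is SGH. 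In particular its sheaf $\mathbf{Ad(P)}$ of GH sections is a locally free $\mathcal{O}_M$-module of finite rank by Proposition \ref{S-V prop}.

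With this in hand, I would simply instantiate Corollary \ref{cor5} at $E = Ad(P)$: form $A_{Ad(P)} = A \otimes_{\mathcal{O}_M} \mathbf{Ad(P)}$ as in \eqref{A-E}, equip it with the extended operator $d_{L,Ad(P)}$, and read off $H_{d_L}^{p,q}(M, Ad(P))$ as the type $(p,q)$ component of the cohomology of $(H^0(M, A_{Ad(P)}), d_L)$. Corollary \ref{cor5} then gives precisely $H_{d_L}^{p,q}(M, Ad(P)) \cong H^q(M, (\mathbf{\mathcal{G}^{*}M})^{p} \otimes_{\mathcal{O}_M} \mathbf{Ad(P)})$, which is the assertion.

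There is essentially no obstacle here: the substantive work — the $d_L$-Poincar\'e lemma on tubular neighborhoods (Proposition \ref{prop}), the resolutions \eqref{exact 1} and \eqref{exact 2}, and the Leray/\v{C}ech computation underlying Theorem \ref{main5} — was already carried out, and Corollary \ref{cor5} records that tensoring those resolutions with a locally free $\mathcal{O}_M$-module of finite rank preserves exactness, so the same argument goes through verbatim. If anything, the one thing I would be careful to spell out is that $\mathbf{Ad(P)}$ is indeed \emph{locally free of finite rank} over $\mathcal{O}_M$ (rank $\dim_{\C}\mathfrak{g}$), since that is exactly the hypothesis needed for the tensored sequences to remain short exact; this follows from Proposition \ref{S-V prop} via the SGH structure on $Ad(P)$ noted above.
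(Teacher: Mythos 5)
Your proposal is correct and matches the paper's argument exactly: the paper also obtains this corollary by applying Corollary \ref{cor5} to the SGH vector bundle $Ad(P)$ constructed in \eqref{atiyah GH bundle}. Your additional verification that $Ad(P)$ is SGH (so that $\mathbf{Ad(P)}$ is locally free of finite rank over $\mathcal{O}_M$) is exactly the point the paper relies on from its earlier construction, so nothing is missing.
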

Let $\Theta = \{\Theta_{\alpha} \}\in \tilde{A}^{1,0}\otimes_{\mathcal{O}_{M}}\mathbf{Ad(P)}$ be a smooth generalized connection on $P$ (see  Definition \ref{def:gen conn} and Section \ref{connection}), where   
$$\Theta_{\alpha}\in C^{\infty}(U_{\alpha},
\Hom_{C^{\infty}_{M}}(C^{\infty}({\mathcal{G}M})\,,\,C^{\infty}(M_{\mathfrak{g}})))\,$$ on a trivializing neighborhood $U_{\alpha}$ of $P$.
Then, the curvature of this smooth generalized connection is defined on $U_{\alpha}$ by
\begin{equation}\label{crvtr}  \Omega_{\alpha}:=\tilde{D}\Theta_{\alpha}+\frac{1}{2}[\Theta_{\alpha},\Theta_{\alpha}]
\end{equation} where $\Theta_{\alpha}$ is considered as $\mathfrak{g}$ valued function. Then, by using either of the equations \eqref{co boundary equation1-2} or \eqref{co boundary equation2-2}, we get
\begin{equation}\label{curvature2}
    \begin{aligned} \Omega_{\alpha}&=\ad(\phi_{\alpha\beta}) \,\Omega_{\beta}\,\,\,\,\,\text{on}\,\,\,U_{\alpha}\cap U_{\beta}\,,\\
    &\text{or}\\
\Omega_{\beta}&=\ad(\phi_{\beta\alpha})\, \Omega_{\alpha}\,\,\,\,\,\text{on}\,\,\,U_{\alpha}\cap U_{\beta}\,.
    \end{aligned}
\end{equation}
So, after patching, we get an element $\Omega\in C^{\infty}(M,\wedge^2(\mathcal{G}M\oplus\overline{\mathcal{G}M})^{*})\otimes Ad(P))$. $\Omega$ is called the curvature of the smooth generalized connection $\Theta$.

\medskip
Now we consider a more special type of smooth generalized connection whose associated $1$-form is locally constant along the leaves. Let $\Theta\in A_{Ad(P)}^{1,0}$ be such that   $\Theta_{\alpha} \in A_{M_{\mathfrak{g}}}^{1,0}$ for every $\alpha$. 
This type of connection always exists due to the existence of partition of unity on the orbifold leaf space $M/\mathscr{S}$. We reformulate the definition of curvature (see \eqref{crvtr}) for such a smooth generalized connection  as
\begin{equation}\label{curvature}  \Omega_{\alpha}=D\Theta_{\alpha}+\frac{1}{2}[\Theta_{\alpha},\Theta_{\alpha}]\,\quad\text{on}\,\,\,U_{\alpha}\,.
\end{equation}
It follows  that the $(1,1)$ component of $\Omega_{\alpha}$, denoted by $\Omega^{1,1}_{\alpha}$, is given by 
\begin{equation}\label{curvature class}
\Omega^{1,1}_{\alpha}=d_{L}\Theta_{\alpha}\,\quad\text{on}\,\,\,U_{\alpha}\,.    
\end{equation}
By \eqref{curvature2}, we have
\begin{equation}\label{curvature3}
    \begin{aligned} \Omega^{1,1}_{\alpha}&=\ad(\phi_{\alpha\beta})\Omega^{1,1}_{\beta}\,\,\,\,\,\text{on}\,\,\,U_{\alpha}\cap U_{\beta}\,,\\
    &\text{or}\\
\Omega^{1,1}_{\beta}&=\ad(\phi_{\beta\alpha})\Omega^{1,1}_{\alpha}\,\,\,\,\,\text{on}\,\,\,U_{\alpha}\cap U_{\beta}\,.
    \end{aligned}
\end{equation}
After patching, we get a global element $\Omega\in\left(F_{M}(\wedge^2(\mathcal{G}^{*}M\oplus\overline{\mathcal{G}^{*}M}))\otimes_{\mathcal{O}_{M}}\mathbf{Ad(P)}\right)(M)$ whose $(1,1)$-component is $\Omega^{1,1}$.
Then from the equations \eqref{co boundary equation1-2}, \eqref{co boundary equation2-2}, \eqref{curvature class} and \eqref{curvature3}, we can see that the $d_{L}$-cohomology class $[\Omega^{1,1}]$ of $\Omega^{1,1}$ is independent of the choice of a smooth generalized connection of type $(1,0)$. Note that $[\Omega^{1,1}]$ maps to $a(P)$, as defined in Theorem \ref{main3}, via the isomorphism in Corollary \ref{main6}. We summarise our results as follows.
\begin{theorem}\label{main7}
    Let $P$ be an SGH principal $G$-bundle over a regular GC manifold $M$ where $G$ is a complex Lie group.  Assume that the leaf space of the induced symplectic foliation on $M$ admits a smooth orbifold structure. Let $\Theta$ be a smooth generalized connection of type $(1,0)$ on $P$, which is constant along the leaves. Let $\Omega^{1,1}$ denote the corresponding $(1,1)$ component of the curvature. Let $[\Omega^{1,1}]$ be the $d_{L}$-cohomology class in $H_{d_{L}}^{1,1}(M,Ad(P))$. Then $[\Omega^{1,1}]$ corresponds to $a(P)\in H^{1}(M,\mathbf{\mathcal{G}^{*}M}\otimes_{\mathcal{O}_{M}}\mathbf{Ad(P)})$ via the isomorphism in Corollary \ref{main6}.
\end{theorem}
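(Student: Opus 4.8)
The plan is to trace the Atiyah class $a(P)$ through its explicit \v{C}ech representative computed in Subsection \ref{connection}, and to show that this representative is the \v{C}ech image of the $d_L$-cohomology class $[\Omega^{1,1}]$ under the connecting maps in the long exact sequences that produce the isomorphism of Corollary \ref{main6}. Concretely: choose a sufficiently fine transverse good cover $\mathcal{U}=\{U_\alpha\}$ of $M$ trivializing $P$ and admitting a leafwise-constant partition of unity (this exists by pulling back a partition of unity on the orbifold $M/\mathscr{S}$ subordinate to the pushed-forward cover, exactly as in the proof of Proposition \ref{prop}(3)--(4)). Let $\Theta=\{\Theta_\alpha\}$ with $\Theta_\alpha\in A^{1,0}_{M_{\mathfrak g}}(U_\alpha)$ be a smooth generalized connection of type $(1,0)$, constant along the leaves, satisfying the coboundary relation \eqref{co boundary equation1-2} (equivalently \eqref{co boundary equation2-2}), i.e. $\xi_{\alpha\beta}=\ad(\phi_{\alpha\beta})\cdot\Theta_\beta-\Theta_\alpha$. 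Recall from \eqref{a alpha beta 2} that $a_{\alpha\beta}=(\widehat{\phi_\alpha})^{-1}\circ\xi_{\alpha\beta}$ represents $a(P)\in H^1(M,\mathbf{Ad(P)}\otimes_{\mathcal{O}_M}\mathbf{\mathcal{G}^*M})$, and by \eqref{curvature class} that $\Omega^{1,1}_\alpha=d_L\Theta_\alpha$, with the patching $\Omega^{1,1}_\alpha=\ad(\phi_{\alpha\beta})\,\Omega^{1,1}_\beta$ from \eqref{curvature3}.

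The first key step is to identify the isomorphism of Corollary \ref{main6} at the level of cocycles. This isomorphism is obtained (via Theorem \ref{main5}/Corollary \ref{cor5}) by the standard staircase argument applied to the fine resolution
\begin{equation*}
0\longrightarrow \mathbf{\mathcal{G}^*M}\otimes_{\mathcal{O}_M}\mathbf{Ad(P)}\hookrightarrow A^{1,0}_{Ad(P)}\xrightarrow{\,d_L\,} A^{1,1}_{Ad(P)}\xrightarrow{\,d_L\,}\cdots\,,
\end{equation*}
so that a class in $H^{1,1}_{d_L}(M,Ad(P))$ is transported to $H^1(\mathcal{U},\mathbf{\mathcal{G}^*M}\otimes_{\mathcal{O}_M}\mathbf{Ad(P)})$ by: take a global $d_L$-closed $(1,1)$-form $\Omega^{1,1}$ (locally $\Omega^{1,1}_\alpha$, related by $\ad(\phi_{\alpha\beta})$), write it locally as $\Omega^{1,1}_\alpha=d_L\Theta_\alpha$ with $\Theta_\alpha\in A^{1,0}_{M_{\mathfrak g}}(U_\alpha)$ using the $d_L$-Poincar\'e Lemma of Proposition \ref{prop}(1), and then $\{(\widehat{\phi_\alpha})^{-1}(\ad(\phi_{\alpha\beta})\Theta_\beta-\Theta_\alpha)\}$ — which is $d_L$-closed, hence a section of $\mathbf{\mathcal{G}^*M}\otimes\mathbf{Ad(P)}$ — is a \v{C}ech $1$-cocycle representing the image class. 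But our $\Theta_\alpha$ is a global solution-in-pieces to precisely these equations, and the resulting cocycle is exactly $(\widehat{\phi_\alpha})^{-1}\circ\xi_{\alpha\beta}=a_{\alpha\beta}$. So the comparison reduces to verifying that $d_L(a_{\alpha\beta})=0$ — which it is, since $a_{\alpha\beta}$ lies in the $\mathcal{O}_M$-module $\Hom_{\mathcal{O}_M}(\mathbf{\mathcal{G}M},\mathbf{Ad(P)})$ — and that this cocycle is independent of the choice of $\Theta$, which follows because any two choices differ by a global $A^{1,0}_{Ad(P)}$-section, changing $a_{\alpha\beta}$ by a \v{C}ech coboundary; this independence was already noted after \eqref{curvature3}.

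The second step is simply to assemble: $[\Omega^{1,1}]\in H^{1,1}_{d_L}(M,Ad(P))\cong H^1(\mathcal{U},\mathbf{\mathcal{G}^*M}\otimes_{\mathcal{O}_M}\mathbf{Ad(P)})\cong H^1(M,\mathbf{\mathcal{G}^*M}\otimes_{\mathcal{O}_M}\mathbf{Ad(P)})$ (the last isomorphism from the Leray argument in Theorem \ref{main5}), and under the composite the representative $\Omega^{1,1}$ goes to $\{a_{\alpha\beta}\}$, which by Theorem \ref{main3} represents $a(P)$. Hence $[\Omega^{1,1}]$ corresponds to $a(P)$. The main obstacle — really the only nontrivial point — is the bookkeeping in the first step: making sure the leafwise-constant Poincar\'e lemma (Proposition \ref{prop}(1)) legitimately produces the \emph{same} local primitives $\Theta_\alpha$ that serve as the connection, so that the connecting homomorphism in the \v{C}ech--Dolbeault staircase literally outputs $\xi_{\alpha\beta}$ and not merely a cohomologous cocycle; once the coboundary equations \eqref{co boundary equation1-2}/\eqref{co boundary equation2-2} are recognized as the defining equations of that connecting map, the identification is forced. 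One should also check the sign conventions are consistent between $\xi_{\alpha\beta}=d(\widetilde\phi_{\alpha\beta})\widetilde\phi_{\alpha\beta}^{-1}$, $\Omega^{1,1}_\alpha=d_L\Theta_\alpha$, and the direction of the staircase maps, but no genuine difficulty arises there.
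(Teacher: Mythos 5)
Your proposal is correct and takes essentially the same approach as the paper: the paper's proof is exactly the discussion in Subsection \ref{curv sec}, where $\Omega^{1,1}_{\alpha}=d_{L}\Theta_{\alpha}$ together with the coboundary equations \eqref{co boundary equation1-2}--\eqref{co boundary equation2-2} identifies the image of $[\Omega^{1,1}]$ under the \v{C}ech--Dolbeault comparison with the cocycle $a_{\alpha\beta}=(\widehat{\phi_{\alpha}})^{-1}\circ\xi_{\alpha\beta}$ representing $a(P)$. Your write-up simply makes explicit the cocycle-level description of the connecting homomorphism behind Corollary \ref{main6} (and the well-definedness checks), which the paper leaves implicit.
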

\section{Generalized Chern-Weil Theory and characteristic class}\label{sec c-w}

Let $\sym^{k}(\mathfrak{g}^{*})$ denotes the set of all symmetric $k$-linear mappings $\mathfrak{g}\times\mathfrak{g}\times\cdots\times\mathfrak{g}\longrightarrow\C$ on the Lie algebra $\mathfrak{g}$.
Define the right adjoint action of the Lie group $G$ on $\sym^{k}(\mathfrak{g}^{*})$ by 
\begin{equation}\label{adj action}
\begin{aligned}
&(f, g) \mapsto \ad(g^{-1})f\\
\text{where}\quad(\ad(g^{-1})f)(x_1,&\ldots,x_k)=f(\ad(g^{-1})x_1,\ldots,\ad(g^{-1})x_{k})
\end{aligned}
\end{equation}
for any $f\in\sym^{k}(\mathfrak{g}^{*})$ and $g\in G$. Denote the space of  $\ad(G)$-invariant forms by
\begin{equation}\label{adj invr form}
\sym^{k}(\mathfrak{g}^{*})^{G}:=\{f\in\sym^{k}(\mathfrak{g}^{*})\,|\,\ad(g^{-1})f=f\,\,\forall\,\,g\in G\}    
\end{equation}
Now, given any $f\in\sym^{k}(\mathfrak{g}^{*})^{G}$, we define a $2k$-form in $A^{2k}$, of type $(k,k)$, by
\begin{equation}\label{f-form}
    f(\Omega^{1,1})(X_1,X_2\ldots,X_{2k}):=\frac{1}{(2k)!}\sum_{\sigma}\epsilon_{\sigma}f(\Omega^{1,1}(X_{\sigma(1)},X_{\sigma(2)}),\ldots,\Omega^{1,1}(X_{\sigma(2k-1)},X_{\sigma(2k)}))
\end{equation}
for $X_1,\ldots,X_{2k}\in F_{M}(\mathcal{G}M\oplus\overline{\mathcal{G}M})$, where $\sigma$ is an element of the symmetric group $S_{2k}$, $\epsilon_{\sigma}$ denotes the sign of the permutation $\sigma\in S_{2k}$, and $\Omega^{1,1}$ is defined as in Theorem \ref{main7}.

\medskip
Let $\C[\mathfrak{g}]$ be the algebra of $\C$-valued polynomials on $\mathfrak{g}$. Consider the same adjoint action of $G$ on $\C[\mathfrak{g}]$ as in \eqref{adj action}, and let $\C[\mathfrak{g}]^{G}$ denote the subalgebra of fixed points under this action. Then any $f\in\sym^{k}(\mathfrak{g}^{*})^{G}$ can be viewed as a homogeneous polynomial function of degree $k$ in $\C[\mathfrak{g}]^{G}$ that is, 
$$\sym^{k}(\mathfrak{g}^{*})^{G}<\C[\mathfrak{g}]^{G}\quad\text{for any $k\geq 0$}\,.$$ 
We set  $$\sym(\mathfrak{g}^{*})^{G}:=\bigoplus^{\infty}_{k=0}\sym^{k}(\mathfrak{g}^{*})^{G}\,.$$ Then, $\sym(\mathfrak{g}^{*})^{G}$ can be viewed as a sub-algebra of $\C[\mathfrak{g}]^{G}$.

\medskip
Since $d_{L}\Omega^{1,1}=0$, we can see that $d_{L}f(\Omega^{1,1})=0$ for any $f\in\sym^{k}(\mathfrak{g}^{*})^{G}$. Thus $f(\Omega^{1,1})\in H_{d_{L}}^{k,k}(M)$. We define a map
\begin{equation}\label{CW-hom map at k}
\begin{aligned}
\Phi_{k}:\sym^{k} (\mathfrak{g}^{*})^{G}\longrightarrow H_{d_{L}}^{k,k}(M)\,,\quad
f\mapsto [f(\Omega^{1,1})]\,.  
\end{aligned}
\end{equation}
Using the algebra structure of $\sym(\mathfrak{g}^{*})^{G}$, we extend the map in \eqref{CW-hom map at k} to an algebra homomorphism
\begin{equation}\label{CW-hom map}
\begin{aligned}
\Phi:\sym (\mathfrak{g}^{*})^{G}\longrightarrow H_{d_{L}}^{*}(M)\,,\quad
f\mapsto [f(\Omega^{1,1})]\,,  
\end{aligned}
\end{equation}
where $H_{d_{L}}^{*}(M):=\bigoplus_{k,l}H_{d_{L}}^{k,l}(M)$. Note that $\img\Phi\subseteq\bigoplus_{k\geq 0}H_{d_{L}}^{k,k}(M)$.
We follow the approach in classical Chern-Weil theory (see \cite{kobayashi69}) to show that $\Phi$ is independent of the choice of a smooth generalized connection of type $(1,0)$  constant along the leaves.  Consider two smooth generalized connections $\Theta$, $\Theta'$ of type $(1,0)$ on the SGH principal bundle $P$  over $M$ which are constant along the leaves. Define
\begin{align*}
    \omega :=\Theta-\Theta' \,, \quad {\rm and} \quad
    \omega_{t}:=\Theta'+t\omega\,\,\,\text{for $t\in [0,1]$}\,.
\end{align*}
From the equations \eqref{co boundary equation1-2} and \eqref{co boundary equation2-2}, one can see that $\omega_{t}$ is a $1$-parameter family of smooth generalized connections of type $(1,0)$ constant along the leaves. Let $\Omega_{t}$ be the curvature of $\omega_{t}$ and let $\Omega^{1,1}_{t}$ be the $(1,1)$ component of $\Omega_{t}$. 
By \eqref{curvature class}, we have
\begin{equation}\label{omega-t-omega}
    \begin{aligned}
\Omega^{1,1}_{t}&=d_{L}\omega_{t} = d_{L}\Theta'+t\,d_{L}\omega\, 
\implies\,\,\frac{d\Omega^{1,1}_{t}}{dt}&=d_{L}\omega\,.
    \end{aligned}
\end{equation}
Consider the transverse generalized  $(2k-1)$-form of type $(k,k-1)$, defined by 
\begin{equation}
\varphi=k\int_{0}^{1}f(\omega,\Omega^{1,1}_{t},\ldots,\Omega^{1,1}_{t})\,\,dt\,. 
\end{equation}
Using \eqref{omega-t-omega}, we have
\begin{equation}
    \begin{aligned}
k\,d_{L}f(\omega,\Omega^{1,1}_{t},\ldots,\Omega^{1,1}_{t}) = kf(d_{L}\omega,\Omega^{1,1}_{t},\ldots,\Omega^{1,1}_{t})\,
=\frac{d}{dt}(f(\Omega^{1,1}_{t},\ldots,\Omega^{1,1}_{t}))\,.
    \end{aligned}
\end{equation}
Hence,
$$d_{L}\varphi=\int_{0}^{1}\frac{d}{dt}(f(\Omega^{1,1}_{t},\ldots,\Omega^{1,1}_{t}))\,\,dt\,=\,f(\Omega^{1,1}_{1},\ldots,\Omega^{1,1}_{1})-f(\Omega^{1,1}_{0},\ldots,\Omega^{1,1}_{0})\,.$$

\begin{definition}\label{def:c-w}
    The algebra homomorphism $\Phi$, defined in \eqref{CW-hom map}, is called the \textit{generalized Chern-Weil} homomorphism.
\end{definition}
\subsection{Generalized Chern class}\label{char class of SGH PB}
Let $P\longrightarrow M$ be an SGH principal $G$-bundle over a regular GC manifold $M$. Let $G$ be a complex Lie group with a canonical faithful representation such as a classical complex Lie group. Then the complex Lie algebra $\mathfrak{g}$ is identified with a complex subalgebra of $M_{l}(\C)$ where $l$ is the dimension of the representation. For any $A\in\mathfrak{g}$\,, consider the following characteristic polynomial
\begin{equation}\label{char poly}
\det\left(I+t\frac{A}{2\pi\,i}\right)=\sum_{k=0}^{l}  f_{k}(A)\,t^{k}\,, 
\end{equation}
where $f_{k}\in\C[\mathfrak{g}]$ is an elementary symmetric polynomial of degree $k$ and $I$ is the identity matrix. Since the right hand side of $\eqref{char poly}$ is invariant under $\ad(G)$-action, we have $$f_{k}\in\sym^{k}(\mathfrak{g}^{*})^{G}\,.$$ 
Following  \cite[Example 32.3]{tu17}, we define an analogue of Chern classes for an SGH principal $G$-bundles where $G$ is a complex Lie group with a holomorphic faithful representation.
\begin{definition}
    The $k$-th \textit{generalized Chern class} of $P$, denoted by $\mathbf{g}c_{k}(P)$, is defined as the image of $f_{k}$ under the generalized Chern-Weil homomorphism, that is,
    $$\mathbf{g}c_{k}(P):=\Phi(f_{k})\,,$$ where $\Phi$ as defined in \eqref{CW-hom map}.
\end{definition}
 \begin{prop}
 $\mathbf{g}c_{1}(P)=\left[\left(\frac{1}{2\pi\,i}\right)\tr(\Omega^{1,1})\right]\,,$ where $\Omega^{1,1}$ as defined in \eqref{curvature class}.    
 \end{prop}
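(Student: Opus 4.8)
The plan is to compute $\mathbf{g}c_1(P)=\Phi(f_1)$ directly from the definitions, unwinding what the degree-one elementary symmetric polynomial $f_1$ is and how the generalized Chern--Weil homomorphism acts on it. First I would observe that, expanding the characteristic polynomial in \eqref{char poly},
\[
\det\!\left(I+t\frac{A}{2\pi i}\right)=1+\frac{t}{2\pi i}\tr(A)+O(t^2),
\]
so that $f_1(A)=\frac{1}{2\pi i}\tr(A)$ for all $A\in\mathfrak g$; this identifies $f_1\in\sym^1(\mathfrak g^*)^G$ explicitly as the (rescaled) trace functional, which is manifestly $\ad(G)$-invariant since trace is conjugation-invariant.

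Next I would apply the definition of $\Phi$ from \eqref{CW-hom map} together with the $k=1$ case of \eqref{f-form}. For $k=1$ the symmetrization in \eqref{f-form} collapses: $f(\Omega^{1,1})(X_1,X_2)=\frac12\big(f(\Omega^{1,1}(X_1,X_2))-f(\Omega^{1,1}(X_2,X_1))\big)=f(\Omega^{1,1}(X_1,X_2))$ by skew-symmetry of $\Omega^{1,1}$, so as a transverse generalized $(1,1)$-form one simply has $f_1(\Omega^{1,1})=f_1\circ\Omega^{1,1}=\frac{1}{2\pi i}\tr(\Omega^{1,1})$. By Theorem \ref{main7} and the surrounding discussion, $d_L\Omega^{1,1}=0$, and since $\tr$ is linear it commutes with $d_L$, so $\tr(\Omega^{1,1})$ is a well-defined $d_L$-closed transverse generalized $(1,1)$-form; note also that $\tr(\Omega^{1,1})$ is genuinely a scalar form (the $Ad(P)$-twisting disappears under trace because of the gluing \eqref{curvature3} and conjugation-invariance of trace), so it lives in $A^{1,1}(M)$ rather than $A^{1,1}_{Ad(P)}(M)$. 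Therefore
\[
\mathbf{g}c_1(P)=\Phi(f_1)=[f_1(\Omega^{1,1})]=\left[\frac{1}{2\pi i}\tr(\Omega^{1,1})\right],
\]
which is exactly the claimed formula.

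I do not anticipate a serious obstacle here; the only point requiring a word of care is the well-definedness of $\tr(\Omega^{1,1})$ as a global scalar class, which follows because under a change of trivialization $\Omega^{1,1}_\alpha=\ad(\phi_{\alpha\beta})\Omega^{1,1}_\beta$ by \eqref{curvature3}, and $\tr(\ad(g)\Lambda)=\tr(\Lambda)$ for the faithful matrix representation, so the locally defined forms $\tr(\Omega^{1,1}_\alpha)$ patch to a global element of $A^{1,1}(M)$; its $d_L$-cohomology class is independent of the chosen smooth generalized connection of type $(1,0)$ constant along the leaves, by the invariance of $\Phi$ established just before Definition \ref{def:c-w}. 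Thus the proof is essentially a one-line extraction of the linear term of the characteristic polynomial followed by an application of the already-established machinery.

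\begin{proof}
Expanding the characteristic polynomial \eqref{char poly} to first order in $t$, we have
\[
\det\!\left(I+t\frac{A}{2\pi i}\right)=1+\frac{t}{2\pi i}\,\tr(A)+O(t^2)\,,
\]
so that $f_{1}(A)=\dfrac{1}{2\pi i}\tr(A)$ for every $A\in\mathfrak{g}$. For $k=1$, the symmetrization in \eqref{f-form} is trivial, and since $\Omega^{1,1}$ is skew-symmetric we obtain, as a transverse generalized $(1,1)$-form,
\[
f_{1}(\Omega^{1,1})=f_{1}\circ\Omega^{1,1}=\frac{1}{2\pi i}\,\tr(\Omega^{1,1})\,.
\]
By \eqref{curvature3}, $\Omega^{1,1}_{\alpha}=\ad(\phi_{\alpha\beta})\,\Omega^{1,1}_{\beta}$ on $U_{\alpha}\cap U_{\beta}$, and since trace is invariant under the adjoint action of the faithful matrix representation, the local forms $\tr(\Omega^{1,1}_{\alpha})$ patch to a global element of $A^{1,1}(M)$. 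As $d_{L}\Omega^{1,1}=0$ and $\tr$ is $\C$-linear, $\tr(\Omega^{1,1})$ is $d_{L}$-closed, hence defines a class in $H^{1,1}_{d_{L}}(M)$. Therefore, by the definition of the generalized Chern-Weil homomorphism \eqref{CW-hom map},
\[
\mathbf{g}c_{1}(P)=\Phi(f_{1})=\left[f_{1}(\Omega^{1,1})\right]=\left[\left(\frac{1}{2\pi i}\right)\tr(\Omega^{1,1})\right]\,. \qedhere
\]
\end{proof}
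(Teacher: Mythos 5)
Your proposal is correct and follows essentially the same route as the paper, which proves the proposition by the "straightforward calculation" of extracting the linear coefficient $f_1(A)=\tr(A)/(2\pi i)$ from the characteristic polynomial and feeding it into the generalized Chern--Weil homomorphism. Your additional remarks on the $\ad$-invariance of the trace and the $d_L$-closedness of $\tr(\Omega^{1,1})$ only make explicit what the paper's construction of $\Phi$ already guarantees.
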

 \begin{proof} {Follows from a straightforward calculation (cf. \cite[Chapter XII]{kobayashi69}, \cite[Chapter 2]{kobayashi14}, \cite[Chapter 6]{tu17}).}
 \end{proof}

 \section{Connection on SGH vector bundle}\label{sec sgh vb}
 Let $M$ be a regular GC manifold and $E$ be an SGH vector bundle over $M$. Set
 \begin{equation}\label{tilde A-E}
\tilde{A}_{E}:=\tilde{A}\otimes_{C^{\infty}_{M}}C^{\infty}(E)=\tilde{A}\otimes_{\mathcal{O}_{M}}\mathbf{E}\,,     
 \end{equation} where $\tilde{A}$ is defined as in \eqref{A}. 
 \begin{definition}
A smooth generalized connection on an SGH vector bundle $E$, is a $\C$-linear sheaf homomorphism 
$$\nabla:\tilde{A}^0_{E}\longrightarrow\tilde{A}^1_{E}$$ which satisfies the Leibniz rule
$$\nabla(fs)=\tilde{D}f\otimes s+f\nabla(s)$$ for any local function on $M$ and any local section $s$ of $E$.
 \end{definition}
Let $\{U_{\alpha},\phi_{\alpha}\}$ be a system of local trivializations of $E$. Then, on $U_{\alpha}$, we may write
$$\nabla|_{U_{\alpha}}=\phi^{-1}_{\alpha}\circ(\tilde{D}+\theta_{\alpha})\circ\phi_{\alpha}$$ where $\theta_{\alpha}$ is a matrix valued generalized $1$-form. On $U_{\alpha}\cap U_{\beta}$, we have
\begin{equation*}
    \begin{aligned}
&\phi^{-1}_{\beta}\circ(\tilde{D}+\theta_{\beta})\circ\phi_{\beta}=\phi^{-1}_{\alpha}\circ(\tilde{D}+\theta_{\alpha})\circ\phi_{\alpha}\,,\\
\implies &\phi^{-1}_{\beta}\circ\theta_{\beta}\circ\phi_{\beta}-\phi^{-1}_{\alpha}\circ\theta_{\alpha}\circ\phi_{\alpha}=\phi^{-1}_{\alpha}\circ\tilde{D}\circ\phi_{\alpha}-\phi^{-1}_{\beta}\circ\tilde{D}\circ\phi_{\beta}\,,\\
\implies &
\left\{
   \begin{array}{l}
\phi^{-1}_{\beta}\circ\theta_{\beta}\circ\phi_{\beta}-\phi^{-1}_{\alpha}\circ\theta_{\alpha}\circ\phi_{\alpha}=\phi^{-1}_{\beta}\circ(\phi^{-1}_{\alpha\beta}\circ\tilde{D}\circ \phi_{\alpha\beta}-\tilde{D})\circ\phi_{\beta}\,,\\
\quad\text{or}\\
\phi^{-1}_{\beta}\circ\theta_{\beta}\circ\phi_{\beta}-\phi^{-1}_{\alpha}\circ\theta_{\alpha}\circ\phi_{\alpha}=\phi^{-1}_{\alpha}\circ(\tilde{D}-\phi_{\alpha\beta}\circ\tilde{D}\circ \phi^{-1}_{\alpha\beta})\circ\phi_{\alpha}\,.\\
   \end{array}
\right.
\end{aligned}
\end{equation*}
Thus, we get the following co-boundary equation for $\nabla$.
\begin{equation}\label{nabla coboundary eq}
    \begin{aligned}
    \left\{
    \begin{array}{l}
    \theta_{\beta}-\ad(\phi_{\beta\alpha})\cdot\theta_{\alpha}=\phi^{-1}_{\alpha\beta}\tilde{D}( \phi_{\alpha\beta})\\
    \quad\text{or}\\
\ad(\phi_{\alpha\beta})\cdot\theta_{\beta}-\theta_{\alpha}=-\phi_{\alpha\beta}\tilde{D}( \phi^{-1}_{\alpha\beta})  
    \end{array}
  \right.\,\quad\text{on\,\,\,$U_{\alpha}\cap U_{\beta}$}\,.
    \end{aligned}
\end{equation}
A straightforward modification of the proof of \cite[Proposition 4.2.3]{hubrechts05} yields the following.
\begin{prop}\label{generalized prop}
Let $E$ be an SGH vector bundle over $M$. Then

\vspace{0.2em}
\begin{enumerate}
\setlength\itemsep{1em}
    \item For any two smooth generalized connections $\nabla,\nabla^{'}$ on an SGH vector bundle $E$, $\nabla-\nabla^{'}$ is $\tilde{A}^0$-linear.
    \item For any $\theta\in\tilde{A}^1_{\en(E)}(M)$, $\nabla+\theta$ is also a smooth generalized connection of $E$.
    \item The set of all smooth generalized connections on $E$, is an affine space over the (infinite-dimensional) $\C$-vector space $\tilde{A}^1_{\en(E)}(M)$.
\end{enumerate}
\end{prop}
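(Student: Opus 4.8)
The plan is to follow the classical argument for holomorphic (or $C^{\infty}$) connections, \emph{cf.} \cite[Proposition 4.2.3]{hubrechts05}, essentially verbatim; the only extra bookkeeping is to keep track of the operator $\tilde{D}$ and the sheaves $\tilde{A}^{\bullet}$ in place of $d$ and $\Omega^{\bullet}$.

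First I would prove (1). Let $\nabla,\nabla'$ be two smooth generalized connections on $E$. For a local section $f$ of $C^{\infty}_M$ and a local section $s$ of $E$, the Leibniz rule for each of $\nabla,\nabla'$ gives
$$(\nabla-\nabla')(fs)=\big(\tilde{D}f\otimes s+f\nabla(s)\big)-\big(\tilde{D}f\otimes s+f\nabla'(s)\big)=f\,(\nabla-\nabla')(s),$$
so $\nabla-\nabla'\colon\tilde{A}^0_E\to\tilde{A}^1_E$ is $\tilde{A}^0$-linear, i.e. $C^{\infty}_M$-linear. The next step is to identify such a $C^{\infty}_M$-linear morphism with a global section of $\tilde{A}^1_{\en(E)}$. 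On a trivializing chart $U_\alpha$ we have $\nabla|_{U_\alpha}=\phi^{-1}_{\alpha}\circ(\tilde{D}+\theta_\alpha)\circ\phi_\alpha$ and $\nabla'|_{U_\alpha}=\phi^{-1}_{\alpha}\circ(\tilde{D}+\theta'_\alpha)\circ\phi_\alpha$, so $(\nabla-\nabla')|_{U_\alpha}=\phi^{-1}_{\alpha}\circ(\theta_\alpha-\theta'_\alpha)\circ\phi_\alpha$ is a matrix-valued generalized $1$-form. Subtracting the two coboundary equations \eqref{nabla coboundary eq} (one for $\nabla$, one for $\nabla'$), the inhomogeneous terms $\phi^{-1}_{\alpha\beta}\tilde{D}(\phi_{\alpha\beta})$ cancel, leaving $\theta_\beta-\theta'_\beta=\ad(\phi_{\beta\alpha})\cdot(\theta_\alpha-\theta'_\alpha)$ on $U_\alpha\cap U_\beta$, which is exactly the cocycle condition saying that $\{\theta_\alpha-\theta'_\alpha\}$ patches into an element of $\tilde{A}^1_{\en(E)}(M)$.

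For (2), given $\theta\in\tilde{A}^1_{\en(E)}(M)$ — equivalently a family $\{\theta_\alpha\}$ of matrix-valued generalized $1$-forms satisfying the same gluing rule — I would check that $\nabla+\theta$ is a smooth generalized connection: it is $\C$-linear as a sheaf map $\tilde{A}^0_E\to\tilde{A}^1_E$ since both $\nabla$ and (contraction against) $\theta$ are, and for local $f,s$,
$$(\nabla+\theta)(fs)=\tilde{D}f\otimes s+f\nabla(s)+f\,\theta(s)=\tilde{D}f\otimes s+f\,(\nabla+\theta)(s),$$
where we used that $\theta$ acts $C^{\infty}_M$-linearly on sections of $E$. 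Finally, for (3) I would first observe that the set of smooth generalized connections is nonempty: take the trivial connection $\phi^{-1}_{\alpha}\circ\tilde{D}\circ\phi_\alpha$ on each $U_\alpha$ and glue with a smooth partition of unity $\{\rho_\alpha\}$ subordinate to $\{U_\alpha\}$, which yields a connection since $\sum_\alpha\rho_\alpha=1$ forces the $\tilde{D}f\otimes s$ terms to recombine with total weight $1$. Parts (1) and (2) then say precisely that the difference map exhibits the set of smooth generalized connections as a torsor over the $\C$-vector space $\tilde{A}^1_{\en(E)}(M)$, i.e. an affine space over it.

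The argument is routine. The only point that genuinely needs care is the identification in the second step — verifying that a $C^{\infty}_M$-linear difference of connections descends to a \emph{global} $\en(E)$-valued generalized $1$-form, i.e. that the local pieces $\theta_\alpha-\theta'_\alpha$ glue — but, as indicated, this follows immediately from subtracting the coboundary equations \eqref{nabla coboundary eq}, so I do not anticipate any serious obstacle.
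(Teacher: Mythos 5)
Your proposal is correct and follows essentially the same route as the paper, which simply invokes a straightforward modification of \cite[Proposition 4.2.3]{hubrechts05} with $d$ replaced by $\tilde{D}$ and $\Omega^{\bullet}$ by $\tilde{A}^{\bullet}$: the Leibniz-rule cancellation for (1), the trivial check for (2), and existence via a smooth partition of unity for (3) are exactly the intended argument. Your extra verification that the local pieces $\theta_{\alpha}-\theta'_{\alpha}$ glue by subtracting the coboundary equations \eqref{nabla coboundary eq} is a harmless (if slightly roundabout) restatement of the fact that a $C^{\infty}_M$-linear map $\tilde{A}^0_E\to\tilde{A}^1_E$ is a global section of $\tilde{A}^1_{\en(E)}$.
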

Any SGH vector bundle $E$ is also a complex vector bundle and it admits a hermitian metric $h$. The pair $(E,h)$ is then known as a hermitian vector bundle.
\begin{definition}
 Given a hermitian vector bundle $(E,h)$, a smooth generalized connection $\nabla$ is called a generalized hermitian connection with respect to $h$ if for any two  local sections $s, \, s^{'}$, one has 
 \begin{equation}\label{hermitian conditn}
 \tilde{D}(h(s,s^{'}))=h(\nabla s,s^{'})+h(s,\nabla s^{'})\,.    
 \end{equation}
\end{definition}
Let $\theta$ be a element in $\tilde{A}^1_{\en(E)}(M)$ and $\nabla$ be a generalized hermitian connection. Then, by Proposition \ref{generalized prop}, $\nabla+\theta$ is also a smooth generalized connection. Now, observe that $\nabla+\theta$ satisfies \eqref{hermitian conditn}
if and only if $h(\theta s,s^{'})+h(s,\theta s^{'})=0$ for all smooth local sections $s,s^{'}$. 
Consider the subsheaf  
$$\en(E,h):=\{\theta\in C^\infty(\en(E))\,|\,h(\theta s,s^{'})+h(s,\theta s^{'})=0\,\,\forall\,\,\text{local sections $s,s^{'}$}\}\,$$ of $C^\infty(\en(E))$.
 Note that $\en(E,h)$ has the structure of a real vector bundle.
\begin{prop}
    The set of all generalized hermitian connections on $(E,h)$ is an affine space over the (infinite-dimensional) $\R$-vector space $\tilde{A}^1_{\en(E,h)}(M)$ where $\tilde{A}^1_{\en(E,h)}=\tilde{A}^1\otimes_{C^{\infty}_{M,\R}}C^{\infty}(\en(E,h))$, and $C^{\infty}_{M,\R}$ is the sheaf of $\R$-valued smooth functions. 
\end{prop}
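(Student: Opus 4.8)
The plan is to mirror the argument just used for smooth generalized connections (Proposition \ref{generalized prop}), specializing to the hermitian setting. First I would establish that the set is nonempty: starting from an arbitrary smooth generalized connection $\nabla_0$ on $E$ (which exists, e.g., by patching local trivial connections with a partition of unity on $M$), I would average it against the hermitian metric $h$ using the standard device — replace $\nabla_0$ by $\tfrac{1}{2}(\nabla_0 + \nabla_0^{h})$, where $\nabla_0^{h}$ is the $h$-adjoint connection characterized by $\tilde{D}(h(s,s')) = h(\nabla_0 s, s') + h(s, \nabla_0^{h} s')$. One checks directly from the definition that $\nabla_0^{h}$ is again a smooth generalized connection (the Leibniz rule transfers because $\tilde D$ satisfies the product rule and $h$ is $C^\infty_M$-sesquilinear), and that the average then satisfies \eqref{hermitian conditn}. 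This shows the space of generalized hermitian connections is nonempty.

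Next I would verify the affine structure. Fix a generalized hermitian connection $\nabla$. By part (2) of Proposition \ref{generalized prop}, for any $\theta \in \tilde{A}^1_{\en(E)}(M)$ the operator $\nabla + \theta$ is a smooth generalized connection; and by part (1), every smooth generalized connection differs from $\nabla$ by such a $\theta$. The remaining point, already noted in the paragraph preceding the statement, is that $\nabla + \theta$ satisfies the hermitian condition \eqref{hermitian conditn} if and only if $h(\theta s, s') + h(s, \theta s') = 0$ for all local sections $s, s'$, i.e. if and only if $\theta$ is a (generalized $1$-form valued) section of $\en(E,h)$. Thus the generalized hermitian connections form exactly the coset $\nabla + \tilde{A}^1_{\en(E,h)}(M)$ inside the affine space of all smooth generalized connections. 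It remains to observe that $\tilde{A}^1_{\en(E,h)}(M) = \tilde{A}^1 \otimes_{C^\infty_{M,\R}} C^\infty(\en(E,h))$ is an $\R$-vector space (and infinite-dimensional, by the same reasoning as in Proposition \ref{generalized prop}): it is not a $\C$-vector space because the condition cutting out $\en(E,h)$ is only $\R$-linear — multiplying a skew-hermitian endomorphism by $i$ produces a hermitian one — which is precisely why the base ring is $C^\infty_{M,\R}$ rather than $C^\infty_M$.

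The only genuinely non-formal step is checking that the $h$-adjoint $\nabla_0^{h}$ of a smooth generalized connection is again a \emph{smooth generalized} connection, i.e. that it is compatible with the generalized exterior derivative $\tilde D$ rather than with some other operator; this follows because the hermitian pairing and $\tilde D$ are both defined pointwise/locally in terms of the underlying smooth structure, and $\tilde D$ obeys the Leibniz rule on $\tilde A^\bullet$, so the defining identity $\tilde D(h(s,s')) = h(\nabla_0 s, s') + h(s, \nabla_0^h s')$ indeed determines a $\C$-linear $\nabla_0^h : \tilde A^0_E \to \tilde A^1_E$ satisfying $\nabla_0^h(fs) = \tilde D f \otimes s + f\,\nabla_0^h s$. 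Everything else is a routine transcription of the proof of \cite[Proposition 4.2.3]{hubrechts05} with $d$ replaced by $\tilde D$, exactly as in Proposition \ref{generalized prop}. I would present the argument in this order: nonemptiness via averaging, then the iff-characterization of the perturbations preserving hermitianness, then the conclusion about the affine structure over $\tilde{A}^1_{\en(E,h)}(M)$.
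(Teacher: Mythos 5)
Your proof is correct and takes essentially the same route as the paper: the paper's proof consists of the observation made just before the statement (that $\nabla+\theta$ satisfies \eqref{hermitian conditn} if and only if $h(\theta s,s')+h(s,\theta s')=0$, i.e.\ $\theta\in\tilde{A}^1_{\en(E,h)}(M)$) combined with Proposition \ref{generalized prop} applied with $E$ regarded as a real vector bundle, which is exactly your steps (b)--(c). Your averaging argument via the $h$-adjoint only makes explicit the nonemptiness of the set of generalized hermitian connections, a point the paper leaves implicit, and it is sound.
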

\begin{proof}
    Follows from Proposition \ref{generalized prop} after considering $E$ as a real vector bundle.
\end{proof}
\begin{remark} (cf. \cite[Section 4.2]{hubrechts05})
    $\en(E,h)$ is not always an SGH vector bundle. It is not even always a complex vector bundle. For example, if $E=M\times\C$ is the trivial SGH vector bundle. Then $\en{E}$ is again $M\times\C$ but $\en(E,h)$ is just the $ M \times i\R  $.
\end{remark}
Now $\tilde{A}^1_{E}=\tilde{A}^{1,0}_{E}\oplus\tilde{A}^{0,1}_{E}$, as in \eqref{A-pq}. So, we can decompose any smooth generalized connection $\nabla$ into two components, $\nabla^{1,0}$ and $\nabla^{0,1}$ such that $\nabla=\nabla^{1,0}+\nabla^{0,1}$ where 
\begin{equation*}
\nabla^{1,0}:\tilde{A}^0_{E}\longrightarrow\tilde{A}^{1,0}_{E}\quad;\quad\nabla^{0,1}:\tilde{A}^0_{E}\longrightarrow\tilde{A}^{0,1}_{E}\,.    
\end{equation*}
Note that for any local function $f$ on $M$ and local section $s$ of $E$, $$\nabla^{0,1}(fs)=\tilde{d_{L}}f\otimes s+f\nabla^{0,1}(s)\,.$$
\begin{definition}
     A smooth generalized connection $\nabla$ on $E$ is compatible with the GCS if $\nabla^{0,1}=\tilde{d_{L}}$.
\end{definition}
After some straightforward modifications of the proofs in  \cite[Corollary 4.2.13, Proposition 4.2.14]{hubrechts05}, one obtains the following. 
\begin{prop}\label{generalized prop1}
Let $E$ be an SGH vector bundle over $M$ with a hermitian structure $h$.
 \begin{enumerate} 
 \setlength\itemsep{1em}
     \item The space of smooth generalized connections on $E$, compatible with the GCS, forms an affine space over the  $\C$-vector space $\tilde{A}^{1,0}_{\en(E)}(M)$.
     \item There exists a unique generalized hermitian connection $\nabla$ on $E$ with respect to $h$ which is also compatible with the $GCS$. This smooth generalized connection is called \textit{generalized Chern connection}. 
\end{enumerate}   
\end{prop}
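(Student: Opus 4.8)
The plan is to imitate the classical argument for the Chern connection on a holomorphic hermitian bundle (as in \cite[Section 4.2]{hubrechts05}), carefully replacing the Dolbeault operator $\overline{\partial}$ by $\tilde{d}_L$, the operator $d$ by $\tilde{D}$, and the bidegree decomposition $\Omega^1 = \Omega^{1,0}\oplus\Omega^{0,1}$ by $\tilde{A}^1_E = \tilde{A}^{1,0}_E \oplus \tilde{A}^{0,1}_E$. For part $(1)$: if $\nabla$ and $\nabla'$ are both smooth generalized connections compatible with the GCS, then by Proposition \ref{generalized prop}(1) their difference $\nabla - \nabla'$ is $\tilde{A}^0$-linear, hence lies in $\tilde{A}^1_{\en(E)}(M)$; but compatibility forces $(\nabla-\nabla')^{0,1} = \tilde{d}_L - \tilde{d}_L = 0$, so the difference lies in $\tilde{A}^{1,0}_{\en(E)}(M)$. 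Conversely, for $\theta \in \tilde{A}^{1,0}_{\en(E)}(M)$, Proposition \ref{generalized prop}(2) shows $\nabla + \theta$ is again a smooth generalized connection, and its $(0,1)$-part is unchanged, so it is still compatible with the GCS. This establishes the affine-space structure.

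For part $(2)$, uniqueness and existence: work in a local trivialization $\{U_\alpha, \phi_\alpha\}$ of $E$ and write $\nabla|_{U_\alpha} = \phi_\alpha^{-1}\circ(\tilde{D} + \theta_\alpha)\circ\phi_\alpha$ with $\theta_\alpha$ a matrix-valued generalized $1$-form. Compatibility with the GCS says the $(0,1)$-part of $\theta_\alpha$ vanishes, i.e.\ $\theta_\alpha$ is of type $(1,0)$. The hermitian condition \eqref{hermitian conditn}, written in a frame with hermitian matrix $H_\alpha$, becomes the usual identity $\tilde{D} H_\alpha = \theta_\alpha^{*} H_\alpha + H_\alpha \theta_\alpha$ (where $\theta_\alpha^*$ is the conjugate transpose). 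Splitting this into its $(1,0)$ and $(0,1)$ components and using that $\theta_\alpha$ has pure type $(1,0)$ so that $\theta_\alpha^* H_\alpha$ has pure type $(0,1)$, one solves uniquely $\theta_\alpha = (\tilde{D}^{1,0} H_\alpha)\, H_\alpha^{-1} = (\tilde{d}_{\overline{L}} H_\alpha)\, H_\alpha^{-1}$, where $\tilde{D}^{1,0} = \tilde{d}_{\overline{L}}$ is the $(1,0)$-component of $\tilde{D}$ from the decomposition $\tilde{D} = \tilde{d}_{\overline{L}} + \tilde{d}_L$. This gives a local formula; one then checks it is consistent with the coboundary relation \eqref{nabla coboundary eq}, so that the locally defined $\nabla|_{U_\alpha}$ patch to a global smooth generalized connection on $E$, which is by construction both hermitian and compatible with the GCS. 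Uniqueness is immediate since the local formula is forced.

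The main obstacle I expect is verifying that the local expressions $\theta_\alpha = (\tilde{d}_{\overline{L}} H_\alpha) H_\alpha^{-1}$ actually satisfy the transition/coboundary equation \eqref{nabla coboundary eq}, i.e.\ that the naive pointwise solution glues. In the classical case this uses the fact that the transition functions $\phi_{\alpha\beta}$ are holomorphic, so that $\overline{\partial}$ commutes appropriately with conjugation by $\phi_{\alpha\beta}$; here the analogous input is that $\phi_{\alpha\beta}$ is a GH map (Proposition \ref{GH principal bundle}), hence each entry is a GH function and is annihilated by $\tilde{d}_L$ while behaving correctly under $\tilde{d}_{\overline{L}}$. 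One must check that $\tilde{d}_{\overline{L}}$ satisfies a Leibniz rule with respect to matrix products and that $\tilde{d}_{\overline{L}}(\phi_{\alpha\beta}) = \tilde{D}(\phi_{\alpha\beta})$ on entries (since the $(0,1)$-part $\tilde{d}_L$ kills GH functions), which reduces the computation to the classical one verbatim. The remaining points — that $\en(E,h)$-valued forms are the right affine model, and that hermitian-ness is preserved — are routine modifications of \cite[Corollary 4.2.13, Proposition 4.2.14]{hubrechts05} and require no new ideas.
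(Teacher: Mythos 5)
Your proposal is correct and follows essentially the same route as the paper, which proves this proposition precisely by "straightforward modifications" of the classical Chern-connection arguments in Huybrechts (Corollary 4.2.13, Proposition 4.2.14), with $\overline{\partial}$ replaced by $\tilde{d}_L$, $d$ by $\tilde{D}$, and the gluing justified by the GH-ness of the transition functions (equivalently, by forced uniqueness of the local formula). The only quibble is a convention-level slip in the local formula: with your stated identity $\tilde{D}H_{\alpha}=\theta_{\alpha}^{*}H_{\alpha}+H_{\alpha}\theta_{\alpha}$ and $\theta_{\alpha}$ of type $(1,0)$, the $(1,0)$-component gives $\theta_{\alpha}=H_{\alpha}^{-1}\,\tilde{d}_{\overline{L}}H_{\alpha}$ rather than $(\tilde{d}_{\overline{L}}H_{\alpha})H_{\alpha}^{-1}$, which does not affect the argument.
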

\begin{definition}
   The curvature of a smooth generalized connection $\nabla$, denoted by $\Omega_{\nabla}$ and referred to as a smooth generalized curvature,  is the composition   $$\Omega_{\nabla}:=\nabla\circ\nabla:\tilde{A}^0_{E}\longrightarrow\tilde{A}^{1}_{E}\longrightarrow\tilde{A}^{2}_{E}\,.$$ 
\end{definition}
\begin{example}\label{generalized curvature}
Let $E=M\times\C^{l}$ be the trivial SGH vector bundle. By Proposition \ref{generalized prop}, any smooth generalized connection is of the form $\nabla=\tilde{D}+\theta$ where $\theta\in\tilde{A}^1_{\en(E)}(M)$. 
Note that $\tilde{D}: \tilde{A}^p \to \tilde{A}^{p+1} $ extends naturally to $\tilde{D}: \tilde{A}_E^p \to 
\tilde{A}_E^{p+1} $ by the Leibniz rule.
So, for any local section $s\in\tilde{A}^0_{E}$, we have
\begin{align*}
\Omega_{\nabla}(s)&=(\tilde{D}+\theta)(\tilde{D}s+\theta s)\\
&=\tilde{D}(\tilde{D}s)+(\tilde{D}(\theta s)+\theta\wedge\tilde{D}s)+\theta\wedge\theta(s)\\
&=(\theta\wedge\theta+\tilde{D}(\theta))(s)\,.
\end{align*}
\end{example}
{Given any smooth generalized connection $\nabla$ on an SGH vector bundle $E$ with local trivialization $\{U_{\alpha},\phi_{\alpha}\}$, using \eqref{nabla coboundary eq} and by a straightforward calculation (cf. \cite{griffiths, hubrechts05, tu17}), we have} 
$$\phi_{\beta}\circ\Omega_{\nabla}\circ\phi^{-1}_{\beta}=\ad(\phi_{\beta\alpha})(\phi_{\alpha}\circ\Omega_{\nabla}\circ\phi^{-1}_{\alpha})\,\,\,\text{on}\,\,\,U_{\alpha\beta}\,.$$
This implies that $\Omega_{\nabla}\in\tilde{A}^{2}_{\en(E)}(M)\,.$ 
Now, assume that $E$ admits a hermitian structure such that $\nabla$ is a generalized hermitian connection with respect to $h$. Without loss of generality, we can assume that $(E,h)|_{U_{\alpha}}$ is isomorphic to $U_{\alpha}\times\C^{l}$ with constant hermitian structure. Then we can easily see that, on $U_{\alpha}$, $\overline{\theta_{\alpha}}^{t}=-\theta_{\alpha}$ and so, by Example \ref{generalized curvature}, $\overline{\Omega_{\nabla}}^{t}=-\Omega_{\nabla}\,.$ Note that, using \eqref{hermitian conditn}, we have, for any local $s_{i}\in\tilde{A}^{k_{i}}_{E}\,\,(i=1,2)$, 
$$\tilde{D}h(s_1,s_2)=h(\nabla s_1,s_2)+(-1)^{k_1}h(s_1,\nabla s_2)\,.$$
This implies that for $s_{i}\in\tilde{A}^0_{E}$,
\begin{equation}\label{her-cur}
\begin{aligned}
    0&=\tilde{D}(\tilde{D}h(s_1,s_2))\\
    &=\tilde{D}(h(\nabla s_1,s_2)+h(s_1,\nabla s_2))\\
    &=h(\Omega_{\nabla} s_1,s_2)+h(s_1,\Omega_{\nabla} s_2)\,.
\end{aligned}
\end{equation}
If we further assume that $\nabla$ is compatible with the GCS, we get,
$$\Omega_{\nabla}=\nabla^2=(\nabla^{1,0})^2+\nabla^{1,0}\circ\tilde{d_{L}}+\tilde{d_{L}}\circ\nabla^{1,0}\,.$$ Thus, $h(\Omega_{\nabla} s_1,s_2)$ and $h(s_1,\Omega_{\nabla} s_2)$ are of type $(2,0)+(1,1)$ and $(1,1)+(0,2)$, respectively. So, by \eqref{her-cur}, $(\nabla^{1,0})^2=0$. We have proved the following.

\begin{prop}\label{generalized prop2}
 Let $E$ be an SGH vector bundle over $M$ with a hermitian structure $h$. Let $\nabla$ be a smooth generalized connection with curvature $\Omega_{\nabla}$.  Then
 \vspace{0.5em}
 \begin{enumerate}
 \setlength\itemsep{1em}
     \item If $\nabla$ is a generalized hermitian connection with respect to $h$, $\Omega_{\nabla}$ satisfies $$h(\Omega_{\nabla} s_1,s_2)+h(s_1,\Omega_{\nabla} s_2)=0\quad\text{for any sections $s_1,s_2$}\,.$$
     \item If $\nabla$ is compatible with the GCS, then $\Omega_{\nabla}$ has no $(0,2)$-part, that is, $$\Omega_{\nabla}\in(\tilde{A}^{2,0}_{\en(E)}\oplus\tilde{A}^{1,1}_{\en(E)})(M)\,.$$
     \item If $\nabla$ is a generalized Chern connection on $(E,h)$, $\Omega_{\nabla}$ is of type $(1,1)$, skew-hermitian and real.
 \end{enumerate}
\end{prop}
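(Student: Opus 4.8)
The plan is to verify the three assertions locally on a trivializing neighborhood $U_\alpha$ and then transport the conclusions globally using the transformation law $\phi_\beta\circ\Omega_\nabla\circ\phi_\beta^{-1}=\ad(\phi_{\beta\alpha})(\phi_\alpha\circ\Omega_\nabla\circ\phi_\alpha^{-1})$ already established just above, together with Example \ref{generalized curvature} which gives the local expression $\phi_\alpha\circ\Omega_\nabla\circ\phi_\alpha^{-1}=\tilde{D}(\theta_\alpha)+\theta_\alpha\wedge\theta_\alpha$.

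\medskip
For item $(1)$, I would start from the hermitian condition \eqref{hermitian conditn} and its extension to higher-degree $E$-valued generalized forms, namely $\tilde{D}h(s_1,s_2)=h(\nabla s_1,s_2)+(-1)^{k_1}h(s_1,\nabla s_2)$ for $s_i\in\tilde{A}^{k_i}_E$; this is the standard sign bookkeeping for a connection-induced pairing and I would just assert it. Applying $\tilde{D}$ twice to $h(s_1,s_2)$ with $s_i\in\tilde{A}^0_E$ and using $\tilde{D}^2=0$ yields $0=h(\Omega_\nabla s_1,s_2)+h(s_1,\Omega_\nabla s_2)$, which is the computation already displayed in \eqref{her-cur}. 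So item $(1)$ is essentially written out in the excerpt and needs only to be cited.

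\medskip
For item $(2)$, compatibility with the GCS means $\nabla^{0,1}=\tilde{d}_L$, so $\nabla=\nabla^{1,0}+\tilde{d}_L$ and hence $\Omega_\nabla=\nabla^2=(\nabla^{1,0})^2+(\nabla^{1,0}\tilde{d}_L+\tilde{d}_L\nabla^{1,0})+\tilde{d}_L^2$. Now $\tilde{d}_L^2=0$ because $\overline{L}$ (equivalently $\mathcal{G}M$) is an integrable subbundle of $TM\otimes\C$, as recorded after \eqref{D-def} via \cite[Proposition 4.2]{Gua2}, so the $(0,2)$-part of $\Omega_\nabla$ vanishes; the remaining terms lie in $\tilde{A}^{2,0}_{\en(E)}\oplus\tilde{A}^{1,1}_{\en(E)}$ by inspection of bidegrees. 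For item $(3)$, combining $(1)$ and $(2)$: by $(2)$, if additionally $\nabla$ is hermitian (a generalized Chern connection), then $(\nabla^{1,0})^2=0$ follows because in \eqref{her-cur} the term $h(\Omega_\nabla s_1,s_2)$ is of type $(2,0)+(1,1)$ while $h(s_1,\Omega_\nabla s_2)$ is of type $(1,1)+(0,2)$, forcing the $(2,0)$-components to vanish separately—hence $\Omega_\nabla$ is purely of type $(1,1)$. Skew-hermitian is item $(1)$ restricted to this case, and realness (i.e.\ $\Omega_\nabla$ lies in the real form $\tilde A^{1,1}_{\en(E,h)}$) follows from skew-hermiticity together with type $(1,1)$, by the local computation $\overline{\theta_\alpha}^t=-\theta_\alpha$ in a unitary frame, which gives $\overline{\Omega_\nabla}^t=-\Omega_\nabla$ as noted in the excerpt just before the proposition.

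\medskip
The only genuine subtlety—and the step I would be most careful about—is item $(2)$'s reliance on $\tilde{d}_L^2=0$ on $E$-valued generalized forms. The integrability statement in the excerpt is phrased for scalar-valued transverse forms, and here $\tilde d_L$ has been extended to $\tilde A^{\bullet}_E$ via the Leibniz rule with $\nabla^{0,1}=\tilde d_L$; strictly, $\nabla^{0,1}\circ\nabla^{0,1}$ is the $(0,2)$-curvature of the Dolbeault-type operator, and its vanishing is equivalent to the existence of the SGH structure on $E$ (the operator $\nabla^{0,1}$ is by construction the $d_L$-operator of the SGH bundle $E$, which squares to zero because $E$ comes from a locally free $\mathcal{O}_M$-module). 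I would make this precise by noting that on a trivializing chart $\nabla^{0,1}$ acts as $\tilde{d}_L\otimes\id$ on $\mathbf{E}|_{U_\alpha}\cong\bigoplus\mathcal{O}_M|_{U_\alpha}$ and the transition functions are GH (so $\tilde d_L$-closed), whence the local operators patch and $(\nabla^{0,1})^2=0$ globally. Everything else is routine bidegree counting and the linear algebra of hermitian pairings in a unitary frame, exactly paralleling \cite[Section 4.2]{hubrechts05}.
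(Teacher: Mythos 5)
Your proposal is correct and follows essentially the same route as the paper: item $(1)$ is the computation displayed in \eqref{her-cur}, item $(2)$ comes from the decomposition $\Omega_{\nabla}=(\nabla^{1,0})^2+\nabla^{1,0}\circ\tilde{d}_L+\tilde{d}_L\circ\nabla^{1,0}$, and item $(3)$ combines the two via the bidegree argument forcing $(\nabla^{1,0})^2=0$, with skew-hermiticity and realness read off from $\overline{\theta_{\alpha}}^{t}=-\theta_{\alpha}$ in a unitary frame. The only difference is that you make explicit the vanishing $(\nabla^{0,1})^2=\tilde{d}_L^2=0$ on $E$-valued forms (via GH, hence $\tilde{d}_L$-closed, transition functions), a point the paper uses implicitly; this is a welcome clarification rather than a different argument.
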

\medskip

Recall the transversely holomorphic symplectic foliation $\mathscr{S}$  of $M$ and the corresponding leaf space $M/\mathscr{S}$. We have seen that a smooth generalized connection on an SGH vector bundle $E$ over $M$ with trivializations $\{U_{\alpha},\phi_{\alpha}\}$ is equivalent to a family $\{\theta_{\alpha}\in\tilde{A}^{1}_{\en(E)|_{U_{\alpha}}}(U_{\alpha})\}$ satisfying \eqref{nabla coboundary eq}. If each $\theta_{\alpha}$ is constant along the leaves of $\mathscr{S}$, that is, if we replace $\tilde{A}^{\bullet}$ by $A^{\bullet}$, we get the following notion.

\begin{definition}\label{transvrs connectn}
Let $E$ be an SGH vector bundle on $M$.
\begin{enumerate}
\setlength\itemsep{1em}
    \item A \textit{transverse generalized connection} on $E$, is a $\C$-linear sheaf homomorphism 
$$\nabla:A^0_{E}\longrightarrow A^1_{E}$$ which satisfies the Leibniz rule
$$\nabla(fs)=Df\otimes s+f\nabla(s)$$ for any local function $f\in F_{M}$ and any local section $s$ of $F_{M}(E)$.
\item A \textit{transverse generalized curvature} is the curvature of a transverse generalized connection $\nabla$, denoted by $\Omega_{\nabla}$. Note that, $\Omega_{\nabla}\in A^2_{\en(E)}(M)$.
\end{enumerate}
\end{definition}
\begin{remark}
    A transverse generalized connection is also a smooth generalized connection in the sense that given a transverse generalized connection $\nabla$, we can consider a $\C$-linear sheaf homomorphism
    $$\tilde{\nabla}:\tilde{A}^0_{E}=C^{\infty}_{M}\otimes_{F_{M}}A^0_{E}\longrightarrow \tilde{A}^1_{E}=C^{\infty}_{M}\otimes_{F_{M}}A^1_{E}\,,$$ defined by $$\tilde{\nabla}(fs)=\tilde{D}f\otimes s+f\nabla s$$ for any local smooth function $f$ and any local section $s\in A^0_{E}$. One can check that $\tilde{\nabla}$ is a smooth generalized connection.
\end{remark}
\begin{remark}
A smooth generalized connection always exists. A transverse generalized connection exists locally. For it to exist globally we need a smooth partition of unity, which is constant along the leaves.  If we assume $M/\mathscr{S}$ is a smooth orbifold,  such a partition of unity exists. Henceforth, in this section, we always assume that $M/\mathscr{S}$ is a smooth orbifold.
\end{remark}

We can replicate all the definitions and results for smooth generalized connections in this section, except those concerning hermitian structure, to transverse generalized connections by making the following substitutions.
\begin{table}[H]
  \centering
\begin{tblr}{|Q[c,1.5cm]|Q[c,1.5cm]|Q[c,1.5cm]|Q[c,1.5cm]|Q[c,1.5cm]|Q[c,1.5cm]|} 
\hline
 & $C^{\infty}_{M}$ & $\tilde{A}^{\bullet}$ & $\tilde{A}^{\bullet,\bullet}$ & $\tilde{D}$ & $\tilde{d_{L}}$\\[1ex]\hline
 Replaced by & $F_{M}$ & $A^{\bullet}$ & $A^{\bullet,\bullet}$ & $D$ & $d_{L}$\\[1ex]\hline
\end{tblr}
\caption{Replacement table}
  \label{table}
\end{table}


For the results concerning the generalized hermitian connection, some extra care is needed.
Consider the trivial SGH vector bundle $E=M\times\C^{r}$ with a hermitian structure $h$ and let $\nabla$ be a smooth generalized connection which satisfies \eqref{hermitian conditn}. Any hermitian metric $h$ given on $E$ is given by a function, again denoted by $h$, on $M$ that associates to any $x\in M$, a positive-definite hermitian matrix $h(x)=(h_{ij}(x))$. So we can think of $h$ as a smooth global section of $E^*\otimes E^*$, that is,
$h\in C^{\infty}(M,E^*\otimes E^*)\,.$
Now $\nabla$ is of the form $\nabla=\tilde{D}+\theta$ for some $\theta=(\theta_{ij})\in\tilde{A}^1_{\en(E)}(M)$. Let $e_i$ be the constant $i$-th unit vector considered as a section of $E$. The compatibility of $\nabla$ with $h$ yields
$$\tilde{D}h(e_i,e_j)=h(\sum_{k}\theta_{ki}e_{k},e_{j})+h(e_{i},\sum_{l}\theta_{lj}e_{j})\,,$$ or equivalently $\tilde{D}h=\theta^{t}\cdot h+h\cdot\overline{\theta}$. Furthermore, if we assume that $\nabla$ is compatible with the GCS, we get $\theta$ is of type $(1,0)$. This implies $\tilde{d_{L}}h=h\cdot\overline{\theta}\,,$ and $\theta=\overline{h}^{-1}\tilde{d_{L}}h\,.$
So, the hermitian structure uniquely determines the smooth generalized connection. Thus, for a transverse generalized connection, we would like to have a hermitian metric which is constant along the leaves. 

\begin{definition}\label{thmetric}
 A hermitian metric $h$ on an SGH vector bundle $E$ is called a \textit{transverse hermitian metric} if $h\in F_{M}(M,E^{*}\otimes E^{*})$, that is, $h$ is constant along the leaves of $\mathscr{S}$.   
\end{definition}
\begin{remark}
    Our assumption that $M/\mathscr{S}$ is a smooth orbifold ensures that a transverse hermitian metric always exists.
\end{remark}
With this notion of transverse hermitian metric and using the substitutions in  Table \ref{table}, we can replicate all the relevant definitions and extend the results in Proposition \ref{generalized prop1} and Proposition \ref{generalized prop2} to the transverse generalized connections. In particular, we have the following.
\begin{theorem}\label{main generalized prop}
Let $E$ be an SGH vector bundle over $M$ such that $M/\mathscr{S}$ is a smooth orbifold. Let $h$ be a transverse hermitian metic on $E$.
\vspace{0.5em}
\begin{enumerate}
\setlength\itemsep{1em}
     \item There exists a unique transverse generalized hermitian connection $\nabla$ on $E$ with respect to $h$ which is also compatible with the $GCS$. This transverse generalized connection is called \textit{transverse generalized Chern connection}.
     \item The transverse generalized curvature of $\nabla$, $\Omega_{\nabla}$ is of type $(1,1)$, skew-hermitian and real.
     \item The space of transverse generalized connections on $E$, compatible with the GCS, forms an affine space over the  $\C$-vector space $A^{1,0}_{\en(E)}(M)$.
\end{enumerate}
\end{theorem}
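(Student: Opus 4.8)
The plan is to transfer, verbatim with the substitutions of Table \ref{table}, the arguments already developed for smooth generalized connections, paying attention only to the two places where the hermitian structure enters. The key point enabling all of this is that the existence of a partition of unity on the orbifold leaf space $M/\mathscr{S}$, pulled back to $M$, gives a smooth partition of unity subordinate to any transverse good cover that is constant along the leaves of $\mathscr{S}$; this is precisely what makes the sheaf $\en(E)$-valued $A^{\bullet,\bullet}$ fine in the transverse setting and lets one glue local transverse objects to global ones.

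First, for part (1), I would run the standard existence-and-uniqueness argument for the Chern connection (cf. \cite[Proposition 4.2.14]{hubrechts05}), but now working inside the complex $A^{\bullet,\bullet}$ rather than $\tilde{A}^{\bullet,\bullet}$. Locally, on a trivializing neighborhood $U_\alpha$ with a transverse hermitian metric $h$ (Definition \ref{thmetric}), the computation reproduced in the excerpt shows that the connection matrix must be $\theta_\alpha = \overline{h_\alpha}^{-1} d_L h_\alpha$; here the use of $d_L$ (rather than $\tilde{d_L}$) is legitimate precisely because $h$ is constant along the leaves, so $d_L h_\alpha$ lies in $A^{1,0}$. Uniqueness then forces these local $\theta_\alpha$ to satisfy the coboundary relation \eqref{nabla coboundary eq} with $\tilde D$ replaced by $D$, so they patch to a global transverse generalized connection $\nabla$ on $E$; compatibility with the GCS ($\nabla^{0,1} = d_L$) and the hermitian condition \eqref{hermitian conditn} (now with $D$ in place of $\tilde D$) hold by construction. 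One must also check that a transverse hermitian metric exists globally, which again follows from the leaf-constant partition of unity.

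For part (2), I would repeat the argument of Proposition \ref{generalized prop2}(3) inside $A^{\bullet,\bullet}$. Choosing local trivializations in which $(E,h)|_{U_\alpha}$ carries a constant hermitian structure, skew-hermiticity of $\theta_\alpha$ gives $\overline{\Omega_\nabla}^{\,t} = -\Omega_\nabla$ as in Example \ref{generalized curvature}; compatibility with the GCS together with $(\nabla^{1,0})^2 = 0$ (obtained exactly as in the display \eqref{her-cur}, with $\tilde D$ replaced by $D$) kills the $(2,0)$ and $(0,2)$ parts, leaving $\Omega_\nabla \in A^{1,1}_{\en(E)}(M)$, skew-hermitian and real. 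For part (3), the affine-space statement is immediate from Proposition \ref{generalized prop}(1)--(3) read with the substitution $\tilde{A}^{1,0}_{\en(E)} \rightsquigarrow A^{1,0}_{\en(E)}$: the difference of two transverse connections compatible with the GCS is $A^0$-linear and of type $(1,0)$, hence lies in $A^{1,0}_{\en(E)}(M)$, and conversely adding any such element preserves both properties.

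The main obstacle is purely bookkeeping rather than conceptual: one must verify that every step in the $C^\infty$ arguments that implicitly used the full smooth structure — in particular the gluing of local connection forms, the existence of the partition of unity, and the existence of the transverse hermitian metric — survives the restriction to leaf-constant data, and that at no point does an operation (such as differentiating $h$) force one out of the sheaves $F_M$, $A^\bullet$, $A^{\bullet,\bullet}$. This is exactly where the standing assumption that $M/\mathscr{S}$ is a smooth orbifold is used, and it is the only nontrivial input beyond the complex-geometric template.
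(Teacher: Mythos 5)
Your proposal is correct and follows essentially the same route as the paper: the paper's own "proof" is exactly the observation that, once a transverse hermitian metric (existing thanks to the leaf-constant partition of unity coming from the orbifold structure on $M/\mathscr{S}$) is fixed, the arguments for Propositions \ref{generalized prop}, \ref{generalized prop1} and \ref{generalized prop2} carry over verbatim under the substitutions of Table \ref{table}, with the local formula $\theta_\alpha=\overline{h}^{-1}d_{L}h$ staying inside $A^{1,0}$ because $h$ is constant along the leaves. Your write-up just makes these transferred steps explicit, which matches the paper's intent.
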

\subsection{Generalized Chern class for SGH vector bundles}\label{sgh-vb chrn}
Let $E\longrightarrow M$ be an SGH vector bundle over $M$ of complex rank $l$. Then, following Subsection \ref{char class of SGH PB}, consider the following characteristic polynomial
$$\det\left(I-t\frac{A}{2\pi\,i}\right)=\sum_{j=0}^{l}  g_{j}(A)\,t^{j}\,,$$ 
where $g_{j}\in\C[M_{l}(\C)]$ is the elementary symmetric polynomial of degree $j$ and $I$ is the identity matrix. Then, we can define an analogue of Chern classes similar to the classical case, as follows.

\begin{definition} Let $E$ be an SGH vector bundle over $M$.
    The $j$-th \textit{generalized Chern class} of $E$, denoted by $\mathbf{g}c_{j}(E)$, is defined as the image of $g_{j}$ under the generalized Chern-Weil homomorphism, that is,
    $$\mathbf{g}c_{j}(E):=\Phi(g_{j})\,,$$ where $\Phi$ as defined in \eqref{CW-hom map}.
\end{definition}
\begin{example}\label{chern class fr GH-VB}
   Let $E$ be an SGH vector bundle over $M$ where the leaf space  $M/\mathscr{S}$
   admits an orbifold structure. Let $\nabla$ be the transverse generalized Chern connection and $\Omega_{\nabla}$ be its curvature. Then $\mathbf{g}c_{1}(E)=-\frac{1}{2\pi i}[ \tr(\Omega_{\nabla}) ]$.
\end{example}
\begin{remark}
    {If the leaf space $M/\mathscr{S}$ is a smooth manifold, and if we have an SGH vector bundle over $M$ which is the pullback of a holomorphic vector bundle over $M/\mathscr{S}$, then the generalized Chern classes of the SGH bundle are the pullbacks of the corresponding Chern classes of the holomorphic vector bundle. This is discussed further in Remark \ref{rmk chrn}.}
\end{remark}

\begin{remark}
 It is worth noting that if we substitute $F_M$ with $\mathcal{O}_M$ in Definition \ref{transvrs connectn} and refer to Remark \ref{VB rmk}, we get a GH connection on an SGH vector bundle as defined by Lang et al \cite[Definition 4.1]{lang2023}. In this framework, the subsequent result has been established concerning the existence of a GH connection on an SGH vector bundle.   
\end{remark}
\begin{theorem}(\cite[Sections 4.1--4.2]{lang2023})\label{thm VB}
    Let $E$ be an SGH vector bundle over a regular GC manifold. Then, the following are equivalent:
    \begin{enumerate}
    \setlength\itemsep{1em}
        \item $E$ admits a GH connection.
        \item The short exact sequence, as defined in \eqref{B(E)2}, splits.
        \item $b(E)=0$ where $b(E)$ is the Atiyah class of the SGH vector bundle $E$ as defined in Definition \ref{atiyah VB}.
    \end{enumerate}
\end{theorem}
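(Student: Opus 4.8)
The statement is the vector-bundle analogue of Theorem \ref{main3}, which was proved for SGH principal $G$-bundles, so the natural strategy is to leverage the already-established dictionary between an SGH vector bundle $E$ and its associated SGH principal $GL_l(\C)$-bundle $P$, together with Theorem \ref{main4} which identifies $a(P) = -b(E)$. The plan is to prove the three equivalences by a short cycle: $(2)\Leftrightarrow(3)$ is essentially the content of Atiyah's extension-class formalism applied to the sequence $\mathcal{B}(E)$ in \eqref{B(E)2}, and $(1)\Leftrightarrow(2)$ requires unwinding the definition of a GH connection on $E$ (obtained, as the preceding remark notes, by substituting $\mathcal{O}_M$ for $F_M$ in Definition \ref{transvrs connectn}) into the language of splittings of the jet sequence.

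First I would establish $(2)\Leftrightarrow(3)$. By the discussion just before Definition \ref{atiyah VB}, the short exact sequence $\mathcal{B}(E)$ of $\mathcal{O}_M$-modules in \eqref{B(E)2} defines, via \cite[Proposition 2]{atiyah57}, the class $b(E) \in H^1(M, \mathbf{\mathcal{G}^{*}M}\otimes_{\mathcal{O}_M}\mathbf{\en(E)})$, and the cited proposition asserts precisely that this class vanishes if and only if the sequence splits as a sequence of $\mathcal{O}_M$-modules, i.e. admits an $\mathcal{O}_M$-linear right inverse to $\widehat{\pi_1}$ (equivalently a left inverse to $\widehat{J}$). This is immediate once one recalls that $b(E)$ is constructed as the image of $\mathrm{id}_{\mathbf{E}}$ under the connecting homomorphism $\Hom_{\mathcal{O}_M}(\mathbf{E},\mathbf{E}) \to H^1(M,\Hom_{\mathcal{O}_M}(\mathbf{E},\mathbf{\mathcal{G}^{*}M}\otimes_{\mathcal{O}_M}\mathbf{E}))$ of the long exact cohomology sequence obtained by applying $\Hom_{\mathcal{O}_M}(\mathbf{E},-)$ to $\mathcal{B}(E)$, and $\mathrm{id}_{\mathbf{E}}$ lifts to a global section of $\Hom_{\mathcal{O}_M}(\mathbf{E},\mathbf{J_1(E)})$ exactly when the sequence splits.

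Next I would prove $(1)\Leftrightarrow(2)$ by identifying a GH connection on $E$ with an $\mathcal{O}_M$-linear splitting of $\mathcal{B}(E)$. Locally, over a trivializing cover $\{U_\alpha,\phi_\alpha\}$, a GH connection is a family of $\mathcal{O}_M$-linear operators $\nabla|_{U_\alpha} = \phi_\alpha^{-1}\circ(d_{\overline L} + \theta_\alpha)\circ\phi_\alpha$ with $\theta_\alpha \in \Gamma(U_\alpha, \mathbf{\mathcal{G}^{*}M}\otimes_{\mathcal{O}_M}\mathbf{\en(E)})$ satisfying the coboundary relation analogous to \eqref{nabla coboundary eq} but with $d_{\overline L}$ in place of $\tilde D$ and $\mathcal{O}_M$-valued transition data; such a global $\nabla$ exists iff the cochain $\{\phi_{\alpha\beta}^{-1} d_{\overline L}(\phi_{\alpha\beta})\}$, which represents $b(E)$ (this is exactly the local computation \eqref{imp b alpha beta} carried out in the proof of Theorem \ref{main4}, specialized to GH rather than smooth operators), is a coboundary in $\mathbf{\mathcal{G}^{*}M}\otimes_{\mathcal{O}_M}\mathbf{\en(E)}$. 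On the other hand, given such a $\nabla$ one defines the splitting $b_\alpha(s) = s + \nabla_\alpha(s)$ of $\widehat{\pi_1}$ exactly as the maps $b_\alpha$ were built in the proof of Theorem \ref{main4}, and the coboundary condition on $\{\theta_\alpha\}$ is precisely what makes the local $b_\alpha$ patch to a global $\mathcal{O}_M$-linear splitting; conversely a global splitting $b: \mathbf{E}\to\mathbf{J_1(E)}$ yields $\nabla = b - b_\alpha^{\mathrm{triv}}$ locally, a GH connection. Combining the two equivalences closes the cycle.

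The main obstacle, to the extent there is one, is the careful bookkeeping in $(1)\Leftrightarrow(2)$: one must check that the jet-bundle description of Lang et al.\ (via \eqref{B(E)2}) interacts correctly with the local description of a GH connection in the transverse-connection formalism of Definition \ref{transvrs connectn} with $F_M$ replaced by $\mathcal{O}_M$, and in particular that the $\C$-linearity plus Leibniz rule of $\nabla$ translates exactly into $\mathcal{O}_M$-linearity of $b_\alpha$ and compatibility with the $\mathcal{O}_M$-module structure on $\mathbf{J_1(E)}$ defined via $\phi_m(f, s+\delta) = fs \oplus (f\delta + df\otimes s)$. All of this is routine once the definitions are aligned, and none of it differs in substance from the holomorphic case \cite[Sections 4.1--4.2]{lang2023}; indeed, since the statement is attributed to that reference, the proof here is really a matter of recording that the arguments there transfer verbatim to the SGH setting, using Theorem \ref{main4} and Proposition \ref{imp prop} to pass between $\en(E)$ and $Ad(P_E)$ when convenient.
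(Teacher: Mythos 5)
Your argument is correct and is essentially the proof the paper defers to: the theorem is stated in the paper without its own proof, with a citation to \cite[Sections 4.1--4.2]{lang2023}, where exactly your two steps appear — the identification of a GH connection with an $\mathcal{O}_{M}$-linear splitting of the jet sequence \eqref{B(E)2} via the twisted module structure, and the vanishing of the extension class $b(E)$ as the splitting criterion via \cite[Proposition 2]{atiyah57}. The only slip is cosmetic: the local operators $\phi_{\alpha}^{-1}\circ(d_{\overline{L}}+\theta_{\alpha})\circ\phi_{\alpha}$ are not $\mathcal{O}_{M}$-linear (they obey the Leibniz rule; only the forms $\theta_{\alpha}$ and differences of connections are $\mathcal{O}_{M}$-linear), which does not affect the argument.
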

\begin{theorem}\label{main8}
Consider $E$ as an SGH vector bundle over a regular GC manifold $M$. Let $P$ denote the corresponding SGH principal bundle, as in \eqref{e-p}. Then, $E$ admits a GH connection if and only if  $P$ admits a GH connection.
\end{theorem}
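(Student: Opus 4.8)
The plan is to chain together the results already available in the paper. The key link is Theorem \ref{main4}, which says that for an SGH vector bundle $E$ of complex rank $l$ and its associated SGH principal $GL_l(\mathbb{C})$-bundle $P$ (as in \eqref{e-p}), the Atiyah classes satisfy $a(P) = -b(E)$ in $H^1(M, \mathbf{\mathcal{G}^*M} \otimes_{\mathcal{O}_M} \mathbf{\en(E)}) \cong H^1(M, \mathbf{\mathcal{G}^*M} \otimes_{\mathcal{O}_M} \mathbf{Ad(P)})$, the isomorphism being furnished by Proposition \ref{imp prop} and its Corollary. In particular $a(P) = 0$ if and only if $b(E) = 0$.

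First I would invoke Theorem \ref{thm VB}: $E$ admits a GH connection if and only if $b(E) = 0$. Next, I would invoke Theorem \ref{main3}: $P$ admits a GH connection if and only if $a(P) = 0$. Finally, combining these with Theorem \ref{main4} (and the fact that $a(P) = -b(E)$ forces $a(P) = 0 \iff b(E) = 0$), we get the equivalence: $E$ admits a GH connection $\iff b(E) = 0 \iff a(P) = 0 \iff P$ admits a GH connection. The proof is therefore essentially a three-line deduction assembling Theorems \ref{thm VB}, \ref{main3}, and \ref{main4}.

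There is, strictly speaking, no hard step left — the substantive work was done in establishing Theorem \ref{main4} (the sign-matching of Atiyah classes in local coordinates) and Theorem \ref{thm VB}. The only point requiring a line of care is that the isomorphism of cohomology groups under which $a(P)$ and $-b(E)$ are identified is the one induced by $\en(E) \cong Ad(P)$ from Proposition \ref{imp prop}; one should note that this is an isomorphism of SGH vector bundles, so it carries the vanishing of one class precisely to the vanishing of the other. With that observation recorded, the equivalence follows immediately, and the proof can simply read: ``By Theorem \ref{thm VB}, $E$ admits a GH connection iff $b(E) = 0$. By Theorem \ref{main3}, $P$ admits a GH connection iff $a(P) = 0$. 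By Theorem \ref{main4}, $a(P) = -b(E)$ under the isomorphism of Proposition \ref{imp prop}, hence $a(P) = 0$ iff $b(E) = 0$. Combining, $E$ admits a GH connection iff $P$ does.''
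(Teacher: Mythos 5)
Your proposal is correct and is exactly the paper's argument: the paper's proof of Theorem \ref{main8} likewise consists of combining Theorem \ref{thm VB}, Theorem \ref{main3}, and Theorem \ref{main4} (with the identification $\en(E)\cong Ad(P)$ from Proposition \ref{imp prop}), so that $b(E)=0$ iff $a(P)=0$. Your extra remark about the isomorphism being one of SGH vector bundles is a fine point of care but adds nothing beyond what the paper already implicitly uses.
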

\begin{proof}
 Follows from Theorem \ref{main3}, Theorem \ref{main4}, and Theorem \ref{thm VB}. 
\end{proof}
\subsection{Generalized holomorphic Picard group}\label{sec picard}

Let $E$ be an SGH line bundle over a GC manifold $M$ with local trivializations $\{U_{\alpha},\phi_{\alpha}\}$. The transition functions $\phi_{\alpha\beta}$, as defined in Theorem \ref{main1}, are clearly non-vanishing GH functions by Lemma \ref{cmplx fiber}, that is, $\phi_{\alpha\beta}\in\mathcal{O}^{*}_{M}(U_{\alpha}\cap U_{\beta})$\,, and satisfy
\begin{equation}\label{cocycle conditn}
\phi_{\alpha\beta}\cdot\phi_{\beta\alpha}=1\,,\,\,\,\text{and}\,\,\,\phi_{\alpha\beta}\cdot\phi_{\beta\gamma}\cdot\phi_{\gamma\alpha}=1\,.
\end{equation}

On the other hand, given any collection of GH functions $\{\phi_{\alpha\beta}\in\mathcal{O}^{*}_{M}(U_{\alpha}\cap U_{\beta})\}\,,$ satisfying \eqref{cocycle conditn}, we can construct an SGH line bundle $E$ with transition functions $\{\phi_{\alpha\beta}\}$ by taking the union of $U_{\alpha}\times\C$ overall $\alpha$ and identifying ${z}\times\C$ in $U_{\alpha}\times\C$ and $U_{\beta}\times\C$ via multiplication by $\phi_{\alpha\beta}(z)$. 

\vspace{0.3em}
Note that the transition functions $\{\phi_{\alpha\beta}\in\mathcal{O}^{*}_{M}(U_{\alpha}\cap U_{\beta})\}\,$ of $E$ over $\{U_{\alpha}\}$ represent a \v{C}ech $1$-cocycle on $M$ with coefficients in $\mathcal{O}^{*}_{M}$. 
Moreover, any two cocycles $\{\phi_{\alpha\beta}\}$ and $\{\phi^{'}_{\alpha\beta}\}$ define isomorphic SGH line bundles if and only if $\{\phi_{\alpha\beta}\cdot(\phi^{'}_{\alpha\beta})^{-1}\}$ is a \v{C}ech co-boundary. This implies that any SGH bundle isomorphism class of an SGH line bundle over $M$ defines a unique element in $H^{1}(M,\mathcal{O}^{*}_{M})$ and vice versa.

\begin{definition}
 Consider the set $\mathscr{E}_{1}$ as defined in Proposition \ref{G-V prop}. We give $\mathscr{E}_{1}$ a group structure, denoted by $\tau$,   where multiplication is given by tensor product and inverses by dual bundles. Denote the group $(\mathscr{E}_{1}\,,\,\tau)$ by $\mathcal{G}\pic(M)\,.$ 
This group is called the \textit{generalized holomorphic (GH) Picard Group} of $M$. \end{definition}

\begin{theorem}\label{pic grp iso}
   For any GC manifold $M$, $\mathcal{G}\pic(M)\cong H^{1}(M,\mathcal{O}^{*}_{M})$ as groups.  
\end{theorem}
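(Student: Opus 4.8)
The plan is to imitate the classical argument identifying the holomorphic Picard group of a complex manifold with $H^1(M,\mathcal{O}^{*}_M)$, which has already been laid out in detail in the discussion preceding the statement. First I would construct the map $\Psi:\mathcal{G}\pic(M)\longrightarrow H^1(M,\mathcal{O}^{*}_M)$. Given an SGH line bundle $E$, choose local trivializations $\{U_\alpha,\phi_\alpha\}$; by Lemma \ref{cmplx fiber} the transition functions $\phi_{\alpha\beta}$ lie in $\mathcal{O}^{*}_M(U_{\alpha}\cap U_{\beta})$ and satisfy the cocycle condition \eqref{cocycle conditn}, so $\{\phi_{\alpha\beta}\}$ is a \v{C}ech $1$-cocycle with coefficients in $\mathcal{O}^{*}_M$. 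A different choice of trivializations changes the cocycle by a coboundary $\{g_\alpha/g_\beta\}$ with $g_\alpha\in\mathcal{O}^{*}_M(U_\alpha)$, and passing to a common refinement shows the resulting class in $\check{H}^1(M,\mathcal{O}^{*}_M)=H^1(M,\mathcal{O}^{*}_M)$ depends only on the SGH isomorphism class $[E]$. Thus $\Psi([E])$ is well defined.

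Next I would check that $\Psi$ is a group homomorphism. Under tensor product of line bundles the transition functions multiply, $\phi^{E\otimes E'}_{\alpha\beta}=\phi^{E}_{\alpha\beta}\cdot\phi^{E'}_{\alpha\beta}$ (after passing to a common refinement of the two trivializing covers), which is precisely the group operation in $H^1(M,\mathcal{O}^{*}_M)$; the trivial bundle maps to the identity cocycle, and the dual bundle has transition functions $\phi^{-1}_{\alpha\beta}$, i.e.\ the inverse class. Hence $\Psi$ respects $\tau$.

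Then I would produce the inverse map. Given a cocycle $\{\phi_{\alpha\beta}\}\in Z^1(\mathcal{U},\mathcal{O}^{*}_M)$, build $E:=\bigsqcup_\alpha (U_\alpha\times\C)/\!\sim$ by gluing $\{z\}\times\C$ over $U_\alpha$ to $\{z\}\times\C$ over $U_\beta$ via multiplication by $\phi_{\alpha\beta}(z)$; since each $\phi_{\alpha\beta}$ is a nonvanishing GH function, Lemma \ref{cmplx fiber} (equivalently Proposition \ref{GH principal bundle} together with the identification in Proposition \ref{G-V prop}) guarantees that the total space carries an SGH line bundle structure with the given transition functions. Cohomologous cocycles yield SGH-isomorphic bundles by the coboundary relation \eqref{sub cocycle conditn}-type computation already recorded in the excerpt, and a refinement of the cover does not change the isomorphism class of $E$; so the assignment descends to a well-defined map $H^1(M,\mathcal{O}^{*}_M)\longrightarrow\mathcal{G}\pic(M)$. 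The two constructions are mutually inverse essentially by construction: starting from $E$, trivializing, and regluing returns a bundle SGH-isomorphic to $E$, and starting from a cocycle, building $E$, and reading off its transition functions in the tautological trivialization returns the same cocycle up to coboundary.

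The only genuinely nontrivial input is that the glued space actually is an SGH line bundle — i.e.\ that a \v{C}ech cocycle valued in $\mathcal{O}^{*}_M$ produces a total space admitting a GCS that is locally a product and for which the tautological trivializations are GH homeomorphisms. This is exactly the content isolated in Lemma \ref{cmplx fiber} and Proposition \ref{GH principal bundle} (for $G=GL_1(\C)=\C^{*}$), so the step that would otherwise be the main obstacle has already been dispatched; what remains is the standard but slightly tedious bookkeeping with refinements of covers to make ``well defined on $\check{H}^1$'' precise. I would phrase that bookkeeping exactly as in the complex-analytic case (e.g.\ \cite{griffiths}), citing Lemma \ref{cmplx fiber} at the one place where holomorphicity of the clutching functions is used.
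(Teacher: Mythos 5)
Your proposal is correct and follows essentially the same route as the paper: the paper's own argument consists of exactly this identification of SGH line bundles with $\check{\rm C}$ech $1$-cocycles valued in $\mathcal{O}^{*}_{M}$ via transition functions (using Lemma \ref{cmplx fiber}/Theorem \ref{main1} for the gluing direction), with isomorphism corresponding to coboundaries and tensor product/dual matching the group operations. The only difference is that you spell out the refinement bookkeeping and the homomorphism check more explicitly than the paper, which leaves them implicit.
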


\begin{remark}\label{ses main}
Using Corollary \ref{imp corr}, for any GC manifold $M$ we have the following short exact sequence of sheaves, 
 \begin{equation}\label{ses-eq main}
\begin{tikzcd}
0 \arrow[r] & \{\Z\} \arrow[r, hookrightarrow] & \mathcal{O}_{M} \arrow[r, "\exp"] & \mathcal{O}^{*}_{M}\arrow[r] & 0 
\end{tikzcd}\,.
\end{equation} 
However, given an SGH line bundle over $M$, the image of its isomorphism class in
$H^1(M, \mathcal{O}^{*}_{M} )$, under the connecting homomorphism in the corresponding long exact sequence, may not give the first generalized Chern class of the bundle. This is because the first generalized Chern class lies in $H^{1,1}_{d_L} (M)$ which under suitable conditions lies in $H^2_D(M)$. But the latter basically describes the cohomology of the leaf space of the GCS and
may not be the same as the de Rham cohomology of $M$.  If the bundle is the pullback of a holomorphic line bundle on the leaf space of the symplectic foliation, then there is no such discrepancy.   
\end{remark}

\begin{remark}
    Note that Gualtieri \cite{Gua2} has given a different notion of Picard group corresponding to his notion of GH line bundles.
\end{remark}

\section{Dualities and vanishing theorems for SGH vector bundles}\label{sec duality}

In this section, we extend some classical results like Serre duality,  Poincar\'{e} duality, Hodge decomposition and vanishing theorems to the cohomology theory of Section \ref{sec cohomolgy} following the approach of \cite{asaoka14} and \cite{hubrechts05}.

\subsection{Generalized Serre duality and Hodge decomposition}

Let $M^{2n}$ be a compact regular GC manifold of type $k$.  Then the leaf space $M/\mathscr{S}$, as defined in \eqref{leaf sp}, is a compact space. 
Let us assume $M/\mathscr{S}$ is a smooth orbifold. 
Then, by the integrability condition of the GCS, 
$M/\mathscr{S}$ is a complex orbifold, and hence, 
orientable. Thus the cohomology 
$H^{2k} (M/\mathscr{S})$ is nontrivial. Therefore, there exists  a $(2n-2k)$-form $\chi$ on $M$ (see \cite[Section 2.8]{asaoka14}) which restricts to a volume form on each leaf such that for any $X_1,\ldots, X_{2n-2k}\in C^{\infty}(T\mathscr{S})$ and $Y\in C^{\infty}(TM)$,
\begin{equation}\label{proprty 1}
    d\chi(X_1,\ldots,X_{2n-2k},Y)=0\,.
\end{equation}

Fix a Riemannian metric on the leaf space. This induces a transverse Riemmaninan metric on $M$. We can complete the transverse metric by a Riemannian metric along the leaves to obtain a Riemannian metric on $M$ for which the leaves are minimal. In fact, $\chi$ is associated to this metric. 

\medskip
Now, define a Hodge-star operator on $A^{\bullet}$, 
\begin{equation}\label{hodge-start}
    \star: A^{\bullet}\longrightarrow A^{2k-\bullet}\,,
\end{equation} as follows: Let $U$ be an open set in $M$ on which the GCS is equivalent to a product GCS
(see Theorem \ref{darbu thm}). This implies that the symplectic foliation on $U$ is trivial. Let $e_1,\ldots,e_{2k}$ be transverse generalized $1$-forms such that $\{e_1,\ldots,e_{2k}\}$ is an orthonormal frame of $A^1(U)$. Then, for any $r>0$,  $\star:A^{r}(U)\longrightarrow A^{2k-r}(U)$ is defined by, 
\begin{equation}\label{star}
\star(e_{i_1}\wedge e_{i_2}\wedge\cdots\wedge e_{i_{r}})=\sign(i_{1},\ldots,i_{r},j_{1},\ldots,j_{2k-r})\,e_{j_1}\wedge e_{j_2}\wedge\cdots\wedge e_{j_{2k-r}}    
\end{equation}
where $\{j_{1},\ldots,j_{2k-r}\}$ is the increasing complementary sequence of $\{i_{1},\ldots,i_{r}\}$ in the set $\{1,2,\ldots,2k\}$ and $\sign(i_{1},\ldots,i_{r},j_{1},\ldots,j_{2k-r})$ denotes the sign of the permutation $\{i_{1},\ldots,i_{r},j_{1},\ldots,j_{2k-r}\}$. A simple calculation will show that 
\begin{equation}\label{**}
    \star\star=(-1)^{r(2k-r)}\,.
\end{equation}
 Define a hermitian product on $A^{r}(M)$, by
 \begin{equation}\label{hermitian prdct}   h(\alpha,\beta):=\int_{M}\alpha\wedge\overline{\star\beta}\wedge\chi\,.
 \end{equation}
 Define another operator $D^{*}:A^{r}\longrightarrow A^{r-1}$ by 
 \begin{equation}\label{adjnt of D}
  D^{*}:=(-1)^{2k(r-1)-1}\star\,D\,\star\,.   
 \end{equation}
For any $\alpha\in A^{r-1}(M)$ and $\beta\in A^{r}(M)$,
\begin{align*}
d(\alpha\wedge\star\beta\wedge\chi)=D\alpha\wedge\star\beta\wedge\chi-\alpha\wedge\star D^{*}\beta\wedge\chi+(-1)^{2k-1}\alpha\wedge\star\beta\wedge d\chi\,.
\end{align*}
 Using \eqref{proprty 1} and integrating both sides, we get $h(D\alpha,\beta)=h(\alpha,D^{*}\beta)$. The operator $D^{*}$ is called the formal adjoint of $D$.

\medskip
Since $M/\mathscr{S}$ is a complex orbifold, $\mathscr{S}$ is hermitian as well. The operator $\star$ induces a (vector space) isomorphism between $A^{p,q}(M)$ and $A^{k-q\,,\,k-p}(M)$, that is,
$$A^{p,q}(M)\cong A^{k-q\,,\,k-p}(M)\,,\quad\text{(as $\C$-vector spaces)}\,,$$
where $A^{p,q}$ as defined in \eqref{Apq}. Moreover, $D=d_{L}+d_{\overline{L}}$ on $A^{p,q}$ where $d_{L},d_{\overline{L}}$ are defined as in \eqref{dL-dL bar}.
Then the operator $D^{*}$, restricted to $A^{p,q}$, decomposes into the sum of two operators
\begin{align*}
    d^{*}_{L}:=-\star\,d_{\overline{L}}\,\star\,,\quad\text{and}\quad
   d^{*}_{\overline{L}}:=-\star\,d_{L}\,\star\,,
\end{align*} of type $(-1,0)$ and $(0,-1)$, respectively. Observe that $d^{*}_{L}$ and $d^{*}_{\overline{L}}$ are the formal adjoints of $d_{\overline{L}}$ and $d_{L}$, respectively. Define the following operators
\begin{equation}\label{harmonic operatr}
    \begin{aligned}
  \Delta_{D}:=D^{*}D+DD^{*}\,;\quad
\Delta_{d_{\overline{L}}}:=d_{\overline{L}}d^{*}_{L}+d^{*}_{L}d_{\overline{L}}\,;\quad
\Delta_{d_{L}}:=d^{*}_{\overline{L}}d_{L}+d_{L}d^{*}_{\overline{L}}\,.
    \end{aligned}
\end{equation}
Note that, similar to the classical case, $\Delta_{D}$, $\Delta_{d_{\overline{L}}}$, and $\Delta_{d_{L}}$ are self-adjoint operators.

\medskip
For any $p,q,r\geq 0$, define 
\begin{equation}\label{GH harmonic form}
    \begin{aligned}  &\mathcal{H}^{r}_{D}:=\ker(\Delta_{D})=\{\alpha\in A^{r}(M)\,|\,D\alpha=D^{*}\alpha=0\}\,;\\ 
    &\\
&\mathcal{H}^{p,q}_{d_{\overline{L}}}:=\ker(\Delta_{d_{\overline{L}}})=\{\alpha\in A^{p,q}(M)\,|\,d_{\overline{L}}\alpha=d^{*}_{L}\alpha=0\}\,;\\
&\\
&\mathcal{H}^{p,q}_{d_{L}}:=\ker(\Delta_{d_{L}})=\{\alpha\in A^{p,q}(M)\,|\,d_{L}\alpha=d^{*}_{\overline{L}}\alpha=0\}\,.
    \end{aligned}
\end{equation}
\begin{definition}
    A form $\alpha\in\mathcal{H}^{r}_{D}$ is called a transverse GH harmonic form of degree $r$ and if $\alpha\in\mathcal{H}^{p,q}_{d_{L}}$, it's called transverse GH form of type $(p,q)$.
\end{definition}
\begin{theorem}\label{harmonic thm}
    Let $M$ be a compact regular GC manifold of type $k$. Let $\mathscr{S}$ be the induced transversely holomorphic foliation. Assume that $M/\mathscr{S}$ is a smooth orbifold. Then we have the following. 
    \vspace{0.3em}
    \begin{enumerate}
    \setlength\itemsep{1em}
        \item $\mathcal{H}^{\bullet}_{D}$ and $\mathcal{H}^{\bullet,\bullet}_{d_{L}}$ both are finite dimensional.
        \item There are orthogonal decompositions 
        \vspace{0.5em}
        \begin{itemize}
        \setlength\itemsep{1em}
            \item[(a)] $A^{\bullet}(M)=\mathcal{H}^{\bullet}_{D}\oplus\img(\Delta_{D})=\mathcal{H}^{\bullet}_{D}\oplus\img(D)\oplus\img(D^{*})$\,,
            \item[(b)] $A^{\bullet,\bullet}=\mathcal{H}^{\bullet,\bullet}_{d_{L}}\oplus\img(\Delta_{d_{L}})=\mathcal{H}^{\bullet,\bullet}_{d_{L}}\oplus\img(d_{L})\oplus\img(d^{*}_{\overline{L}})$\,.
        \end{itemize}
    \end{enumerate}
\end{theorem}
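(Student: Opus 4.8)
The plan is to reduce the statement to the classical elliptic theory of Hodge on a compact manifold, by realizing the complexes $(A^\bullet(M), D)$ and $(A^{\bullet,\bullet}(M), d_L)$ as the de Rham and Dolbeault complexes of the leaf space $M/\mathscr S$, viewed as a compact complex orbifold, possibly with twisted coefficients. Concretely, by Theorem \ref{orbi thm} the foliation $\mathscr S$ is Riemannian with finite holonomy and $M/\mathscr S$ carries a smooth complex orbifold structure; the transverse metric fixed after \eqref{proprty 1} descends to an orbifold Riemannian (in fact Hermitian) metric on $M/\mathscr S$, and by construction the leaves are minimal and $\chi$ restricts to the leafwise volume form. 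The key structural fact I would establish first is that pullback along the quotient map $\tilde\pi: M \to M/\mathscr S$ (in the orbifold sense) identifies the sheaf $F_M$ with the structure sheaf of smooth functions on $M/\mathscr S$, and hence identifies $A^{p,q}(M)$ with the space of orbifold $(p,q)$-forms on $M/\mathscr S$, compatibly with $D \leftrightarrow d_{\mathrm{orb}}$ and $d_L \leftrightarrow \bar\partial_{\mathrm{orb}}$, and with the Hodge-star \eqref{star} and the inner product \eqref{hermitian prdct} (the factor $\wedge\chi$ in \eqref{hermitian prdct} being precisely the device that integrates out the leaf directions). This is essentially the content of Proposition \ref{prop}(3)--(4) and the partition-of-unity argument there; I would spell out that $\Delta_D$, $\Delta_{d_L}$, $\Delta_{d_{\overline L}}$ correspond to the orbifold Hodge Laplacians.

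Once this dictionary is in place, part (1) and part (2) follow from the orbifold analogue of the Hodge theorem. I would invoke the standard fact that on a compact Riemannian (resp. Hermitian) orbifold, the de Rham (resp. $\bar\partial$-) Laplacian is elliptic, self-adjoint, with finite-dimensional kernel, and one has the orthogonal decomposition $A = \mathcal H \oplus \mathrm{im}\,\Delta$, together with $\mathrm{im}\,\Delta = \mathrm{im}\,D \oplus \mathrm{im}\,D^\ast$ (resp. $\mathrm{im}\,d_L \oplus \mathrm{im}\,d_{\overline L}^\ast$); this is Satake's and Baily's Hodge theory for $V$-manifolds, or can be obtained by passing to a finite covering orbifold chart and averaging. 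The self-adjointness of the three Laplacians, already noted after \eqref{harmonic operatr}, and the adjunction formulas $h(D\alpha,\beta) = h(\alpha, D^\ast\beta)$ (resp. for $d_L$ and $d_{\overline L}^\ast$) established via \eqref{proprty 1}, are exactly the inputs needed; the decompositions $\mathrm{im}\,\Delta_D = \mathrm{im}\,D \oplus \mathrm{im}\,D^\ast$ and its $d_L$-counterpart then follow formally from $\ker\Delta = \ker D \cap \ker D^\ast$ plus self-adjointness, as in the compact manifold case.

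The main obstacle I anticipate is not the formal bookkeeping but the care needed to justify the orbifold Hodge theorem in the precise form required, since $M/\mathscr S$ is only assumed to be a smooth orbifold and the forms in $A^{p,q}(M)$ must be matched with genuine orbifold forms rather than merely basic forms on $M$. The subtlety is that a priori $A^{p,q}(M)$ consists of leafwise-constant transverse forms on $M$, and one must check these are in bijective, metric-preserving correspondence with orbifold forms downstairs; the finite-holonomy description of tubular neighborhoods in Theorem \ref{orbi thm}(4) is what makes this work locally, and gluing uses the leafwise-constant partition of unity from Proposition \ref{prop}. A secondary technical point is verifying ellipticity of $\Delta_D$ and $\Delta_{d_L}$ as operators on $M$ directly (they are only transversely elliptic on $M$, but genuinely elliptic on the quotient), which again is resolved by working on $M/\mathscr S$. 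I would therefore organize the proof as: (i) set up the $\tilde\pi^\ast$-dictionary; (ii) transport metric, Hodge-star, inner product, and Laplacians; (iii) quote the orbifold Hodge theorem for $d_{\mathrm{orb}}$ and $\bar\partial_{\mathrm{orb}}$; (iv) read off (1) and (2)(a),(2)(b) by transporting back. Everything else is routine linear algebra of elliptic complexes.
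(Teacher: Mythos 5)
Your proposal is correct in substance, but it takes a genuinely different route from the paper. The paper never descends to the quotient: it regards $A^{\bullet}$ and $A^{\bullet,\bullet}$ as the basic (leafwise-constant) forms of the Riemannian foliation $\mathscr{S}$ on $M$, checks by a local coordinate computation that $\Delta_{D}$ and $\Delta_{d_{L}}$ have elliptic transverse symbol, so that $\{A^{\bullet},D\}$ and $\{A^{\bullet,\bullet},d_{L}\}$ are transversally elliptic complexes, and then quotes the transverse Hodge theorem for Riemannian foliations on compact manifolds, \cite[Theorem 2.7.3]{asaoka14}. You instead push everything down through $\tilde\pi$, identify $A^{p,q}(M)$ with orbifold $(p,q)$-forms on the compact complex orbifold $M/\mathscr{S}$, and invoke Baily--Satake ($V$-manifold) Hodge theory there. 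Both are sound: your route makes the role of the orbifold hypothesis completely transparent and reduces the analysis to classical elliptic theory downstairs, at the price of verifying the dictionary carefully (the local model of Theorem \ref{orbi thm}(4) does give basic forms $=$ holonomy-invariant forms in charts, so this works), while the paper's route avoids all of that bookkeeping and rests on foliation-theoretic machinery that, by itself, does not even require the leaf space to be an orbifold. One point you should make explicit if you execute your plan: under your dictionary the inner product \eqref{hermitian prdct} corresponds to the orbifold $L^{2}$-product weighted by the leaf-volume function $y\mapsto\int_{L_{y}}\chi$, and the transported $D^{*}$, $d^{*}_{\overline{L}}$ agree with the orbifold codifferentials only because this weight is locally constant; that constancy follows from \eqref{proprty 1} together with the closedness and compactness of the leaves (differentiate $\int_{L_{y}}\chi$ along a transverse direction, write $\mathcal{L}_{Y}\chi=i_{Y}d\chi+d\,i_{Y}\chi$, and use Stokes on the closed leaf). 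With that observation -- or, alternatively, by using the general elliptic theory for formally self-adjoint operators on orbifold bundles, which tolerates a smooth positive weight -- your steps (i)--(iv) do yield both (1) and (2).
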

\begin{proof}
Since $M/\mathscr{S}$ is a compact complex orbifold, $A^{\bullet}$ and $A^{\bullet,\bullet}$ both are hermitian vector bundles over $M$. Now, a simple local coordinate calculation shows that both $\Delta_{D}$ and $\Delta_{d_{L}}$ are strongly elliptic operators. This implies that the complexes $\{A^{\bullet},D\}$ and $\{A^{\bullet,\bullet},d_{L}\}$ both are transversely elliptic. Thus by \cite[Theorem 2.7.3]{asaoka14}, we are done. 
\end{proof}
\begin{corollary}
Let $H^{\bullet}_{D}(M)$ and $H^{\bullet,\bullet}_{d_{L}}(M)$ are defined as in \eqref{d-cohomo}, \eqref{d-L-cohomo}, respectively. Then 
    $H^{\bullet}_{D}(M)$ and $H^{\bullet,\bullet}_{d_{L}}(M)$ are finite dimensional and isomorphic to $\mathcal{H}^{\bullet}_{D}$ and $\mathcal{H}^{\bullet,\bullet}_{d_{L}}$, respectively. 
\end{corollary}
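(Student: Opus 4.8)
The plan is to deduce the statement directly from the Hodge-theoretic decomposition of Theorem~\ref{harmonic thm}, mimicking the classical argument that identifies de Rham/Dolbeault cohomology with spaces of harmonic forms. First I would recall that, by Theorem~\ref{harmonic thm}(2), one has the orthogonal decompositions
$$A^{\bullet}(M)=\mathcal{H}^{\bullet}_{D}\oplus\img(D)\oplus\img(D^{*})\quad\text{and}\quad A^{\bullet,\bullet}(M)=\mathcal{H}^{\bullet,\bullet}_{d_{L}}\oplus\img(d_{L})\oplus\img(d^{*}_{\overline{L}})\,,$$
together with the finite-dimensionality of $\mathcal{H}^{\bullet}_{D}$ and $\mathcal{H}^{\bullet,\bullet}_{d_{L}}$ from part~(1).

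Next I would establish the key algebraic fact that on the summand $\mathcal{H}^{\bullet}_{D}$ the operator $D$ vanishes (so each harmonic form is $D$-closed), and that a $D$-closed form decomposes as (harmonic part) $\oplus$ ($D$-exact part), with the $\img(D^{*})$-component forced to be zero: indeed if $\alpha = \alpha_{\mathcal H} + D\beta + D^{*}\gamma$ with $D\alpha=0$, then $0 = h(D\alpha, \gamma) = h(D D^{*}\gamma, \gamma) = h(D^{*}\gamma, D^{*}\gamma)$, using $D\alpha_{\mathcal H}=0$, $D D\beta =0$, and the adjointness $h(D\cdot,\cdot)=h(\cdot,D^{*}\cdot)$ from \eqref{hermitian prdct}–\eqref{adjnt of D}; hence $D^{*}\gamma=0$. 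This gives a well-defined linear map $\mathcal{H}^{\bullet}_{D}\to H^{\bullet}_{D}(M)$ sending a harmonic form to its $D$-cohomology class, and the decomposition shows it is both injective (a harmonic form that is $D$-exact, $\alpha=D\beta$, satisfies $h(\alpha,\alpha)=h(D\beta,\alpha)=h(\beta,D^{*}\alpha)=0$) and surjective (project any $D$-closed form onto $\mathcal{H}^{\bullet}_{D}$). The identical argument with $D$ replaced by $d_{L}$, $D^{*}$ by $d^{*}_{\overline{L}}$, and $\mathcal{H}^{\bullet}_{D}$ by $\mathcal{H}^{\bullet,\bullet}_{d_{L}}$ yields $\mathcal{H}^{\bullet,\bullet}_{d_{L}}\cong H^{\bullet,\bullet}_{d_{L}}(M)$. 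Finite-dimensionality of the cohomology then follows immediately from finite-dimensionality of the harmonic spaces.

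I do not expect any serious obstacle here: all the analytic heavy lifting—the Hodge decomposition, finite-dimensionality, and transverse ellipticity of $\Delta_D$ and $\Delta_{d_L}$—is already packaged in Theorem~\ref{harmonic thm} (which in turn invokes \cite[Theorem~2.7.3]{asaoka14}). The only point requiring mild care is to verify that $D^{*}$ (resp.\ $d^{*}_{\overline L}$) is genuinely the formal adjoint of $D$ (resp.\ $d_L$) with respect to the hermitian product \eqref{hermitian prdct}; this is exactly the integration-by-parts identity $h(D\alpha,\beta)=h(\alpha,D^{*}\beta)$ derived just above \eqref{harmonic operatr} using property \eqref{proprty 1} of $\chi$. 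Once that adjointness is in hand, the isomorphisms are formal consequences of orthogonality, so the corollary reduces to a short bookkeeping argument. One should also note in passing that the $d_L$-statement is understood componentwise in the bigrading $(p,q)$, i.e.\ $H^{p,q}_{d_L}(M)\cong\mathcal{H}^{p,q}_{d_L}$ for each fixed $p,q$, since $\Delta_{d_L}$ preserves the bidegree.
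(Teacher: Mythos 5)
Your proposal is correct and follows the same route as the paper, which simply deduces the corollary from Theorem \ref{harmonic thm}; you have just spelled out the standard Hodge-theoretic bookkeeping (harmonic forms are closed, orthogonality kills the $\img(D^{*})$ and exact components, hence the natural map $\mathcal{H}\to H$ is bijective) that the paper leaves implicit. No gaps.
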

\begin{proof}
    Follows from Theorem \ref{harmonic thm}.
\end{proof}
The operator $\star\,$ induces a $\C$-linear isomorphism
$$\star:\mathcal{H}^{\bullet,\ast}_{d_{L}}(M)\cong\mathcal{H}^{k-\ast,\,k-\bullet}_{d_{\overline{L}}}(M)\,.$$
On the other hand, consider the following hermitian map
$$\tilde{h}:A^{\bullet}(M)\times A^{2k-\bullet}(M)\longrightarrow\C$$ defined by $\tilde{h}(\alpha,\beta)=\int_{M}\alpha\wedge\beta\wedge\chi\,.$ It induces a  non-degenerate pairing 
$$\Phi:H^{\bullet}_{D}(M)\times H^{2k-\bullet}_{D}(M)\longrightarrow\C\,.$$ 
\begin{theorem}\label{harmonic thm 2}
 Let $M$ be a compact regular GC manifold of type $k$. Let $\mathscr{S}$ be the induced transversely holomorphic foliation. Assume that $M/\mathscr{S}$ is a smooth orbifold. Then 
 \begin{enumerate}
 \setlength\itemsep{1em}
     \item $H^{\bullet}_{D}(M)$ satisfies the generalized Poincar\'{e} duality, that is, $$H^{\bullet}_{D}(M)\cong (H^{2k-\bullet}_{D}(M))^{*}\,.$$
     \item $H^{\bullet,\bullet}_{d_{L}}(M)$ satisfies the generalized Serre duality, that is, $$H^{\bullet,\bullet}_{d_{L}}(M)\cong (H^{k-\bullet,k-\bullet}_{d_{L}}(M))^{*}\,.$$
     \item Moreover, if $\mathscr{S}$ is also transversely K\"{a}hlerian, we have a generalized Hodge decomposition,
      $$H^{\bullet}_{D}(M)=\bigoplus_{p+q=\bullet}H^{p,q}_{d_{L}}(M)\,.$$
 \end{enumerate}
\end{theorem}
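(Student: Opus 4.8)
The plan is to deduce Theorem \ref{harmonic thm 2} from the Hodge-theoretic machinery already set up in Theorem \ref{harmonic thm} together with the fact that the relevant complexes are transversely elliptic, so that the orbifold leaf space $M/\mathscr{S}$ essentially carries a genuine Hodge theory. First I would establish part $(1)$: the operator $\star$ of \eqref{hodge-start} together with $\star\star = (-1)^{r(2k-r)}$ from \eqref{**} gives a $\C$-linear isomorphism $\mathcal{H}^{r}_{D}(M) \cong \mathcal{H}^{2k-r}_{D}(M)$ because $\Delta_D$ commutes with $\star$ (the metric along the leaves was chosen so the leaves are minimal and $\chi$ satisfies \eqref{proprty 1}, which is exactly what makes $D^{*}$ the formal adjoint). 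Composing this with the non-degenerate pairing $\Phi: H^{\bullet}_{D}(M) \times H^{2k-\bullet}_{D}(M) \to \C$ induced by $\tilde h(\alpha,\beta) = \int_M \alpha \wedge \beta \wedge \chi$ — non-degenerate because on harmonic representatives $\tilde h(\alpha, \overline{\star\alpha}) = h(\alpha,\alpha) > 0$ by \eqref{hermitian prdct} — yields $H^{\bullet}_{D}(M) \cong (H^{2k-\bullet}_{D}(M))^{*}$. Here I use the finite-dimensionality and harmonic representatives from Theorem \ref{harmonic thm} and its corollary.

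For part $(2)$, I would run the parallel argument on the bicomplex. The operator $\star$ maps $A^{p,q}$ to $A^{k-q,k-p}$ and intertwines $\Delta_{d_L}$ with $\Delta_{d_{\overline L}}$ (this is already noted just before the statement: $d^{*}_L = -\star d_{\overline L} \star$, $d^{*}_{\overline L} = -\star d_L \star$), giving $\star: \mathcal{H}^{p,q}_{d_L}(M) \cong \mathcal{H}^{k-q,k-p}_{d_{\overline L}}(M)$. Complex conjugation gives $\mathcal{H}^{p,q}_{d_{\overline L}}(M) \cong \overline{\mathcal{H}^{q,p}_{d_L}(M)}$, and combining these two with the pairing $(\alpha,\beta) \mapsto \int_M \alpha \wedge \beta \wedge \chi$ on $A^{p,q}(M) \times A^{k-p,k-q}(M)$ produces $H^{p,q}_{d_L}(M) \cong (H^{k-p,k-q}_{d_L}(M))^{*}$, i.e. the generalized Serre duality. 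The point to check carefully is that the pairing descends to cohomology and is non-degenerate on harmonic forms, which again reduces to positivity of $h$ in \eqref{hermitian prdct}.

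Part $(3)$ is where the main obstacle lies. When $\mathscr{S}$ is transversely Kähler, I would invoke the transverse Kähler identities — the basic commutation relations between the Lefschetz operator $L$ (wedging with the transverse Kähler form), its adjoint $\Lambda$, and $d_L, d_{\overline L}, d^{*}_L, d^{*}_{\overline L}$ — to derive the Kähler identity $\Delta_D = 2\Delta_{d_L} = 2\Delta_{d_{\overline L}}$ on $A^{\bullet,\bullet}$. This forces a $D$-harmonic form to be harmonic for each bidegree component separately, giving the bigrading $\mathcal{H}^{r}_{D} = \bigoplus_{p+q=r} \mathcal{H}^{p,q}_{d_L}$, which together with the cohomology isomorphisms of Theorem \ref{harmonic thm} yields $H^{\bullet}_{D}(M) = \bigoplus_{p+q=\bullet} H^{p,q}_{d_L}(M)$. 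The subtlety is that the transverse Kähler identities must be justified in the orbifold/foliated setting: one works on the product charts of Theorem \ref{darbu thm} where everything is literally the Kähler identity on $\C^k$ pulled back, checks that the constructions are leaf-wise constant and glue (using a leaf-wise-constant partition of unity from the orbifold structure, as in Proposition \ref{prop}), and verifies compatibility with the transverse metric. Alternatively, one can cite the transverse Kähler Hodge theory of \cite{asaoka14} directly, since the complexes $\{A^{\bullet},D\}$ and $\{A^{\bullet,\bullet}, d_L\}$ were already shown to be transversely elliptic in the proof of Theorem \ref{harmonic thm}; in that case part $(3)$ is essentially a citation plus the observation that the transverse Kähler form is $D$-closed and of type $(1,1)$. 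I expect the bookkeeping around the Lefschetz decomposition and the precise form of $D^{*}$ relative to $\star$ on the bicomplex to be the most delicate routine part, but no genuinely new idea beyond the transversely elliptic Hodge theory should be required.
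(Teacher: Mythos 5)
Your proposal is correct and follows essentially the same route as the paper: parts $(1)$ and $(2)$ come from the transverse Hodge theory of Theorem \ref{harmonic thm}, the operator $\star$ (together with conjugation) and the non-degenerate pairing $\int_M \alpha\wedge\beta\wedge\chi$ — the paper simply refers to this as "the preceding discussion," and spells out the non-degeneracy argument only later for the bundle-valued case in Theorem \ref{gc serre} — while part $(3)$ is obtained, exactly as you do, from the transverse K\"ahler identity $\Delta_D = 2\Delta_{d_L}$ (the paper records the needed identities as Proposition \ref{kahler id}) and the resulting bidegree splitting of harmonic forms.
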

\begin{proof}
$(1)$ and $(2)$ follows from the preceding discussion.

\vspace{0.2em}
\begin{itemize}
    \item[(3)] Since $\mathscr{S}$ is transversely K\"{a}hlerian, we can prove, analogous to the classical K\"{a}hler case, that $\Delta_{D}=2\Delta_{d_{L}}$. Since $A^{\bullet}(M)=\bigoplus_{p+q=\bullet}A^{p,q}(M)$, every $\alpha$ is a transverse GH harmonic form of degree $\bullet$ if and only if its each component is a transverse GH harmonic form of type $(p,q)$ where $p+q=\bullet$. Then using Theorem \ref{harmonic thm}, we have the direct decomposition
    $$H^{\bullet}_{D}(M)=\bigoplus_{p+q=\bullet}H^{p,q}_{d_{L}}(M)\,.$$
\end{itemize}
\end{proof}

As the complex conjugation operator induces an isomorphism (of real vector spaces) $H^{\bullet,\ast}_{d_{L}}(M)\cong H^{\ast,\bullet}_{d_{L}}(M)\,,$ the operator $\star$ also induces a unitary isomorphism 
$$\overline{\star}:H^{\bullet,\ast}_{d_{L}}(M)\longrightarrow H^{k-\bullet,\,k-\ast}_{d_{L}}(M)$$ defined by $\overline{\star}(\alpha)=\star(\overline{\alpha})=\overline{\star(\alpha)}$ where $k$ is the type of $M$.

\medskip
Let $E$ be an SGH vector bundle on $M$ with a transverse hermitian structure $H$. Then $H$ can be considered as a $\C$-antilinear isomorphism $H:E\cong E^{*}$. Consider the following operators
\begin{enumerate}
\setlength\itemsep{1em}
    \item \begin{equation}\label{star E}
    \overline{\star}_{E}: A^{\bullet,\bullet}_{E}\longrightarrow A^{k-\bullet,k-\bullet}_{E^{*}}
\end{equation} defined by $\overline{\star}_{E}(\phi\otimes s)=\overline{\star}(\phi)\otimes H(s)$ for any local sections $\phi\in A^{\bullet,\bullet}$ and $s\in F_{M}(E)$ where $\overline{\star}(\phi)=\star(\overline{\phi})$. 
\item \begin{equation}\label{dL-E}
    d^{*}_{L,E}:A^{\bullet,\bullet}_{E}\longrightarrow A^{\bullet,\bullet-1}_{E}
\end{equation} defined by $d^{*}_{L,E}=-\overline{\star}_{E^{*}}\circ d_{L,E^{*}}\circ\overline{\star}_{E}$ where $d_{L,E^{*}}:A^{\bullet,\bullet}_{E^{*}}\longrightarrow A^{\bullet,\bullet+1}_{E^{*}}$ is the natural extension of $d_{L}$, as defined in \eqref{dL-dL bar}, described in Section \ref{sec cohomolgy}.
\item $\Delta_{d_{L,E}}:=d^{*}_{L,E}\,d_{L,E}+d_{L,E}\,d^{*}_{L,E}\,.$
\end{enumerate}
\vspace{0.5em}
Consider a natural hermitian scalar product on $A^{\bullet,\bullet}_{E}(M)$, similarly as in \eqref{hermitian prdct}, defined as
\begin{equation}\label{hermitian prdct-E}
  h_{E}(\alpha,\beta):=\int_{M}\alpha\wedge\overline{\star}_{E}(\beta)\wedge\chi\,.  
\end{equation} for any local section $\alpha,\beta\in A^{\bullet,\bullet}_{E}(M)$ where $\wedge$ is the exterior product on $A^{\bullet,\bullet}$ and the evaluation map $E\otimes E^{*}\longrightarrow\C$ in bundle part. Then, similarly, we can prove that 
\begin{lemma}
$d^{*}_{L,E}$ is the formal adjoint of $d_{L,E}$ and $\Delta_{d_{L,E}}$ is self-adjoint.   
\end{lemma}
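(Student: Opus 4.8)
The plan is to mimic the classical argument (see e.g. \cite[Chapter 6]{hubrechts05}) that the codifferential is the formal adjoint of the differential, carefully tracking the bundle factor via the transverse hermitian structure $H$. First I would unwind the definitions: for $\alpha \in A^{p,q}_{E}(M)$ and $\beta \in A^{p,q+1}_{E}(M)$, I want to show $h_{E}(d_{L,E}\alpha,\beta) = h_{E}(\alpha, d^{*}_{L,E}\beta)$, where $d^{*}_{L,E} = -\overline{\star}_{E^{*}}\circ d_{L,E^{*}}\circ\overline{\star}_{E}$. The key computational input is a Leibniz-type identity: if one pairs an $E$-valued form with an $E^{*}$-valued form using the evaluation $E\otimes E^{*}\to\C$ in the bundle part and the wedge in the form part, then $d_{L}$ acts as a derivation, i.e. $d_{L}\big(\alpha\wedge\overline{\star}_{E}(\beta)\big) = d_{L,E}\alpha \wedge \overline{\star}_{E}(\beta) + (-1)^{p+q}\,\alpha \wedge d_{L}\big(\overline{\star}_{E}(\beta)\big)$. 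This requires that $\overline{\star}_{E}(\beta)$ is genuinely an $E^{*}$-valued transverse form and that $d_{L}$ commutes appropriately with $\overline{\star}_{E}$ up to the sign conventions already fixed in \eqref{adjnt of D}, \eqref{dL-E}; here the hypothesis that $H$ is a \emph{transverse} hermitian metric (constant along the leaves, Definition \ref{thmetric}) is essential, since otherwise $\overline{\star}_E$ would not land in $A^{\bullet,\bullet}_{E^{*}}$ and $d_{L}$ would pick up leaf-direction terms.

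The second step is the integration argument. Wedging the Leibniz identity with the leafwise volume form $\chi$ and using \eqref{proprty 1} (so that the leaf-direction derivative of $\chi$ contributes nothing), I get a total $D$-exact top-degree form plus the two pairing terms. Integrating over the compact manifold $M$ and applying Stokes' theorem — in the form already used to establish $h(D\alpha,\beta)=h(\alpha,D^{*}\beta)$ in the discussion preceding \eqref{harmonic operatr} — kills the exact term. After matching the sign in the definition $d^{*}_{L,E}=-\overline{\star}_{E^{*}}\,d_{L,E^{*}}\,\overline{\star}_{E}$ against the sign $(-1)^{p+q}$ from the Leibniz rule and using $\overline{\star}_{E^{*}}\circ\overline{\star}_{E}=\pm\,\mathrm{id}$ (the bundle analogue of \eqref{**}, since $H:E\to E^{*}$ is an antilinear isomorphism with $H^{*}\circ H$ proportional to the identity after the hermitian normalization), one obtains exactly $h_{E}(d_{L,E}\alpha,\beta)=h_{E}(\alpha,d^{*}_{L,E}\beta)$. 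This is the assertion that $d^{*}_{L,E}$ is the formal adjoint of $d_{L,E}$.

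For the self-adjointness of $\Delta_{d_{L,E}} = d^{*}_{L,E}\,d_{L,E} + d_{L,E}\,d^{*}_{L,E}$, the argument is then purely formal: for any $\alpha,\beta$ of the appropriate bidegrees,
\[
h_{E}(\Delta_{d_{L,E}}\alpha,\beta) = h_{E}(d_{L,E}\alpha, d_{L,E}\beta) + h_{E}(d^{*}_{L,E}\alpha, d^{*}_{L,E}\beta) = h_{E}(\alpha, \Delta_{d_{L,E}}\beta),
\]
where each equality is just one application of the adjunction relation just proved (once for $d_{L,E}$ and once for $d^{*}_{L,E}$, noting $(d^{*}_{L,E})^{*}=d_{L,E}$ follows by taking adjoints in the same identity). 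Strictly this also needs that $d_{L,E}$ raises $q$ by one so the pairings are between matching bidegrees and that $h_E$ is a genuine positive-definite hermitian product (established in the text), so the intermediate quantities are well-defined.

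The main obstacle I anticipate is bookkeeping rather than conceptual: getting the signs and the bundle-pairing conventions to line up so that the definition of $d^{*}_{L,E}$ via the two Hodge stars produces precisely the formal adjoint with no stray sign. The delicate point is that $\overline{\star}_{E}$ involves a conjugation, so one must be careful whether $d_{L}$ or $d_{\overline{L}}$ appears after conjugating — this is exactly the subtlety already handled in the scalar case where $d^{*}_{L}=-\star\,d_{\overline{L}}\,\star$, and the proof should reduce to that case once the bundle factor is carried along via the flat-along-leaves structure of $H$. I would therefore present the scalar identity as the model and indicate the modifications needed for the $E$-twisted version, rather than redo the local coordinate computation in full.
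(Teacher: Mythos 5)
Your proposal is correct and follows essentially the same route the paper intends: the paper proves the lemma "similarly" to the scalar case, i.e.\ by the integration-by-parts argument with the leafwise volume form $\chi$, property \eqref{proprty 1}, Stokes' theorem, and the sign bookkeeping through the conjugate Hodge stars, exactly as you outline, with self-adjointness of $\Delta_{d_{L,E}}$ then formal. No gaps worth flagging.
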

Set $\mathcal{H}^{\bullet,\bullet}_{d_{L,E}}:=\ker(\Delta_{d_{L,E}})=\{\alpha\in A^{\bullet,\bullet}_{E}(M)\,|\,d_{L,E}\,\alpha=d^{*}_{L,E}\,\alpha=0\}\,.$ 

\begin{theorem}\label{harmonic thm 3}(Generalized Hodge decomposition for SGH vector bundle)
Let $(E, H)$ be an SGH vector bundle with a transverse hermitian structure $H$, over a compact regular GC manifold $M$. Assume $M/\mathscr{S}$ is a smooth orbifold. Then
\begin{enumerate}
\setlength\itemsep{1em}
    \item $\mathcal{H}^{\bullet,\bullet}_{d_{L,E}}$ is finite dimensional.
\item $A^{\bullet,\bullet}_{E}(M)=\mathcal{H}^{\bullet,\bullet}_{d_{L,E}}\oplus\img(\Delta_{d_{L,E}})=\mathcal{H}^{\bullet,\bullet}_{d_{L,E}}\oplus\img(d_{L,E})\oplus\img(d^{*}_{L,E})$
\end{enumerate}
\end{theorem}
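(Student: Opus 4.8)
The plan is to reduce this statement to a standard application of Hodge theory for transversely elliptic complexes, exactly as in the proof of Theorem \ref{harmonic thm} but now with coefficients in the SGH vector bundle $E$. First I would observe that, since $M/\mathscr{S}$ is a compact complex orbifold and $H$ is a transverse hermitian metric on $E$, the sheaf $A^{\bullet,\bullet}_{E}$ is the sheaf of leaf-wise constant sections of a hermitian vector bundle over $M$ (the relevant bundle being $\wedge^{\bullet}\mathcal{G}^{*}M \otimes \wedge^{\bullet}\overline{\mathcal{G}^{*}M}\otimes E$, equipped with the tensor product metric built from $\chi$, the transverse Riemannian metric and $H$). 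The hermitian scalar product $h_{E}$ in \eqref{hermitian prdct-E} is then a genuine inner product on $A^{\bullet,\bullet}_{E}(M)$, and by the Lemma immediately preceding the theorem, $d^{*}_{L,E}$ is its formal adjoint and $\Delta_{d_{L,E}}$ is self-adjoint.

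Next I would verify that $\Delta_{d_{L,E}}$ is a strongly (transversely) elliptic operator. This is a purely local computation: on a transverse chart $(U,\phi,p,z)$ as in \eqref{loc coordi}, the bundle $E$ is SGH-trivial, so $d_{L,E}$ differs from the coefficient-free $d_{L}$ only by a zeroth-order term (the connection form of the transverse Chern connection of Theorem \ref{main generalized prop}), hence the principal symbol of $\Delta_{d_{L,E}}$ coincides with that of $\Delta_{d_{L}}$ tensored with the identity on the fibre of $E$. Since $\Delta_{d_{L}}$ was already shown in the proof of Theorem \ref{harmonic thm} to be strongly elliptic in the transverse directions, the same holds for $\Delta_{d_{L,E}}$; equivalently, the complex $\{A^{\bullet,\bullet}_{E},d_{L,E}\}$ is transversely elliptic. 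Then I would invoke \cite[Theorem 2.7.3]{asaoka14} verbatim: for a transversely elliptic complex on a compact manifold with a Riemannian foliation whose leaf space is a compact orbifold (guaranteed here by Theorem \ref{orbi thm}), the harmonic space is finite dimensional and one has the orthogonal Hodge decomposition $A^{\bullet,\bullet}_{E}(M) = \mathcal{H}^{\bullet,\bullet}_{d_{L,E}} \oplus \img(d_{L,E}) \oplus \img(d^{*}_{L,E})$, which combined with $\img(\Delta_{d_{L,E}}) = \img(d_{L,E})\oplus\img(d^{*}_{L,E})$ yields both assertions (1) and (2).

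The main obstacle — or rather the only point requiring genuine care — is checking that the hypotheses of the transverse Hodge theory of \cite{asaoka14} are met in the bundle-coefficient setting: namely that $A^{\bullet,\bullet}_{E}$ really is the space of basic sections of an honest foliated hermitian vector bundle, that the foliation $\mathscr{S}$ is Riemannian with the metric for which the leaves are minimal (this is arranged in Section \ref{sec duality} via the form $\chi$ satisfying \eqref{proprty 1}), and that the transverse metric together with $\chi$ makes $d^{*}_{L,E}$ the true adjoint so that $\Delta_{d_{L,E}}$ is self-adjoint and transversely elliptic. All of these follow by the same arguments used for Theorem \ref{harmonic thm}, with the transverse Chern connection of Theorem \ref{main generalized prop} supplying the compatible connection on $E$; the introduction of $E$ changes only the zeroth-order part of $\Delta_{d_{L,E}}$ and hence affects none of the analytic input. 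Thus the proof amounts to: (i) identify $A^{\bullet,\bullet}_{E}(M)$ with basic forms valued in a foliated hermitian bundle; (ii) compute the symbol of $\Delta_{d_{L,E}}$ and conclude transverse ellipticity; (iii) apply \cite[Theorem 2.7.3]{asaoka14}.
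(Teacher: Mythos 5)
Your proposal is correct and follows essentially the same route as the paper: the paper's proof simply replaces $\{A^{\bullet,\bullet},d_{L}\}$ and $\Delta_{d_{L}}$ in Theorem \ref{harmonic thm} by $\{A^{\bullet,\bullet}_{E},d_{L,E}\}$ and $\Delta_{d_{L,E}}$, relying on transverse ellipticity and \cite[Theorem 2.7.3]{asaoka14}, exactly as you do. Your extra details (the symbol computation showing $E$ only perturbs zeroth-order terms, and the role of the transverse Chern connection and of $h_E$ with formal adjoint $d^{*}_{L,E}$) are just a fleshed-out version of the paper's "simple local coordinate calculation" and its appeal to the preceding lemma.
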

\begin{proof}
    Follows from Theorem \ref{harmonic thm} by replacing $\{A^{\bullet,\bullet},d_{L}\}$ and $\Delta_{d_{L}}$ with $\{A^{\bullet,\bullet}_{E},d_{L,E}\}$ and $\Delta_{d_{L,E}}$, respectively.
\end{proof}
Consider the natural pairing 
$$\tilde{h}_{E}:A^{\bullet,\bullet}_{E}(M)\times A^{k-\bullet,k-\bullet}_{E^{*}}(M)\longrightarrow\C\,\,\,\text{defined by}\,\,\,\tilde{h}_{E}(\alpha,\beta)=\int_{M}\alpha\wedge\beta\wedge\chi\,,$$ where $\wedge$ is the exterior product on $A^{\bullet,\bullet}$ and the evaluation map $E\otimes E^{*}\longrightarrow\C$ in bundle part. 
\begin{theorem}(Generalized Serre duality for SGH vector bundle)\label{gc serre}
Let $E$ be an SGH vector bundle with the same assumption as in Theorem \ref{harmonic thm 3}. Then, there exists a natural $\C$-linear isomorphism between $H^{\bullet,\bullet}_{d_{L}}(M,E)$ and $(H^{k-\bullet,k-\bullet}_{d_{L}}(M,E^{*}))^{*}$, that is,
$$H^{\bullet,\bullet}_{d_{L}}(M,E)\cong(H^{k-\bullet,k-\bullet}_{d_{L}}(M,E^{*}))^{*}\quad(\text{as $\C$-vector spaces})\,.$$

\end{theorem}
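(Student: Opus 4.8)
\textbf{Proof strategy for Theorem \ref{gc serre}.} The plan is to mimic the classical Hodge-theoretic proof of Serre duality (cf.\ \cite[Chapter 4]{hubrechts05}), using the generalized Hodge decomposition of Theorem \ref{harmonic thm 3} in place of the usual $\overline{\partial}$-Hodge theory, and the operator $\overline{\star}_E$ of \eqref{star E} as the conjugate-linear duality isomorphism. First I would record that, by Theorem \ref{harmonic thm 3} applied to $E$ and to $E^{*}$, we have the Hodge isomorphisms $H^{p,q}_{d_L}(M,E)\cong \mathcal{H}^{p,q}_{d_{L,E}}$ and $H^{k-p,k-q}_{d_L}(M,E^{*})\cong \mathcal{H}^{k-p,k-q}_{d_{L,E^{*}}}$, exactly as in the coefficient-free corollary following Theorem \ref{harmonic thm}. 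It therefore suffices to produce a $\C$-linear isomorphism $\mathcal{H}^{p,q}_{d_{L,E}}\xrightarrow{\ \sim\ }\big(\mathcal{H}^{k-p,k-q}_{d_{L,E^{*}}}\big)^{*}$.

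The key step is to show that $\overline{\star}_E$ carries $\mathcal{H}^{p,q}_{d_{L,E}}$ bijectively onto $\mathcal{H}^{k-p,k-q}_{d_{L,E^{*}}}$. This follows from the definition $d^{*}_{L,E}=-\overline{\star}_{E^{*}}\circ d_{L,E^{*}}\circ\overline{\star}_E$ in \eqref{dL-E} together with the involutivity of $\overline{\star}$ (up to sign) inherited from \eqref{**}: a form $\alpha$ satisfies $d_{L,E}\alpha=0$ and $d^{*}_{L,E}\alpha=0$ if and only if $\overline{\star}_E\alpha$ satisfies $d^{*}_{L,E^{*}}(\overline{\star}_E\alpha)=0$ and $d_{L,E^{*}}(\overline{\star}_E\alpha)=0$, i.e.\ $\overline{\star}_E\alpha\in\mathcal{H}^{k-p,k-q}_{d_{L,E^{*}}}$; the inverse is provided by $\overline{\star}_{E^{*}}$ up to the scalar $(-1)^{r(2k-r)}$. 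One must check here that $\overline{\star}_E$ is indeed conjugate-linear and a bijection of the smooth (transverse) form bundles, which is immediate from \eqref{star E} since $H:E\to E^{*}$ is a conjugate-linear bundle isomorphism and $\star$ is an $\R$-linear isomorphism of $A^{\bullet,\bullet}$.

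Next I would feed this into the pairing $\tilde h_{E}$ defined just before the statement. Composing $\overline{\star}_E$ with $\tilde h_{E}$ gives, for $\alpha\in\mathcal{H}^{p,q}_{d_{L,E}}$, the functional $\beta\mapsto \tilde h_{E^{*}}(\beta,\overline{\star}_E\alpha)=\int_M \beta\wedge\overline{\star}_E\alpha\wedge\chi$ on $\mathcal{H}^{k-p,k-q}_{d_{L,E^{*}}}$, which up to conjugation is the hermitian product $h_E$ of \eqref{hermitian prdct-E} restricted to harmonic forms. Since $h_E$ is positive-definite (the integrand is a nonnegative multiple of the leafwise volume form wedged with $\chi$, which restricts to a genuine volume form on each leaf by construction of $\chi$), this functional is nonzero whenever $\alpha\neq 0$; hence the map $\mathcal{H}^{p,q}_{d_{L,E}}\to\big(\mathcal{H}^{k-p,k-q}_{d_{L,E^{*}}}\big)^{*}$ is injective, and by the finite-dimensionality in Theorem \ref{harmonic thm 3}(1) together with the symmetric role of $E$ and $E^{*}$ (which gives matching dimensions), it is an isomorphism. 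Finally one checks that $\tilde h_E$ descends to a well-defined pairing on cohomology: if $\alpha=d_{L,E}\gamma$ is exact and $\beta$ is $d_{L,E^{*}}$-closed, then $\int_M d_{L,E}\gamma\wedge\beta\wedge\chi=\pm\int_M \gamma\wedge d_{L,E^{*}}\beta\wedge\chi \pm \int_M \gamma\wedge\beta\wedge d_{\mathscr{S}}\chi=0$, using the Leibniz rule for $D=d_L+d_{\overline L}$, the bidegree bookkeeping that kills the $d_{\overline L}$ terms, and property \eqref{proprty 1} of $\chi$; this is the same Stokes-type computation already used to show $D^{*}$ is the formal adjoint of $D$. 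I expect the main obstacle to be purely bookkeeping: verifying carefully that the bidegrees, the signs coming from \eqref{**} and from reordering wedge factors, and the conjugations in $\overline{\star}_E$ all match up so that $\overline{\star}_E$ genuinely intertwines the two Laplacians $\Delta_{d_{L,E}}$ and $\Delta_{d_{L,E^{*}}}$ and so that the induced pairing on cohomology is the nondegenerate one claimed; the analytic input (existence of harmonic representatives, finite-dimensionality) is entirely supplied by Theorem \ref{harmonic thm 3}.
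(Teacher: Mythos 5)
Your proposal is correct and follows essentially the same route as the paper: both define the pairing via $\tilde h_{E}$, reduce nondegeneracy to harmonic representatives using Theorem \ref{harmonic thm 3}, and use the $\C$-antilinear isomorphism $\overline{\star}_{E}\colon\mathcal{H}^{\bullet,\bullet}_{d_{L,E}}\to\mathcal{H}^{k-\bullet,k-\bullet}_{d_{L,E^{*}}}$ to produce, for $0\neq\alpha$, the dual element $\beta=\overline{\star}_{E}\alpha$ with $\Phi_{E}([\alpha],[\beta])=h_{E}(\alpha,\alpha)\neq 0$. Your extra verifications (that $\overline{\star}_{E}$ intertwines the Laplacians via \eqref{dL-E} and \eqref{**}, and the Stokes-type well-definedness check using \eqref{proprty 1}) simply spell out details the paper leaves implicit.
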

\begin{proof}
Consider the natural pairing $\tilde{h}_{E}$. It induces a pairing
$$\Phi_{E}:H^{\bullet,\bullet}_{d_{L}}(M,E)\times H^{k-\bullet,k-\bullet}_{d_{L}}(M,E^{*})\longrightarrow\C\,.$$ defined as $\Phi_{E}([\alpha],[\beta])=\tilde{h}_{E}(\alpha,\beta)$ where $[\alpha],[\beta]$ denote the classes of $\alpha,\beta$, respectively. One can easily check that this is well-defined. To show that $\Phi_{E}$ is non-degenerate, by Theorem \ref{harmonic thm 3}, it is enough to show that for any $0\neq\alpha\in\mathcal{H}^{\bullet,\bullet}_{d_{L,E}}$, there exist a $\beta\in\mathcal{H}^{\bullet,\bullet}_{d_{L,E^{*}}}$ such that $\int_{M}\alpha\wedge\beta\wedge\chi\neq0$. Note that, $\overline{\star}_{E}$ induces a $\C$-antilinear isomorphism $\overline{\star}_{E}:\mathcal{H}^{\bullet,\bullet}_{d_{L,E}}\longrightarrow\mathcal{H}^{k-\bullet,k-\bullet}_{d_{L,E^{*}}}$. This implies there exist $\beta$ s.t $\overline{\star}_{E}(\alpha)=\beta$. Thus $\Phi_{E}([\alpha],[\beta])=h_{E}(\alpha,\alpha)\neq0$ and this proves the theorem.
\end{proof}
\medskip

\subsection{Generalized Vanishing Theorems}
Let $g$ be a transversely hermitian metric and $I$ be the transverse complex structure corresponding to the GCS on $M$ where $M$ and $M/\mathscr{S}$ satisfy the same conditions as before with one exception, namely, $M$ need not be compact. Define a transverse generalized form of type $(1,1)$ by
$$\omega:= g(I(\cdot),\cdot)\in A^{1,1}(M)\,.$$ This form is called the transverse generalized fundamental form. We define four operators, in particular, an analogue $\mathcal{L}$ of the Lefschetz operator, and a corresponding dual Lefschetz operator $\Lambda$, as follows.
\setlength{\jot}{0.5em}
\begin{align*}
     &(1)\,\,\mathcal{L}:A^{\bullet}\longrightarrow A^{\bullet+2}\,;\quad \alpha\mapsto\alpha\wedge\omega\,,\\
     &(2)\,\,\Lambda:=\star^{-1}\circ\mathcal{L}\circ\star:A^{\bullet}\longrightarrow A^{\bullet-2}\,,\\
     &(3)\,\,d^{*}_{L}:=-\star\,d_{\overline{L}}\,\star\,,\\
     &(4)\,\,d^{*}_{\overline{L}}:=-\star\,d_{L}\,\star\,,
\end{align*}
where $\star$ is defined in \eqref{hodge-start}. Note that $d^{*}_{L}$ and $d^{*}_{\overline{L}}$ are well defined even if $M$ is not compact. But if $M$ is compact, they are formal adjoints with respect to the hermitian inner product $h$ (see \eqref{hermitian prdct}).

\medskip
Now, assume $D\omega=0$. This implies that $\mathscr{S}$ is transversely K\"{a}hlerian with transversely K\"{a}hler metric $g$. Thus, $M/\mathscr{S}$ is a K\"{a}hler orbifold. Trivial modification of the proofs of \cite[Proposition 1.2.26, Proposition 3.1.12]{hubrechts05} yields the following identities analogous to the K\"{a}hler identities in the classical case.
\begin{prop}\label{kahler id}
Let $M$ be a regular GCS such that the leaf space $M/\mathscr{S}$ is a K\"{a}hler orbifold. Then
\vspace{0.3em}
\begin{enumerate}
\setlength\itemsep{1em}
    \item $[\Lambda,\mathcal{L}]=(k-(p+q))\id_{A^{p,q}}$ 
    \item $[d_{L},\mathcal{L}]=[d_{\overline{L}},\mathcal{L}]=0$ and $[d^{*}_{L},\Lambda]=[d^{*}_{\overline{L}},\Lambda]=0\,.$
    \item $[d^{*}_{L},\mathcal{L}]=id_{\overline{L}}\,,$ $[d_{\overline{L}},\mathcal{L}]=-id_{L}$ and $[\Lambda,d_{L}]=-id^{*}_{\overline{L}}\,,$ and $[\Lambda,d_{\overline{L}}]=id^{*}_{L}\,.$
\end{enumerate}
\end{prop}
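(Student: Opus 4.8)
The plan is to reduce the statement to the classical Kähler identities on the leaf-space orbifold $M/\mathscr{S}$, since the transverse generalized forms $A^{\bullet,\bullet}$, the operators $d_L, d_{\overline{L}}$, the Hodge star $\star$, the Lefschetz operator $\mathcal{L}$ and the fundamental form $\omega$ are all pulled back from (equivalently, descend to) the complex orbifold $M/\mathscr{S}$. Concretely, on a transverse chart $(U,\phi,p,z)$ as in \eqref{loc coordi} where the symplectic foliation is trivial, a transverse generalized form is a form in the $z,\overline{z}$ variables with coefficients in $F_M(U)$, i.e. constant along the leaves; such forms are exactly the forms on the transversal slice $U_2 \subset \C^k$ (invariant under the finite holonomy action, by Theorem \ref{orbi thm}). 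Under this identification $d_L$ corresponds to $\overline{\partial}$ (cf. the computation following \eqref{del-delbar}), $d_{\overline{L}}$ to $\partial$, $\omega$ to the transverse Kähler form, and $\star$ to the Riemannian Hodge star of the transverse metric $g$. The hypothesis $D\omega = 0$ means $d_L\omega = d_{\overline{L}}\omega = 0$, i.e. $g$ is a genuine Kähler metric on the orbifold.

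First I would set up this dictionary carefully: record that $A^{p,q}(U)$ is identified with the $\hol(S)$-invariant $(p,q)$-forms on a transversal, that the four operators $\mathcal{L}, \Lambda, d_L^{*}, d_{\overline{L}}^{*}$ are the local transverse analogues of $L, \Lambda, \partial^{*}, \overline{\partial}^{*}$, and that all of these are built pointwise/locally, so that an identity which is local in nature may be checked on each such chart. Then $(1)$ is the purely linear-algebraic statement $[\Lambda, L] = (k - (p+q))\,\mathrm{Id}$ on the exterior algebra of a $k$-dimensional Hermitian vector space, which is identical to its classical counterpart and requires no integrability — I would just cite \cite[Proposition 1.2.26]{hubrechts05} applied fibrewise to $\mathcal{G}^{*}M \oplus \overline{\mathcal{G}^{*}M}$ with its transverse Hermitian structure. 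For $(2)$ and $(3)$, the relations $[d_L, \mathcal{L}] = [d_{\overline{L}}, \mathcal{L}] = 0$ follow from $d_L\omega = d_{\overline{L}}\omega = 0$ together with the fact that $d_L, d_{\overline{L}}$ are derivations of the wedge product on $A^{\bullet,\bullet}$; the remaining commutators are obtained from these by applying $\star$ (using $\star\star = (-1)^{r(2k-r)}$ from \eqref{**} and the definitions $d_L^{*} = -\star d_{\overline{L}}\star$, $d_{\overline{L}}^{*} = -\star d_L\star$, $\Lambda = \star^{-1}\mathcal{L}\star$) and by the same adjointness juggling as in the proof of \cite[Proposition 3.1.12]{hubrechts05}. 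The only genuinely analytic identity, $[\Lambda, d_L] = -i\, d_{\overline{L}}^{*}$ and its conjugate/adjoint forms, is proved exactly as in the classical Kähler case: reduce to a point, use the local Kähler normal coordinates supplied by $d_L\omega = 0$ on the transversal, and compute — this is where the Kähler hypothesis is essential and where the classical proof transfers verbatim once the dictionary above is in place.

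The main obstacle is not any single computation but verifying that the reduction to the leaf space is legitimate: I need to know that an orbifold Kähler normal-coordinate argument is available transversally, i.e. that around any point of $M$ one can choose the transversal chart so that the transverse metric $g$ osculates to order two to the flat metric, and that the finite holonomy group action does not obstruct this (it does not, since one can average). I would phrase this as a lemma: \emph{every point of $M$ has a transverse coordinate neighbourhood in which $\omega = \frac{i}{2}\sum dz_j \wedge d\overline{z_j} + O(|z|^2)$}, deduced from $D\omega = 0$ exactly as in \cite[Proposition 1.3.12]{hubrechts05}. Granting this lemma, the proof of Proposition \ref{kahler id} is then a line-by-line translation of \cite[Propositions 1.2.26 and 3.1.12]{hubrechts05}, replacing $(L, \Lambda, \partial, \overline{\partial}, \partial^{*}, \overline{\partial}^{*}, *)$ by $(\mathcal{L}, \Lambda, d_{\overline{L}}, d_L, d_{\overline{L}}^{*}, d_L^{*}, \star)$ and working throughout within $A^{\bullet,\bullet}$ rather than the full de Rham complex — which is exactly the "trivial modification" the statement claims. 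I would therefore present the proof compactly: state the normal-coordinate lemma, note the operator dictionary, observe that all identities in the proposition are local and pointwise and hence reduce to the corresponding linear-algebra or flat-model identities, and invoke the cited results from \cite{hubrechts05}.
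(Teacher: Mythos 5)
Your proposal is correct and follows essentially the same route as the paper, whose entire proof consists of invoking a ``trivial modification'' of the proofs of \cite[Propositions 1.2.26 and 3.1.12]{hubrechts05}; your transverse dictionary (with $d_{\overline{L}},d_L,d_{\overline{L}}^{*},d_L^{*},\star$ replacing $\partial,\overline{\partial},\partial^{*},\overline{\partial}^{*},\ast$, consistent with the defining formulas $d_L^{*}=-\star d_{\overline{L}}\star$, $d_{\overline{L}}^{*}=-\star d_L\star$), the reduction to leafwise-constant forms on a transversal, and the transverse K\"ahler normal-coordinate lemma obtained from $D\omega=0$ are exactly the content of that modification. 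The only caution is notational: the identities in parts (2)--(3) involve some sign/adjointness conventions (e.g. the intended reading $[d_{\overline{L}}^{*},\mathcal{L}]=-id_L$), so the dictionary must be applied with the paper's definitions of the starred operators, as you in fact do.
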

For the rest of this section, assume that $M$ is compact and $M/\mathscr{S}$ is a K\"{a}hler orbifold. Let $E$ be an SGH vector bundle over $M$. Consider the natural extension of $\mathcal{L}$, $\Lambda$ on $A^{\bullet,\bullet}_{E}$, which will be again denoted by $\mathcal{L}$, $\Lambda$, respectively. Fix a transverse hermitian structure on $E$ (see Definition \ref{thmetric}). Let $\nabla_{E}$ be the transverse generalized Chern connection and $\Omega_{\nabla}$ be its curvature. Let $\{U_{\alpha},\phi_{\alpha}\}$ be an orthonormal trivialization of $E$. Then, on $U_{\alpha}\times\C^{r}$, with respect to such trivialization, 

\vspace{0.2em}
\begin{enumerate}
\setlength\itemsep{1em}
    \item $\overline{\star}_{E}$ can be identified with the complex conjugate $\overline{\star}$ of the operator $\star$ defined in \eqref{star}.
    \item $\nabla_{E}=D+\theta_{\alpha}\,,$ $\nabla^{1,0}_{E}=d_{\overline{L}}+\theta_{\alpha}^{1,0}$ and $\theta_{\alpha}^{*}=-\theta_{\alpha}$\,.
    \item  \begin{align*}
        (\nabla^{1,0}_{E})^{*}&=-\overline{\star}\circ\nabla^{1,0}_{E^{*}}\circ\overline{\star}\quad(\text{by \eqref{dL-E}})\\
        &=-\overline{\star}\circ(d_{\overline{L}}-\theta^{1,0}_{\alpha})\circ\overline{\star}\quad(\text{by \eqref{dL-E}})\\
        &=d^{*}_{\overline{L}}-(\theta^{1,0}_{\alpha})^{*}\,.
    \end{align*}
\end{enumerate}
\begin{align*}
    [\Lambda,\nabla^{0,1}_{E}]+i(\nabla^{1,0}_{E})^{*}&=[\Lambda,d_{L}]+[\Lambda,\theta^{0,1}_{\alpha}]+id^{*}_{\overline{L}}-i(\theta^{1,0}_{\alpha})^{*}\\
    &=[\Lambda,\theta^{0,1}_{\alpha}]-i(\theta^{1,0}_{\alpha})^{*}\quad(\text{by $(3)$ Proposition \ref{kahler id}})\,.
\end{align*}
So, the global operator $[\Lambda,\nabla^{0,1}_{E}]+i(\nabla^{1,0}_{E})^{*}$ is linear. For any point $x\in M$, we can always choose an orthonormal trivialization $\{U_{\alpha},\phi_{\alpha}\}$ such that $x\in U_{\alpha}$ and $\theta_{\alpha}(x)=0\,.$ Since $M$ is compact, $(\nabla^{1,0}_{E})^{*}=-\overline{\star}_{E^{*}}\circ\nabla^{1,0}_{E^{*}}\circ\overline{\star}_{E}$ is the formal adjoint of $\nabla^{1,0}_{E}$. 

\begin{lemma}\label{generalized lemma1}
    Let $\nabla_{E}$ be the transverse generalized Chern connection on $E$ and $\Omega_{\nabla}$ be its curvature. Then, we have

    \vspace{0.3em}
    \begin{enumerate}
    \setlength\itemsep{1em}
        \item $[\Lambda,\nabla^{0,1}_{E}]=-i( (\nabla^{1,0}_{E})^{*})=i(\overline{\star}_{E^{*}}\circ\nabla^{1,0}_{E^{*}}\circ\overline{\star}_{E})\,.$
        \vspace{0.5em}
        \item For an arbitrary $\alpha\in\mathcal{H}^{\bullet,\bullet}_{d_{L,E}}\,,$
        $$\frac{i}{2\pi}h_{E}(\Omega_{\nabla}\Lambda\alpha,\alpha)\leq 0;\quad\text{and}\quad\frac{i}{2\pi}h_{E}(\Lambda\Omega_{\nabla}\alpha,\alpha)\geq 0\,,$$ where $h_{E}$ is the natural hermitian product, defined in \eqref{hermitian prdct-E}.
    \end{enumerate}
\end{lemma}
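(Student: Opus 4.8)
The plan is to mimic, more or less verbatim, the classical Kähler-identities argument used in the proof of the Akizuki--Nakano / Kodaira vanishing theorem (cf. Huybrechts, \cite[Sections 4.2--4.3]{hubrechts05}), transplanted to the transverse setting. First I would establish item (1). Since $M$ is compact and $M/\mathscr{S}$ is a Kähler orbifold, the transverse Kähler identities of Proposition \ref{kahler id} apply, and by the computation carried out just above the statement we have that the global first-order operator $[\Lambda,\nabla^{0,1}_{E}]+i(\nabla^{1,0}_{E})^{*}$ is $A^{0}$-linear, i.e.\ it is of order zero. To evaluate it, fix $x\in M$ and choose an orthonormal transverse trivialization $\{U_{\alpha},\phi_{\alpha}\}$ with $x\in U_{\alpha}$ and $\theta_{\alpha}(x)=0$; at such a point the operator reduces to $[\Lambda,\theta^{0,1}_{\alpha}(x)]-i(\theta^{1,0}_{\alpha}(x))^{*}=0$. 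Since it is pointwise zero and order zero, it vanishes identically, giving $[\Lambda,\nabla^{0,1}_{E}]=-i(\nabla^{1,0}_{E})^{*}=i(\overline{\star}_{E^{*}}\circ\nabla^{1,0}_{E^{*}}\circ\overline{\star}_{E})$.

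For item (2), the key tool is the Bochner--Kodaira--Nakano identity in the transverse setting, namely
\begin{equation*}
\Delta_{d_{L,E}}=\Delta_{\nabla^{1,0}_{E}}+\Big[\tfrac{i}{2\pi}\Omega_{\nabla},\Lambda\Big]\,,
\end{equation*}
where $\Delta_{\nabla^{1,0}_{E}}:=(\nabla^{1,0}_{E})^{*}\nabla^{1,0}_{E}+\nabla^{1,0}_{E}(\nabla^{1,0}_{E})^{*}$. This follows formally by the same manipulation as in the classical case: expand $\Delta_{d_{L,E}}=d^{*}_{L,E}d_{L,E}+d_{L,E}d^{*}_{L,E}$ using $d_{L,E}=\nabla^{0,1}_{E}$ and $d^{*}_{L,E}=(\nabla^{0,1}_{E})^{*}$, substitute $\nabla^{0,1}_{E}=-i[\Lambda,(\nabla^{1,0}_{E})^{*}]+(\text{zeroth order})$ via part (1), and collect commutators using Proposition \ref{kahler id} and $(\nabla_{E})^{2}=\Omega_{\nabla}$, which has only a $(1,1)$ part by Theorem \ref{main generalized prop}. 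Given the identity, take $\alpha\in\mathcal{H}^{\bullet,\bullet}_{d_{L,E}}$, so $\Delta_{d_{L,E}}\alpha=0$; pairing with $\alpha$ under $h_{E}$ gives
\begin{equation*}
0=h_{E}(\Delta_{\nabla^{1,0}_{E}}\alpha,\alpha)+\tfrac{i}{2\pi}h_{E}([\Omega_{\nabla},\Lambda]\alpha,\alpha)=\|\nabla^{1,0}_{E}\alpha\|^{2}+\|(\nabla^{1,0}_{E})^{*}\alpha\|^{2}+\tfrac{i}{2\pi}\big(h_{E}(\Omega_{\nabla}\Lambda\alpha,\alpha)-h_{E}(\Lambda\Omega_{\nabla}\alpha,\alpha)\big)\,.
\end{equation*}
Since the first two terms are $\geq 0$, we get $\tfrac{i}{2\pi}h_{E}(\Lambda\Omega_{\nabla}\alpha,\alpha)-\tfrac{i}{2\pi}h_{E}(\Omega_{\nabla}\Lambda\alpha,\alpha)\geq 0$; a separate sign check on the two terms (using that $\Omega_{\nabla}$ is of type $(1,1)$, real, and skew-hermitian, plus the local normal-form expression for $\mathcal{L},\Lambda$ and $\star$) shows each has the claimed sign, namely $\tfrac{i}{2\pi}h_{E}(\Omega_{\nabla}\Lambda\alpha,\alpha)\leq 0$ and $\tfrac{i}{2\pi}h_{E}(\Lambda\Omega_{\nabla}\alpha,\alpha)\geq 0$. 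For the individual signs I would argue pointwise as in Huybrechts \cite[Lemma 4.3.1 ff.]{hubrechts05}: at a point, in an orthonormal transverse coframe diagonalizing $\Omega_{\nabla}$, the bundle endomorphism part is skew-hermitian and the operators $\mathcal{L},\Lambda$ act on the form part exactly as in the classical Kähler situation, so the classical inequalities transfer verbatim.

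The main obstacle I anticipate is not the algebra of commutators — that is routine once Proposition \ref{kahler id} is in hand — but rather making sure that all of the classical analytic/linear-algebraic steps are legitimate in the transverse (orbifold leaf space) category: specifically, that the formal adjoints $(\nabla^{1,0}_{E})^{*}$, $d^{*}_{L,E}$ are genuine adjoints for $h_{E}$ (this needs $M$ compact and the minimality of the leaves so that $d\chi$ contributes nothing, exactly as in \eqref{proprty 1}), that $\Delta_{d_{L,E}}$ is self-adjoint (already noted), and that the pointwise sign computation for the two Lefschetz-twisted curvature terms is unaffected by the presence of the foliation — which it is, since all operators involved are $F_{M}$-linear in the leaf directions and the computation takes place fibrewise in $\wedge^{\bullet}\mathcal{G}^{*}M\otimes\wedge^{\bullet}\overline{\mathcal{G}^{*}M}\otimes E$. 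I would therefore state the Bochner--Kodaira--Nakano identity as a lemma, prove it by the commutator bookkeeping above, and then deduce both parts; the sign lemma for part (2) I would relegate to a short pointwise computation citing \cite{hubrechts05}.
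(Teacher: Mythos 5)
Your part (1) is essentially the paper's argument (zeroth-order operator, evaluated at a point in an orthonormal trivialization with $\theta_{\alpha}(x)=0$), so there is no issue there. The problem is in part (2). The Bochner--Kodaira--Nakano identity, paired against a harmonic $\alpha$, only yields the \emph{difference} inequality
$\tfrac{i}{2\pi}h_{E}(\Lambda\Omega_{\nabla}\alpha,\alpha)-\tfrac{i}{2\pi}h_{E}(\Omega_{\nabla}\Lambda\alpha,\alpha)\geq 0$,
and your plan for extracting the two \emph{individual} signs --- a pointwise computation in an orthonormal transverse coframe diagonalizing $\Omega_{\nabla}$, claiming "the classical inequalities transfer verbatim" --- does not work. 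The lemma makes no positivity assumption on $\Omega_{\nabla}$, and the two inequalities are not pointwise facts: already for a line bundle with $\tfrac{i}{2\pi}\Omega_{\nabla}=\lambda\,\omega$ near a point where $\lambda>0$, one has pointwise $\tfrac{i}{2\pi}\langle\Omega_{\nabla}\Lambda\alpha,\alpha\rangle=\lambda\,|\Lambda\alpha|^{2}\geq 0$, the opposite of the claimed sign. The pointwise commutator estimates in Huybrechts that you invoke require a positivity hypothesis on the curvature (they are the input to the Nakano-type vanishing theorems), which is exactly what is \emph{not} available here; the individual signs in this lemma hold only globally and only because $\alpha$ is harmonic.

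The correct (and shorter) route, which is what the paper does, bypasses the BKN identity entirely: since $\nabla^{0,1}_{E}=d_{L,E}$ and $(\nabla^{1,0}_{E})^{2}=0$ (Theorem \ref{main generalized prop}), one has $\Omega_{\nabla}=\nabla^{1,0}_{E}\circ d_{L,E}+d_{L,E}\circ\nabla^{1,0}_{E}$. For $\alpha\in\mathcal{H}^{p,q}_{d_{L,E}}$ the term involving $d^{*}_{L,E}\alpha$ drops out after moving $d_{L,E}$ to the other side, and part (1) converts $-i(\nabla^{1,0}_{E})^{*}\alpha$ into $[\Lambda,d_{L,E}]\alpha=-d_{L,E}\Lambda\alpha$ (using $d_{L,E}\alpha=0$), giving
$h_{E}(i\Omega_{\nabla}\Lambda\alpha,\alpha)=-h_{E}(d_{L,E}\Lambda\alpha,d_{L,E}\Lambda\alpha)\leq 0$;
the companion computation writes $h_{E}(i\Lambda\Omega_{\nabla}\alpha,\alpha)=\|\nabla^{1,0}_{E}\alpha\|^{2}\geq 0$. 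So each quantity is exhibited globally as $\mp$ a norm squared; this harmonicity-plus-adjointness argument is the missing ingredient your proposal needs, and once you have it the BKN detour is unnecessary.
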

\begin{proof}
    $(1)$ follows from the preceding discussion.
    
    \medskip
    $(2)$ By Proposition \ref{generalized prop}, $\Omega_{\nabla}=\nabla^{1,0}_{E}\circ d_{L,E}+d_{L,E}\circ\nabla^{1,0}_{E}\,.$ Let $\alpha$ be an element in $\mathcal{H}^{p,q}_{d_{L,E}}$. Since $\Lambda\alpha\in A^{p-1,q-1}_{E}(M)$, $\Omega_{\nabla}\Lambda\alpha \in A^{p,q}_{E}(M)$. So, we can compute
    \begin{align*}
        h_{E}(i\Omega_{\nabla}\Lambda\alpha,\alpha)&=ih_{E}(\nabla^{1,0}_{E} d_{L,E}\Lambda\alpha,\alpha)+ih_{E}(d_{L,E}\nabla^{1,0}_{E}\Lambda\alpha,\alpha)\\
        &=h_{E}(d_{L,E}\Lambda\alpha,-i(\nabla^{1,0}_{E})^{*}\alpha)+0\quad(\text{as $\alpha\in\mathcal{H}^{p,q}_{d_{L,E}}$})\\
        &=h_{E}(d_{L,E}\Lambda\alpha,[\Lambda,\nabla^{0,1}_{E}]\alpha)\quad(\text{by $(1)$})\\
        &=-h_{E}(d_{L,E}\Lambda\alpha,d_{L,E}\Lambda\alpha)\quad(\text{as $\nabla^{0,1}_{E}=d_{L,E}$})\\
        &\leq 0\,.
    \end{align*}
    Similarly, we can show $h_{E}(i\Lambda\Omega_{\nabla}\alpha,\alpha)\geq 0\,.$
\end{proof}

\vspace{0.2em}
\begin{definition}
~
\vspace{0.2em}
    \begin{enumerate}
     \setlength\itemsep{1em}
        \item A real $(1,1)$-transverse generalized form $\alpha$ (that is, $\alpha=\overline{\alpha}$) is called \textit{(semi-) positive} if for all GH tangent vectors $0\neq v\in\mathcal{G}M$, one has
        $$-i\alpha(v,\overline{v})>0\,(\geq 0)\,.$$
        \item Let $\nabla$ be a transverse generalized hermitian connection with respect to a transverse hermitian structure $H$ on $E$ such that $\Omega_{\nabla}\in A^{1,1}_{\en(E)}(M)$. The transverse generalized curvature $\Omega_{\nabla}$ is \textit{(Griffiths-) positive} if, for any local section $0\neq s\in F_{M}(E)$, one has
        $$H(\Omega_{\nabla}(s),s)(v,\overline{v})>0$$ for all $0\neq v\in\mathcal{G}M$.
    \end{enumerate}
\end{definition}
\begin{definition}
    An SGH line bundle $E$ over $M$ is positive if its first generalized Chern class $\mathbf{g}c_{1}(E)$ ($\in H^2_{D}(M)$ by Theorem \ref{harmonic thm 2}) can be represented by a closed positive  $(1,1)$-transverse generalized form where $\mathbf{g}c_{1}(E)$ is defined in Example \ref{chern class fr GH-VB}.
\end{definition}
\begin{theorem}\label{vanishing thm}
Let $M$ be a compact regular GC manifold of type $k$. Let the leaf space $M/\mathscr{S}$ of the induced foliation be a K\"{a}hler orbifold. Let $E$ be a positive SGH line bundle on $M$. Then, we have the following
\vspace{0.5em}
\begin{enumerate}
 \setlength\itemsep{1em}
    \item (Generalized Kodaira vanishing theorem) $$H^{q}(M,(\mathbf{\mathcal{G}^{*}M})^{p}\otimes_{\mathcal{O}_{M}}\mathbf{E})=0\quad\text{for $p+q>k$}\,.$$
    \item (Generalized Serre's theorem) For any SGH vector bundle $E^{'}$ on $M$, there exists a constant $m_0$ such that 
    $$H^{q}(M,\mathbf{E^{'}}\otimes_{\mathcal{O}_{M}}\mathbf{E}^{m})=0\quad\text{for $m\geq m_0$ and $q>0$}\,.$$
\end{enumerate}
\end{theorem}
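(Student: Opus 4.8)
The plan is to reduce both vanishing statements to the corresponding classical statements (Kodaira vanishing and Serre's theorem) on the K\"{a}hler orbifold $M/\mathscr{S}$, via the identifications established earlier in the paper. The key observations are: (i) by Corollary \ref{cor5}, $H^{q}(M,(\mathbf{\mathcal{G}^{*}M})^{p}\otimes_{\mathcal{O}_{M}}\mathbf{E})\cong H^{p,q}_{d_{L}}(M,E)$, so everything can be phrased in terms of the $d_{L}$-cohomology; (ii) by Theorem \ref{harmonic thm 3}, $H^{p,q}_{d_{L}}(M,E)\cong\mathcal{H}^{p,q}_{d_{L,E}}$, the space of transverse GH harmonic forms with coefficients in $E$; and (iii) since $\mathscr{S}$ is transversely K\"{a}hlerian, the transverse Lefschetz identities of Proposition \ref{kahler id} and Lemma \ref{generalized lemma1} are available. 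These are precisely the tools needed to run the Bochner--Kodaira argument transversally, exactly as in the classical proof (cf. \cite[Proposition 5.2.2]{hubrechts05} and \cite[Chapter VI]{voisin02}).

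For part (1), I would fix a transverse hermitian metric on $E$ whose curvature $\Omega_{\nabla}$ of the transverse generalized Chern connection represents $-2\pi i\,\mathbf{g}c_{1}(E)$ and is positive; positivity of $E$ guarantees such a metric exists after rescaling the transverse K\"{a}hler form, so we may assume $\tfrac{i}{2\pi}\Omega_{\nabla}=\omega$, the transverse fundamental form. Take $0\neq\alpha\in\mathcal{H}^{p,q}_{d_{L,E}}$ with $p+q>k$. The Bochner--Kodaira identity — obtained by comparing $\Delta_{d_{L,E}}$ with $\Delta_{\nabla^{1,0}_{E}}$ using the commutator relations in Lemma \ref{generalized lemma1}(1) and Proposition \ref{kahler id} — gives
\begin{equation*}
0=h_{E}(\Delta_{d_{L,E}}\alpha,\alpha)=h_{E}(\Delta_{\nabla^{1,0}_{E}}\alpha,\alpha)+\frac{i}{2\pi}h_{E}\big([\Omega_{\nabla},\Lambda]\alpha,\alpha\big)\cdot(\text{sign factor})\,,
\end{equation*}
and then Lemma \ref{generalized lemma1}(2) combined with the commutator $[\Lambda,\mathcal{L}]=(k-(p+q))\id$ on $A^{p,q}_{E}$ from Proposition \ref{kahler id}(1) shows the curvature term is $\geq (p+q-k)\,h_{E}(\alpha,\alpha)$, which is strictly positive unless $\alpha=0$. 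Hence $\mathcal{H}^{p,q}_{d_{L,E}}=0$, and the identifications above finish the argument. Part (2) follows the classical Serre pattern: using the generalized Serre duality of Theorem \ref{gc serre}, rewrite $H^{q}(M,\mathbf{E'}\otimes_{\mathcal{O}_{M}}\mathbf{E}^{m})$ as a dual cohomology of $(\mathbf{E'})^{*}\otimes(\mathbf{E})^{-m}$ twisted by $(\mathbf{\mathcal{G}^{*}M})^{k}$, and then for $m$ large one arranges, by taking a metric on $E'\otimes E^{m}$ adapted to a high power of the positive curvature of $E$, that the curvature endomorphism dominates the fixed contribution of $E'$; a Bochner--Kodaira estimate as in part (1) then forces vanishing. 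The uniform $m_{0}$ comes from compactness of $M$ (equivalently of $M/\mathscr{S}$), which makes the relevant curvature eigenvalue bounds uniform.

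The main obstacle I anticipate is making the transverse Bochner--Kodaira identity rigorous — i.e.\ checking that the commutator identities of Proposition \ref{kahler id}, which were stated for the scalar operators, extend with the correct curvature term to the $E$-twisted operators $\nabla^{1,0}_{E}$, $d_{L,E}$, $d^{*}_{L,E}$. This requires care because the connection $\nabla_{E}$ is only a \emph{transverse} generalized connection and the operators live on $A^{\bullet,\bullet}_{E}$ rather than on smooth forms; however, since everything is constant along the leaves, the computation takes place fiberwise over the leaf space orbifold $M/\mathscr{S}$, where it is literally the classical computation on a K\"{a}hler orbifold. The secondary technical point is justifying the Hodge-theoretic input (finite-dimensionality and the harmonic representative) on the orbifold leaf space, but this is already supplied by Theorem \ref{harmonic thm 3}. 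Once these identifications are in place, the proof is a transcription of the standard arguments, and I would cite \cite{hubrechts05}, \cite{voisin02}, and \cite{asaoka14} for the orbifold/transverse Hodge theory rather than reproduce the details.
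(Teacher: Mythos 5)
Your part (1) is essentially the paper's proof: normalize the transverse K\"{a}hler form to be $\tfrac{i}{2\pi}\Omega_{\nabla_{E}}$, pass to a $d_{L,E}$-harmonic representative via Corollary \ref{cor5} and Theorem \ref{harmonic thm 3}, and combine Lemma \ref{generalized lemma1}(2) with $[\Lambda,\mathcal{L}]=(k-(p+q))\id$ from Proposition \ref{kahler id}; the twisted Bochner--Kodaira bookkeeping you flag as the main obstacle is exactly what Lemma \ref{generalized lemma1} already packages, so there is no issue there.

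Part (2), as written, has a genuine gap. After invoking Theorem \ref{gc serre} you land in $H^{k,k-q}_{d_{L}}(M,(E')^{*}\otimes E^{-m})$, yet you then propose to estimate ``with a metric on $E'\otimes E^{m}$'', i.e.\ on the primal side at bidegree $(0,q)$. There the part-(1) mechanism gives nothing: on $A^{0,q}_{E'\otimes E^{m}}$ one has $[\Lambda,\mathcal{L}]=(k-q)\id\geq 0$ and the curvature is approximately $+m\,\mathcal{L}$ plus a bounded term, so the harmonic inequality $0\leq\tfrac{i}{2\pi}h([\Lambda,\Omega]\alpha,\alpha)$ only says $0\leq C+m(k-q)$ per unit norm --- no contradiction. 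Likewise, on the dual side at form-bidegree $(k,k-q)$ with the negative bundle $(E')^{*}\otimes E^{-m}$ the coefficient of $m$ is $-(q-k)=+(k-q)\geq 0$, again no contradiction; so the duality step, as you set it up, does not put you in a regime where the Bochner estimate bites. The missing idea is the degree shift by the ``canonical'' line bundle: the paper proves $\mathcal{H}^{k,q}_{d_{L},E'\otimes E^{m}}=0$ (at $p=k$ one gets $[\Lambda,\mathcal{L}]=-q\,\id$, so the inequality becomes $0\leq(\tfrac{C}{2\pi}-mq)\,h_{E''}(\alpha,\alpha)$ after exactly the Cauchy--Schwarz/compactness bound you mention), and then replaces $E'$ by $(\mathcal{G}M)^{k}\otimes E'$, using $(\mathbf{\mathcal{G}^{*}M})^{k}\otimes_{\mathcal{O}_{M}}(\mathbf{\mathcal{G}M})^{k}\cong\mathcal{O}_{M}$, to remove the twist --- no Serre duality at all. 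Your duality route can be repaired, but only by absorbing $(\mathbf{\mathcal{G}^{*}M})^{k}$ into the coefficients and estimating at bidegree $(0,k-q)$ for the very negative bundle $(\mathcal{G}^{*}M)^{k}\otimes(E')^{*}\otimes E^{-m}$, where $[\Lambda,\mathcal{L}]=q\,\id$ and the $-mq$ term forces vanishing; as stated, the step that makes the curvature term come out with the right sign is absent.
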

\begin{proof}
Choose a transverse hermitian structure on $E$ such that the curvature of the transverse generalized Chern connection $\nabla_{E}$ is positive, that is, $\frac{i}{2\pi}\Omega_{\nabla_{E}}$ is a transverse K\"{a}hler form (that is, $D$-closed transverse generalized fundamental form) on $M$. We endow $M$ with this corresponding transverse K\"{a}hler structure.

\medskip
  $(1)$ With respect to this transverse K\"{a}hler structure, the operator $\mathcal{L}$ is nothing but the curvature operator $\frac{i}{2\pi}\Omega_{\nabla_{E}}$. Then, for $\alpha\in\mathcal{H}^{p,q}_{d_{L,E}}\,,$
  \begin{align*}
      0\leq\,&h_{E}(\frac{i}{2\pi}[\Lambda,\Omega_{\nabla_{E}}]\alpha,\alpha)\quad(\text{by $(2)$ Lemma \ref{generalized lemma1}})\\
      &=h_{E}([\Lambda,\mathcal{L}]\alpha,\alpha)\\
      &=(k-(p+q))h_{E}(\alpha,\alpha)\quad(\text{by $(1)$ Proposition \ref{kahler id}})\,.
  \end{align*}
  By Corollary \ref{cor5} and Theorem \ref{harmonic thm 3}, we get
  $$\mathcal{H}^{p,q}_{d_{L,E}}\cong H_{d_{L}}^{p,q}(M,E)\cong H^{q}(M,(\mathbf{\mathcal{G}^{*}M})^{p}\otimes_{\mathcal{O}_{M}}\mathbf{E})\,.$$
  Hence $H^{q}(M,(\mathbf{\mathcal{G}^{*}M})^{p}\otimes_{\mathcal{O}_{M}}\mathbf{E})=0$ for $p+q>k$.

\medskip
  $(2)$ Let $m\neq 0$. Choose a transverse hermitian structure on $E^{'}$ and denote its associated transverse generalized Chern connection by $\nabla_{E^{'}}$. Then we have an induced transverse Chern connection on $E^{''}:=E^{'}\otimes E^{m}$, denoted by $\nabla$, corresponding to the induced transverse hermitian structure,
  $$\nabla=\nabla_{E^{'}}\otimes 1+1\otimes\nabla_{E^{m}}\,,$$ where $\nabla_{E^{m}}$ is induced by $\nabla_{E}$. Its curvature is of the form
  \begin{align*}
      \frac{i}{2\pi}\Omega_{\nabla}&= \frac{i}{2\pi}\Omega_{\nabla_{E^{'}}}\otimes 1+ \frac{i}{2\pi}(1\otimes\Omega_{\nabla_{E^{m}}})\\
      &=\frac{i}{2\pi}\Omega_{\nabla_{E^{'}}}\otimes 1+m(1\otimes\frac{i}{2\pi}\Omega_{\nabla_{E}})\quad(\text{as $\Omega_{\nabla_{E^{m}}}=m\Omega_{\nabla_{E}}$})
  \end{align*}
By $(2)$ in Lemma \ref{generalized lemma1}, for $\alpha\in\mathcal{H}^{p,q}_{d_{L,E^{''}}}\,,$ we have
\begin{align*}
    0\leq\,\,&\frac{i}{2\pi}h_{E^{''}}([\Lambda,\Omega_{\nabla}]\alpha,\alpha)\\
    &=\frac{i}{2\pi}h_{E^{''}}([\Lambda,\Omega_{E^{'}}]\alpha,\alpha)+m\,h_{E^{''}}([\Lambda,\mathcal{L}]\alpha,\alpha)\\
    &= \frac{i}{2\pi}h_{E^{''}}([\Lambda,\Omega_{E^{'}}]\alpha,\alpha)+m(k-(p+q))h_{E^{''}}(\alpha,\alpha)\quad(\text{by $(1)$ Proposition \ref{kahler id}})\,.
\end{align*}
Since $h_{E^{''}}$ is a positive-definite hermitian matrix on each fiber of $E^{''}$, we can consider the fiber wise Cauchy-Schwarz inequality 
$$|h_{E^{''}}([\Lambda,\Omega_{E^{'}}]\alpha,\alpha)|\leq ||[\Lambda,\Omega_{E^{'}}]||\cdot h_{E^{''}}(\alpha,\alpha)\,.$$ By compactness of $M$, we have a global upper bound $C$ for the operator norm 
$||[\Lambda,\Omega_{E^{'}}]||$, independent of $m$, and  
 a corresponding global inequality. 
Thus, we get,
\begin{align*}
     0\leq\,\,&|\frac{i}{2\pi}h_{E^{''}}([\Lambda,\Omega_{E^{'}}]\alpha,\alpha)|+(m(k-(p+q)))\,h_{E^{''}}(\alpha,\alpha)\\
     &=\left(\frac{C}{2\pi}+(m(k-(p+q)))\right)\,h_{E^{''}}(\alpha,\alpha)\,.
\end{align*} 
\newline
Hence, if $C+2\pi\,m(k-(p+q))<0$, then $\alpha=0$. When $p=k$ and $q>0$,   $m>\frac{C}{2\pi}\geq\frac{C}{2\pi\,q}\,$ ensures $\alpha=0$. So, if we take $m_0>\frac{C}{2\pi}$, by Corollary \ref{cor5} and Theorem \ref{harmonic thm 3}, we get
$$\mathcal{H}^{k,q}_{d_{L,E^{''}}}\cong H^{q}(M,(\mathbf{\mathcal{G}^{*}M})^{k}\otimes_{\mathcal{O}_{M}}(\mathbf{E^{'}}\otimes_{\mathcal{O}_{M}}\mathbf{E^{m}}))=0\quad\text{for $m\geq m_0$ and $q>0$}\,.$$
Now, we apply these arguments to the SGH bundle $(\mathcal{G}M)^{k}\otimes E^{'}$ instead of $E^{'}$. The constant $m_0$ might change in the process but this will prove the assertion.
\end{proof}

\section{Strong generalized Calabi-Yau manifold and its leaf space}\label{sec calabi}

In this section, we give some criteria on the GCS so that the leaf space of the associated symplectic foliation is a smooth torus, and therefore, satisfies the
hypothesis that the leaf space is an orbifold, used in most of our results in this manuscript. This is a generalization of a result of Bailey et al.  \cite[Theorem1.9]{cav17}.

\medskip
Let $M^{2n}$ be a GC manifold with $+i$-eigenbundle $L$ of $(TM\oplus T^{*}M)\otimes\C$. 
 Consider the bundle $\bigwedge^{\bullet} T^{*}M\otimes\C$ as a spinor bundle for $(TM\oplus T^{*}M)\otimes\C$ with the following Clifford action 
$$(X+\eta)\cdot\rho=i_{X}(\rho)+\eta\wedge\rho\quad\text{for $X+\eta\in (TM\oplus T^{*}M)\otimes\C$}\,.$$ Then, there exits a unique line subbundle $U_M$ of $\bigwedge^{\bullet}T^{*}M\otimes\C$, called the {\it canonical line bundle} associated to the GCS, which is  annihilated by $L$ under the above Clifford action.

\medskip
At each point of $M$, $U_{M}$ is generated by a $$\rho=e^{B+i\omega}\Omega\,,$$ where $B,\omega$ are real $2$-forms and $\Omega=\theta_1\wedge\cdots\wedge\theta_{k}$ is a complex decomposable $k$-form where $k$ is the type of the GCS at that point. By \cite{Gua, Gua2}, the condition $L\cap\overline{L}=\{0\}$ is equivalent to the non-degeneracy condition
\begin{equation}\label{nondegen} \omega^{n-k}\wedge\Omega\wedge\overline{\Omega}\neq 0\,.
\end{equation}
The involutivity of $L$, with respect to the Courant bracket, is equivalent to the following condition on any local trivialization $\rho$ of $U_{M}$, $$d\rho=(X+\eta)\cdot\rho\,,$$ for some $X+\eta\in C^{\infty}((TM\oplus T^{*}M)\otimes\C)\,.$ 
\begin{definition}
    A GC manifold $M$ of type $k$ is said to be a \textit{generalized Calabi-Yau} manifold if its canonical bundle $U_{M}$ is a trivial bundle admitting a nowhere-vanishing global section $\rho$ such that $d\rho = 0$ (cf. \cite{Gua,Gua2}). $M$ is called a \textit{strong generalized Calabi-Yau} manifold if, in addition, $\rho$ is such that $\Omega=\theta_1\wedge\cdots\wedge\theta_{k}$ is globally decomposable and $d\theta_{j}=0$ for $1\leq j\leq k$.
\end{definition}
\begin{remark}
    Note that any generalized Calabi-Yau manifold is orientable because we get a global nowhere-vanishing volume form $\omega^{n-k}\wedge\Omega\wedge\overline{\Omega}\,.$
\end{remark}

\vspace{0.2em}
\begin{example}
~
\begin{enumerate}
\setlength\itemsep{1em}
    \item Any type $1$ generalized Calabi-Yau manifold is a strong generalized Calabi-Yau manifold.
    \item Any $6$-dimensional nilmanifold with $(b_1,b_2)\in\{(4,6),(4,8),(5,9),(5,11),(6,15)\}$ admits a type $2$ strong generalized Calabi-Yau structure (cf. \cite[Table 1]{cav04}) where $b_1$ and $b_2$ are the first and second betti numbers, respectively.
\end{enumerate} 
\end{example}

Let $M^{2n}$ be a compact connected strong generalized Calabi-Yau manifold of type $k$. Under some assumptions on the leaves of the induced foliation, we show that the foliation is simple. To show this, we need to use an extended version of the techniques used in \cite[Section 1.2]{cav17}.

\medskip
Let $\rho$ be a nowhere-vanishing closed section of the corresponding canonical line bundle $U_{M}$. We can express $\rho$ in the following form $$\rho=e^{B+i\omega}\wedge\Omega$$ where $B,\omega$ are real $2$-forms and $\Omega=\theta_1\wedge\cdots\wedge\theta_{k}$ is a complex decomposable $k$-form with $d\theta_{j}=0$ for $1\leq j\leq k$. Fix $p\in\{1,\ldots,k\}$. Let $\theta_{p}=\re(\theta_{p})+i\im(\theta_{p})$ where $\re(\theta_{p})$ and $\im(\theta_{p})$ denote the real and imaginary parts of $\theta_{p}$, respectively. First, we show that  $[\re(\theta_{p})]$ and $[\im(\theta_{p})]$ are linearly independent in $H_{dR}^1(M,\R)$. 

\medskip
If possible, let there exist a nontrivial linear combination, say $$\lambda_{R}[\re(\theta_{p})]+\lambda_{I}[\im(\theta_{p})]=0\,.$$ Fix $m_0\in M$ and define the map $f:M\longrightarrow\R$ as
 $$f(m)=\int_{[m_0,m]}(\lambda_{R}\re(\theta_{p})+\lambda_{I}\im(\theta_{p}))\,,$$ where integral is taken over any  path connecting $m_0$ to $m\in M$. The function $f$ is well-defined as $\lambda_{R}\re(\theta_{p})+\lambda_{I}\im(\theta_{p})$ is exact. Without loss of generality, let $\lambda_{R}\neq 0$. Note that $\theta_{p}\wedge\overline{\theta_{p}}=-2i\re(\theta_{p})\wedge\im(\theta_{p})$. Then the non-degeneracy condition $$\omega^{n-k}\wedge\left(\bigwedge^{k}_{j=1}\theta_{j}\wedge\overline{\theta_{j}}\right)\neq 0\quad\text{implies that}\quad\omega^{n-k}\wedge\left(\bigwedge^{k}_{\substack{j=1 \\ j\neq p}}\theta_{j}\wedge\overline{\theta_{j}}\right)\wedge df\neq 0\,.$$ This shows that $df$ is nowhere vanishing and so, $f$ is a submersion. Therefore, $f(M)$ is open. However,  $f(M)$ is also closed since $M$ is compact. Thus $f(M)=\R$ which is a contradiction. Hence $[\re(\theta_{p})]$ and $[\im(\theta_{p})]$ are linearly independent.
 
 \medskip
 Now, the non-degeneracy condition \eqref{nondegen} is an open condition that gives us the freedom to choose $\theta_{j}\in\Omega^1(M,\C)$ $(1\leq j\leq k)$  such that $[\re(\theta_{j})]$ and $[\im(\theta_{j})]$ are still linearly independent in $H^1(M,\Q)$. Then, we can consider the map $$\tilde{f}:M\longrightarrow\C^{k}/\Gamma\cong\prod_{j}\T^2\quad\text{defined as}\quad \tilde{f}(m)=\bigoplus_{j}\int_{[m_0,m]}\theta_{j}\,,$$ where the integral is taken over any path connecting $m_0$ to $m$ and $$\Gamma=\oplus_{j}[\theta_{j}](H_1(M,\Z))$$ is a co-compact lattice in $\C^{k}$. As before, using the non-degeneracy condition, one can show that $\tilde{f}$ is a surjective submersion. 
 
\vspace{0.5em}
Suppose $S$ is a leaf of the induced foliation $\mathscr{S}$ which is closed. Then $S$ is a compact embedded submanifold in $M$. {Let $X_{j}$ be a complex vector field on $M$ such that $$\theta_{l}(X_{j})=\delta_{lj}\quad\text{and}\quad\overline{\theta_{l}}(X_{j})=0\,,$$ where $\delta_{lj}$ is Kronecker delta and $l,j\in\{1,\ldots,k\}\,.$ This is possible since the normal bundle $\mathcal{N}$ of the foliation is trivial and the transverse holomorphic structure induces an integrable complex structure on $\mathcal{N}$ so that $C^{\infty}(\mathcal{N}^{1,0 *})=\langle\theta_{j}\,|\,j=1,\ldots,k\rangle$ and $C^{\infty}(\mathcal{N}^{0,1 *})=\langle\overline{\theta_{j}}\,|\,j=1,\ldots,k\rangle$ where $\mathcal{N}\otimes\C=\mathcal{N}^{1,0}\oplus\mathcal{N}^{0,1}$ as defined in \eqref{nrml bundle}.} Let $\re(X_{j})$ and $\im(X_{j})$ be the real and imaginary parts of $X_{j}$, respectively. Note that $\re(X_{j})$ and $\im(X_{j})$ are pointwise linearly independent and $\mathcal{L}_{Y}\Omega=\mathcal{L}_{Y}\overline{\Omega}=0$ where $Y\in\{\re(X_{j}),\im(X_{j}) : 1\leq j\leq k\}$. Therefore, these vector fields preserve the foliation $\mathscr{S}$ which is determined by $\ker (\Omega\wedge\overline{\Omega})$.

\medskip
Consider the map 
$$\psi_{S}:S\times\R^{2k}\longrightarrow M\,,$$ defined by 
$$\psi_{S}(s,\lambda_1,\ldots,\lambda_{2k})=\exp\left(\sum^{k}_{j=1}\bigg(\lambda_{2j-1}\re(X_{j})+\lambda_{2j}\im(X_{j})\bigg)\right)(s)\,.$$  $\psi_{S}$ is a local diffeomorphism as $(\psi_{S})_{*}(TS\oplus\R^{2k})=TM$. Since $\re(X_{j})$ and $\im(X_{j})$ preserve the foliation $\mathscr{S}$,  $\sum^{k}_{j=1}\bigg(\lambda_{2j-1}\re(X_{j})+\lambda_{2j}\im(X_{j})\bigg)$ also preserves $\mathscr{S}$. We conclude that all leaves in a neighborhood of $S$ are diffeomorphic to $S$. More precisely, since $S$ is compact, $\psi_{S}$ provides a leaf preserving local diffeomorphism between a tubular neighborhood of $S$ in $M$ and $S\times\prod_{j}\D^{2}$. Here $\D^{2}\subset\R^2$ is an open disk. 

\medskip
Let $V$ be the set of points in $M$ that lie in leaves that are diffeomorphic to $S$. Then $V$ is an open subset of $M$. Let $q\in\overline{V}$. Let $\alpha:\D^{2n-2k}\longrightarrow M$ be a local parametrization of the leaf through $q$ such that $\alpha(0)=q$. Then, the map $\psi:\D^{2n-2k}\times\prod_{j}\D^{2}\longrightarrow M$ defined by 
$$\psi(s,\lambda_1,\ldots,\lambda_{2k})=\exp\left(\sum^{k}_{j=1}\bigg(\lambda_{2j-1}\re(X_{j})+\lambda_{2j}\im(X_{j})\bigg)\right)(\alpha(s))$$ is a again a leaf preserving local diffeomorphism. Here, $\D^{2n-2k}\times\prod_{j}\D^{2}$ is considering with the product GCS.  So, $\img(\psi)\cap V\neq\emptyset\,.$ Let $q^{'}\in\img(\psi)\cap V$ and  $S^{'}$ be the compact leaf through it. For some $(s,\lambda_1,\ldots,\lambda_{2k})\in\D^{2n-2k}\times\prod_{j}\D^{2}$, we have $$q^{'}=\psi(s,\lambda_1,\ldots,\lambda_{2k})\,.$$ By taking the inverse of $\exp$, we can shows that $\psi_{S^{'}}(q^{'},-\lambda_1,\ldots,-\lambda_{2k})\in\img(\alpha)\,.$ Therefore, $\img(\alpha)\cap\img(\psi_{S^{'}})\neq\emptyset\,.$ So, $S^{'}$ is diffeomorphic to the leaf through $q$ via $\psi_{S^{'}}$, which shows that $q\in V\,.$ Since $V$ is both open and closed and $M$ is connected, we have $V=M\,.$ This conclude that $M$ is a fibration (fibre bundle) $M\longrightarrow B$ over a compact connected $2k$-dimensional smooth manifold. {Now, $\theta_{j}$ $(1\leq j\leq k)$ vanishes when restricted to a leaf by \cite[Corollary 2.8]{Gua2}. Since $\theta_{j}$ is also closed, it is basic for this fibration, that is, $B$ has $2k$-number of linearly independent nowhere-vanishing closed real $1$-forms.} 
\begin{prop}\label{diff prop}
    Let $B$ be any smooth compact connected $2k$-dimensional manifold. Suppose $B$ has $2k$ linearly independent nowhere-vanishing closed real $1$-forms. Then $B$ is diffeomorphic to a product of $2$-dimensional tori $\prod^{k}_{j=1}\T^{2}\,.$
\end{prop}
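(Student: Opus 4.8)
The plan is to produce $k$ independent real-valued functions $B \to \mathbb{R}/\mathbb{Z}$ (or rather a map to $\prod_j \mathbb{T}^2$) from the $2k$ given closed $1$-forms, and then show this map is a diffeomorphism by the usual argument combining submersion-ness with compactness and connectedness. Concretely, write the $2k$ nowhere-vanishing closed $1$-forms as $\eta_1, \ldots, \eta_{2k}$. By linear independence at every point, $\eta_1 \wedge \cdots \wedge \eta_{2k}$ is a nowhere-vanishing $2k$-form on $B$, so $B$ is orientable and these forms furnish a global coframe. The first step is to arrange that the forms have integral periods: since the non-degeneracy of $\eta_1 \wedge \cdots \wedge \eta_{2k}$ is an open condition, one can perturb each $\eta_i$ within its de Rham class (or replace it by a nearby closed form with rational, hence after scaling integral, periods) while keeping the wedge nonzero — this is exactly the trick already used in the paragraph preceding the statement. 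After scaling we may assume each $[\eta_i] \in H^1(B;\mathbb{Z})$.

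Next, fixing a basepoint $b_0 \in B$, define $F : B \to \mathbb{R}^{2k}/\Lambda$ by $F(b) = \left( \int_{[b_0,b]} \eta_1, \ldots, \int_{[b_0,b]} \eta_{2k} \right)$, where $\Lambda = \bigoplus_i [\eta_i](H_1(B;\mathbb{Z}))$ is the period lattice; integrality of the periods ensures $F$ is well-defined and that $\mathbb{R}^{2k}/\Lambda \cong \prod_{j=1}^k \mathbb{T}^2$ is a $2k$-torus (one needs $\Lambda$ to be a full-rank lattice, which follows because the $\eta_i$ span $T_b^*B$ at each point, forcing the image of $H_1(B;\mathbb{Z}) \to \mathbb{R}^{2k}$ to span). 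Since $dF = (\eta_1, \ldots, \eta_{2k})$ in the obvious sense and these forms are pointwise linearly independent, $F$ is a local diffeomorphism, hence (both sides being compact of the same dimension, target connected) a covering map.

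The final step is to show $F$ is a bijection, i.e.\ a degree-one covering. Here I would use that $B$ is parallelizable via the dual frame $X_1, \ldots, X_{2k}$ with $\eta_i(X_j) = \delta_{ij}$; these are commuting? — not necessarily, but one does not need commutativity. Instead, observe that $F$ being a covering of a torus means $B \cong \mathbb{R}^{2k}/\Lambda'$ for a sublattice $\Lambda' \supseteq \Lambda$; but $H^1(B;\mathbb{R})$ has dimension at least $2k$ (the classes $[\eta_i]$ are independent since they restrict to a basis of $H^1$ of the torus under $F^*$, which is injective for a covering), and $\dim B = 2k$, so $B$ has the rational cohomology of a $2k$-torus, forcing $b_1(B) = 2k$ and hence $[\Lambda' : \Lambda] $ finite with $F$ of finite degree $d$. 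To pin down $d = 1$: the map $F^* : H^{2k}(\prod \mathbb{T}^2;\mathbb{R}) \to H^{2k}(B;\mathbb{R})$ sends the fundamental class to $[\eta_1 \wedge \cdots \wedge \eta_{2k}]$, and by construction this generator pulls back to a form whose integral over $B$ equals $d$ times the covolume; choosing the $\eta_i$ to be (a basis of) an integral basis of $\Lambda$ makes the relevant integral equal to $\pm 1$, so $d = 1$ and $F$ is a diffeomorphism onto $\prod_{j=1}^k \mathbb{T}^2$.

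\textbf{Main obstacle.} The genuinely delicate point is the integrality/rationality adjustment of the periods together with verifying the period lattice $\Lambda$ has full rank $2k$ — one must be careful that perturbing the closed forms to have rational periods can be done simultaneously for all $2k$ forms while preserving the pointwise linear independence (an open, hence robust, condition) \emph{and} that the resulting lattice is cocompact. Everything after that — $F$ a local diffeomorphism, covering maps of compact manifolds, degree computation — is standard; the care is entirely in setting up $F$ so that it genuinely lands in a torus and is an honest covering rather than merely a local diffeomorphism. I would phrase the period argument by first passing to $H^1(B;\mathbb{Q})$, using that the independence of the $[\eta_i]$ over $\mathbb{R}$ plus density of $\mathbb{Q}^{2k}$ lets us choose a rational basis of the span arbitrarily close to $([\eta_1],\ldots,[\eta_{2k}])$, then clearing denominators.
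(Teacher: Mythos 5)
Your construction up to the covering map coincides with the paper's (Tischler-style) argument: perturb the forms, using the openness of pointwise independence, so that the classes become rational, integrate from a basepoint to get $F\colon B\to\R^{2k}/\Lambda$, and use pointwise independence plus compactness to conclude $F$ is a covering. The genuine gap is your final step. With $\Lambda=\bigoplus_i[\eta_i](H_1(B,\Z))$ the map $F$ need \emph{not} have degree one: on $B=\T^2=\R^2/\Z^2$ take $\eta_1=dx+dy$, $\eta_2=dx-dy$. These are closed, nowhere vanishing, pointwise independent, have integral periods, their classes are independent in $H^1(B,\Q)$, and $\Lambda=\Z\oplus\Z$; yet $F(x,y)=(x+y,\,x-y)$ is a two-to-one covering (the points $(x,y)$ and $(x+\tfrac12,y+\tfrac12)$ have the same image), consistent with $\int_B\eta_1\wedge\eta_2=-2$ while the covolume of $\Lambda$ is $1$. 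The degree is the index in $\Lambda$ of the image of the \emph{full} period homomorphism $H_1(B,\Z)\to\R^{2k}$, and no normalization of the individual $\eta_i$ controls that index, so the claim ``choosing the $\eta_i$ to be an integral basis of $\Lambda$ makes $d=1$'' does not hold. (Also, your intermediate assertion that $B\cong\R^{2k}/\Lambda'$ with $\Lambda'\supseteq\Lambda$ has the containment backwards: covers of $\R^{2k}/\Lambda$ correspond to sublattices $\Lambda'\subseteq\Lambda$, with $d=[\Lambda:\Lambda']$.)

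Fortunately, degree one is not needed, since the statement only asks that $B$ be diffeomorphic to a torus, not that $F$ itself be the diffeomorphism. Two standard repairs: (i) as in the paper, use the covering only to see that $\pi_1(B)$ injects into $\Z^{2k}$, hence is free abelian of rank at most $2k$; the $2k$ independent classes force the rank to be exactly $2k$, and then $B\cong\R^{2k}/\pi_1(B)$ is a $2k$-torus (equivalently: a compact cover of a torus is $\R^{2k}$ modulo a finite-index, hence full-rank, sublattice, so it is again a torus); or (ii) redefine $\Lambda$ as the image of the full period homomorphism $H_1(B,\Z)\to\R^{2k}$, which after the rationality adjustment is discrete, and is of full rank because a nontrivial real combination $\sum_i\lambda_i\eta_i$ with vanishing periods would be exact and nowhere vanishing, impossible on a compact manifold; with that $\Lambda$ one has $F_*\pi_1(B)=\pi_1(\R^{2k}/\Lambda)$ by construction, so $F$ genuinely has degree one. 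The rest of your proposal (rational perturbation preserving independence, discreteness and cocompactness of the lattice, covering from properness) is sound and matches the paper; also note that full rank of your direct-sum lattice follows from non-exactness of each $\eta_i$ rather than from the pointwise spanning of $T_b^*B$.
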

\begin{proof}
  Let $\{\theta_1,\ldots,\theta_{2k}\}$ be linearly independent nowhere-vanishing closed $1$-forms on $B$. Note that $\wedge_{j}\theta_{j}$ is a volume form for $B$, which is an open condition. So, we can choose $\theta_{j}$'s  such that $\theta_{j}$'s are linearly independent in $H^{1}(B,\Q)$. Fix $m_0\in B$ and consider the following map 
  $$\phi:B\longrightarrow\R^{2k}/\Gamma\cong\prod^{k}_{j=1}\T^2\quad\text{defined as}\quad\phi(m)=\bigoplus^{2k}_{j=1}\int_{[m_0,m]}\theta_{j}\,,$$ where integral is taken over any path connecting $m_0$ to $m$. Let $\Gamma=\oplus^{2k}_{j=1}[\theta_{j}](H_1(B,\Z))$. Then $\Gamma$ is a co-compact lattice in $\R^{2k}\,$. One can easily see that 
  $$\left(\bigwedge^{2k}_{\substack{j=1 \\ j\neq p}}\theta_{j}\right)\wedge d\phi_{p}\neq 0\,,$$ where $\phi_{p}:B\longrightarrow\R/[\theta_{p}](H_1(B,\Z))\cong S^1$ is the natural projection of $\phi$ onto the $p$-th component. This implies that $\phi_{j}$ $(1\leq j\leq 2k)$ is a submersion. Hence, $\phi$ is a submersion.  It follows that $\phi$ is a local diffeomorphism. Since $B$ is compact, $\phi$ is a proper map. Therefore, $\phi:B\longrightarrow\prod^{k}_{j=1}\T^2$ is a covering map and it induces an injective map 
  $$\pi_1(\phi):\pi_1(B)\longrightarrow\pi_1(\prod^{k}_{j=1}\T^2)\cong\bigoplus_{2k}\Z\,.$$ Then $\pi_1(B)\cong\bigoplus_{l}\Z$ for some $l\leq 2k$. Using the de Rham isomorphism and the universal coefficient theorem, we have $H^1_{dR}(B,\R)\cong\Hom(H_1(B,\R),\R)$ and $ H_1(B,\R) \cong H_1(B,\Z)\otimes_{\Z}\R$, respectively. Since the rank of $H^1_{dR}(B,\R)$ is $2k$, $\rank(H_1(B,\Z))=2k$. As $\pi_1(B)\cong H_1(B,\Z)$ (since $\pi_1(B)$ is abelian), $\pi_1(B)\cong\bigoplus_{2k}\Z\,.$  So, there exists a smooth covering map $\tilde{\phi}:\R^{2k}\longrightarrow B$, such that $B$ is diffeomorphic to $\R^{2k}/\pi_1(B) \cong \prod^{k}_{j=1}\T^2\,.$ 
\end{proof}

We have proved the following result.

\begin{theorem}\label{eg thm2}
    Let $M$ be a compact connected strong generalized Calabi-Yau manifold of type $k$. Let $\mathscr{S}$ be the induced foliation. Then, the following statements hold.
    
    \medskip
    \begin{enumerate}
    \setlength\itemsep{0.5em}
        \item There exists a smooth surjective submersion $\tilde{f}:M\longrightarrow\prod^{k}_{j=1}\T^2\,.$ 
        \item Suppose $\mathscr{S}$ has a closed leaf. Then, we have:
        
        \medskip
        \begin{enumerate}
        \setlength\itemsep{0.5em}
            \item All leaves are diffeomorphic and compact. Their holonomy group is trivial.
            \item The leaf space $M/\mathscr{S}$ is a smooth manifold.
            \item The submersion $\tilde{f}$ can be chosen so that the components of the fibers of $\tilde{f}$ are the symplectic leaves of $\mathscr{S}\,.$
        \end{enumerate}
    \end{enumerate}
\end{theorem}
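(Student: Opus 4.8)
\textbf{Proof plan for Theorem \ref{eg thm2}.}

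The plan is to assemble the theorem from the running argument that precedes it in this section, which already does most of the work; the task is mainly to organize the pieces and fill in the small gaps. For part (1), I would simply invoke the map $\tilde{f}:M\longrightarrow \C^k/\Gamma\cong\prod_j\T^2$ constructed above from the closed $1$-forms $\theta_j$, after perturbing them (using the openness of the non-degeneracy condition \eqref{nondegen}) so that the classes $[\re(\theta_j)],[\im(\theta_j)]$ are rational, hence $\Gamma$ is a co-compact lattice. The non-degeneracy condition then forces $\tilde{f}$ to be a submersion, as shown above, and surjectivity follows since $\tilde{f}(M)$ is both open (submersion) and closed (compactness of $M$) in the connected target. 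This part is essentially already proved in the text; I would just state it cleanly.

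For part (2), assume $\mathscr{S}$ has a closed leaf $S$. The argument given above shows that the set $V\subseteq M$ of points lying on leaves diffeomorphic to $S$ is open (via the local diffeomorphism $\psi_S: S\times\prod_j\D^2\to M$ built from the flows of $\re(X_j),\im(X_j)$, which preserve $\mathscr{S}$) and closed (via the parametrization argument at a boundary point $q\in\overline V$), hence $V=M$ by connectedness. This yields that $M\to B$ is a fiber bundle over a compact connected $2k$-manifold $B$ carrying $2k$ linearly independent nowhere-vanishing closed real $1$-forms (the $\theta_j$ and $\overline{\theta_j}$, or rather their real and imaginary parts, which are basic by \cite[Corollary 2.8]{Gua2}), so Proposition \ref{diff prop} gives $B\cong\prod_{j=1}^k\T^2$. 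For (2a): all fibers are diffeomorphic and compact by the bundle structure; the leaves are the connected components of the fibers, hence compact; and their holonomy is trivial because the local product structure $S\times\prod_j\D^2$ provides a global trivialization of the normal bundle compatible with the foliation along each leaf, so the holonomy homomorphism is trivial. For (2b): since $B\cong\prod_j\T^2$ is a manifold and the leaf space $M/\mathscr{S}$ is the quotient of $B$ by the (discrete, and in fact trivial-holonomy) identification of components of fibers, $M/\mathscr{S}$ is a smooth manifold — if the fibers are connected it equals $B$, otherwise it is a finite covering space of $B$; either way a manifold. For (2c): one chooses the $\theta_j$ so that the fibers of $\tilde f$ agree with the fibers of $M\to B$ (possible since both are cut out by the same basic $1$-forms up to the lattice), and then the symplectic leaves are exactly the connected components of these fibers.

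The main obstacle I anticipate is not any single hard estimate but rather the bookkeeping in (2b)–(2c): one must be careful that the fiber bundle $M\to B$ and the submersion $\tilde f:M\to\prod_j\T^2$ can be taken to be the \emph{same} map, and that passing from ``fibers'' to ``connected components of fibers = leaves'' is handled consistently when the fibers are disconnected. The cleanest route is to first establish the fiber bundle $M\to B$ with $B$ the naive leaf space completion, prove $B\cong\prod_j\T^2$ via Proposition \ref{diff prop}, and only then identify $\tilde f$ with the composition $M\to B\hookrightarrow$ (covering) $\prod_j\T^2$; the triviality of holonomy from (2a) is what guarantees the leaf space is Hausdorff and smooth, closing the loop. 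Everything else — submersivity, openness/closedness, the flow argument — is already carried out in the text and only needs to be cited.
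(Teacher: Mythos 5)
Your proposal is correct and follows essentially the same route as the paper: part (1) is the integration map built from the rationally perturbed closed $1$-forms (submersion from non-degeneracy, surjectivity from open-plus-closed image), and part (2) is the flow/open--closed argument giving a leaf-preserving product structure and hence a fibration over a compact connected $2k$-manifold carrying $2k$ nowhere-vanishing closed $1$-forms, identified with $\prod^{k}_{j=1}\T^2$ via Proposition \ref{diff prop}, with $\tilde f$ taken to be the composition $M\to B\to\prod^{k}_{j=1}\T^2$. The only slip is cosmetic: in the paper's setup $B$ is the leaf space itself (the fibers are the connected leaves), so the disconnected-fiber bookkeeping you worry about does not arise, and in any case the leaf space would cover $B$ rather than be a quotient of it.
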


\section{Nilpotent Lie groups, nilmanifolds, and SGH bundles}\label{sec nilpotent}
In this section, we give a complete characterization of the leaf space of a left invariant GCS on a simply connected nilpotent Lie group and its associated nilmanifolds. Finally, we construct some examples of nontrivial SGH bundles on the Iwasawa manifolds which show that the category of SGH bundles is in general different from the category of holomorphic bundles on the leaf space. 

\medskip
Let $G^{2n}$ be a simply connected nilpotent Lie group and $\mathfrak{g}$ be its real lie algebra. Suppose $G$ has a left-invariant GCS. Since $G$ is diffeomorphic to $\mathfrak{g}$ via the exponential map, any left-invariant GCS is regular of constant type, say $k\,.$ The canonical line bundle, corresponding to a left-invariant GCS, is trivial as $G$ is contractible. So, we can choose a global trivialization of the form 
\begin{equation}\label{rho}
\rho=e^{B+i\omega}\wedge\Omega\,,   
\end{equation}
where $B,\omega$ are real left invariant $2$-forms and $\Omega=\theta_1\wedge\cdots\wedge\theta_{k}$ is a complex decomposable $k$-form with left invariant complex $1$-forms $\theta_{j}$ $(1\leq j\leq k)\,.$ 

\medskip
Let $\mathscr{S}$ be the induced foliation and $\mathcal{N}$ be its normal bundle. Then we know that $T\mathscr{S}=\ker(\Omega\wedge\overline{\Omega})\,.$ Using \cite[Theorem 4]{alek12}, the left-invariant GCS corresponds to a real Lie subalgebra $\mathfrak{s}\subset\mathfrak{g}$ such that $\mathfrak{s}\cong T_{id}\mathscr{S}$ where $id\in G$ is the identity element. Since any (simply connected) nilpotent Lie group is solvable, $S=\exp(\mathfrak{s})$ is a closed simply connected Lie subgroup of $G$ by \cite[Section II]{claude41}. Note that, by the closed subgroup theorem, $S$ is also an embedded submanifold of $G$ and so, $T_{id}S=T_{id}\mathscr{S}\,.$ 
Observe that
$$TG\cong G\times\mathfrak{g}\quad\text{and}\quad T\mathscr{S}\cong G\times\mathfrak{s}\,.$$  Thus any leaf of $\mathscr{S}$ is diffeomorphic to $S$ via the left multiplication map. This implies that $G$ is foliated by the left cosets of $S$, that is, the leaf space $G/\mathscr{S}$ is $G/S\,.$ Since $S$ is closed, $G/S$ is a smooth homogeneous manifold such that the quotient map $\pi_{S}:G\longrightarrow G/S$ is a smooth submersion.

\medskip
Contractibily of $G$ also implies that $\mathcal{N}\cong G\times\R^{2k}\,.$ Let $\langle\,,\,\rangle$ be an inner product on $\R^{2k}\,.$ Consider the metric $\langle\,,\,\rangle^{'}$ on $G\times\R^{2k}$ defined as $$\langle(g,v),(h,w)\rangle^{'}=\langle v,w\rangle\quad\text{for all $(g,v),(h,w)\in G\times\R^{2k}$}\,.$$
Note that $\langle\,,\,\rangle^{'}$ is $G$-invariant. Then there exists a left-invariant metric, say $h$ on $\mathcal{N}$ such that $(\mathcal{N},h)$ is isometric to $(G\times\R^{2k},\langle\,,\,\rangle^{'})\,.$ This $h$ is a  left-invariant transverse metric on $G$.

\vspace{0.3em}
Hence, we have established the following result. 

\begin{theorem}\label{eg thm}
    Let $G$ be a simply connected nilpotent Lie group with $\mathfrak{g}$ as its real lie algebra. Suppose $G$ has a left-invariant GCS. Let  $\mathscr{S}$ be the foliation induced by the GCS.  Then, the following hold.

    \medskip
    \begin{enumerate}
    \setlength\itemsep{0.5em}
        \item All leaves of $\mathscr{S}$ are diffeomorphic to the leaf through the identity element of $G$.
        \item $\mathscr{S}$ is a Riemannian foliation. In particular, $G$ admits a transverse left-invariant metric.
        \item $G$ is foliated by the left cosets of $S$ where $S\subset G$ is a closed simply connected Lie subgroup. The leaf space $G/\mathscr{S}$ is the homogeneous manifold $G/S\,.$
    \end{enumerate}
\end{theorem}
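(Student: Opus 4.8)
The plan is to establish the three assertions in order, drawing on the structural facts about left-invariant generalized complex structures on nilpotent Lie groups that were recalled before the statement. First I would recall from \cite{alek12} that a left-invariant GCS on $G$ corresponds to a real Lie subalgebra $\mathfrak{s} \subset \mathfrak{g}$ with $\mathfrak{s} \cong T_{id}\mathscr{S}$, where the tangent distribution of the foliation is $T\mathscr{S} = \ker(\Omega \wedge \overline{\Omega})$ for the global trivialization $\rho = e^{B+i\omega}\wedge\Omega$ of the canonical line bundle; the key point is that the distribution $T\mathscr{S}$ is left-invariant because $B$, $\omega$ and the $\theta_j$ are all left-invariant. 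Then the trivializations $TG \cong G \times \mathfrak{g}$ and $T\mathscr{S} \cong G \times \mathfrak{s}$ identify the foliation $\mathscr{S}$ with the foliation of $G$ by the left cosets $gS$ of the connected subgroup $S = \exp(\mathfrak{s})$.

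For assertion (1), the observation is that left translation $L_g : G \to G$ is a diffeomorphism carrying the leaf through $id$ (namely $S$) onto the leaf through $g$ (namely $gS$), since $L_g$ preserves the left-invariant distribution $T\mathscr{S}$; hence every leaf is diffeomorphic to $S = \exp(\mathfrak{s})$. For assertion (2), I would produce the transverse metric explicitly: contractibility of $G$ gives $\mathcal{N} \cong G \times \R^{2k}$ as a bundle, and transporting a fixed inner product on $\R^{2k}$ by this trivialization gives a $G$-invariant fiber metric $\langle\,,\,\rangle'$; pulling this back yields a left-invariant metric $h$ on $\mathcal{N}$. Since $h$ is left-invariant and the foliation is left-invariant, the Bott partial connection together with $h$ makes $\mathscr{S}$ a Riemannian foliation (the holonomy acts by isometries of the transverse metric because everything in sight is $G$-invariant), and $h$ is the desired transverse left-invariant metric. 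For assertion (3), the crucial input is that $G$, being simply connected nilpotent (in particular solvable), has the property that $S = \exp(\mathfrak{s})$ is a \emph{closed}, simply connected Lie subgroup — this is \cite[Section II]{claude41}; by the closed subgroup theorem $S$ is an embedded submanifold with $T_{id}S = \mathfrak{s} = T_{id}\mathscr{S}$. Closedness then guarantees that $G/S$ is a smooth homogeneous manifold and that the projection $\pi_S : G \to G/S$ is a smooth submersion whose fibers are exactly the leaves, so $G/\mathscr{S} = G/S$ is a smooth manifold.

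The main obstacle I expect is verifying that $S = \exp(\mathfrak{s})$ is closed in $G$: this is not automatic for an arbitrary Lie subgroup (non-closed one-parameter subgroups of a torus are the standard counterexample), and it is precisely the solvability/nilpotency of $G$, via the result of Chevalley--Mal'cev type cited as \cite{claude41}, that rescues the argument. Once closedness is in hand, the passage from ``$S$ closed'' to ``$G/S$ a smooth homogeneous space with $\pi_S$ a submersion'' is standard homogeneous-space theory, and the identification of the fibers of $\pi_S$ with the leaves of $\mathscr{S}$ follows because both the fibers and the leaves are the left cosets of $S$. Assertions (1) and (2) are then essentially bookkeeping with left-invariance once the subalgebra picture is set up.
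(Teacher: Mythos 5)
Your proposal is correct and follows essentially the same route as the paper: invoking \cite[Theorem 4]{alek12} to get the subalgebra $\mathfrak{s}\cong T_{id}\mathscr{S}$, using solvability and \cite{claude41} to conclude $S=\exp(\mathfrak{s})$ is closed and simply connected so that $G/S$ is a homogeneous manifold whose projection has the leaves (left cosets of $S$) as fibers, identifying all leaves with $S$ via left translations, and building the transverse left-invariant metric from the trivialization $\mathcal{N}\cong G\times\R^{2k}$ given by contractibility of $G$. The only difference is your added remark about the Bott connection and holonomy acting by isometries, which is a harmless elaboration of the same argument.
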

Let $\Gamma\subset G$ be a maximal lattice (that is, cocompact, discrete subgroup). Malcev (cf. \cite{malcev49}) showed that such a lattice exists if and only if $\mathfrak{g}$ has rational structure constants in some basis. Let $M^{2n}=\Gamma\backslash G$ be a nilmanifold with a left-invariant GCS. Using \cite[Theorem 3.1]{cav04}, we can say that this left-invariant GCS is generalized Calabi-Yau. This GCS on $M$ is induced from a left-invariant GCS on $G$. Let $\rho$ be a global trivialization for the canonical line bundle of the left-invariant GCS on $G$ as defined in \eqref{rho}. It also induces a global trivialization for the canonical line bundle of the left-invariant GCS on $M$. Let $\mathscr{S}_{M}$ be the induced foliation corresponding to this GCS on $M$ and $S_{M}$ be the leaf through the coset $ \Gamma \in M$. Note that, since $\mathscr{S}$ is $\Gamma$-invariant, $\mathscr{S}_{M}$ is just induced by $\mathscr{S}$, that is, $\mathscr{S}_{M}=\Gamma\backslash\mathscr{S}\,.$  

\vspace{0.3em}
Now, the quotient map $\pi_{\Gamma}: G\longrightarrow M$ is a principal $\Gamma$-bundle as well as a covering map. It induces a principal $(S\cap\Gamma)$-bundle $\pi_{\Gamma}|_{S}:S\longrightarrow S_{M}$ and so the fundamental group $\pi_1(S_{M})=S\cap\Gamma\,.$ Therefore, we can identify $S_{M}=(S\cap\Gamma)\backslash S$. Note that $\pi_{\Gamma}^{-1}(S_{M})=\Gamma S$. By \cite[Theorem 1.13]{raghunathan72}, $S\cap\Gamma$ is a maximal lattice in $S$ if and only if $\Gamma S$ is closed. Thus,
$S_{M}=(S\cap\Gamma)\backslash S\subset M$ is a compact leaf if and only if $\Gamma S$ is closed.  The transverse left-invariant metric on $G$ (see Theorem \ref{eg thm}),  is preserved by $\Gamma$-action. Therefore, it induces a transverse metric on $M$. This implies that $\mathscr{S}_{M}$ is a Riemannian foliation. 

\vspace{0.3em}
Consider the natural left $\Gamma$-action on $G/S$ defined as $\eta\cdot gS=(\eta g)S$ and its quotient space $\widehat{M}:=\Gamma\backslash G/S$ with the quotient topology such that $\widehat{\pi}:G/S\longrightarrow\widehat{M}$ is continuous. Note that this map is also open. So, we have the following diagram,  \[\begin{tikzcd}[ampersand replacement=\&]
	\& G \\
	M \&\& {G/S} \\
	\\
	{M/\mathscr{S}_{M}} \&\& {\widehat{M}}
	\arrow["{\pi_{\Gamma}}"', from=1-2, to=2-1]
	\arrow["{\pi_{S}}", from=1-2, to=2-3]
	\arrow["{\tilde{\pi}}"', from=2-1, to=4-1]
	\arrow["{\widehat{\pi}}", from=2-3, to=4-3]
\end{tikzcd}\]
where $M/\mathscr{S}_{M}$ is the leaf space and $\tilde{\pi}$ is the quotient map as defined in \eqref{leaf sp map}. We will use $\tilde{S}_{x}$ to denote the leaf through $x\in M\,.$ Let $g\in G$ and consider the map 
\begin{equation}\label{map}
    \Phi:M/\mathscr{S}_{M}\longrightarrow\widehat{M}\quad\text{defined as}\quad\Phi(\tilde{S}_{\pi_{\Gamma}(g)})=\widehat{\pi}(\pi_{S}(g))\,.
\end{equation}
Let $g,g^{'}\in G$ such that $\pi_{\Gamma}(g)$ and $\pi_{\Gamma}(g^{'})$ are in the same leaf, that is, $\tilde{S}_{\pi_{\Gamma}(g)}=\tilde{S}_{\pi_{\Gamma}(g^{'})}\,.$ To show $\Phi$ is well-defined, we need to show that $\widehat{\pi}(\pi_{S}(g))=\widehat{\pi}(\pi_{S}(g^{'}))\,.$ 

\medskip
Let $\gamma:[0,1]\longrightarrow\tilde{S}_{\pi_{\Gamma}(g)}$ be a path such that $\gamma(0)=\pi_{\Gamma}(g)$ and $\gamma(1)=\pi_{\Gamma}(g^{'})\,.$ Since $G$ is the universal cover of $M$, the path $\gamma$ lifts to a unique path $\tilde{\gamma}$ such that $\tilde{\gamma}(0)=g$ and $\tilde{\gamma}(1)=g^{''}$ with $\pi_{\Gamma}(g^{'})=\pi_{\Gamma}(g^{''})\,.$ Now the path $\tilde{\gamma}$ is contained in one of the connected components of $\pi_{\Gamma}^{-1}(\tilde{S}_{\pi_{\Gamma}(g)})$, which is a leaf of $\mathscr{S}$, say, $\tilde{g}S$ for some $\tilde{g}\in G$, such that $\pi_{\Gamma}|_{\tilde{g}S}:\tilde{g}S\longrightarrow\tilde{S}_{\pi_{\Gamma}(g)}$ is a universal covering map. So, we get the following commutative diagram,
\[\begin{tikzcd}[ampersand replacement=\&]
	\&\& {\tilde{g}S} \&\& G \\
	{[0,1]} \\
	\&\& {\tilde{S}_{\pi_{\Gamma}(g)}} \&\& M
	\arrow["{{\pi_{\Gamma}|_{\tilde{g}S}}}"', from=1-3, to=3-3]
	\arrow["{{\pi_{\Gamma}}}", from=1-5, to=3-5]
	\arrow["{{\tilde{\gamma}}}", from=2-1, to=1-3]
	\arrow["\gamma"', from=2-1, to=3-3]
	\arrow["i", hook, from=1-3, to=1-5]
	\arrow["{\tilde{i}}", from=3-3, to=3-5]
\end{tikzcd}\] where $i$ is inclusion and $\tilde{i}$ is injective immersion. Since $g,g^{''}\in \tilde{g}S$, we have $\pi_{S}(g)=\pi_{S}(g^{''})\,.$ Now $g^{'}$ and $g^{''}$ are in the same fiber of the universal covering map $\pi_{\Gamma}$, and as $\pi_1(M)=\Gamma$, there exists $\eta\in\Gamma$ such that $\eta\cdot g^{''}=g^{'}\,.$ Note that $\Gamma$ preserves the foliation $\mathscr{S}$ and so, $\pi_{S}(\eta\cdot g^{''})=\eta\cdot\pi_{S}(g^{''})\,.$ Therefore, we can see that $\pi_{S}(g^{'})=\eta\cdot\pi_{S}(g)$ which implies that $\widehat{\pi}(\pi_{S}(g))=\widehat{\pi}(\pi_{S}(g^{'}))\,.$  Hence, the map $\Phi$ is well defined.

\medskip
Now define the inverse of $\Phi$ as $$\Phi^{-1}\bigg(\widehat{\pi}(\pi_{S}(g))\bigg)=\tilde{S}_{\pi_{\Gamma}(g)}\,.$$ We need to show that $\Phi^{-1}$ is well-defined. For that, let $g,g^{'}\in G$ with the condition that $\widehat{\pi}(\pi_{S}(g))=\widehat{\pi}(\pi_{S}(g^{'}))\,.$ It is enough to show that $\tilde{S}_{\pi_{\Gamma}(g)}=\tilde{S}_{\pi_{\Gamma}(g^{'})}\,.$ 
The given condition on $\pi_{S}(g)$ and $\pi_{S}(g^{'})$ implies that there exists $\eta\in\Gamma$ such that $$\pi_{S}(g)=\eta\cdot\pi_{S}(g^{'})=\pi_{S}(\eta\cdot g^{'})\,.$$ Then, there exists $\tilde{g}\in G$ such that $g,\eta\cdot g^{'}\in\tilde{g}S\,,$ that is, they belong to the same leaf of $\mathscr{S}\,.$ This implies that $\pi_{\Gamma}(g)=\pi_{\Gamma}(\eta\cdot g^{'})=\pi_{\Gamma}(g^{'})\,.$ Hence $\tilde{S}_{\pi_{\Gamma}(g)}=\tilde{S}_{\pi_{\Gamma}(g^{'})}\,.$ So $\Phi^{-1}$ is well-defined.

\medskip
Note that $\pi_{\Gamma},\tilde{\pi},\pi_{S}$ and $\widehat{\pi}$ are open maps and we have the following commutative diagram: 
\[\begin{tikzcd}[ampersand replacement=\&]
	\& G \\
	M \&\& {G/S} \\
	\\
	{M/\mathscr{S}_{M}} \&\& {\widehat{M}}
	\arrow["{\pi_{\Gamma}}"', from=1-2, to=2-1]
	\arrow["{\pi_{S}}", from=1-2, to=2-3]
	\arrow["{\tilde{\pi}}"', from=2-1, to=4-1]
	\arrow["{\widehat{\pi}}", from=2-3, to=4-3]
	\arrow["\Phi", from=4-1, to=4-3]
\end{tikzcd}\]
This implies both $\Phi$ and $\Phi^{-1}$ are continuous and so, $\Phi$ is a homeomorphism.

\medskip
Let $x,y\in\widehat{M}$. There exist $g,g^{'}\in G$ such that $\widehat{\pi}^{-1}(x)=\Gamma gS$ and $\widehat{\pi}^{-1}(y)=\Gamma g^{'}S\,.$ Now the map $\Gamma gS\longrightarrow\Gamma g^{'}S$ defined as $\eta gs\longmapsto\eta g^{'}s$ is a diffeomorphism. In particular, both orbits are diffeomorphic to $\Gamma S\,.$ Suppose $\Gamma S$ is closed. Set 
$$\ker(\widehat{\pi}):=\left\{(gS,g^{'}S)\,|\,\widehat{\pi}(gS)=\widehat{\pi}(g^{'}S)\right\}\subset G/S\times G/S\,.$$ To show $\widehat{M}$ is Hausdroff, it is enough to show that $\ker(\widehat{\pi})$ is closed, because $\widehat{\pi}$ is an open surjection.

\medskip
Let $\{(g^1_{n}S,g^2_{n}S)\}_{n}\in\ker(\widehat{\pi})$ be a sequence converging to $(g^1S,g^2S)\,.$ Then $\{g^{j}_{n}S\}_{n}$ is converging to $g^{j}S$ for $j=1,2$. By the assumption on $\{g^{j}_{n}S\}_{n}$ $(j=1,2)$, they belong to the same $\Gamma$-orbit, in other words, there exist $\tilde{g}$ such that $g^{j}_{n}S\in\Gamma\tilde{g}S$ $(j=1,2)$. Since any two $\Gamma$-orbits are diffeomorphic, and $\Gamma S$ is closed, $\Gamma\tilde{g}S$ is also closed. This implies that $g^{j}S\in\Gamma\tilde{g}S$ for $j=1,2$. Therefore, $\widehat{\pi}(g^1 S)=\widehat{\pi}(g^2 S)\,.$ This implies $(g^1S,g^2S)\in\ker(\widehat{\pi})$ and $\ker(\widehat{\pi})$ is closed. Hence, $M/\mathscr{S}_{M}$ is Hausdroff. So, each leaf is closed as well as compact in $M$. Since $\mathscr{S}_{M}$ is a Riemannian foliation, the holonomy group of any leaf is finite, and $M/\mathscr{S}_{M}$ is a smooth orbifold. Hence we have proved the following.

\begin{theorem}\label{eg thm3}
 Let $M=\Gamma\backslash G$ be a nilmanifold with a left-invariant GCS. Let $\mathscr{S}_{M}$ be the induced foliation. Then, the following hold.

 \medskip
 \begin{enumerate}
 \setlength\itemsep{0.5em}
     \item $\mathscr{S}_{M}$ is a Riemannian foliation.
     \item $M/\mathscr{S}_{M}$ is homeomorphic to $\Gamma\backslash G/S$ where $S\subset G$ is a closed simply connected Lie subgroup.
     \item $M/\mathscr{S}_{M}$ is a compact smooth orbifold $ \iff \Gamma S$ is closed $\iff (S\cap\Gamma)\backslash S$ is compact.
 \end{enumerate}
\end{theorem}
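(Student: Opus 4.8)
The plan is to produce, via the diagram relating $G$, $M$, $G/S$, and $\widehat{M}$, the homeomorphism $\Phi: M/\mathscr{S}_{M}\to\widehat{M}$ and then to transfer the Riemannian and orbifold structure facts established for $G$ (Theorem \ref{eg thm}) and for $M$ (Theorem \ref{orbi thm}, applied under the orbifold hypothesis) along this identification. First I would record the structural setup already assembled in the paragraphs preceding the statement: the left-invariant GCS on $M$ is induced from one on $G$ (using \cite{cav04, alek12}); the induced tangent distribution is $T\mathscr{S}=\ker(\Omega\wedge\overline{\Omega})$ and corresponds to a real Lie subalgebra $\mathfrak{s}\subset\mathfrak{g}$; the subgroup $S=\exp(\mathfrak{s})$ is closed and simply connected (solvability plus \cite{claude41}), hence an embedded submanifold; and $G$ is foliated by left cosets of $S$, so $G/\mathscr{S}=G/S$. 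Descending the $\Gamma$-invariant foliation $\mathscr{S}$ gives $\mathscr{S}_{M}=\Gamma\backslash\mathscr{S}$, and each leaf $S_{M}$ through $\Gamma\in M$ is $(S\cap\Gamma)\backslash S$ with $\pi_{\Gamma}^{-1}(S_{M})=\Gamma S$.

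Next I would establish part (1). The transverse left-invariant metric $h$ on $G$ constructed before Theorem \ref{eg thm} (via the trivialization $\mathcal{N}\cong G\times\R^{2k}$ and a $G$-invariant inner product) is preserved by the $\Gamma$-action, so it descends to a transverse metric on $M$; by the characterization of Riemannian foliations this shows $\mathscr{S}_{M}$ is Riemannian. For part (2), I would prove $\Phi$ in \eqref{map} is a well-defined homeomorphism. Well-definedness of $\Phi$ uses the path-lifting argument: given $\pi_{\Gamma}(g),\pi_{\Gamma}(g')$ in the same leaf, lift a connecting path in the leaf to $G$ using that $G$ is the universal cover and that the lifted path stays in a single leaf $\tilde{g}S$ of $\mathscr{S}$ (since $\pi_{\Gamma}$ restricted to a leaf of $\mathscr{S}$ covers a leaf of $\mathscr{S}_{M}$), concluding $\pi_{S}(g)=\pi_{S}(g'')$ and then, via $\eta\in\Gamma$ with $\eta g''=g'$ and $\Gamma$-invariance of $\mathscr{S}$, that $\widehat{\pi}(\pi_S(g))=\widehat{\pi}(\pi_S(g'))$. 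The inverse $\Phi^{-1}$ is well-defined by a symmetric argument. Continuity of both follows from openness of $\pi_{\Gamma},\tilde{\pi},\pi_{S},\widehat{\pi}$ together with commutativity of the square, as spelled out before the statement.

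For part (3), I would argue the chain of equivalences. The equivalence $\Gamma S$ closed $\iff (S\cap\Gamma)\backslash S$ compact is \cite[Theorem 1.13]{raghunathan72}, saying $S\cap\Gamma$ is a cocompact lattice in $S$ exactly when $\Gamma S$ is closed. For the remaining equivalence with ``$M/\mathscr{S}_{M}$ is a compact smooth orbifold'': if $\Gamma S$ is closed, then the leaf $S_M$ is a closed embedded submanifold (its $\pi_\Gamma$-preimage $\Gamma S$ being closed), and by the homogeneity argument all leaves are diffeomorphic to $\Gamma S/S\cong(S\cap\Gamma)\backslash S$, hence closed; the Hausdorffness of $M/\mathscr{S}_{M}$ then follows from the $\ker(\widehat{\pi})$-closedness argument given before the statement (using that any two $\Gamma$-orbits in $G/S$ are diffeomorphic, so all are closed once one is). Combining Hausdorffness, compactness of $M$, and the fact that $\mathscr{S}_{M}$ is a Riemannian foliation with closed leaves, Theorem \ref{orbi thm} (or its proof, via finiteness of holonomy for closed leaves of a Riemannian foliation with closed leaves) gives that $M/\mathscr{S}_{M}$ is a compact smooth orbifold. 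Conversely, if $M/\mathscr{S}_{M}$ is a smooth orbifold, Theorem \ref{orbi thm}(1) forces every leaf to be closed embedded, in particular $S_M=(S\cap\Gamma)\backslash S$ is compact, hence $\Gamma S$ is closed.

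\textbf{Main obstacle.} The delicate point is the well-definedness of $\Phi$ and $\Phi^{-1}$ — specifically controlling how the universal cover $G\to M$ interacts with the two foliations, i.e. verifying that a path inside a leaf of $\mathscr{S}_{M}$ lifts to a path inside a single leaf $\tilde{g}S$ of $\mathscr{S}$ and that $\pi_\Gamma|_{\tilde g S}\colon \tilde g S\to S_{\pi_\Gamma(g)}$ is the universal covering. Once this leaf-lifting is pinned down, transferring the Riemannian-foliation and orbifold statements is essentially bookkeeping using Theorems \ref{orbi thm} and \ref{eg thm} and the cited lattice theorem.
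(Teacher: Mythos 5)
Your proposal reproduces the paper's own argument essentially verbatim: the same descent of the transverse left-invariant metric for (1), the same homeomorphism $\Phi$ with the path-lifting/well-definedness argument for (2), and the same combination of Raghunathan's theorem with the $\ker(\widehat{\pi})$-closedness (Hausdorffness) argument and finiteness of holonomy for Riemannian foliations for (3). It is correct as an outline; the only minor imprecision is the identification ``$\Gamma S/S\cong(S\cap\Gamma)\backslash S$'' (the leaf is $(S\cap\Gamma)\backslash S$, obtained from $\pi_{\Gamma}^{-1}(S_M)=\Gamma S$, not from the coset space $\Gamma S/S$), which does not affect the argument.
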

\begin{example}\label{imp exmple3}
Consider the complex Heisenberg group 
$$G=\left\{
      \begin{pmatrix}
      1 & z_1 & z_3\\
      0 & 1 & z_2\\
      0 & 0 & 1
     \end{pmatrix}\,\Big|\,z_{j}\in\C\,(j=1,2,3)
    \right\}\,.$$
 $G$ is a $6$-dimensional simply connected nilpotent lie group. Consider a maximal lattice  $\Gamma\subset G$  defined as  
$$\Gamma=\left\{
      \begin{pmatrix}
      1 & a_1 & a_3\\
      0 & 1 & a_2\\
      0 & 0 & 1
     \end{pmatrix}\,\Big|\,a_{j}\in\Z\oplus i\Z\,(j=1,2,3)
    \right\}\,.$$

Then, $\Gamma$ acts on $G$ by left multiplication and the corresponding nilmanifold $M=\Gamma\backslash G$ is known as the Iwasawa manifold. Let $\mathfrak{g}$ be the real lie algebra of $G$. Choose a basis $\{e_1,e_2,\ldots,e_6\}\in\mathfrak{g}^{*}$ by setting
$$dz_1=e_1+ie_2\,,\quad dz_2=e_3+ie_4\,,\quad\text{and}\quad z_1dz_2-dz_3=e_5+ie_6\,.$$
These real $1$-forms are pullbacks of the corresponding $1$-forms on $M$, which we denote by the same symbols. They satisfy the following equations:
\begin{align*}
    &de_{j}=0\quad\forall\,\,1\leq j\leq 4\,.\\
    &de_5=e_{13}+e_{42}\,\quad\text{and}\quad de_6=e_{14}+e_{23}\,.
\end{align*}
Here, we make use of the notation $e_{jl}=e_{j}\wedge e_{l}$ for all $j,l\in\{1,\ldots, 6\}\,.$

\medskip
Consider the mixed complex form $$\rho=e^{i(e_{56})}(e_1+ie_2)\wedge(e_3+ie_4)\quad\text{on $M$}\,.$$ Note that $de_5\wedge de_6=0$ and $(e_1+ie_2)\wedge(e_3+ie_4)=de_5+i\,de_6$. Then, we have,
\begin{align*}
   d\rho&=e^{i(e_{56})}\wedge d(ie_{56})\wedge(e_1+ie_2)\wedge(e_3+ie_4)\\
   &=e^{i(e_{56})}\wedge d(ie_{56})\wedge(de_5+i\,de_6)\\
   &=-e^{i(e_{56})}\wedge(e_6+ie_5)\wedge de_5\wedge de_6\\
   &=0\,,
\end{align*}
and \begin{align*}
    e_{56}\wedge(e_1+ie_2)\wedge(e_3+ie_4)\wedge(e_1-ie_2)\wedge(e_3-ie_4)&=e_{56}\wedge(de_5+i\,de_6)\wedge(de_5-i\,de_6)\\
    &=-ie_{56}\wedge de_5\wedge de_6\\
    &\neq 0\,.
\end{align*}
By \cite[Theorem 3.38, Theorem 4.8]{Gua}, $M$ admits a type $2$ strong generalized Calabi-Yau structure whose canonical line bundle is generated by $\rho$. It is straightforward to see that $\rho$, when considered as a mixed form on $G$, gives a left-invariant GCS on $G$ which is a strong generalized Calabi-Yau structure.

\medskip
Let $f:G\longrightarrow\C^{2}$ be the natural projection defined as
$$\tilde{\pi}(\begin{pmatrix}
      1 & z_1 & z_3\\
      0 & 1 & z_2\\
      0 & 0 & 1
     \end{pmatrix})=(z_1,z_2)\,.$$
One can see that $\Gamma$-acts on $(z_1, z_2)$ via left translation by $\Z\oplus i\Z$. This shows that $f$ induces a surjective submersion $$\tilde{f}:M\longrightarrow\bigoplus^2_{j=1}\C/\Z\oplus i\Z\cong\T^2\times\T^2\,,$$ that satisfies the following commutative diagram,
\[\begin{tikzcd}[ampersand replacement=\&]
	G \&\& {\C^2} \\
	\\
	M \&\& {\T^2\times\T^2}
	\arrow["f", from=1-1, to=1-3]
	\arrow[from=1-3, to=3-3]
	\arrow["{\pi_{\Gamma}}"', from=1-1, to=3-1]
	\arrow["{\tilde{f}}"', from=3-1, to=3-3]
\end{tikzcd}\]
where $\C^2\longrightarrow\T^2\times\T^2$ is the natural quotient map.
So, there exist $\theta_1,\theta_2\in\Omega^2(\T^2\times\T^2,\C)$ such that $\tilde{f}^{*}(\theta_1)=e_1+ie_2$ and $\tilde{f}^{*}(\theta_2)=e_3+ie_4\,.$ Now, each fiber of this submersion is diffeomorphic to $\C/\Z\oplus i\Z\cong\T^2\,.$ Therefore, the foliation induced by the strong generalized Calabi-Yau structure on $M$ is simple with leaf space  $\T^2\times\T^2$ and with the fibers as leaves.
\end{example}

Let $M$ be a type $k$ regular GC manifold such that the leaf space $M/\mathscr{S}$ of the induced foliation $\mathscr{S}$ is a smooth manifold. Then $\mathscr{M}=M/\mathscr{S}$, as defined in \eqref{leaf sp}, becomes a complex manifold of complex dimension $k$ and the quotient map $\tilde{\pi}:M\longrightarrow\mathscr{M}$, as defined in \eqref{leaf sp map}, becomes a smooth surjective submersion. In particular, $\tilde{\pi}$ is an open map.

\medskip
For any open set $V\subseteq M$, consider the map $\tilde{\pi}^{\#}:\tilde{\pi}^{-1}\mathcal{O}_{\mathscr{M}}\longrightarrow\mathcal{O}_{M}$ defined as
\begin{equation}\label{pull back eq}
\tilde{\pi}^{\#}(f)=f\circ\tilde{\pi}\quad\text{for $f\in\mathcal{O}_{\mathscr{M}}(\tilde{\pi}(V))$}\,,    
\end{equation}
where $\mathcal{O}_{\mathscr{M}}$ is the sheaf of holomorphic functions on $\mathscr{M}$. To show $\tilde{\pi}^{\#}$ is an isomorphism, it is enough to show $\tilde{\pi}^{\#}_{x}:(\tilde{\pi}^{-1}\mathcal{O}_{\mathscr{M}})_{x}\longrightarrow(\mathcal{O}_{M})_{x}$ is an isomorphism for any $x\in M$.

\medskip
Let $x\in M$ and set $y=\tilde{\pi}(x)$. Let $\{U,\phi\}$ be a co-ordinate chart around $y$ in $\mathscr{M}$, and let $S_{x}=\tilde{\pi}^{-1}(y)$ be the fiber (leaf) over $y$. Then, choosing $U$ sufficiently small, we have the following commutative diagram by Theorem \ref{orbi thm},
\[\begin{tikzcd}[ampersand replacement=\&]
	{x\in V=\tilde{\pi}^{-1}(U)} \&\& {\tilde{S_{x}}\times U^{'}} \\
	\\
	U\subset\mathscr{M} \&\& {U^{'}\subset\C^{k}}
	\arrow["{\tilde{\phi}}", from=1-1, to=1-3]
	\arrow["{\tilde{\pi}}"', from=1-1, to=3-1]
	\arrow["\phi"', from=3-1, to=3-3]
	\arrow["{\pr_2}", from=1-3, to=3-3]
\end{tikzcd}\]
where $\tilde{\phi}$ is a GH homeomorphism, $U^{'}\subset\C^{k}$ is an open set, and $\tilde{S_{x}}$ is the universal cover of $S_{x}$. Note that $\mathcal{O}_{V}$ is isomorphic to $\tilde{\phi}^{-1}\mathcal{O}_{S_{x}\times\C^{k}}$ via $\tilde{\phi}^{\#}$, defined in a similar manner as in \eqref{pull back eq}, and $\mathcal{O}_{S_{x}\times U^{'}}=\pr_2^{-1}\mathcal{O}_{U^{'}}\,.$ Using commutativity of the diagram and the fact that $\mathcal{O}_{U}$ is isomorphic to $\phi^{-1}\mathcal{O}_{U^{'}}$ via $\phi^{\#}$, defined similarly as in \eqref{pull back eq}, we can show that $\tilde{\pi}^{-1}\mathcal{O}_{U}$ is isomorphic to $\mathcal{O}_{V}$ via $\tilde{\pi}^{\#}\,.$ Therefore, $\tilde{\pi}^{\#}_{x}$ is isomorphism and so is $\tilde{\pi}^{\#}\,.$ Similarly one can show that $\tilde{\pi}^{\#}$ is also an isomorphism even when we replace $\mathcal{O}_{M}$ by $F_{M}\,,$, that is, 
\begin{equation}\label{pi FM}
\tilde{\pi}^{\#}:\tilde{\pi}^{-1}C^{\infty}_{\mathscr{M}}\longrightarrow F_{M}\quad\text{is an isomorphism}\,.    
\end{equation}
Let $GM$ and $G^{*}M$ be the GH tangent and GH cotangent bundle of $M$, as defined in \eqref{GH tangent bundle} and \eqref{GH cotangent bundle}, respectively. Let $\{U_{\alpha}\}$ be a coordinate atlas of $\mathscr{M}$ such that $\tilde{\pi}^{-1}U_{\alpha}\cong \tilde{S_{\alpha}}\times U^{'}_{\alpha}$ via a GH homeomorphism  for some leaf $S_{\alpha}$ and $U^{'}_{\alpha}\subset\C^{k}$ open set where $\tilde{S_{\alpha}}$ is the universal cover of $S_{\alpha}\,.$ Note that, $$F_{M}(G^{*}M)|_{\tilde{\pi}^{-1}U_{\alpha}}=\spn_{F_{M}(\tilde{\pi}^{-1}U_{\alpha})}\{dz_1,\ldots,dz_{k}\}$$ where $z_{j}$ $(1\leq j\leq k)$ are holomorphic coordinates on $U_{\alpha}^{\prime}$. Then \eqref{pi FM} naturally induces an isomorphism $$\tilde{\tilde{\pi}}^{\#}|_{U_{\alpha}}:C^{\infty}(T^{1,0*}\mathscr{M})|_{U_{\alpha}}\longrightarrow F_{M}(G^{*}M)|_{\tilde{\pi}^{-1}U_{\alpha}}\,,$$ which gives rise to a sheaf isomorphism 
$$\tilde{\tilde{\pi}}^{\#}:\tilde{\pi}^{-1}C^{\infty}(T^{1,0*}\mathscr{M})\longrightarrow F_{M}(G^{*}M)\,,$$ where $T^{1,0}\mathscr{M}$ is the holomorphic tangent bundle of $\mathscr{M}\,.$ Replacing $F_{M}(G^{*}M)$ by $F_{M}(\overline{G^{*}M})$, one can show, similarly, that 
$$\tilde{\tilde{\pi}}^{\#}:\tilde{\pi}^{-1}C^{\infty}(T^{0,1*}\mathscr{M})\longrightarrow F_{M}(\overline{G^{*}M})\,.$$
Also, similarly, we can show that $F_{M}(GM)\cong\tilde{\pi}^{-1}C^{\infty}(T^{1,0}\mathscr{M})$ because $F_{M}(GM)=Hom_{F_{M}}(F_{M}(G^{*}M), F_{M})\,.$ 

\medskip
This shows that, for $l,p,q\geq 0$, $A^{l}$ (respectively, $A^{p,q}$), as defined in \eqref{A-pq}, is isomorphic to $\tilde{\pi}^{-1}(\Omega^{l}_{\mathscr{M}})$ (respectively, $\tilde{\pi}^{-1}(\Omega^{p,q}_{\mathscr{M}})$) where $\Omega^{l}_{\mathscr{M}}$ is the sheaf of $\C$-valued smooth $l$-forms on $\mathscr{M}$. In particular, The map $\tilde{\tilde{\pi}}^{\#}$ induces the pullback map $\tilde{\pi}^{*}$ from $\Omega^{l}_{\mathscr{M}}({\mathscr{M}})$ (respectively, $\Omega^{p,q}_{\mathscr{M}}({\mathscr{M}})$) to $A^{l}(M)$ (respectively, $A^{p,q}(M)$) which is an isomorphism of $\C$-vector spaces. By the definitions of $D$ and $d_{L}$ (see \eqref{D-def} and \eqref{dL-dL bar}), we have the following commutative diagrams.
\[\begin{tikzcd}[ampersand replacement=\&]
	{\Omega^{l}_{\mathscr{M}}(\mathscr{M})} \&\& {A^{l}(M)} \& {\Omega^{p,q}_{\mathscr{M}}(\mathscr{M})} \&\& {A^{p,q}(M)} \\
	\\
	{\Omega^{l+1}_{\mathscr{M}}(\mathscr{M})} \&\& {A^{l}(M)} \& {\Omega^{p,q+1}_{\mathscr{M}}(\mathscr{M})} \&\& {A^{p,q+1}(M)}
	\arrow["\tilde{\pi}^{*}(\cong)", from=1-1, to=1-3]
	\arrow["d"', from=1-1, to=3-1]
	\arrow["D", from=1-3, to=3-3]
	\arrow["\tilde{\pi}^{*}(\cong)"', from=3-1, to=3-3]
	\arrow["{\overline{\partial}}"', from=1-4, to=3-4]
	\arrow["\tilde{\pi}^{*}(\cong)", from=1-4, to=1-6]
	\arrow["\tilde{\pi}^{*}(\cong)"', from=3-4, to=3-6]
	\arrow["{d_{L}}", from=1-6, to=3-6]
\end{tikzcd}\] This shows that we have surjective homomorphisms at the level of de Rham cohomology and Dolbeault cohomology, respectively:
\begin{equation*}
  \tilde{\pi}^{*}:H^{l}_{dR}(\mathscr{M},\C)\longrightarrow H^{l}_{D}(M)\quad\text{and}\quad\tilde{\pi}^{*}:H^{p,q}_{\overline{\partial}}(\mathscr{M})\longrightarrow H^{p,q}_{d_{L}}(M)\,.
\end{equation*}
 Since $ \tilde{\pi}$ is a submersion, $\tilde{\pi}^{*}$ is one-to-one. Hence, $\tilde{\pi}^{*}$ is an isomorphism of $\C$-vector spaces. We summarize our results as follows.
 
\begin{theorem}\label{leaf sp mfld thm}
Let $M$ be a regular GC manifold such that the leaf space of the induced foliation is a smooth manifold $\mathscr{M}$. Let $GM$ and $G^{*}M$ be the GH tangent and GH cotangent bundle of $M$. Let $\tilde{\pi}:M\longrightarrow\mathscr{M}$ be the quotient map,  and let $T^{1,0}_{\mathscr{M}}$ be the sheaf holomorphic sections of the holomorphic tangent bundle of $\mathscr{M}$.
Then the following hold.

\medskip
\begin{enumerate}
\setlength\itemsep{0.6em}
    \item $F_{M}\cong\tilde{\pi}^{-1}C^{\infty}_{\mathscr{M}}$ and $\mathcal{O}_{M}\cong\tilde{\pi}^{-1}\mathcal{O}_{\mathscr{M}}\,.$
    \item $F_{M}(GM)\cong\tilde{\pi}^{-1}C^{\infty}(T^{1,0}\mathscr{M})$ and $F_{M}(G^{*}M)\cong\tilde{\pi}^{-1}C^{\infty}(T^{1,0*}\mathscr{M})\,.$ In particular,  $$GM\cong\tilde{\pi}^{*}T^{1,0}\mathscr{M}\quad\text{and}\quad G^{*}M\cong\tilde{\pi}^{*}T^{1,0*}\mathscr{M}\,.$$ 
    \item $\mathbf{\mathcal{G}M}\cong\tilde{\pi}^{-1}T^{1,0}_{\mathscr{M}}$ and $\mathbf{\mathcal{G}^{*}M}\cong\tilde{\pi}^{-1}T^{1,0*}_{\mathscr{M}}\,.$
    \item $H^{\bullet}_{dR}(\mathscr{M},\C)\cong H^{\bullet}_{D}(M)$ and $H^{\bullet,\bullet}_{\overline{\partial}}(\mathscr{M})\cong H^{\bullet,\bullet}_{d_{L}}(M)\,.$
\end{enumerate}

\end{theorem}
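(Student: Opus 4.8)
The plan is to upgrade the local, stalk-level identifications sketched just before the statement into global sheaf isomorphisms, and then to transport them through the differential operators to cohomology; since the analytic input (the local product structure) is already isolated in Theorem \ref{orbi thm} and in the preceding paragraphs, the proof is largely organizational. First I would prove (1). The quotient map $\tilde{\pi}:M\longrightarrow\mathscr{M}$ is an open surjective submersion, so $\tilde{\pi}^{\#}(f)=f\circ\tilde{\pi}$ defines a morphism $\tilde{\pi}^{-1}C^{\infty}_{\mathscr{M}}\longrightarrow C^{\infty}_{M}$ which manifestly lands in $F_{M}$. To see it is an isomorphism onto $F_{M}$ it suffices to check the stalk at an arbitrary $x\in M$: using Theorem \ref{orbi thm}, choose a chart $(U,\phi)$ around $y=\tilde{\pi}(x)$ small enough that $\tilde{\pi}^{-1}(U)\cong\tilde{S}_{x}\times U'$ via a GH homeomorphism $\tilde{\phi}$ compatible with $\mathrm{pr}_{2}:\tilde{S}_{x}\times U'\to U'\subset\C^{k}$. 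On such a neighborhood a smooth leafwise-constant function is exactly the pullback of a smooth function on $U'$, so the commuting square relating $\tilde{\phi}$, $\phi$ and $\mathrm{pr}_{2}$ forces $\tilde{\pi}^{\#}_{x}$ to be bijective. The identical argument with ``smooth'' replaced by ``holomorphic'' — now invoking Proposition \ref{prop GH} (equivalently Corollary \ref{imp corr}) to identify $\mathcal{O}_{M}$ with the functions annihilated by $d_{L}$, hence locally with pullbacks of holomorphic functions on $U'$ — gives $\mathcal{O}_{M}\cong\tilde{\pi}^{-1}\mathcal{O}_{\mathscr{M}}$.

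Next I would deduce (2) and (3). Cover $\mathscr{M}$ by charts $U_{\alpha}$ with $\tilde{\pi}^{-1}(U_{\alpha})\cong\tilde{S}_{\alpha}\times U'_{\alpha}$; the frames $\{dz_{1},\ldots,dz_{k}\}$ and $\{\partial_{z_{1}},\ldots,\partial_{z_{k}}\}$ serve simultaneously as $C^{\infty}$-frames of $T^{1,0*}\mathscr{M}$, $T^{1,0}\mathscr{M}$ over $U_{\alpha}$ and of $F_{M}(G^{*}M)$, $F_{M}(GM)$ over $\tilde{\pi}^{-1}(U_{\alpha})$, matched by the $\tilde{\pi}^{\#}$ of part (1). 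The transition matrices are leafwise-constant (holomorphic), so these local isomorphisms glue to a sheaf isomorphism $\tilde{\tilde{\pi}}^{\#}:\tilde{\pi}^{-1}C^{\infty}(T^{1,0*}\mathscr{M})\longrightarrow F_{M}(G^{*}M)$, dually $F_{M}(GM)\cong\tilde{\pi}^{-1}C^{\infty}(T^{1,0}\mathscr{M})$ via $F_{M}(GM)=\Hom_{F_{M}}(F_{M}(G^{*}M),F_{M})$, and similarly $F_{M}(\overline{G^{*}M})\cong\tilde{\pi}^{-1}C^{\infty}(T^{0,1*}\mathscr{M})$. Passing to vector bundles yields $GM\cong\tilde{\pi}^{*}T^{1,0}\mathscr{M}$ and $G^{*}M\cong\tilde{\pi}^{*}T^{1,0*}\mathscr{M}$. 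Restricting the same gluing argument to the subsheaf $\mathcal{O}_{M}\leq F_{M}$ — using part (1) for $\mathcal{O}_{M}$ and the local description $\mathbf{\mathcal{G}^{*}M}=\spn_{\mathcal{O}}\{dz_{1},\ldots,dz_{k}\}$ — gives $\mathbf{\mathcal{G}^{*}M}\cong\tilde{\pi}^{-1}T^{1,0*}_{\mathscr{M}}$ and, dually, $\mathbf{\mathcal{G}M}\cong\tilde{\pi}^{-1}T^{1,0}_{\mathscr{M}}$.

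For (4): tensoring (2) over the relevant structure sheaves identifies $A^{l}\cong\tilde{\pi}^{-1}(\Omega^{l}_{\mathscr{M}})$ and $A^{p,q}\cong\tilde{\pi}^{-1}(\Omega^{p,q}_{\mathscr{M}})$, compatibly with $d\leftrightarrow D$ and $\overline{\partial}\leftrightarrow d_{L}$ by the definitions \eqref{D-def} and \eqref{dL-dL bar}; these are exactly the two commuting squares displayed before the statement. Taking global sections, $\tilde{\pi}^{*}$ is a cochain map $(\Omega^{\bullet}_{\mathscr{M}}(\mathscr{M}),d)\to(A^{\bullet}(M),D)$ (resp. from the Dolbeault complex to the $d_{L}$-complex) which is surjective, since every transverse generalized form is locally, hence globally by leafwise-constancy, a pullback, and injective, since $\tilde{\pi}$ is a submersion. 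Hence $\tilde{\pi}^{*}$ is an isomorphism of complexes and descends to $H^{\bullet}_{dR}(\mathscr{M},\C)\cong H^{\bullet}_{D}(M)$ and $H^{\bullet,\bullet}_{\overline{\partial}}(\mathscr{M})\cong H^{\bullet,\bullet}_{d_{L}}(M)$.

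The main obstacle will be the stalk computation in part (1): one must verify that leafwise-constancy of a germ, together with the local product structure from Theorem \ref{orbi thm}, genuinely forces the germ to factor through $\tilde{\pi}$ — in particular that the monodromy of the covering $\tilde{S}_{x}\to S_{x}$ does not obstruct descent to $U'$. Once that point is settled, parts (2)--(4) amount to bookkeeping with the frames $\{dz_{j}\}$ and the already-established commuting squares.
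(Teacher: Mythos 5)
Your proposal is correct and follows essentially the same route as the paper: a stalk-wise verification of $\tilde{\pi}^{\#}$ using the local product structure from Theorem \ref{orbi thm} for part (1), gluing of the frame-level identifications $\{dz_j\}$, $\{\partial_{z_j}\}$ (with duality via $\Hom_{F_M}(F_M(G^*M),F_M)$) for parts (2)--(3), and the commuting squares relating $d,\overline{\partial}$ to $D,d_L$ together with the global-section isomorphism to conclude part (4). The "obstacle" you flag at the end is exactly the point the paper settles with the commutative diagram over a sufficiently small chart, so no genuinely new idea is needed beyond what you describe.
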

\begin{remark}
    Theorem \ref{leaf sp mfld thm} implies that the pullback of any holomorphic vector bundle on the leaf space is an SGH vector bundle of $M$. A natural question is whether all SGH vector bundles arise in this way.  The following two examples demonstrate that this is not always the case. 
\end{remark}
\begin{example}\label{counter eg1}
    Let $M_1$ be a complex manifold and $M_2$ be a symplectic manifold. Consider the natural product GCS on $M_1\times M_2\,.$ Consider the SGH vector bundle $\otimes_{i}\pr_{i}^{*}V_{i}$ over $M_1\times M_2$, as defined in Example \ref{imp exmple2} where $V_1$ is a holomorphic vector bundle over $M_1$ and $V_2$ is flat vector bundle over $M_2$. This bundle is not a pullback of a holomorphic vector bundle over $M_1$ unless $V_2$ is trivial. 
\end{example}
\begin{example}\label{counter eg2}
    Let $G$ be the Heisenberg group and $M=\Gamma\backslash G$ be the Iwasawa manifold with the left-invariant GCS as defined in Example \ref{imp exmple3}. Let $\rho:\Gamma\longrightarrow GL_{l}(\C)$ be a nontrivial (faithful) representation. Let $G\times_{\rho}\C^{l}$ be the SGH bundle over $M$ as defined in Example \ref{imp exmple}. Let $S$ ($\cong\T^2$) be a leaf of the induced foliation. Considering $\pi_1(S)<\Gamma\,,$  $(G\times_{\rho}\C^{l})|_{S}$ is isomorphic to $\R^2\times_{\rho^{'}}\C^{l}$ where $\rho^{'}=\rho|_{\pi_1(S)}$ is a non-trivial representation. If possible let, there exist a holomorphic vector bundle $W$ over $\T^2\times\T^2$ such that $\tilde{f}^{*}W=G\times_{\rho}\C^{l}$. But, then the restriction of $\tilde{f}^{*}W$ to any of the fibers of $\tilde{f}$ is a trivial bundle which is not possible as the fibers of $\tilde{f}$ are the leaves of the induced foliation by the left-invariant GCS. Hence $G\times_{\rho}\C^{l}$ is an SGH vector bundle on $M$ which is not a pullback of any holomorphic vector bundle on $\T^2\times\T^2\,.$
\end{example}

{Let $V$ be a holomorphic vector bundle over $\mathscr{M}$. Let $H^{\bullet,\bullet}_{\overline{\partial}}(\mathscr{M},V)$ denote the $V$-valued Dolbeault cohomology of $\mathscr{M}\,.$ Consider the SGH vector bundle $E:=\tilde{\pi}^{*}V$ over $M$. Then, by preceding discussions and Theorem \ref{leaf sp mfld thm}, we get the following.
\begin{corollary}\label{cor mfld1}
Let $M$ be a regular GC manifold such that the leaf space $\mathscr{M}$ of the induced foliation is a smooth manifold. Let $\tilde{\pi}:M\longrightarrow\mathscr{M}$ be the quotient map. Let $V$ be a holomorphic vector bundle on $\mathscr{M}$. Then,
 $$H^{\bullet,\bullet}_{\overline{\partial}}(\mathscr{M},V)\cong H^{\bullet,\bullet}_{d_{L}}(M,E)\,\,\,\text{(via $\tilde{\pi}^{*}$)\,\,\,where}\,\,\,E=\tilde{\pi}^{*}V\,.$$
\end{corollary}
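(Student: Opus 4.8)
The plan is to reduce the statement to the identification of sheaves on $M$ with pullbacks of sheaves on $\mathscr{M}$, which is exactly the content of Theorem \ref{leaf sp mfld thm}, combined with the Dolbeault-type isomorphism of Corollary \ref{cor5}. First I would recall that, by Corollary \ref{cor5} applied to the SGH vector bundle $E = \tilde{\pi}^{*}V$, one has
\begin{equation*}
H^{\bullet,\bullet}_{d_{L}}(M,E)\cong H^{\bullet}(M,(\mathbf{\mathcal{G}^{*}M})^{\bullet}\otimes_{\mathcal{O}_{M}}\mathbf{E})\,,
\end{equation*}
so that the left-hand side is computed by sheaf cohomology on $M$ of the locally free $\mathcal{O}_{M}$-module $(\mathbf{\mathcal{G}^{*}M})^{\bullet}\otimes_{\mathcal{O}_{M}}\mathbf{E}$. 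On the complex-manifold side, the usual Dolbeault theorem gives $H^{p,q}_{\overline{\partial}}(\mathscr{M},V)\cong H^{q}(\mathscr{M},\Omega^{p}_{\mathscr{M}}\otimes_{\mathcal{O}_{\mathscr{M}}}\mathcal{V})$ where $\mathcal{V}$ is the sheaf of holomorphic sections of $V$ and $\Omega^{p}_{\mathscr{M}} = \bigwedge^{p}T^{1,0*}_{\mathscr{M}}$. So the target is reduced to comparing $H^{q}(M,(\mathbf{\mathcal{G}^{*}M})^{p}\otimes_{\mathcal{O}_{M}}\mathbf{E})$ with $H^{q}(\mathscr{M},\Omega^{p}_{\mathscr{M}}\otimes_{\mathcal{O}_{\mathscr{M}}}\mathcal{V})$.

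Next I would invoke Theorem \ref{leaf sp mfld thm}(3), which gives $\mathbf{\mathcal{G}^{*}M}\cong\tilde{\pi}^{-1}T^{1,0*}_{\mathscr{M}}$ as sheaves of $\mathcal{O}_{M}\cong\tilde{\pi}^{-1}\mathcal{O}_{\mathscr{M}}$-modules, hence $(\mathbf{\mathcal{G}^{*}M})^{p}\cong\tilde{\pi}^{-1}\Omega^{p}_{\mathscr{M}}$. Since $E=\tilde{\pi}^{*}V$, by construction of the pullback of a holomorphic bundle along $\tilde{\pi}$ (which is a GH map by hypothesis, as $\mathscr{M}$ is the leaf space), the sheaf of GH sections satisfies $\mathbf{E}\cong\tilde{\pi}^{-1}\mathcal{V}\otimes_{\tilde{\pi}^{-1}\mathcal{O}_{\mathscr{M}}}\mathcal{O}_{M}$, i.e. $\mathbf{E}\cong\tilde{\pi}^{-1}\mathcal{V}$ under the identification $\mathcal{O}_{M}\cong\tilde{\pi}^{-1}\mathcal{O}_{\mathscr{M}}$. (This last claim I would either cite from the construction in Proposition \ref{S-V prop}/Theorem \ref{leaf sp mfld thm} or verify locally: over a trivializing chart $U_{\alpha}$ of $\mathscr{M}$ with $\tilde{\pi}^{-1}U_{\alpha}\cong\tilde{S}_{\alpha}\times U'_{\alpha}$ via a GH homeomorphism, the GH sections of $\tilde{\pi}^{*}V$ over $\tilde{\pi}^{-1}U_{\alpha}$ are exactly the functions constant along $\tilde{S}_{\alpha}$ and holomorphic in $U'_{\alpha}$ valued in the fiber, matching $\tilde{\pi}^{-1}\mathcal{V}$.) Combining these, $(\mathbf{\mathcal{G}^{*}M})^{p}\otimes_{\mathcal{O}_{M}}\mathbf{E}\cong\tilde{\pi}^{-1}(\Omega^{p}_{\mathscr{M}}\otimes_{\mathcal{O}_{\mathscr{M}}}\mathcal{V})$.

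It then remains to show $H^{q}(M,\tilde{\pi}^{-1}\mathcal{F})\cong H^{q}(\mathscr{M},\mathcal{F})$ for the coherent sheaf $\mathcal{F}=\Omega^{p}_{\mathscr{M}}\otimes_{\mathcal{O}_{\mathscr{M}}}\mathcal{V}$ on $\mathscr{M}$. I would do this via the Leray spectral sequence for $\tilde{\pi}$ (or directly, since $\tilde{\pi}$ is open and has contractible-in-cohomology fibers for these coefficients): the fibers (leaves) are symplectic, and on a leaf the relevant constant-along-the-leaf coefficient sheaf has no higher cohomology by the $d_{L}$-Poincaré lemma of Proposition \ref{prop}, so $R^{i}\tilde{\pi}_{*}(\tilde{\pi}^{-1}\mathcal{F})=0$ for $i>0$ and $\tilde{\pi}_{*}\tilde{\pi}^{-1}\mathcal{F}=\mathcal{F}$. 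This collapses the spectral sequence and yields the desired isomorphism, and one checks that it is induced by $\tilde{\pi}^{*}$ by tracking the Čech representatives through a transverse good cover $\mathcal{U}$ of $M$ and its image cover $\hat{\mathcal{U}}$ of $\mathscr{M}$ (this is essentially how $\tilde{\pi}^{*}$ on cohomology was already shown to be an isomorphism in the proof of Theorem \ref{leaf sp mfld thm}(4)). The main obstacle, I expect, is the bookkeeping in the second step: making precise the identification $\mathbf{E}\cong\tilde{\pi}^{-1}\mathcal{V}$ and checking it is compatible with the tensor products and with the explicit pullback map $\tilde{\pi}^{*}$ on $d_{L}$-cohomology, so that the final isomorphism is genuinely realized by $\tilde{\pi}^{*}$ and not merely an abstract isomorphism. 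Once that compatibility is in hand, everything else is a direct application of the cited results.
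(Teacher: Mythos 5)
Your first two steps are fine: the reduction via Corollary \ref{cor5} and the classical Dolbeault theorem, and the identifications $\mathbf{E}\cong\tilde{\pi}^{-1}\mathcal{V}$, $(\mathbf{\mathcal{G}^{*}M})^{p}\cong\tilde{\pi}^{-1}\Omega^{p}_{\mathscr{M}}$ are consistent with Theorem \ref{leaf sp mfld thm}. The gap is in the Leray step. The fibers of $\tilde{\pi}$ are the symplectic leaves, and these are in general compact manifolds with nontrivial topology (for instance $\T^{2}$ leaves in the Iwasawa manifold of Example \ref{imp exmple3}, whose leaf space is the manifold $\T^{2}\times\T^{2}$). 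The restriction of $\tilde{\pi}^{-1}\mathcal{F}$ to a leaf $S_{y}$ is the \emph{constant} sheaf with stalk $\mathcal{F}_{y}$, so for $\tilde{\pi}$ proper one gets $(R^{i}\tilde{\pi}_{*}\tilde{\pi}^{-1}\mathcal{F})_{y}\cong H^{i}(S_{y};\C)\otimes_{\C}\mathcal{F}_{y}$, which does \emph{not} vanish when $H^{i}(S_{y};\C)\neq 0$ (already $R^{1}\neq 0$ for a torus leaf; you can see the same phenomenon directly by computing the \v{C}ech cohomology of $pr_{2}^{-1}\mathcal{O}_{\D}$ on $S^{1}\times\D$, which is $\mathcal{O}(\D)\neq 0$ in degree one). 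The $d_{L}$-Poincar\'e lemma of Proposition \ref{prop} cannot rescue this: it concerns the complex of \emph{leafwise-constant} forms on a transverse open set, i.e. a complex that by construction sees only the transverse directions, and it says nothing about the constant-sheaf cohomology of a leaf. So the claimed vanishing $R^{i}\tilde{\pi}_{*}(\tilde{\pi}^{-1}\mathcal{F})=0$ for $i>0$ is false, the spectral sequence does not collapse, and the identification $H^{q}(M,\tilde{\pi}^{-1}\mathcal{F})\cong H^{q}(\mathscr{M},\mathcal{F})$ for genuine sheaf cohomology on $M$ breaks down exactly because the leaves carry topology.

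The paper's own proof avoids sheaf cohomology on $M$ (and hence the leaf topology) altogether: it works at the cochain level, showing that $\tilde{\pi}^{*}$ gives an isomorphism of complexes $\Omega^{\bullet,\bullet}(\mathscr{M},V)\cong A^{\bullet,\bullet}_{E}(M)$ intertwining $\overline{\partial}$ with $d_{L,E}$, the point being that sections of $A^{\bullet,\bullet}_{E}$ are constant along the connected leaves and therefore descend to $\mathscr{M}$; the cohomology isomorphism is then immediate. If you want to keep a \v{C}ech/Leray-flavoured argument, you must work throughout with transverse good covers (where intersections are unions of leaves and the leaves never get subdivided), which is effectively what Theorem \ref{main5} and Corollary \ref{cor5} do internally — but the standard Leray spectral sequence for $\tilde{\pi}$ with the usual higher direct images, as invoked in your proposal, does not apply.
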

\begin{remark}
    Corollary \ref{cor mfld1} is useful for studying $\dim_{\C}H^{\bullet,\bullet}_{d_{L}}(M,E)\,.$ This can be illustrated by the following example.
\end{remark}
\begin{example} 
Let $M$ be a regular GC manifold with the leaf space $\mathbb{CP}^{n}$.
    Let $\mathcal{O}_{\mathbb{CP}^{n}}(m)$ denote the holomorphic line bundle of degree $m$ over $\mathbb{CP}^{n}$.
 Consider the  SGH vector bundle $\mathcal{O}_{M}(m):=\tilde{\pi}^{*}\mathcal{O}_{\mathbb{CP}^{n}}(m)$ over $M$ where $\tilde{\pi}:M\longrightarrow\mathbb{CP}^{n}$ is the quotient map. By Corollary \ref{cor5} and Corollary \ref{cor mfld1},  we have
$$\dim_{\C}H^{\bullet}(M,(\mathbf{\mathcal{G}^{*}M})^{\bullet}\otimes_{\mathcal{O}_{M}}\mathcal{O}_{M}(m))=\dim_{\C}H^{\bullet}(\mathbb{CP}^{n},\Omega^{\bullet}_{\mathbb{CP}^{n}}\otimes_{\mathcal{O}_{\mathbb{CP}^{n}}}\mathcal{O}_{\mathbb{CP}^{n}}(m))\,,$$ where $\Omega^{\bullet}_{\mathbb{CP}^{n}}$ denotes the sheaf of holomorphic $\bullet$-forms on $\mathbb{CP}^{n}\,.$ Then, using the Bott formula (cf. \cite{bott57} and \cite[Chapter 1]{okonek11}), for $p,q\geq 0$, we get, 
 \begin{align*}
    \dim_{\C}H^{q}(M,(\mathbf{\mathcal{G}^{*}M})^{p}\otimes_{\mathcal{O}_{M}}\mathcal{O}_{M}(m)) =
    \begin{cases}
    \binom{m+n-p}{m}\binom{m-1}{p} &\text{for } q=0, 0\leq p\leq n, m>p\,;\\
    \binom{-m+p}{-m}\binom{-m-1}{n-p} &\text{for } q= n, 0\leq p\leq n, m<p-n\,;\\
     1 &\text{for } m= 0, 0\leq p=q\leq n\,;\\
     0 &\text{otherwise }\,.
    \end{cases}
\end{align*}
In particular, for $p=0$, we have
\begin{align*}
    \dim_{\C}H^{q}(M,\mathcal{O}_{M}(m)) =
    \begin{cases}
    \binom{m+n}{m} &\text{for } q=0, m\geq 0\,;\\
    \binom{-m-1}{-m-1-n} &\text{for } q= n, m\leq -n-1\,;\\
     0 &\text{otherwise }\,.
    \end{cases}
\end{align*}
\end{example}
\begin{remark}\label{rmk chrn}
    Note that, by Corollary \ref{cor mfld1}, for an SGH vector bundle $E$ over $M$ that is a pullback of a holomorphic vector bundle $V$ over $\mathscr{M}$, the generalized Chern classes of $E$ (see Subsection \ref{sgh-vb chrn}) are pullback of the Chern classes of $V$, that is $$\mathbf{g}c_{j}(E)=\tilde{\pi}^{*}(c_{j}(V))\,\,\,\text{for}\,\,\,0\leq j\leq l\,,$$ where $c_{j}(V)$ is the $j$-th Chern class of $V$, $\tilde{\pi}:M\longrightarrow\mathscr{M}$ is the quotient map, and $l$ is the complex rank of $E$. It is an interesting question if there is an SGH bundle whose generalized Chern class is not the pullback of a Chern class of a holomorphic bundle over the leaf space.
\end{remark}
}

{\bf Acknowledgement.} The authors would like to thank Ajay Singh Thakur and Chandranandan Gangopadhyay for many stimulating and helpful discussions. The research of the first-named author is supported by a CSIR-UGC NET research grant.

\end{document}